\definecolor{b}{HTML}{4472c4}
\definecolor{o}{HTML}{ED7D31}
\definecolor{g}{HTML}{70ad47}
\newtheorem{theorem}{Theorem}[section]
\newtheorem{lemma}[theorem]{Lemma}
\newtheorem{corollary}[theorem]{Corollary}
\newtheorem{definition}[theorem]{Definition}
\newtheorem{proposition}[theorem]{Proposition}
\newtheorem{claim}[theorem]{Claim}
\theoremstyle{remark}
\newtheorem{remark}[theorem]{Remark}
\theoremstyle{plain}
\newcommand{\R}{\mathbb{R}}
\newcommand{\cM}{\mathcal{M}}
\def\dist{{\rm dist}}
\def\diam{{\rm diam}}
\def\dist{{\rm dist}}
\def\ave{{\rm ave}}
\def\med{{\rm med}}
\newcommand{\eps}{\epsilon}
\newcommand{\seminorm}[1]{{\left\vert\kern-0.25ex\left\vert\kern-0.25ex\left\vert #1
    \right\vert\kern-0.25ex\right\vert\kern-0.25ex\right\vert}}
\renewcommand{\part}{\@startsection{part}{0}%
  \z@{5ex plus 1ex minus 1ex}{3ex plus 1ex}%
  {\centering\normalfont\Large\bfseries}}
\begin{document}

\title[Metric Poincar\'e inequalities]{Metric Poincar\'e inequalities for graphs}

\author{Dylan J. Altschuler, Pandelis Dodos, Konstantin Tikhomirov and Konstantinos Tyros}

\address{Department of Mathematical Sciences, Carnegie Mellon University}
\email{daltschu@andrew.cmu.edu}

\address{Department of Mathematics, University of Athens, Panepistimiopolis 157 84, Athens, Greece}
\email{pdodos@math.uoa.gr}

\address{Department of Mathematical Sciences, Carnegie Mellon University}
\email{ktikhomi@andrew.cmu.edu}

\address{Department of Mathematics, University of Athens, Panepistimiopolis 157 84, Athens, Greece}
\email{ktyros@math.uoa.gr}

\thanks{2020 \textit{Mathematics Subject Classification}: 05C12, 05C48, 05C50, 05C80, 46B85.}
\thanks{\textit{Key words}: expander graphs, random regular graphs, nonlinear Poincar\'{e} inequalities, metric embeddings.}


\begin{abstract}

\smallskip
This article obtains purely metric counterparts of cornerstone results in the theory of embedding graphs into normed spaces. Our first main result is a metric analogue of Matou\v{s}ek's extrapolation relating the Poincar\'e constants $\gamma(G,\varrho^p)$ and $\gamma(G,\varrho^q)$ for any exponents $0 < p,q < \infty$, any bounded-degree expander graph $G$, and any target metric space $\mathcal{M}=(M,\varrho)$. Our second main result provides a sharp estimate of the Poincar\'e constant $\gamma(G,\varrho)$ in terms of the cardinalities of the vertex set of $G$ and the metric space $\cM=(M,\varrho)$, in the setting of {\it random} graphs. This yields optimal estimates on the minimum cardinality of (bi-Lipschitz) universal metric spaces for graphs, finally establishing a nonlinear analogue of Matou\v{s}ek's celebrated ``incompressibility" theorem (1996). Further, we obtain estimates on the nonlinear spectral gap of metric snowflakes and sharp lower bounds on the distortion of random regular graphs into arbitrary metric spaces. Our proofs develop new nonlinear techniques, including random compression methods and a novel structural dichotomy for metric embeddings.
\end{abstract}

\maketitle

\tableofcontents

\numberwithin{equation}{section}

\section{Introduction} \label{sec1}

Let $d\geqslant 3$ be an integer, and let $G=(V_G,E_G)$ be a $d$-regular graph. We set $n:=|V_G|$, and write the eigenvalues of the adjacency matrix $A_G$ of $G$ in increasing order as
\begin{equation} \label{eigen-e1}
\lambda_n(G)\leqslant \cdots \leqslant \lambda_2(G)\leqslant \lambda_1(G)=d.
\end{equation}
The \emph{spectral gap} of $G$, given by $d-\lambda_2(G)$, measures the expansion of a graph \cite{HLW06}. This quantity is of fundamental importance across multiple areas of pure and applied mathematics. It is a standard fact that the ratio $\frac{d}{2(d-\lambda_2(G))}$ (essentially, the reciprocal of the spectral gap) is the smallest constant $\gamma\in (0,\infty]$ such that for any function $f\colon V_G\to \mathbb{R}^k$,
\begin{equation} \label{eq-poincare-e1}
\frac{1}{|V_G|^2} \sum_{v,u\in V_G} \big\|f(v)-f(u)\big\|_2^2 \leqslant
\frac{\gamma}{|E_G|} \sum_{\{v,u\}\in E_G} \big\|f(v)-f(u)\big\|_2^2.
\end{equation}
The significance of this \emph{discrete  Poincar\'e inequality} is evidenced by decades of widespread use in graph theory, algorithm design, metric embeddings, and related areas.

A fruitful research direction---with origins in metric geometry, functional analysis, and geometric group theory---is to replace the Euclidean norm in \eqref{eq-poincare-e1} with other norms and, even more generally, arbitrary metrics. The result is a so-called {\it nonlinear} Poincar\'e inequality which serves as the main tool for establishing nonembedding results in metric geometry and geometric group theory. Early appearances of such inequalities can be found in the works of Enflo~\cite{E76}, Bourgain--Milman--Wolfson \cite{BMW86}, Gromov \cite{Gr83,Gr03}, Pisier~\cite{Pi86}, Linial--London--Rabinovich \cite{LLR95}, and Matou\v{s}ek \cite{Ma97}. We refer the reader to the introduction of Mendel--Naor \cite{MN14} and the Bourbaki lectures of Eskenazis~\cite{Es22} for a detailed overview of this rich area and its numerous remarkable applications in metric geometry and algorithm design.

The following definition---which first appeared in \cite{MN14}---systematizes and unifies the aforementioned nonlinear Poincar\'{e} inequalities.

\begin{definition}[Nonlinear spectral gap]\label{def:main}
Let $d\geqslant 3$ be an integer, and let $G=(V_G,E_G)$ be a $d\text{-regular}$ graph. Also let $\mathcal{M}=(M,\varrho)$ be a metric space, and let $p> 0$ be an exponent. Denote by $\gamma(G,\varrho^p)$ the smallest constant $\gamma\in (0,\infty]$ such that for any $f\colon V_G \to M$,
\begin{equation} \label{eq-poincare}
\frac{1}{|V_G|^2} \sum_{v,u\in V_G} \varrho\big(f(v),f(u)\big)^p \leqslant
\frac{\gamma}{|E_G|} \sum_{\{v,u\}\in E_G} \varrho\big(f(v),f(u)\big)^p.
\end{equation}
\end{definition}

A graph $G$ is said to be an \textit{expander} with respect to $\cM$ if $\gamma(G,\varrho) = O_{\mathcal{M}}(1)$. In the specialized setting where the target space $\cM$ is a normed space $X$, there exists a substantial body of classical and recent results that estimate the Poincar\'e constant $\gamma(G,\|\cdot\|_X)$ under various assumptions on~$X$. We highlight the seminal works of Lafforgue \cite{La08} and Mendel--Naor \cite{MN14} which provide families of regular graphs that are expanders with respect to every Banach space of non-trivial type, and every supperreflexive space, respectively. We also refer the reader to \cite{Es22,dLdS23,ADTT24} and the references therein for further results in the normed-space setting.

In the purely metric setting, nonlinear spectral gaps were studied by Mendel--Naor \cite{MN15} who considered certain Hadamard spaces---metric spaces of non-positive curvature---as target spaces, as well as in the recent work of the authors \cite{ADTT25}, where the basic problem of estimating the Poincar\'{e} constant between random regular graphs was resolved. Despite this progress, much of the general theory for nonlinear spectral gaps, beyond when the target space possess a vector space structure, remains to be developed.

The purpose of this article is to obtain \emph{purely metric counterparts} to core pieces of the theory of spectral gaps for normed spaces, and to present related applications. A principal reason for studying the fully general setting comes from the {\it Ribe program} within metric geometry, which aims to develop metric analogues of normed space invariants. Other concrete and important motivations come from theoretical computer science and geometric group theory, which frequently consider graph embeddings into metric spaces, such as groups or other graphs, without normed structure. Among the applications obtained in this work are optimal estimates on the cardinality of {\it universal metric spaces} for graphs embeddings, which provide a natural counterpart to both classical and recent universality results across multiple branches of mathematics.

\subsection{Matou\v{s}ek's extrapolation for metric spaces}

The first part of this paper is devoted to the proof of a comparison result for nonlinear spectral gaps with different exponents. Such questions were first investigated by Matou\v{s}ek \cite{Ma97}, who completely resolved the case where $\cM$ is $\ell_p$ for $1\leqslant p < \infty$. Because of their utility, vector-valued versions of such comparison results---commonly referred to as \emph{Matou\v{s}ek extrapolation arguments}---have been studied by several authors, including Bartal--Linial--Mendel--Naor \cite{BLMN05} and Naor--Silberman \cite{NS11}, culminating in the work of $\text{de Laat--de la Salle}$~\cite{dLdS21} that treats all normed spaces.

Let us highlight two of the many reasons for studying extrapolation principles. First, the Poincar\'{e} constant can have a particular geometric or combinatorial interpretation for a certain exponent that enables direct and sharp computations. A canonical example is $\gamma(G,\|\cdot\|_2^2)$ which has a fundamental spectral interpretation. Second, and more importantly, for some exponents there may be additional analytic and probabilistic tools available (e.g., martingale inequalities) that may not be available for all exponents of interest.

Despite the successful history of extrapolation principles for normed spaces, extending these results for general metric spaces has remained elusive. Analogues of Matou\v{s}ek's extrapolation within the specialized setting of Hadamard spaces have been recently studied by Eskenazis--Mendel--Naor \cite{EMN25a}, where they explicitly ask \cite[Question 1.18]{EMN25a} whether a version of Matou\v{s}ek's extrapolation holds under curvature assumptions. Our first main result resolves the extrapolation problem for all bounded-degree expander graphs and \emph{all} metric spaces.

\begin{theorem}[Extrapolation for metric spaces] \label{thm-extrapolation}
Let $d\geqslant 3$ be an integer, let $G$ be a $d$-regular graph with Cheeger constant\footnote{The Cheeger constant is defined in Subsection~\ref{sec3-Cheeger}; our asymptotic notation is defined in Subsection \ref{sec3-asymptotic}.} $h(G)>0$, and let $\mathcal{M}=(M,\varrho)$ be an arbitrary metric space. Then, for any $0 < p \leqslant q<\infty$, we have
\begin{equation} \label{eq-extra-intro-new-e1}
\gamma(G,\varrho^p) \leqslant O_{d,h(G),p,q} \Big(\max\big\{1,\gamma(G,\varrho^q)\big\}\Big)
\end{equation}
and
\begin{equation} \label{eq-extra-intro-new-e2}
\gamma(G,\varrho^q) \leqslant O_{d,h(G),p,q}\Big( \gamma(G,\varrho^p)^{\frac{q}{p}}\Big),
\end{equation}
where the Poincar\'e constants $\gamma(G,\varrho^p)$, $\gamma(G,\varrho^q)$ are defined according to Definition~\ref{def:main}.
\end{theorem}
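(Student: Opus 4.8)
The plan is to prove both inequalities by reducing to a single "one-step" comparison and then iterating. For \eqref{eq-extra-intro-new-e2}, the easier direction, I would start from the observation that for any map $f\colon V_G\to M$ and any vertices $v,u$, the value $\varrho(f(v),f(u))$ is controlled along a path: if $v=w_0,w_1,\dots,w_k=u$ is a path in $G$ of length $k$, then by the triangle inequality and convexity (power-mean inequality) $\varrho(f(v),f(u))^q \leqslant k^{q-1}\sum_{i} \varrho(f(w_{i-1}),f(w_i))^q$. Since $h(G)>0$ forces the diameter of $G$ to be $O_{d,h(G)}(\log n)$, one can in principle route all pairs through short paths; but a cleaner route is to first pass to the $p$-version. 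Concretely, $\gamma(G,\varrho^q)$ is comparable (up to $d$-dependent constants coming from the degree) to the analogous constant where the left side averages only over \emph{edges} at distance $O(1)$ after a suitable power of the graph, and then Hölder/power-mean converts a bound with exponent $p$ into a bound with exponent $q$ at the cost of raising $\gamma(G,\varrho^p)$ to the power $q/p$ and absorbing a multiplicative constant depending on $d$, $p$, $q$, and $h(G)$. I expect \eqref{eq-extra-intro-new-e2} to follow from this path-averaging argument together with the expansion estimate, and to be essentially soft.

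The substantive content is \eqref{eq-extra-intro-new-e1}, going from a \emph{large} exponent $q$ down to a \emph{small} exponent $p$; this is exactly where Matoušek's original argument in the $\ell_p$ setting, and its Banach-space descendants, require real work, and where the promised "structural dichotomy for metric embeddings" must enter. The idea I would pursue: fix $f\colon V_G\to M$ normalized so that the right-hand side of \eqref{eq-poincare} with exponent $p$ equals, say, $1$, i.e. the average of $\varrho(f(v),f(u))^p$ over edges is $|E_G|^{-1}$-normalized to a constant. Call an edge $\{v,u\}$ \emph{short} if $\varrho(f(v),f(u))$ is below some threshold $t$ and \emph{long} otherwise. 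The dichotomy should say that either (i) long edges are so few that deleting them still leaves a graph with good expansion, in which case one runs the Poincaré inequality on the cleaned-up graph and the contribution of long pairs is controlled directly by a diameter/path argument as above; or (ii) long edges are abundant, in which case the $q$-Poincaré inequality applied to $f$ (whose $q$-energy on edges is then comparable, after the dichotomy, to its $p$-energy up to $t$-powers) yields that the global $q$-average of $\varrho(f(v),f(u))^q$ is small, and one downgrades from exponent $q$ to exponent $p$ on the left-hand side by Jensen in the reverse direction — using that a bound on a higher moment together with a crude uniform bound on the metric (coming from the diameter of $f(V_G)$ being controlled by a single long edge scale) gives a bound on the lower moment. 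Optimizing the threshold $t$ balances the two regimes and produces the stated $O_{d,h(G),p,q}(\cdot)$ constant.

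In more detail on the key steps, in order: (1) record the standard consequences of $h(G)>0$ — namely $\lambda_2(G)\leqslant d - h(G)^2/(2d)$ (Cheeger's inequality) and $\operatorname{diam}(G) = O_{d,h(G)}(\log|V_G|)$ — and note that the Poincaré constant is insensitive, up to $O_d(1)$ factors, to replacing $G$ by a bounded power $G^{\ell}$; (2) reduce \eqref{eq-extra-intro-new-e1} to the case $q = 2p$ (or any fixed ratio $>1$) by iterating, since composing the one-step comparison $\lceil \log_2(q/p)\rceil$ times changes the constant by a bounded amount; (3) for the one-step comparison, normalize $f$, introduce the short/long edge threshold $t$, and invoke the structural dichotomy to split into the sparse-long-edges and dense-long-edges cases; (4) in each case close the estimate using, respectively, expansion of the short-edge subgraph plus a path argument for the long pairs, or the $q$-inequality plus a reverse-Jensen moment comparison; (5) choose $t$ to balance. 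I expect the main obstacle to be step (3): making the dichotomy quantitative enough that in the "sparse long edges" case the residual graph genuinely retains expansion with constants depending only on $d$ and $h(G)$, and in the "dense long edges" case that one really can bound the full $q$-energy (not just the edge $q$-energy) of $f$ — i.e. that the dichotomy controls the \emph{metric} behavior of $f$ and not merely edge counts. This is precisely the place where a purely combinatorial expander argument would break down and the metric structural result stated in the abstract is indispensable; everything else should be routine Hölder/Jensen bookkeeping and standard spectral-gap facts.
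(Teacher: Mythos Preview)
Your sketch for \eqref{eq-extra-intro-new-e2} is roughly in the right spirit, but the plan for \eqref{eq-extra-intro-new-e1} has a genuine gap. In your case (ii) you propose to apply the $q$-Poincar\'e inequality to $f$ itself and then ``downgrade'' via reverse Jensen. The problem is that H\"older goes the wrong way: from $\ave_q \leqslant \gamma(G,\varrho^q)\cdot \mathrm{edge}_q$ and $\ave_p \leqslant (\ave_q)^{p/q}$ you get $\ave_p \leqslant \gamma(G,\varrho^q)^{p/q}\,(\mathrm{edge}_q)^{p/q}$, but you need $(\mathrm{edge}_q)^{p/q}\lesssim \mathrm{edge}_p$, which is a \emph{reverse} H\"older inequality and simply fails if a few edges carry much larger $\varrho$-length than the rest. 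A diameter bound $\diam_\varrho(f(V_G))\leqslant D$ only yields $\mathrm{edge}_q\leqslant D^{q-p}\,\mathrm{edge}_p$, giving an estimate that is not linear in $\mathrm{edge}_p$. Your case (i) has a parallel issue: after deleting long edges you no longer have access to the $q$-Poincar\'e inequality on the pruned graph (that inequality is an assumption about $G$, not about subgraphs), and Cheeger expansion of the subgraph alone gives Poincar\'e only for \emph{scalar} targets, not for the metric $\varrho$.

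The paper's dichotomy is not long-versus-short edges but rather whether $f$ is \emph{concentrated}, meaning $\ave_{\mathcal{M}}(f,q)\leqslant K\,Q_\tau(f)^q$ for an empirical quantile $Q_\tau(f)$ (Definition~\ref{def3.2}). The hard technical fact is Proposition~\ref{prop:non-concentrated}: every \emph{non}-concentrated $f$ satisfies a $q$-Poincar\'e inequality with constant $O_{d,h(G),q}(1)$, unconditionally---this is where the expansion is actually used, via a multiscale dyadic decomposition and path counting (Lemma~\ref{l:2.4}). Once that is in hand, both extrapolation directions reduce to the concentrated case. There the quantile comparability $\ave_p\asymp Q_{1/2}(f)^p$ and $\ave_q\asymp Q_{1/2}(f)^q$ is exactly what makes the moment comparison close; and for the harder left direction, the paper does \emph{not} apply the $q$-Poincar\'e inequality to $f$ but to a carefully truncated auxiliary function $g$ (Lemma~\ref{lemma-new}) that agrees with $f$ on a large set while having controlled $q$-edge energy. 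Your iteration to $q=2p$ is unnecessary once the concentrated/non-concentrated split is in place.
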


\begin{remark}
The exponents appearing in the right-hand sides of \eqref{eq-extra-intro-new-e1} and \eqref{eq-extra-intro-new-e2} coincide with the best known exponents---see, e.g., \cite[Remark 47]{Na21} or \cite[Proposition 1.4]{Es22}---for Matou\v{s}ek's extrapolation in the vector-valued setting. It is an interesting question whether the dependence of the implicit constants on $d$ and $h(G)$, which does not appear in the vector-valued setting, is necessary when considering embeddings into arbitrary metric spaces.
\end{remark}

\begin{remark}
Explicit estimates for the implied constants in \eqref{eq-extra-intro-new-e1} and \eqref{eq-extra-intro-new-e2} are given in Theorem~\ref{thm-extrapolation-quantitative}, which provides a quantitative restatement of Theorem \ref{thm-extrapolation} in the regime $1\leqslant p \leqslant q<\infty$.
\end{remark}

\subsection{Metric Poincar\'{e} inequalities for random graphs}

The second part of this paper focuses on estimating $\gamma(G,\varrho)$ for random regular graphs $G$ and arbitrary metric spaces $\cM = (M,\varrho)$. In addition to the applications presented in this paper, we also refer to Naor's ICM lecture \cite{Na18} as well as \cite{MN15,Es22,ADTT25,EMN25a,EMN25b} for an account of applications of metric Poincar\'{e} inequalities.

For any $d$-regular graph $G$ and any metric space $\mathcal{M}=(M,\varrho)$, combining Bourgain's celebrated embedding theorem \cite{Bou85} with Matou\v{s}ek's extrapolation \cite{Ma97} yields
\begin{equation} \label{intro-e1}
\gamma(G,\varrho)\lesssim \log\big(|V_G|\big) \cdot \frac{d}{d-\lambda_2(G)}.
\end{equation}
Apart from this basic estimate, up to now, no other general estimate for nonlinear spectral gaps is known.

As with extrapolation results, substantially more is known about Poincar\'e inequalities for normed spaces than for general metric spaces. In particular, a recent result\footnote{We note that \eqref{intro-e2} appeared in \cite[Corollary 4]{Na21}, and as a special case of a more general result \cite[Theorem~25]{Na21} concerning nonlinear spectral gaps of complex interpolation spaces; we refer the reader to \cite[Section~5]{Na18} for an interpolation-free proof, as well as to \cite[Theorem 1.3]{Es22} for a streamlined exposition of this fundamental estimate.} of Naor \cite{Na21} asserts that for any $d$-regular graph $G$ and any finite-dimensional normed space $X$,
\begin{equation} \label{intro-e2}
\gamma(G,\|\cdot\|_X)\lesssim \log\big(\dim(X)+1\big) \cdot \frac{d}{d-\lambda_2(G)}.
\end{equation}
By \eqref{intro-e1} and \eqref{intro-e2}, one then obtains the optimal estimate
\begin{equation} \label{intro-e3}
\gamma(G,\|\cdot\|_X)\lesssim \min\Big\{\log\big(|V_G|\big), \log\big(\dim(X)+1\big)\Big\} \cdot \frac{d}{d-\lambda_2(G)},
\end{equation}
which neatly combines the dimensionality of both the domain and the target space.

The extent to which \eqref{intro-e3} is valid for general target metric spaces is a fundamental open problem in metric geometry, and in particular within the Ribe program \cite{Na18}. A principal difficulty is that preexisting methods operate under strong assumptions on the target space, such as curvature or linear space structure. A second serious obstruction is that a naive nonlinear analogue of \eqref{intro-e3} cannot hold because $\mathcal{M}$ can be chosen adversarially depending on $G$; e.g., by taking $\mathcal{M}$ equal to a deterministic bounded-degree connected graph\footnote{Graphs are viewed as metric spaces equipped with the shortest-path distance; see Subsection \ref{sec3-graphs} for more~details.} $G$ we obtain that $\gamma(G,\dist_G)\gtrsim \log\big(|V_G|\big)$, i.e., no improvement over \eqref{intro-e1} is available.

Our second main contribution is the following sharp asymptotic characterization of nonlinear spectral gaps for general metric spaces when $G$ is \textit{random}. In particular, the second mentioned obstacle turns out to be {\it the only obstruction} to efficiently controlling the nonlinear Poincar\'e constant of graph embeddings.  For the remainder of this paper, for any pair $n\geqslant d\geqslant 3$ of integers, $G(n,d)$ denotes the set of all $d$-regular graphs on $[n]:=\{1,\dots,n\}$, and $\mathbb{P}_{G(n,d)}$ denotes the uniform probability measure on~$G(n,d)$. We will always assume $dn$ is even.

\begin{theorem}[Poincar\'{e} inequality for metric spaces] \label{random-poincare}
For every integer $d\geqslant 3$, there exist constants $C_d\geqslant 1$ and $\tau>0$, depending only on the degree $d$, such that for any finite metric space $\mathcal{M}=(M,\varrho)$ with at least three points, setting $N:=|M|$, we have, for every integer $n\geqslant d$,
\begin{equation} \label{random-poincare-intro-e1}
\mathbb{P}_{G(n,d)}\Big[G\colon \gamma(G,\varrho)\leqslant C_d \min\big\{\log n, \, \log\log N\big\} \Big]
\geqslant 1- O_d\Big(\frac{1}{n^{\tau}}\Big),
\end{equation}
where $\gamma(G,\varrho)$ is given by Definition~\ref{def:main}.
\end{theorem}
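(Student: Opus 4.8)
\emph{Proof plan.} The plan is to prove the two bounds $\gamma(G,\varrho)\leqslant C_d\log n$ and $\gamma(G,\varrho)\leqslant C_d\log\log N$ separately, on a single ``good graph'' event of probability $1-O_d(n^{-\tau})$.

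For the first bound I would combine \eqref{intro-e1} with the standard fact that a random $d$-regular graph satisfies $d-\lambda_2(G)=\Omega_d(1)$, and hence $h(G)=\Omega_d(1)$, outside an event of probability $O_d(n^{-\tau})$ --- any quantitative form of Friedman's theorem, or even a crude trace/switching argument giving a non-optimal but uniformly positive gap, suffices. This also fixes $h(G)=\Omega_d(1)$ on the good event, so that the implicit constants supplied by Theorem~\ref{thm-extrapolation} are $O_d(1)$; I will use that theorem throughout to move between the exponents $\varrho^p$ at cost $O_d(1)$, and in particular it suffices, for the second bound, to establish $\gamma(G,\varrho^2)\leqslant C_d\log\log N$ (the exponent $2$ being the one at which expander mixing is cleanest) and then apply \eqref{eq-extra-intro-new-e1}. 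Since $\min\{\log n,\log\log N\}=\log n$ unless $N<e^{\,n}$, I may assume $N<e^{\,n}$ when proving the second bound.

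The core is a recursion built on the structural dichotomy. Given $f\colon V_G\to M$ and a scale $t>0$, call an edge \emph{short} if $\varrho(f(v),f(u))\leqslant t$ and \emph{long} otherwise, and let $\mathcal P_t$ partition $V_G$ into the components spanned by the short edges. For an appropriately chosen $t$, the dichotomy should give one of: \emph{(i)} the long edges carry a constant fraction of the edge-energy $\frac{1}{|E_G|}\sum_{\{v,u\}\in E_G}\varrho(f(v),f(u))^2$, in which case the part of the Poincar\'e quotient coming from scales $\gtrsim t$ is bounded by an \emph{additive} $O_d(1)$ --- here one uses expander mixing for the large classes of $\mathcal P_t$ together with the fact that a random $d$-regular graph has \emph{no small dense subgraphs} (every vertex set $S$ with $|S|=o(n)$ spans at most $|S|+O_d(1)$ edges), the latter being precisely what handles classes too small for expander mixing; or \emph{(ii)} some class $V'\in\mathcal P_t$ contains all but an $\varepsilon$-fraction of $V_G$, and after charging the $O(t)$-scale structure to the edge-energy one may replace $(M,\varrho)$ by a sub-metric-space --- a net at scale $t$, or the corresponding cluster --- of cardinality at most $\sqrt{|M|}$, at the cost of an additive $O_d(1)$ and of restricting $G$ to $G[V']$. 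Iterating, each level costs an additive $O_d(1)$ and either terminates (case (i)) or zooms into a metric space of square-root cardinality on a subgraph that has lost only $\varepsilon n$ vertices (case (ii), where $G[V']$ remains an expander with the no-small-dense-subgraph property). The cardinality collapses doubly-exponentially, so the recursion has depth $O(\log\log N)$; taking $\varepsilon$ of order $1/\log\log N$ (which the dichotomy permits) keeps the total vertex loss $o(n)$, the base cases are closed by expander mixing over $O(1)$ classes, and summing the per-level costs gives $\gamma(G,\varrho^2)\leqslant C_d\log\log N$. Applying \eqref{eq-extra-intro-new-e1} yields $\gamma(G,\varrho)\leqslant O_d(\log\log N)$, and since every probabilistic ingredient concerns $G$ alone (spectral gap, Cheeger constant, sparsity of small subgraphs) the good event has probability $1-O_d(n^{-\tau})$ \emph{uniformly} over all finite metric spaces $\mathcal M$.

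The hard part will be proving the dichotomy in a form strong enough to power this recursion: (a) selecting the scales so that the bad alternative truly passes to a metric space of square-root cardinality --- this is what makes the depth $\log\log N$ and not $\log N$ --- (b) marrying expander mixing to the sparsity of small subgraphs of a random regular graph so that \emph{every} cluster size is handled in the good alternative, and (c) keeping the additive losses and vertex deletions from accumulating across the $\log\log N$ levels. By comparison, the classical $\log n$ bound, the probabilistic estimates on $G$, and the bookkeeping with Theorem~\ref{thm-extrapolation} are routine.
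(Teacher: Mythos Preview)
Your plan cannot work as written, for a reason the paper itself flags. You say that ``every probabilistic ingredient concerns $G$ alone (spectral gap, Cheeger constant, sparsity of small subgraphs)'' and that the good event is \emph{uniform} over all finite metric spaces $\mathcal{M}$. But there is no deterministic version of the $\log\log N$ bound: take any fixed $d$-regular graph $G$ on $[n]$ with the properties you list and set $\mathcal{M}=(G,\dist_G)$, so $N=n$. The identity map gives
\[
\frac{1}{n^2}\sum_{v,u}\dist_G(v,u)=\Theta_d(\log n)\quad\text{while}\quad \frac{1}{|E_G|}\sum_{\{v,u\}\in E_G}\dist_G(v,u)=1,
\]
hence $\gamma(G,\varrho)\gtrsim_d \log n\gg \log\log N$. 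So no collection of graph properties that holds w.h.p.\ can imply $\gamma(G,\varrho)\leqslant C_d\log\log N$ for \emph{all} $N$-point metric spaces; the randomness of $G$ has to interact with $\mathcal{M}$ (equivalently, the high-probability event must be allowed to depend on $\mathcal{M}$). Your recursion, being purely deterministic once $G$ lies in the good event, therefore cannot close.

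There is a second, independent gap: the step ``replace $(M,\varrho)$ by a sub-metric-space of cardinality at most $\sqrt{|M|}$'' is unsupported. In your case (ii) a single short-edge component $V'$ dominates, but $f(V')$ need not land in a set of size $\sqrt{N}$; a $t$-net of an $N$-point space can still have $N$ points, and nothing in the dichotomy forces the image to occupy few points of $M$. Without the square-root drop you get depth $\log N$, not $\log\log N$. For comparison, the paper obtains $\log\log N$ by a completely different mechanism: it fixes $\mathcal{M}$, then runs a \emph{compression} argument on maps $f\colon[n]\to M$ in which each vertex is approximated by a ``seed'' at graph-distance $m\asymp\log\log N$; the proof is a delicate union bound over the values of $f$ on the $\sim n/(d-1)^m$ seeds, powered by a multi-stage revelation of the random graph (delete $\ell$ edges, permute, then add them back). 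The $\log\log N$ arises from the radius $m$, not from any collapse of $|M|$.
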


The proof of this theorem requires developing new methods suitable for the setting of general metric spaces; an extensive discussion is provided in Section \ref{sec2-overview}.

\begin{remark}[Optimality]
The estimate on the Poincar\'e constant in \eqref{random-poincare-intro-e1} is optimal for all values of the parameters $n,N$. This is discussed in Section~\ref{sec-optimality}. We also note that the probabilistic tail estimate in \eqref{random-poincare-intro-e1} is independent of $N$.
\end{remark}

\begin{remark}[Normed space setting, revisited]
Theorem~\ref{random-poincare} recovers the optimal estimate \eqref{intro-e3} for normed spaces when restricted to the setting of \textit{random} bounded-degree graphs, despite being a fully metric result. Indeed, assume that $X$ is a normed space with $\mathrm{dim}(X)\gtrsim \log n$. For a sufficiently large constant $C>0$, let $\mathcal{M}=(M,\|\cdot\|_X)$ be an $n^{-C}$-net in the unit ball of $X$ with cardinality $|M|\leqslant (5\,n^C)^{\mathrm{dim}(X)}$. By \eqref{random-poincare-intro-e1}, with high probability, every map $f\colon [n] \to M$ satisfies \eqref{eq-poincare} with a constant $\gamma=O\big(\min\big\{\log n, \log\log\big(|M|\big)\big\}\big) =O\big(\min\big\{\log n, \log\big(\mathrm{dim}(X)\big)\big\}\big)$. It is then easy to verify, by a standard approximation argument, that \eqref{eq-poincare} also holds for {\it all} maps $f\colon [n]\to X$, implying \eqref{intro-e3}.
\end{remark}

\subsection{Applications to metric embeddings: bi-Lipschitz distortion}

Our first main application is an estimate of the bi-Lipschitz distortion of embedding random graphs into arbitrary target metric spaces, yielding an optimal characterization for a wide range of parameters. Estimates on the spectral gap of metric snowflakes are also given.

\subsubsection{Embeddings of random regular graphs into metric spaces}

Recall that for two metric spaces $\mathcal{M}=(M,\varrho_{\mathcal{M}})$ and $\mathcal{N}=(N,\varrho_{\mathcal{N}})$, the \emph{$($bi-Lipschitz$)$ distortion of $\mathcal{M}$ into $\mathcal{N}$}, denoted by $c_{\mathcal{N}}(\mathcal{M})$, is defined as the smallest constant $D>0$ for which there~exists a map $f\colon M\to N$ and a scaling factor $s>0$ such that $s \varrho_{\mathcal{M}}(i,j) \leqslant \varrho_{\mathcal{N}}\big(f(i),f(j)\big)\leqslant s D \varrho_{\mathcal{M}}(i,j)$ for all $i,j\in M$. Theorem \ref{random-poincare} yields the following estimate of the bi-Lipschitz distortion of random regular graphs into any given finite metric space.

\begin{corollary} \label{intro-cor1}
Let $d\geqslant 3$ be an integer, and let $\mathcal{M}= (M,\varrho)$ be an arbitrary finite metric space with $N:=|M|\geqslant 3$ points. Then, for every integer $n\geqslant d$,
\begin{equation} \label{intro-e4}
\mathbb{P}_{G(n,d)}\bigg[ G\colon c_{\mathcal{M}}(G) \gtrsim_d  \frac{\log n}{\min\{\log n, \log \log N\}}\bigg]
\geqslant 1- O_d\Big(\frac{1}{n^{\tau}}\Big),
\end{equation}
where $c_{\mathcal{M}}(G)$ denotes the bi-Lipschitz distortion of\, $(G,\dist_G)$ into $\mathcal{M}$, and $\tau=\tau(d)>0$ is as in Theorem \ref{random-poincare}.
\end{corollary}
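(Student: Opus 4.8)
The plan is to obtain Corollary~\ref{intro-cor1} as a direct consequence of Theorem~\ref{random-poincare}, via the classical principle that a Poincar\'e inequality on the domain is an obstruction to low-distortion embeddings. Fix a $d$-regular graph $G$ on $[n]$ and suppose that some $f\colon V_G\to M$ witnesses the distortion, i.e.\ there is a scaling $s>0$ with $s\,\dist_G(u,v)\leqslant \varrho\big(f(u),f(v)\big)\leqslant s\,c_{\mathcal{M}}(G)\,\dist_G(u,v)$ for all $u,v\in V_G$ (if no such $f$ exists then $c_{\mathcal{M}}(G)=\infty$ and there is nothing to prove). Applying inequality~\eqref{eq-poincare} with exponent $p=1$ to the map $f$ and the metric $\varrho$, I would bound the two sides as follows. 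On the right-hand side each edge $\{u,v\}\in E_G$ has $\dist_G(u,v)=1$, so $\varrho(f(u),f(v))\leqslant s\,c_{\mathcal{M}}(G)$; hence the right-hand side is at most $s\,c_{\mathcal{M}}(G)\,\gamma(G,\varrho)$. On the left-hand side $\varrho(f(u),f(v))\geqslant s\,\dist_G(u,v)$, so the left-hand side is at least $\frac{s}{|V_G|^2}\sum_{u,v\in V_G}\dist_G(u,v)$.

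The one elementary ingredient is a lower bound on the average distance in $G$, valid for \emph{every} $d$-regular graph. Since a ball of radius $r$ in a $d$-regular graph contains at most $1+d\frac{(d-1)^r-1}{d-2}\leqslant d^{\,r+1}$ vertices, choosing $r\asymp_d \log n$ (concretely $r=\lfloor\tfrac12\log_d n\rfloor$, which for $n$ large enough forces $d^{\,r+1}\leqslant n/2$) shows that for each $v\in V_G$ at least half of the vertices lie at distance more than $r$ from $v$. Summing over $v$ gives $\frac{1}{|V_G|^2}\sum_{u,v\in V_G}\dist_G(u,v)\gtrsim_d \log n$. (For small $n$ the bound claimed in \eqref{intro-e4} is trivial, so there is no loss.) Combining this with the two estimates of the previous paragraph and cancelling $s$ yields $c_{\mathcal{M}}(G)\gtrsim_d \log n/\gamma(G,\varrho)$.

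Finally I would invoke Theorem~\ref{random-poincare}: with $\mathbb{P}_{G(n,d)}$-probability at least $1-O_d(n^{-\tau})$ the graph $G$ satisfies $\gamma(G,\varrho)\leqslant C_d\min\{\log n,\log\log N\}$, and on this event the previous bound gives $c_{\mathcal{M}}(G)\gtrsim_d \log n/\min\{\log n,\log\log N\}$, which is exactly \eqref{intro-e4}. There is essentially no obstacle here beyond Theorem~\ref{random-poincare} itself: the deduction is the standard passage from a metric Poincar\'e inequality to a distortion lower bound, and the only auxiliary fact needed, the logarithmic lower bound on the average distance, follows from the exponential ball-growth bound in bounded-degree graphs.
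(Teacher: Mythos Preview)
Your proposal is correct and follows essentially the same approach as the paper's proof: fix an optimal embedding, sandwich the two sides of the $p=1$ Poincar\'e inequality using the bi-Lipschitz bounds, invoke the $\log n$ lower bound on average graph distance (which the paper records simply as ``a deterministic consequence of $d$-regularity'' and you justify via the ball-growth bound), and then plug in Theorem~\ref{random-poincare}. The only difference is that you spell out the ball-counting argument explicitly, which the paper leaves implicit.
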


The proof of this corollary is a standard argument detailed in Section \ref{sec12}.

\begin{remark}
The lower bound on $c_\cM(G)$ given in \eqref{intro-e4} is optimal for a wide range of the parameters $n,N$, in~particular, for $N\geqslant n^{\log^c n}$ for any constant $c>2$; see Proposition \ref{prop-optimality} for details. Characterizing the optimal distortion for the remaining range of parameters, including $N=n^{O(1)}$, is a compelling open question.
\end{remark}

\begin{remark}[Average distortion]\label{rem:average distortion}
The estimate \eqref{intro-e4} actually holds in the stronger context of lower bounding the optimal average\footnote{The \emph{average distortion} of a function $f\colon V_G \to M$ is given by the ratio between $\sum_{v,u \in V_G} \varrho\big(f(v),f(u)\big)$ and $\sum_{v,u \in V_G} \dist_G(v,u)$.} distortion of $G$ into $\cM$. This is a universal feature of distortion lower bounds that are proven via nonlinear spectral gap estimates.
\end{remark}

\subsubsection{Metric snowflakes}

Next, we consider an application of our extrapolation result to the study of metric snowflakes. The $\eps$-snowflake of a metric $\cM = (M,\varrho)$ is given by $\cM_\eps=(M,\varrho^{1-\eps})$, where $\eps \in (0,1)$. The snowflake of a space often has better embeddability properties than the original space. The canonical example of this is the seminal embedding theorem of Assouad \cite{As83}, which roughly asserts that snowflakes of doubling metric spaces admit low-distortion embeddings into finite dimensional Euclidean space. Following Assouad's theorem, snowflakes have become an important construction in metric geometry, acting as a metric notion of regularization; we refer to the survey \cite{Es22} for a comprehensive discussion of applications.

As snowflakes and nonlinear spectral gaps are both core tools in the theory of metric embeddings, it is natural to study their interplay.
\medskip

\begin{center}
\textit{Given a graph $G$, an arbitrary metric space $\cM = (M,\varrho)$, and $\eps \in (0,1)$,\\ how is the nonlinear spectral gap $\gamma(G,\varrho)$ related to $\gamma(G,\varrho^{1-\eps})$?}
\end{center}
\medskip

As an immediate consequence of Theorem \ref{thm-extrapolation}, we obtain the following answer.

\begin{corollary}[Extrapolation for snowflakes]
Let $\cM = (M,\varrho)$ be a metric space, let $\eps \in (0,1)$, let $d\geqslant 3$ be an integer, and let $G$ be a $d$-regular expander graph. There exist positive constants $c$ and $C$, that depend only on the degree $d$, the Cheeger constant $h(G)$ of\, $G$ and $\eps$, such that
\begin{equation}
c\, \gamma(G,\varrho^{1-\eps}) \leqslant \max\{1,\gamma(G,\varrho)\} \ \ \  \text{ and } \ \ \  \gamma(G,\varrho) \leqslant C\,\gamma(G,\varrho^{1-\eps})^{\frac{1}{1-\eps}}.
\end{equation}
In particular, for any fixed $\eps \in (0,1)$, a bounded-degree expander graph $G$ satisfies a dimension-free metric Poincar\'e inequality with respect to $\cM$ if and only if $G$ satisfies a dimension-free inequality for the $\eps$-snowflake of $\cM$.
\end{corollary}

\subsection{Applications to metric embeddings: universal spaces}

Our second main application is a sharp estimate on the cardinalityof (bi-Lipschitz) universal metric spaces, establishing a purely metric counterpart to Matou\v{s}ek's celebrated ``incompressibility" theorem \cite{Ma96}.

The search for universal structures (spaces containing all substructures from a given class) is a fundamental problem within many branches of mathematics. Classical examples include the Whitney and Nash embedding theorems in differential geometry and the Menger--Nöbeling theorem in topology. In combinatorics, recent breakthroughs of Alon--Nenadov \cite{AN19} and Alon \cite{Al17} identify the minimum cardinality universal graphs which contain all subgraphs of a given size.

In the setting of metric embeddings, the  classical Fr\'{e}chet theorem asserts that every $n$-point metric space embeds isometrically into $\ell_\infty^k$ for $k = n-1$. A fundamental problem originating in the seminal work of Johnson--Lindenstrauss \cite{JL84} asks about the minimum dimension universal space with respect to embeddings with bi-Lipchitz distortion at most $D$. For brevity, such embeddings are said to be \emph{$D$-embeddings}.

Upper bounds were obtained by Johnson--Lindenstrauss--Schechtman \cite{JLS87} and, in a stronger form, by Matou\v{s}ek~\cite{Ma92} who proved that, as long as $1\leqslant D\leqslant \frac{\log n}{\log\log n}$, any $n$-point metric space $D$-embeds into $\ell^k_\infty$ with $k\lesssim n^{C/D}$, where $C>0$ is a universal constant. Matou\v{s}ek's celebrated ``incompressibility" theorem \cite{Ma96,Ma02} yields a matching lower bound, obtained via the following rigidity result for embeddings. For every $n$, there exists an $n$-point metric space~$\mathcal{M}$, constructed as a regular graph of large girth and small diameter, such that if $\mathcal{M}$ admits a $D$-embedding into a normed space $X$, then necessarily $\dim(X)\gtrsim n^{c/D}$, where $c>0$ is a universal constant. An alternative and more robust proof was given by Naor~\cite{Na21}, who used \eqref{intro-e2} to show that spectral expanders have the aforementioned rigidity. We refer the reader to \cite{Na18} for a detailed overview, and to \cite{AT25} for very recent progress in the regime $\Omega(\log n)\leqslant D\leqslant n$.

Our second main result, Theorem \ref{random-poincare}, enables us to obtain an answer to the following metric analogue of the classical line of questioning described above.
\medskip

\begin{center}
\textit{Given an integer $n\geqslant 2$ and a distortion parameter $D\geqslant 1$, what is the least \\ cardinality of a metric space  $\mathcal{M}$ such that every connected graph \\ on $n$ vertices embeds into $\mathcal{M}$ with distortion at most $D$?}
\end{center}
\medskip

We focus on embeddings of graphs partly because of the non-scale invariance of general metric spaces, but mainly because graphs are central in the study of bi-Lipschitz universality and its numerous applications (see, e.g., \cite{Es22}).

An upper bound of the cardinality in question can be obtained by Matou\v{s}ek's argument in \cite{Ma92} that we recall in Appendix \ref{appendix-A} for the reader's convenience: for any $n\geqslant 2$ and any $1\leqslant D\leqslant \frac{\log n}{\log\log n}$, there exists a metric space $\mathcal{M}=(M,\varrho)$ with
\begin{equation} \label{intro-e5}
|M| \lesssim \exp\big( n^{C/D} \big),
\end{equation}
where $C>0$ is a universal constant, so that every connected graph on $n$ vertices admits a $D\text{-embedding}$ into $\mathcal{M}$.

Theorem \ref{random-poincare} implies that in the above range for distortion $D$, the upper bound in \eqref{intro-e5} is optimal, up to universal constants in the power of $n$. Specifically, we obtain the following lower bound on the size of (bi-Lipschitz) universal metric spaces.

\begin{corollary} \label{intro-cor2}
There exists a universal constant $c>0$ with the following property. Let $n\geqslant 2$ be an integer, and let $D\geqslant 1$ be a distortion parameter. If $\mathcal{M}= (M,\varrho)$ is a finite metric space such that every (connected) graph on $n$ vertices embeds into $\mathcal{M}$ with distortion at most $D$, then
\begin{equation} \label{intro-e6}
\exp\big( n^{c/D} \big) \lesssim |M|.
\end{equation}
\end{corollary}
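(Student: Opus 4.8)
The plan is to derive Corollary \ref{intro-cor2} from Theorem \ref{random-poincare} by a counting/union-bound argument combined with the standard fact that nonlinear spectral gaps obstruct low-distortion embeddings. Fix $n$ and $D$, and suppose $\mathcal{M} = (M,\varrho)$ is a finite metric space of cardinality $N := |M|$ into which every connected graph on $n$ vertices $D$-embeds. Fix a degree $d$ (say $d=3$) and consider a random graph $G \sim \mathbb{P}_{G(n,d)}$. On one hand, by hypothesis $G$ admits a $D$-embedding into $\mathcal{M}$; a $D$-embedding of $(G,\dist_G)$ into $\mathcal{M}$ forces $\gamma(G,\varrho) \geqslant D^{-2}\,\gamma(G,\dist_G^{\,\text{restricted to edges}})$-type inequality — more precisely, since $\dist_G(v,u)=1$ for edges and $\dist_G(v,u)\asymp \log n$ for a constant fraction of pairs $v,u$ (as $G$ has logarithmic diameter and, with high probability, an expander), one gets $\gamma(G,\varrho) \gtrsim_d (\log n)/D^2$. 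On the other hand, Theorem \ref{random-poincare} says that with probability $1 - O_d(n^{-\tau})$ we have $\gamma(G,\varrho) \leqslant C_d \min\{\log n,\ \log\log N\}$. For $n$ large enough that $1 - O_d(n^{-\tau}) > 0$, there exists a graph $G$ satisfying both bounds simultaneously, so
\[
\frac{\log n}{D^2} \lesssim_d \log\log N,
\]
which rearranges to $\log\log N \gtrsim_d (\log n)/D^2$, i.e. $N \geqslant \exp(n^{c'/D^2})$ for a constant $c'$ depending only on $d$ — and since $d$ is a fixed absolute constant, $c'$ is universal. This is slightly weaker than \eqref{intro-e6} (it has $D^2$ rather than $D$ in the exponent), so to recover the sharp $n^{c/D}$ one should not use $\gamma(G,\varrho)$ directly but rather apply the distortion lower bound to a \emph{snowflake} or, better, invoke Theorem \ref{random-poincare} together with Remark \ref{rem:average distortion}: the correct route is to observe that a $D$-embedding of $G$ into $\mathcal{M}$ yields a bound on the \emph{average} distortion, and average-distortion lower bounds from Poincaré inequalities at exponent $p$ scale like $D$ rather than $D^2$ once one optimizes over $p$ via the extrapolation Theorem \ref{thm-extrapolation}.

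More concretely, the refined argument runs as follows. Pick a large exponent $p$ (to be chosen $\asymp \log\log N$, say). By Theorem \ref{thm-extrapolation}, applied to the random graph $G$ with its (high-probability) positive Cheeger constant $h(G) \gtrsim_d 1$, together with the $p=2$ spectral bound $\gamma(G,\varrho^2) \lesssim \gamma(G,\varrho)\cdot(\text{something})$ — or more directly by combining Theorem \ref{random-poincare} with \eqref{eq-extra-intro-new-e2} — one controls $\gamma(G,\varrho^p)$ by $\big(C_d\min\{\log n,\log\log N\}\big)^{p}$ up to a $p$-dependent constant. Meanwhile, if $f\colon V_G \to M$ is a $D$-embedding (scaled so the edge distances are $\asymp 1$), then for each edge $\varrho(f(v),f(u))^p \leqslant D^p$, while $\sum_{v,u}\varrho(f(v),f(u))^p \geqslant \sum_{v,u:\,\dist_G(v,u)\geqslant \delta\log n} 1 \cdot (\delta \log n)^p \gtrsim_d n^2 (\delta\log n)^p$, using that a random $d$-regular graph has, with high probability, at least a constant fraction of vertex pairs at distance $\geqslant \delta\log_{d-1} n$. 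Plugging into \eqref{eq-poincare} gives
\[
n^2 (\delta\log n)^p \lesssim_d \gamma(G,\varrho^p)\cdot |E_G| \cdot D^p / |V_G|^2 \cdot |V_G|^2,
\]
so that $(\delta \log n)^p \lesssim_{d,p} \big(C_d\log\log N\big)^{p} D^p$, whence $\log n \lesssim_d \log\log N \cdot D$ after taking $p$-th roots (the $p$-dependent constant contributing only a lower-order factor once $p$ is a constant, or absorbed if $p$ grows slowly). Rearranging yields $\log\log N \gtrsim (\log n)/D$, i.e. $|M| = N \gtrsim \exp(n^{c/D})$ as claimed.

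The main obstacle — and the place requiring genuine care rather than bookkeeping — is twofold. First, one must ensure the $p$-dependent implied constants from the extrapolation Theorem \ref{thm-extrapolation} do not destroy the exponent: the constant in \eqref{eq-extra-intro-new-e2} depends on $p,q$, so when we take $p$-th roots of an inequality of the form $A^p \leqslant C(p) B^p$ we get $A \leqslant C(p)^{1/p} B$, and we need $C(p)^{1/p} = O(1)$ uniformly, or at worst subpolynomial in a way that still leaves $\Omega((\log n)/D)$ on the correct side; checking that the quantitative bounds in Theorem \ref{thm-extrapolation-quantitative} behave this way is the crux. Second, one needs the geometric input that a random $d$-regular graph has a positive fraction of pairs at distance $\gtrsim_d \log n$ — this is standard (it follows from the bounded growth of balls, $|B(v,r)| \leqslant d(d-1)^{r-1}$, forcing most pairs to be far), but it must be invoked with the right high-probability guarantee so that it coexists with the event in \eqref{random-poincare-intro-e1}; a union bound over the two bad events, each of probability $O_d(n^{-\tau'})$, suffices. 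Everything else — the scaling normalization in the definition of distortion, the reduction from worst-case to the specific random graph, and the passage from $\gamma(G,\varrho^p)$ back to a statement about $|M|$ — is routine and is carried out in Section \ref{sec12}.
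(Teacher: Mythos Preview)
Your overall architecture---pick a random $d$-regular graph, combine the Poincar\'e bound from Theorem~\ref{random-poincare} with the distortion lower bound coming from a $D$-embedding, and rearrange---is exactly what the paper does in Section~\ref{sec13}. The paper's proof is only a few lines.

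The genuine gap in your proposal is a miscalculation at the very first step: the direct argument at exponent $p=1$ already gives the sharp $D$ (not $D^2$) dependence, so the entire extrapolation detour is unnecessary. Concretely, if $e\colon [n]\to M$ is a $D$-embedding with $s\,\dist_G(v,u)\leqslant \varrho\big(e(v),e(u)\big)\leqslant sD\,\dist_G(v,u)$, then for edges $\varrho\big(e(v),e(u)\big)\leqslant sD$, while for all pairs $\varrho\big(e(v),e(u)\big)\geqslant s\,\dist_G(v,u)$. Hence
\[
\frac{1}{n^2}\sum_{v,u}\varrho\big(e(v),e(u)\big)\geqslant \frac{s}{n^2}\sum_{v,u}\dist_G(v,u)\gtrsim_d s\log n
\ \ \text{and}\ \
\frac{1}{|E_G|}\sum_{\{v,u\}\in E_G}\varrho\big(e(v),e(u)\big)\leqslant sD,
\]
so $\gamma(G,\varrho)\gtrsim_d (\log n)/D$, \emph{linearly} in $1/D$. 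Combined with $\gamma(G,\varrho)\leqslant C_d\log\log N$ from Theorem~\ref{random-poincare}, this immediately yields $\log\log N\gtrsim (\log n)/D$, i.e.\ $|M|\gtrsim\exp(n^{c/D})$. This is precisely the paper's argument; there is no $D^2$, and no appeal to Theorem~\ref{thm-extrapolation} is needed. (The $D^2$ you are remembering arises when one works at exponent $p=2$; here Theorem~\ref{random-poincare} is stated for $p=1$.)

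A secondary remark: your ``refined'' route through large $p$ is not only unnecessary but also delicate in a way you did not fully resolve. The quantitative constants in Theorem~\ref{thm-extrapolation-quantitative} include the term $C_3=\exp\big(64\cdot 4^q\,\frac{d}{h(G)}\log d\big)$, which is doubly exponential in $q$; taking $q$-th roots does not reduce this to $O(1)$, so letting $q$ grow (as you suggest, $q\asymp\log\log N$) would destroy the bound. One could salvage the argument by keeping $q$ a fixed absolute constant, but then the computation collapses back to the direct $p=1$ case anyway.
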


\begin{remark}[Random regular graphs are the hardest to embed]
From the perspective of trying to find low-distortion embeddings of graphs into \textit{small} metric spaces, \eqref{intro-e5} and (the proof of) Corollary \ref{intro-cor2} together assert that random regular graphs are among the hardest to embed out of all graphs. Namely, for a fixed parameter $1\leqslant D\leqslant \frac{\log n}{\log\log n}$, if a metric space $\cM'=(M',\varrho')$ is such that a random $d$-regular graph on $n$ vertices $D$-embeds into $\cM'$ with high probability, then $\log\log\big(|M'|\big)$ is at least of the same order as $\log\log\big(|M|\big)$, where $\cM = (M,\varrho)$ is the universal metric space given by Matou\v{s}ek's argument.
\end{remark}

\section{Outline of the proofs: key ideas and technical overview} \label{sec2-overview}

In this section, we present the technical challenges and the main new ideas of this article. For brevity,~\eqref{eq-poincare} will be referred to as a ``$p$-Poincar\'e inequality for $f$'' and the right-hand side of \eqref{eq-poincare} will be referred to as the ``$p$-Dirichlet form"\!. We also set
\[  \gamma(G,\varrho^q;f) := \frac{\ave_{\mathcal{M}}(f,q)}{\frac{1}{|E_{G}|} \sum_{\{v,u\} \in E_{G}} \varrho(f(v),f(u))^q }, \]
where
\[ \ave_{\cM}(f,q) := \frac{1}{|V_{G}|^2} \sum_{v,u \in V_{G}} \varrho\big(f(v),f(u)\big)^q. \]
Given $f$, the ``$\varrho$-length'' of an edge $\{v,u\}$ refers to the distance $\varrho\big(f(v),f(u)\big)$. We will always consider a fixed metric space $\cM = (M,\varrho)$ with cardinality $N := |M|$. The cardinality of the vertex set of $G$ will be denoted by $n$. Our notation in the following sketches largely mirrors that of the actual proofs, but minor modifications are made to some definitions for simplicity.

\subsection{A structural dichotomy} \label{subsec2.1}

A core observation enabling both our extrapolation and metric Poincar\'e results is the following structural dichotomy for functions $f\colon [n] \to M$. Such functions will be classified as either \textit{concentrated} or \textit{non-concentrated}. Non-concentrated functions turn out to always satisfy $\gamma(G,\varrho^q;f) = O(1)$, for any $q\geqslant 1$ and any bounded-degree expander graph $G$. On the other hand, concentrated functions exhibit structure that is amenable to analysis.

A function $f\colon [n] \to M$ is said to be $q\text{-\textit{concentrated}}$ if the mean is not much bigger than the median for the random variable
\[ [n]^2 \ni (v,u) \mapsto \varrho\big(f(v),f(u)\big)^q, \]
where the probabilistic quantities are computed with respect to the uniform  distribution of pairs $(v,u)\in [n]^2$. The precise definition is given in Definition \ref{def3.2} in the main text.

Roughly, the image of a $q$-concentrated function resembles a uniform metric space (where all pairwise distances are equal), which lends itself to analysis based on a careful path counting and multiscale partitioning. On the other hand, for functions which are \emph{not} $q$-concentrated, one can extract special pairs of far-apart clusters within the target metric space. Applying the standard Euclidean Poincar\'e inequality to these clusters yields a \emph{dimension-free} metric Poincar\'e inequality.

\subsection{Extrapolation} \label{subsec2.2}

We now sketch the proof of our extrapolation result, Theorem \ref{thm-extrapolation} (the full proof is given in Sections~\ref{sec-empirical}--\ref{sec-non-concentrated}). Recall that our goal is to compare $\gamma(G,\varrho^p)$ and $\gamma(G,\varrho^q)$, where $G$ is a bounded-degree expander graph and $0<p\leqslant q<\infty$. A simple argument enables us to restrict to $p\geqslant 1$.

An upper bound on $\gamma(G,\varrho^q;f)$ in terms of $\gamma(G,\varrho^p;f)$, when $f$ is $q\text{-concentrated}$, is obtained by a relatively elementary combination of H\"{o}lder's and Markov's inequalities with the definition of $\gamma(G,\varrho^p)$. In this setting, we obtain a stronger \textit{function-wise} comparison.

The remaining cases of interest---left extrapolation for concentrated $f$, and two-sided extrapolation for non-concentrated $f$---are considered below.

\subsubsection{Left extrapolation for concentrated $f$} \label{subsubsec2.2.1}

Let $f$ be $p$-concentrated. We seek to upper bound $\gamma(G,\varrho^p;f)$ in terms of $\gamma(G,\varrho^q)$, where $1\leqslant p \leqslant q$. For shorthand, set $\Gamma := C \,\gamma(G,\varrho^q)$, where $C$ is a sufficiently large positive constant.

Consider first the case when, for some small constant $c_0 > 0$,
\[ \Gamma \cdot \frac{1}{|E_G|} \sum_{\substack{\{v,u\}\in E_G \\
\varrho(f(v),f(u))\leqslant c_{0}\,\med(f)}}
\varrho\big(f(v),f(u)\big)^q \geqslant \med(f)^q. \]
That is, the contribution of edges with small $\varrho$-length in the $q$-Dirichlet form for $f$ is significant. In this case, by the assumed $p$-concentration of $f$,
\begin{align*}
\frac{\Gamma}{|E_G|} \sum_{\{v,u\}\in E_G }
\frac{\varrho\big(f(v),f(u)\big)^p}{\ave_{\cM}(f,p)}
& \gtrsim_{c_0,p} \frac{\Gamma}{|E_G|}
\sum_{\substack{\{v,u\}\in E_G \\ \varrho(f(v),f(u))\leqslant c_{0}\, \med(f)}}
\bigg(\frac{\varrho\big(f(v),f(u)\big)}{c_0\,\med(f)}\bigg)^p \\
& \gtrsim_{c_0,p,q} \frac{\Gamma}{|E_G|} \sum_{\substack{\{v,u\}\in E_G \\
\varrho(f(v),f(u))\leqslant c_{0}\, \med(f)}}
\bigg(\frac{\varrho\big(f(v),f(u)\big)}{c_0\,\med(f)}\bigg)^q \gtrsim_{c_0, q} 1,
\end{align*}
where the last inequality utilizes the definition of $\gamma(G,\varrho^q)$. The desired $p$-Poincar\'e inequality for $f$ follows.

Next, consider the case that the dominant contribution to the $p$-Dirichlet form for $f$ comes from edges of ``exceptional'' $\varrho$-length. For every $\xi>0$, define $\mathcal{E}_\xi \subseteq E_G$ by
\begin{equation*}
\mathcal{E}_\xi := \big\{ \{v,u\}\in E_G \colon \varrho\big(f(v),f(u)\big) \geqslant  \xi \cdot \ave_{\mathcal{M}}(f,p)^{1/p} \big\}.
\end{equation*}
If $|\mathcal{E}_\xi| \geqslant \Gamma\,\xi^{-p}\,|E_G|$ for some $\xi > 0$, then the desired $p$-Poincar\'e inequality for $f$ also follows.

The main technical claim---which is enough to complete the proof of left extrapolation---is that the two extreme cases considered above are exhaustive: there is no intermediate regime. Arguing by contradiction, the idea is to construct an auxiliary function $g$ that agrees with $f$ on a large set and violates the assumed $q$-Poincar\'e inequality\footnote{Note that since the assumed $q$-Poincar\'e inequality is applied to an auxiliary function $g$---different, possibly, than $f$---we do not obtain a function-wise comparison principle like in the case of right extrapolation mentioned earlier.}. The details of the construction of $g$ are given in Lemma \ref{lemma-new} in the main text.

\subsubsection{Non-concentrated $f$} \label{subsubsec2.2.2}

We now sketch the estimate $\gamma(G,\varrho^q;f) = O(1)$ for any $q\geqslant 1$, any non-concentrated $f$, and any bounded-degree expander graph $G$. A quantitative formulation is given in Proposition \ref{prop:non-concentrated}. We emphasize again that this is not a comparison inequality; it is a stronger, unconditional result.

The argument begins by using the definition of $\med(f)$ and the pigeonhole principle, to select some vertex $\tilde v$ so that $|B| \geqslant n/2$, where
\[ B := \big\{u \in [n] \colon \varrho\big(f(u),f(\tilde v)\big) \leqslant 2\,\med(f) \big\}. \]
Set $A:=B^\complement$. Since $f$ is not $q$-concentrated, we see that $\frac{1}{n^2}\sum_{v,u\in B} \varrho\big(f(v),f(u)\big)^q$ is much smaller than $\ave_{\cM}(f,q)$. Moreover, for any $u,v \in [n]$, we have $\varrho\big(f(v),f(u)\big) \leqslant \varrho\big(f(v),f(\tilde v)\big) + \varrho\big(f(\tilde{v}),f(u)\big)$. Thus, in total,
\[ \frac{1}{n}\sum_{v\in A} \varrho\big(f(v),f(\tilde v)\big)^q =\Theta\big(\ave_{\cM}(f,q)\big). \]
It remains to compare the $q$-Dirichlet form for $f$ with the left-hand side of the above equation. Towards this goal, define the dyadic decomposition of $A$ into increasingly exceptional sets of vertices,
\[  A_k:= \Big\{v\in [n] \colon  2^{k-1}\,\med(f) < \varrho\big(f(v),f(\tilde v)\big) \leqslant 2^k \,\med(f)\Big\}, \]
and notice that
\[ \ave_{\cM}(f,q) \lesssim \frac{1}{n} \sum_{v \in A} \varrho\big(f(v),f(\tilde v)\big)^q \leqslant
\frac{1}{n^2} \sum_{k = 1}^\infty 2^{kq}\, \med(f)^q \,|A_k|. \]
Thus, it suffices to show that for each $k \geqslant 1$, there is some $m_k \geqslant 1$ with
\begin{equation}\label{eq:overview nonconc}
\Big|\Big\{ \{v,u\} \in E_G\colon  \varrho\big(f(v),f(u)\big) \geqslant \frac{2^k\,\med(f)}{2^{m_k}}\Big\}\Big| ~\gtrsim ~3^{m_k\,q}\,|A_k|;
\end{equation}
that is, there are edges whose contribution to the $q$-Dirichlet form for $f$ ``compensates'' for $A_k$'s contribution to $\ave_{\cM}(f,q)$. There can either be a few long compensating edges or many short ones;, and the parameter $m_k$ quantifies this trade-off. The existence of such an $m_k$---which ultimately relies on standard scalar expansion via Cheeger's inequality---is established in Lemma \ref{l:2.4}.

\subsection{Metric Poincar\'e inequalities} \label{subsec2.3}

In this subsection, we give an overview of the proof of our second main result, Theorem \ref{random-poincare}. Let $\boldsymbol{G}$ be a uniformly random $d$-regular graph on $n$ vertices, let $\cM = (M,\varrho)$ be a metric space with cardinality $N := |M|\geqslant 3$, and recall that we need to show that, with high-probability, $\gamma(\boldsymbol{G},\varrho) \leqslant C\,\min\{\log\log N, \log n\}$, where the constant $C$ depends only on the degree $d$. The secondary $\log n$ bound follows from Bourgain's embedding theorem \cite{Bou85}. Thus, we can assume, without loss of generality, that $\log\log N = o(\log n)$, i.e., $\log(N) \leqslant n^{o(1)}$. In this \textit{moderate cardinality} regime, we must prove that, with high-probability, $\gamma(\boldsymbol{G},\varrho) \leqslant C\log\log N$. Note that, in view of the outlined comparison results---more precisely, Proposition \ref{prop:non-concentrated}---it suffices to show that, with high-probability, $\gamma(\boldsymbol{G},\varrho;f) \leqslant C\log\log N$ for functions $f\colon [n] \to M$ that are $1$-concentrated.

First, let us discuss the difficulties that one encounters when attempts to apply a trivial union bound argument over all potential concentrated maps. Fix a $1$-concentrated map $f$, and consider randomly generating the edges of $\boldsymbol{G}$ one at a time. Since $f$ is $1$-concentrated, each edge $\{v,u\}$ of $\boldsymbol{G}$ will have a constant probability of satisfying $\varrho\big(f(v),f(u)\big) \gtrsim \ave_\cM(f,1)$. By standard concentration estimates,
\[  \mathbb{P}\big[ \gamma(\boldsymbol{G},\varrho;f) = O(1)\big] \geqslant 1 - \exp\big(-\Theta(n)\big). \]
The central obstruction is that the number of $1$-concentrated functions is, for some choices of $\cM$, of order $\Theta(N)^n$, preventing a union bound over all $f$. This is a fundamental information-theoretic barrier: there are only order $\exp(c_d\,n\log n)$ $d$-regular graphs on $n$ vertices, which is much smaller than the total number of mappings when $N=n^{\omega(1)}$.

We introduce a {\it compression method} aimed at  sidestepping this obstacle. The main idea is to restrict ourselves to accessing only a small amount of information about $f$, leading to a much more efficient union bound. The commensurate price is that, for most pairs of vertices $u$ and $v$, we are only able to estimate $\varrho\big(f(v),f(u)\big)$ up to some substantial approximation error, leading to the $\log\log N$ growth of the nonlinear spectral gap. As already noted in the introduction---and further detailed in Section \ref{sec-optimality}---this growth is necessary.

\subsubsection{Information about $f$} \label{subsubsec2.3.1}

Let us describe the information about $f$ that we will allow ourselves to ``query"\!. Our goal is to record data about $f$ with low complexity---i.e., the number of total possible realizations of the data is much smaller than $\Theta(N)^n$---thereby allowing us to apply certain union bound arguments.
\medskip

\noindent (1) \textit{Average}: we will query $\ave_{\cM}(f,1)$. The number of possible values of this average is bounded above by $(n+N)^N$, which is acceptable if $N \ll \sqrt{n}$. For the regime of $N\gg \sqrt{n}$, substantial new ideas will be needed; they will be discussed later on in this overview.
\medskip

\noindent (2) \textit{Values of $f$ on the ``seeds"}: instead of recording values of $f(v)$ for all $v \in [n]$, we will only query values on a particular $k$-element subset of vertices, that we call \textit{seeds}. Note that this has complexity $N^k$; for example, if $N$ is polynomial in $n$, the latter has order $e^{\Theta(k \log n)}$.

\subsubsection{Approximation scheme} \label{subsubssec3.3.2}

Let $m\geqslant 1$ be a parameter, and consider a $1$-concentrated function $f$ that satisfies
\begin{equation}\label{eq:intro contra}
\gamma(\boldsymbol{G},\varrho;f) = \omega(m).
\end{equation}
Let us investigate the effect of \eqref{eq:intro contra} on typical paths of length $m$. By $d$-regularity, each edge $e \in E_{\boldsymbol{G}}$ is contained in at most order $m(d-1)^{m-1}$ paths of length $m$. On the other hand, letting $\mathcal{P}_m$ denote the set of paths of length $m$, by the locally tree-like geometry of $\boldsymbol{G}$, we have $|\mathcal{P}_m|\gtrsim |E_{\boldsymbol{G}}|\, (d-1)^{m-1}$. An elementary double counting argument yields that
\[  \frac{m}{|E_{\boldsymbol{G}}|} \sum_{\{v,u\} \in E_{\boldsymbol{G}}} \varrho\big(f(v),f(u)\big) \gtrsim \frac{1}{|\mathcal{P}_m|} \sum_{P \in \mathcal{P}_m} \sum_{\{v,u\} \in P} \varrho\big(f(v),f(u)\big). \]
Thus, by Markov's inequality, \eqref{eq:intro contra} implies that typical paths $P \in \mathcal{P}_m$ have $\varrho$-length $o\big(\ave_{\cM}(f,1)\big)$.

We shall develop an approximation scheme that proceeds by attempting to associate to each vertex $v$ another vertex $h(v)$ that belongs to a $k$-element set of vertices, the aforementioned ``seeds". This assignment is done so that $\dist_{\boldsymbol{G}}\big(v,h(v)\big) = m$, where the radius $m$ satisfies $m \asymp \log \log N$. We aim to approximate distances between pairs of vertices with the following properties.
\begin{itemize}
\item[(P1)] The seed $h(v)$ exists.
\item[(P2)] There is a unique path $P_{v,h(v)}$ in $\boldsymbol{G}$ of length $m$ from $v$ to $h(v)$.
\item[(P3)] The path $P_{v,h(v)}$ has typical length, i.e.,
\[ \sum_{\{v',u'\} \in P_{v,g(v)}} \varrho\big(f(v'),f(u')\big) = o\big(\ave_{\cM}(f,1)\big). \]
\item[(P4)] The seed $h(v)$ is not overused: we have $\big|h^{-1}\big(h(v)\big)\big| \leqslant \frac{(d-1)^m}{m}$.
\end{itemize}
Let us assume\footnote{This assumption is not far from what we can actually prove.}, for the sake of exposition, that, with probability $1 - e^{-\omega(k\log n)}$, a constant fraction of vertices satisfy (P1)--(P4). We also observe that, since $f$ is $1$-concentrated, with high probability, for most pairs $\{s,s'\}$ of seeds,
\[ \varrho\big(f(s),f(s')\big) = \Omega\big(\ave_{\cM}(f,1)\big). \]

Our subsequent goal is to show that since most vertices satisfy (P1)--(P4), a constant fraction of the edges in $E_{\boldsymbol{G}}$ should have vertices as endpoints that satisfy (P1)--(P4). However, \textit{the core technical issue} in our argument is that the seeds, the edge-set of $\boldsymbol{G}$, and the location of these good vertices are coupled. However, imagine, again for the sake of exposition, that we could argue that most edges have as endpoints vertices that satisfy (P1)--(P4). Then, for most $\{v,u\} \in E_{\boldsymbol{G}}$, we have
\[ \varrho\big(f(v),f(u)\big) \geqslant \varrho\big(f(h(v)),f(h(u))\big) - \varrho\big(f(v),f(h(v))\big) - \varrho\big(f(u),f(h(u))\big) = \Omega\big(\ave_{\cM}(f,1)\big). \]
This implies $\gamma(\boldsymbol{G},\varrho;f) = O(1)$, contradicting \eqref{eq:intro contra}. In summary, if all of the above assertions held with probability at least $1 - e^{-\omega(k \log n)}$, then the probability of \eqref{eq:intro contra} holding is at most $e^{-\omega(k\log n)}$, which is sufficient to beat the union bound and establish the desired Poincar\'e inequality.

\subsubsection{Multistage construction of the random graph} \label{subsubsec2.3.3}

The above sketch contains a number of serious issues, mostly centered around the fact that the seeds,  the edge-set of $\boldsymbol{G}$ and the good vertices are coupled. This motivates a multistage construction of $\boldsymbol{G}$, in which a significant portion of randomness is reserved for further technical analysis. The construction of $\boldsymbol{G}$ proceeds as follows. (See Section \ref{sec-models} for the precise definitions.)
\begin{enumerate}
\item[$\bullet$] Fix, once and for all, a representative of each isomorphism class of $d$-regular graphs on~$[n]$. Denote by $\boldsymbol{U}$ the random variable that assigns to $\boldsymbol{G}$ its representative.
\item[$\bullet$] Let $\boldsymbol{G}_{-\ell}$ denote the random graph obtained by deleting exactly $\ell$ many edges of $\boldsymbol{G}$ uniformly at random. Denote by $\boldsymbol{U}_{-\ell}$ the random graph obtained by applying the same procedure~to~$\boldsymbol{U}$.
\item[$\bullet$] Apply a uniform random permutation $\boldsymbol{\pi}$, independent of all other random variables, to label the vertices of $\boldsymbol{U}$. This process results in a uniformly random $d$-regular graph, which we denote by $\boldsymbol{H}$. We also denote by $\boldsymbol{H}_{-\ell}$ the random variable obtained by applying $\boldsymbol{\pi}$ to $\boldsymbol{U}_{-\ell}$.
\item[$\bullet$] The resulting pair  $(\boldsymbol{H},\boldsymbol{H}_{-\ell})$ is equal in distribution to the pair $(\boldsymbol{G},\boldsymbol{G}_{-\ell})$; see Lemma \ref{rm-l1}.
\end{enumerate}

\subsubsection{Main steps of the proof in the regime $N \ll \sqrt{n}$} \label{subsubsec2.3.4}

With this construction in hand, let us return to the analysis of $\gamma(\boldsymbol{G},\varrho;f)$, where $f$ is a fixed $1\text{-concentrated}$ function. Roughly, we will apply the described approximation scheme to $\boldsymbol{U}_{-\ell}$, and then judiciously use the randomness of $\boldsymbol{\pi}$ and $E_{\boldsymbol{U}}\setminus E_{\boldsymbol{U}_{-\ell}}$ to find many pairs of vertices $\{v,u\}$ that are ``far apart" in a sense that will be described below. When selecting the parameter $\ell$, there is a tradeoff: the larger $\ell$ is, the more randomness we have in reserve to power our concentration estimates. However, if $\ell$ is too large, then $\boldsymbol{U}_{-\ell}$ behaves quite differently from a $d$-regular graph, invalidating our analysis in the previous sketch.

We briefly summarize some of the key steps. Say that an event holds with \textit{overwhelming probability} if it holds with conditional probability $1-o(1)$, conditioned on the isomorphism class of \emph{any} ``locally tree-like'' realization of the isomorphism class $\boldsymbol{U}$. (The precise meaning of ``locally tree-like" is given in Proposition~\ref{prop-tree}.)
\medskip

\noindent \emph{Step 1: approximation}. The seeds will be the vertices $[k]$. The assigned seed $g_{-\ell}(v) \in [k]$ of a vertex $v$ is a $\boldsymbol{H}_{-\ell}\text{-measurable}$ random variable, obtained by taking the smallest (in the natural order) seed at distance exactly $m$ from $v$ in the graph $\boldsymbol{H}_{-\ell}$. The assigned seed $g(v)$ of a vertex $v$ in $\boldsymbol{H}$ is defined analogously.
\medskip

\noindent \emph{Step 2: typical vertices}. Assume that
\eqref{eq:intro contra} holds true. With overwhelming probability, there exists a random set $\mathcal{O}$ of vertices---that we shall refer to as \emph{typical}---of size proportional to $\ell$, with the following properties.
\begin{enumerate}
\item[(i)] The degree of a vertex $v\in\mathcal{O}$ in $\boldsymbol{H}_{-\ell}$ is exactly $d-1$.
\item[(ii)] For all vertices $v\in\mathcal{O}$, the seed exists and $g_{-\ell}(v) = g(v)$.
\item[(iii)] The set of deleted edges $E_{\boldsymbol{H}}\setminus E_{\boldsymbol{H}_{-\ell}}$ contains a perfect matching of $\mathcal{O}$.
\item[(iv)] For most pairs $\{v,u\}$ of vertices of $\mathcal{O}$, we have $\varrho\big(f(g(v)),f(g(u))\big) = \Theta\big(\ave_{\cM}(f,1)\big)$. On the other hand,
we have $\varrho\big(f(g(v)),f(g(u))\big) = o\big(\ave_{\cM}(f,1)\big)$ for most of the deleted edges $\{v,u\}\in E_{\boldsymbol{H}}\setminus E_{\boldsymbol{H}_{-\ell}}$.
\end{enumerate}
The existence of typical vertices is (essentially) achieved in Proposition \ref{prop-seeds-and-conc-final}; the proof is technically demanding and occupies the entire Section \ref{sec-prop-5.2-new}.
\medskip

\noindent \emph{Step 3: derandomization}. There is a subtle challenge when dealing with the measurability of the set $\mathcal{O}$ of typical vertices: it is \emph{not} $\boldsymbol{H}_{-\ell}$-measurable. This problem is fixed by derandomizing some aspects of $\mathcal{O}$ and $f$. As $\mathcal{O}$ is a subset of the $\boldsymbol{H}_{-\ell}\text{-measurable}$ set (of size at most $\ell$) of vertices satisfying (i) above, there are at most $2^\ell$ possibilities for $\mathcal{O}$. Thus, we can afford to condition on the relative position of $\mathcal{O}$ within the $\boldsymbol{H}_{-\ell}$-measurable set of vertices satisfying (i). We can also afford to condition on the values of $f$ on the seeds.
\medskip

\noindent \textit{Step 4: concluding.} The derandomization argument eventually allows us to condition on just $\boldsymbol{H}_{-\ell}$ while still having access to $\mathcal{O}$, with $|\mathcal{O}|$ proportional to~$\ell$. Moreover, we also obtain a large $\boldsymbol{H}_{-\ell}\text{-measurable}$ set of pairs $\mathcal{Q} \subseteq \mathcal{O} \times \mathcal{O}$ such that the set of deleted edges $E_{\boldsymbol{H}}\setminus E_{\boldsymbol{H}_{-\ell}}$ contains a perfect matching of $\mathcal{O}$, and yet most deleted edges do not belong to $\mathcal{Q}$. This (and recalling that $(\boldsymbol{G},\boldsymbol{G}_{-\ell})$ and $(\boldsymbol{H},\boldsymbol{H}_{-\ell})$ are equal in distribution) yields strong upper bounds on the probability that \eqref{eq:intro contra} holds for a $1$-concentrated $f$.
\medskip

In total, the following choices of parameters suffice:
\[ m  \asymp \log\log N, \ \ \ \ell  \asymp \frac{n}{m}, \ \ \ \text{  and } \ \ \  k\asymp \frac{n}{(d-1)^m} . \]
The multistage generation of $\boldsymbol{G}$ allows us to rigorously implement the previously sketched compression argument, thus completing Theorem \ref{random-poincare} within the regime $N = o(\sqrt{n})$.

\begin{remark}[Local properties]
The most technical aspect of our proof is the collection of overwhelming probability bounds on typical vertices. In this direction, the main observation is that most of these properties are \textit{local} and, consequently, if $v$ and $u$ are far apart, then the events $\mathbbm{1}_{[v \text{ is typical}]}$ and $\mathbbm{1}_{[u \text{ is typical}]}$ are independent. This fact, together with the locally tree-like behavior of random regular graphs, allows us to apply standard Chernoff bounds and prove the existence of typical vertices with the desired probability.
\end{remark}

\subsubsection{Well-conditioned and arbitrary metric spaces} \label{subsubsec2.3.5}

Towards dropping the assumption $N = o(\sqrt{n})$ on the cardinality of $\cM$, the critical part of the above argument that does not immediately transfer to the much broader regime of $n \geqslant \log(N)^{\omega(1)}$ turns out to be the union bound over $\ave_{\cM}(f,1)$. Towards replacing this union bound, we say a metric space is \textit{well-conditioned} if it has a sub-exponential aspect ratio, that is,
\[  \frac{\diam(\cM)}{\min\{ \varrho(x,y)\colon x,y \in M \text{ and } x\neq y\}} \leqslant e^{N}. \]
Then, the idea is to union bound over $r$, where $\ave_\cM(f,1) \in [2^r,\, 2^{r+1})\, \mathrm{diam}(\cM)$. As there are only $O(N+\log n)$ scales, a union bound now succeeds. Finally, the case that $N$ is extremely large, i.e., $N \geqslant e^{n^{\Omega(1)}}$ is handled easily using existing techniques.

So, it remains only to lift the restriction of being well-conditioned when $N = e^{n^{o(1)}}$. We will show that for an arbitrary metric space $\cM = (M,\varrho)$, one can always construct an auxiliary \text{well-conditioned} metric space $\cM' = (M',\varrho')$, with roughly the same cardinality, such that $\gamma(G,\varrho') \geqslant \frac{1}{2}\,\gamma(G,\varrho)$ for any connected graph $G$ on $n$ vertices. The idea is to create $N^2$ disjoint ``truncated" copies of $\cM$, indexed by the pairwise distances $\{\tau_1,\tau_2,\dots,\tau_{N^2}\}$ of elements in $\cM$, with each copy corresponding to a different ``scale'' $\tau$. Then, given a non-constant map $f\colon [n] \to M$, we define the relevant scale as
\[  \tau=\tau(f) := \max\Big\{ \varrho\big(f(v),f(u)\big)\colon v,u \in [n] \Big\}>0.  \]
Let $G$ be any connected graph on $n$ vertices; there is a path of length at most $n$ connecting the pair of vertices which saturate $\tau$, and thus $G$ contains some edge with $\varrho$-length of at least~$\tau/n$. So, the Dirichlet form is not affected to leading order if we were to truncate all distances less than $o(\tau/n^2)$, as there are order $n$ edges in the average defining the Dirichlet form. Similarly, the statistic $\ave_{\cM}(f,1)$ is not affected to leading order if we truncate distances less than $o(\tau/n^2)$, as there is at least one pair of vertices at distance $\tau$ and order $n^2$ total pairs. These observations are enough to show that $\gamma(G,\varrho') \geqslant \frac{1}{2}\,\gamma(G,\varrho)$. The details are given in Proposition \ref{prop-aspect-ratio} in the main text.

\section{Preliminaries} \label{sec3}

\subsection{Graphs} \label{sec3-graphs}

All graphs in this paper are finite and simple. For any graph $G$, by $V_G$ we denote its vertex set, and by $E_G$ we denote the set of its edges; moreover, for any subset $S$ of $V_G$, by $G[S]$ we denote the induced on $S$ subgraph of $G$.

\subsubsection{Graph distance}

If $G$ is a graph, then by $\dist_G(\cdot,\cdot)$ the shortest-path distance on~$G$ between vertices; by convention, we set $\dist_G(v,u):=|V_G|$ if $v,u\in V_G$ are contained in different connected components of $G$.

If $v\in V_G$ and $S\subseteq V_G$ is nonempty, then define the shorthand
\[ \dist_G(v,S):= \min\big\{ \dist_G(v,u)\colon u\in S\big\}. \]
Moreover, for every integer $\ell\geqslant 0$, define
\begin{enumerate}
\item[$\bullet$] $B_G(S,\ell):=\big\{v\in V_G\colon \dist_G(v,S)\leqslant \ell\big\}$, and
\item[$\bullet$] $\partial B_G(S,\ell):=\big\{v\in V_G\colon \dist_G(v,S)=\ell\big\}$.
\end{enumerate}
(In particular, we have $B_G(S,0)=\partial B_G(S,0)=S$.) Finally, define the \emph{diameter} $\diam(G)$ of $G$ to be the quantity $\max\{\dist_G(v,u):v,u\in V_G\}$; namely, $\diam(G)$ is the diameter of the metric space~$(G,\dist_G)$.

\subsubsection{Regular graphs}

For any pair $n\geqslant d\geqslant 3$ of positive integers, $G(n,d)$ denotes the set of all $d$-regular graphs on $[n]:=\{1,\dots,n\}$, and $\mathbb{P}_{G(n,d)}$ denotes the uniform probability measure on $G(n,d)$. \emph{We will always assume that $dn$ is even.}

\subsection{Metric spaces}

If $\mathcal{M}=(M,\varrho)$ is a metric space, then for any nonempty subset $A$ of $M$,~set
\[ \mathrm{diam}_{\varrho}(A):= \max\big\{\varrho(x,y)\colon x,y\in A\big\};\]
notice that if $A=M$, then $\mathrm{diam}_{\varrho}(M)$ is just the diameter of the metric space $\mathcal{M}$, and we shall denote it simply by $\mathrm{\diam}(\mathcal{M})$.

\subsection{Embedding finite metric spaces into $\ell_1$}

Let $\mathcal{M}=(M,\varrho)$ be a metric space and let $c_{1}(\mathcal{M})$ denote the (bi-Lipschitz) distortion of $\mathcal{M}$ into $\ell_1$. Since $\ell_1$ is a vector space, the quantity $c_{1}(\mathcal{M})$ can also be defined as the smallest constant $D>0$ for which there exists a map $e\colon M\to \ell_1$ such that $\varrho(i,j)\leqslant \|e(i)-e(j)\|_{\ell_1}\leqslant D \varrho(i,j)$ for all $i,j\in M$. We will need the following classical embedding theorem due to Bourgain \cite{Bou85}.

\begin{theorem} \label{thm-bourgain}
There exists a universal constant $c_{\rm B}>0$ such that for any $n$-point metric space~$\mathcal{M}$,
\begin{equation} \label{eq-bourgain}
c_1(\mathcal{M}) \leqslant c_{\rm B} \, \log n.
\end{equation}
\end{theorem}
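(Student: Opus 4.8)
The plan is to prove \eqref{eq-bourgain} by Bourgain's random-subset embedding. Fix an $n$-point metric space $\mathcal{M}=(M,\varrho)$, put $L:=\lceil\log_2 n\rceil$, let $C$ be a large absolute constant to be chosen at the end, and set $N:=C\lceil\log n\rceil$. For each scale $t\in\{1,\dots,L\}$ and each repetition $j\in\{1,\dots,N\}$, let $A_{t,j}\subseteq M$ be a random subset that contains each point of $M$ independently with probability $2^{-t}$, all of these sets being mutually independent. Define $F\colon M\to\R^{LN}$ by letting the $(t,j)$-coordinate of $F(x)$ be $\varrho(x,A_{t,j}):=\min_{a\in A_{t,j}}\varrho(x,a)$ (with the convention $\varrho(x,\emptyset):=\diam(\mathcal{M})$), and view $F$ as a map into $\ell_1^{LN}$, which embeds isometrically into $\ell_1$. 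The upper bound is deterministic: for every realization of the sets each coordinate is $1$-Lipschitz, since $|\varrho(x,A_{t,j})-\varrho(y,A_{t,j})|\leqslant\varrho(x,y)$ by the triangle inequality, so $\|F(x)-F(y)\|_1\leqslant LN\,\varrho(x,y)$ for all $x,y\in M$.

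The heart of the argument is the lower bound. Fix $x\neq y$ and write $\Delta:=\varrho(x,y)$. Define radii $0=\rho_0<\rho_1\leqslant\rho_2\leqslant\cdots$ by taking $\rho_t$ to be the least radius $r$ for which both closed balls $\bar B(x,r)$ and $\bar B(y,r)$ contain at least $2^t$ points; let $\hat t$ be the first index with $\rho_{\hat t}\geqslant\Delta/4$, truncate by redefining $\rho_{\hat t}:=\Delta/4$, and discard later scales. Since $|M|=n$ we have $\hat t\leqslant L$, and for $t\leqslant\hat t$ the balls $\bar B(x,\rho_t)$ and $\bar B(y,\rho_t)$ are disjoint (a common point would force $\varrho(x,y)\leqslant 2\rho_t<\Delta$). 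The key claim is that there is an absolute constant $c_0>0$ such that for every $t\leqslant\hat t$ and every $j$,
\[
\Prob\big[\,|\varrho(x,A_{t,j})-\varrho(y,A_{t,j})|\geqslant\rho_t-\rho_{t-1}\,\big]\geqslant c_0 .
\]
Indeed, by minimality of $\rho_t$, one of $x,y$ — say $y$ after possibly swapping names — satisfies $|B^{\circ}(y,\rho_t)|<2^t$, while $\rho_{t-1}$, being untruncated since $t-1<\hat t$, satisfies $|\bar B(x,\rho_{t-1})|\geqslant 2^{t-1}$. The sets $\bar B(x,\rho_{t-1})$ and $B^{\circ}(y,\rho_t)$ are disjoint, so the events ``$A_{t,j}$ meets $\bar B(x,\rho_{t-1})$'' and ``$A_{t,j}$ avoids $B^{\circ}(y,\rho_t)$'' are independent, the first having probability at least $1-(1-2^{-t})^{2^{t-1}}\geqslant 1-e^{-1/2}$ and the second at least $(1-2^{-t})^{2^t-1}\geqslant 1/4$; on their intersection $\varrho(x,A_{t,j})\leqslant\rho_{t-1}$ and $\varrho(y,A_{t,j})\geqslant\rho_t$, which gives the claim with $c_0:=(1-e^{-1/2})/4$.

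Given the claim, for a fixed pair $x,y$ and each scale $t\leqslant\hat t$ let $N_t$ count the repetitions $j$ for which the good event occurs; then $\Exp N_t\geqslant c_0N$, and a Chernoff bound yields $\Prob[N_t<c_0N/2]\leqslant e^{-c_0N/8}$. Taking a union bound over the at most $L$ scales and the $\binom n2$ pairs and choosing $C$ large enough, with probability at least $1/2$ we have, simultaneously for all $x,y$ and all $t\leqslant\hat t(x,y)$, at least $c_0N/2$ repetitions each contributing $\rho_t-\rho_{t-1}$ to $\|F(x)-F(y)\|_1$ at scale $t$; hence
\[
\|F(x)-F(y)\|_1\;\geqslant\;\frac{c_0N}{2}\sum_{t=1}^{\hat t}(\rho_t-\rho_{t-1})\;=\;\frac{c_0N}{2}\cdot\frac{\Delta}{4}\;=\;\frac{c_0N}{8}\,\varrho(x,y).
\]
Combined with the deterministic upper bound $\|F(x)-F(y)\|_1\leqslant LN\,\varrho(x,y)$, this shows $F$ has bi-Lipschitz distortion at most $8L/c_0=O(\log n)$, proving \eqref{eq-bourgain}.

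The main obstacle is the lower bound, and within it the bookkeeping around the radii $\rho_t$: one must check that minimality of $\rho_t$ genuinely produces one of $x,y$ whose open ball of radius $\rho_t$ has fewer than $2^t$ points (so a random set with density $2^{-t}$ avoids it with constant probability) while the other point's closed ball of radius $\rho_{t-1}$ has at least $2^{t-1}$ points (so the random set meets it with constant probability), and that the relevant balls are disjoint so the two events are independent; the truncation at radius $\Delta/4$ must be handled so that the telescoping sum equals $\Delta/4$ and the disjointness is preserved at scale $\hat t$. Passing from this per-scale, per-pair estimate to a single embedding that works for all pairs is then routine, via Chernoff concentration plus a union bound, once the number of repetitions is taken to be a sufficiently large constant multiple of $\log n$.
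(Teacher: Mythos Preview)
Your proof is correct and is essentially Bourgain's original argument via random subsets at geometric scales, with the standard bookkeeping for the radii $\rho_t$ and the Chernoff/union-bound amplification. The paper does not prove Theorem~\ref{thm-bourgain}; it is quoted in the preliminaries as a classical result with a citation to \cite{Bou85}, so there is no in-paper proof to compare against.
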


\subsection{The Cheeger constant and spectral properties of regular graphs} \label{sec3-Cheeger}

Let $d\geqslant 3$ be an integer, and let $G$ be a $d$-regular graph. The \emph{Cheeger constant} of $G$ is defined by
\begin{equation} \label{eq-cheeger}
h(G):= \min_{\substack{\emptyset \neq S\subseteq V_G\\ |S|\leqslant \frac{|V_G|}{2}}}
\frac{\big|\big\{ \{v,u\}\in E_G:\; v\in S \text{ and } u\in S^{\complement}\big\}\big|}{|S|}.
\end{equation}
The Cheeger constant of $G$ has the following well--known relation to the spectral properties of the adjacency matrix of $G$. Denoting by $\lambda_n(G)\leqslant \cdots \leqslant \lambda_2(G) \leqslant \lambda_1(G)$ the eigenvalues of the adjacency matrix $A_G$ of $G$, it holds (see, e.g., \cite[Theorem 4.1]{HLW06}),
\begin{equation} \label{e-cheeger-e1}
\frac{d-\lambda_2(G)}{2} \leqslant h(G) \leqslant \sqrt{2d\big(d-\lambda_2(G)\big)}.
\end{equation}
The following consequence of Friedman's second eigenvalue theorem \cite{Fr08} will also be used.
\begin{corollary} \label{Friedman}
Let $n\geqslant d\geqslant 3$ be integers. There exists $\tau_1=\tau_1(d)>0$ such that
\begin{equation} \label{friedman-e1}
\mathbb{P}_{G(n,d)}\Big[ \lambda_2(G) \leqslant 2.1 \sqrt{d-1} \Big] \geqslant 1- O_d\Big( \frac{1}{n^{\tau_1}}\Big);
\end{equation}
consequently, by \eqref{e-cheeger-e1},
\begin{equation} \label{friedman-e2}
\mathbb{P}_{G(n,d)}\big[ h(G) \geqslant 0.005d \big] \geqslant 1- O_d\Big( \frac{1}{n^{\tau_1}}\Big).
\end{equation}
\end{corollary}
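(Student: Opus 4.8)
The plan is to derive \eqref{friedman-e1} directly from a \emph{quantitative} version of Friedman's second eigenvalue theorem \cite{Fr08}, and then to obtain \eqref{friedman-e2} from \eqref{friedman-e1} by invoking the first inequality in \eqref{e-cheeger-e1} together with a short numerical check.

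For \eqref{friedman-e1}, the point is that Friedman's theorem supplies, for every fixed $\epsilon>0$, not merely the asymptotic assertion $\mathbb{P}_{G(n,d)}\big[\lambda_2(G)\leqslant 2\sqrt{d-1}+\epsilon\big]\to 1$ but a polynomial rate of convergence: the failure probability is $O_{d,\epsilon}\big(n^{-\tau}\big)$ for some exponent $\tau=\tau(d,\epsilon)>0$ depending only on $d$ and $\epsilon$. For the uniform model $G(n,d)$ this can be seen by passing through the configuration model, which coincides with $G(n,d)$ after conditioning on the (constant-probability, for fixed $d$) event that the resulting multigraph is simple, so that any polynomial tail bound transfers with the same exponent; alternatively one may quote the later reproofs of Friedman's theorem (e.g.\ via the non-backtracking operator) that carry explicit exponents. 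Applying this with $\epsilon:=0.1\sqrt{d-1}$ and using $2\sqrt{d-1}+0.1\sqrt{d-1}=2.1\sqrt{d-1}$ gives \eqref{friedman-e1} with $\tau_1:=\tau\big(d,0.1\sqrt{d-1}\big)$, which depends only on~$d$.

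For \eqref{friedman-e2}, I would restrict to the event $\{\lambda_2(G)\leqslant 2.1\sqrt{d-1}\}$, which has probability at least $1-O_d(n^{-\tau_1})$ by the previous step. On this event the first inequality in \eqref{e-cheeger-e1} yields
\[
h(G)\;\geqslant\;\frac{d-\lambda_2(G)}{2}\;\geqslant\;\frac{d-2.1\sqrt{d-1}}{2},
\]
so it suffices to check the scalar inequality $\frac{d-2.1\sqrt{d-1}}{2}\geqslant 0.005\,d$ for every integer $d\geqslant 3$, equivalently $0.99\,d\geqslant 2.1\sqrt{d-1}$. This is routine: the tightest case is $d=3$, where the left side equals $2.97$ and the right side equals $2.1\sqrt2=2.9698\ldots$, and for larger $d$ the gap only widens since the left side grows linearly in $d$ while the right side grows like $\sqrt d$ (the constants $2.1$ and $0.005$ in the statement are precisely calibrated to the case $d=3$). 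Combining the two displayed inequalities then gives $h(G)\geqslant 0.005\,d$ on the same event, which is \eqref{friedman-e2}.

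The only genuinely delicate point is the first step: the bare asymptotic form of Friedman's theorem supplies no rate on its own, so the argument hinges on invoking a version equipped with a polynomial-in-$n$ tail bound and on confirming that such a bound holds for the uniform model $G(n,d)$ (via the configuration-model comparison above). Everything after that---the choice of $\epsilon$, the use of Cheeger's inequality, and the numerical verification---is entirely routine.
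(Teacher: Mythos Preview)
Your proposal is correct and matches the paper's approach: the paper does not give a proof but simply states the corollary as a direct consequence of Friedman's theorem \cite{Fr08} for \eqref{friedman-e1} and of the Cheeger inequality \eqref{e-cheeger-e1} for \eqref{friedman-e2}, which is exactly the route you spell out in detail, including the (tight) numerical verification at $d=3$.
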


\subsection{Locally tree-like behavior of random regular graphs}

We utilize a quantitative form of the well-known fact that random regular graphs are locally tree-like. For every integer $d\geqslant 3$, every $G\in G(n,d)$, and every nonnegative integer $m\leqslant \frac{1}{25} \log_{d-1} n$, set
\begin{equation} \label{eq-tree}
\mathcal{T}(G,m):=\big\{ v\in [n]\colon \text{the induced subgraph of $G$ on $B_G(v,3m)$ is a tree}\big\}.
\end{equation}
The following proposition follows from the results in \cite{MWW04} (see, e.g., \cite[Appendix~A.1]{Hu19} for details).

\begin{proposition} \label{prop-tree}
For every pair $n\geqslant d\geqslant 3$ of integers and every $m\in \big\{ 0,\dots,\frac{1}{25} \log_{d-1} n\big\}$,
\begin{equation} \label{eq2.3}
\mathbb{P}_{G(n,d)} \Big[ G\colon |\mathcal{T}(G,m)|\geqslant n-\sqrt{n}\Big]\geqslant 1-O_d\Big(\frac{1}{\sqrt{n}}\Big).
\end{equation}
\end{proposition}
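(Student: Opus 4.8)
The plan is to pass to the configuration (pairing) model, establish a first-moment bound on the number of vertices whose $3m$-ball fails to be a tree, and then upgrade this to the claimed high-probability estimate using the refined short-cycle estimates of \cite{MWW04}. First I would recall that a uniformly random element of $G(n,d)$ is distributed as the pairing-model multigraph conditioned on being simple, an event of probability bounded below by a constant $c_d>0$; hence it suffices to work in the pairing model and show that the number $X$ of \emph{bad} vertices---those $v$ for which the subgraph induced on $B_G(v,3m)$ is not a tree---exceeds $\sqrt n$ with probability $O_d(1/\sqrt n)$, after which dividing by $c_d$ transfers the bound to $\mathbb{P}_{G(n,d)}$.

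For the first-moment step I would fix $v$ and reveal the pairing by a breadth-first exploration from $v$. So long as the revealed portion is a tree, reaching depth $3m$ touches at most $C_d(d-1)^{3m}$ half-edges (throughout, $C_d$ denotes a constant depending only on $d$). The ball $B_G(v,3m)$ fails to be a tree precisely when one of these at most $C_d(d-1)^{3m}$ pairings joins two already-revealed half-edges; since $(d-1)^{3m}\leqslant n^{3/25}\ll n$, at every step at least $dn/2$ half-edges remain unpaired, so each such pairing does so with probability at most $C_d(d-1)^{3m}/n$. A union bound gives $\mathbb{P}[v\text{ bad}]\leqslant C_d(d-1)^{6m}/n$, whence, using $m\leqslant\frac{1}{25}\log_{d-1}n$,
\[
\mathbb{E}[X]\leqslant C_d\,(d-1)^{6m}\leqslant C_d\,n^{6/25}.
\]
Already Markov's inequality gives $\mathbb{P}[X>\sqrt n]\leqslant C_d\,n^{6/25-1/2}=C_d\,n^{-13/50}$, which proves the qualitative statement; the sharper rate $O_d(1/\sqrt n)$ needs the concentration input described next.

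For the concentration step I would observe that if $B_G(v,3m)$ is not a tree then the exploration produces a non-tree edge between two vertices at distance at most $3m$ from $v$, hence a cycle of length at most $6m+1$ contained in $B_G(v,3m)$; thus every bad vertex lies within distance $3m$ of some cycle of length $\leqslant 6m+1$. As the $3m$-neighbourhood of one such cycle has at most $(6m+1)(d-1)^{3m+2}\leqslant C_d\,m\,(d-1)^{3m}$ vertices, one obtains the deterministic bound $X\leqslant C_d\,m\,(d-1)^{3m}\,Y$, where $Y$ is the number of cycles of length at most $6m+1$ in $G$. Elementary first-moment estimates give $\mathbb{E}[Y]\leqslant C_d\,(d-1)^{6m}\leqslant C_d\,n^{6/25}$, while the factorial-moment control of short-cycle counts from \cite{MWW04}---valid here since $6m+1\leqslant\log_{d-1}n$---shows that $Y$ has Poisson-type upper tails, so $\mathbb{P}[Y>t]$ decays faster than any power of $n$ once $t\gg\mathbb{E}[Y]$. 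Since
\[
\frac{\sqrt n}{C_d\,m\,(d-1)^{3m}}\;\geqslant\;\frac{n^{19/50}}{C_d\,m}\;\gg\;n^{6/25}\;\geqslant\;\mathbb{E}[Y],
\]
it follows that $\mathbb{P}[X>\sqrt n]\leqslant\mathbb{P}\!\left[Y>\frac{\sqrt n}{C_d\,m\,(d-1)^{3m}}\right]=o(1/\sqrt n)$, completing the argument.

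The routine parts are the reduction to the pairing model and the exploration bound in the first-moment step. The hard part---the step that genuinely needs external input---is the concentration of the short-cycle count $Y$: a bare union/Markov argument yields only a polynomially small failure probability, whereas the Poisson-type concentration furnished by \cite{MWW04} is what produces the $O_d(1/\sqrt n)$ rate, in fact with substantial room to spare. The hypothesis $m\leqslant\frac{1}{25}\log_{d-1}n$ enters only to keep the exploration local ($(d-1)^{3m}\ll n$) and to ensure that the typical size $\Theta_d\big((d-1)^{6m}\big)\leqslant n^{6/25}$ of the bad set remains comfortably below the threshold $\sqrt n$.
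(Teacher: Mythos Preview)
The paper does not supply its own proof; it simply records that the proposition ``follows from the results in \cite{MWW04} (see, e.g., \cite[Appendix~A.1]{Hu19} for details).'' Your sketch---passing to the pairing model, linking bad vertices to short cycles of length at most $6m+1$, and then invoking the short-cycle distributional control of \cite{MWW04}---is exactly the route the paper has in mind, and the first-moment and deterministic cycle-neighbourhood bounds are correct as stated.

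One point to tighten in the concentration step: your assertion that $\mathbb{P}[Y>t]$ decays ``faster than any power of $n$'' once $t\gg\mathbb{E}[Y]$ is too strong. The Poisson approximation for the vector of cycle counts $(C_3,\dots,C_{6m+1})$ furnished by \cite{MWW04} carries a total-variation error that is only polynomially small in $n$, and it is this error---not the (genuinely super-polynomially small) Poisson tail---that ultimately controls $\mathbb{P}[Y>t]$. What you actually need is that this TV error is $O_d(1/\sqrt{n})$ in the range $6m+1\leqslant \tfrac{6}{25}\log_{d-1}n+1$; the standard quantitative estimates (of order roughly $(d-1)^{12m}/n\leqslant n^{-13/25}$, up to logarithmic factors) do give this with room to spare, since $13/25>1/2$. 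So the conclusion stands, but you should quote the explicit rate from \cite{MWW04} rather than appeal to a super-polynomial tail that the approximation does not deliver.
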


\subsection{Chernoff bounds}

We recall the multiplicative formulation of Chernoff's classical deviation inequality for sums of independent Bernoulli random variables (see, e.g., \cite{AS16}).

\begin{lemma} \label{chernoff}
Let $X_1,\dots,X_n$ be independent Bernoulli random variables, and set $X:=X_1+\dots+X_n$ and $\mu:=\mathbb{E}[X]$. Then,
\begin{align}
\label{chernoff-e1} \mathbb{P}\big[X \geqslant (1+\delta)\mu\big] \leqslant \exp\Big(-\frac{\delta^2\mu}{2+\delta}\Big),
\ \ \ \ \ & \text{for all } \delta>0, \\
\label{chernoff-e2} \mathbb{P}\big[X \leqslant (1-\delta)\mu\big] \leqslant \exp\Big(-\frac{\delta^2\mu}{2}\Big),
\ \ \ \ \ \ \ & \text{for all } 0<\delta\leqslant 1.
\end{align}
\end{lemma}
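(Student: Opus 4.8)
The plan is to prove both tail bounds by the classical exponential moment (Bernstein--Chernoff) method, i.e., by applying Markov's inequality to $e^{tX}$ for a suitably chosen $t>0$. Write $p_i:=\mathbb{E}[X_i]$, so that $\mu=\sum_{i=1}^n p_i$. First I would record the moment generating function bound: for any real $t$, independence of the $X_i$ gives $\mathbb{E}[e^{tX}]=\prod_{i=1}^n\big(1+p_i(e^t-1)\big)$, and using the elementary inequality $1+x\leqslant e^x$ termwise yields $\mathbb{E}[e^{tX}]\leqslant \exp\!\big(\mu(e^t-1)\big)$ for every $t\in\mathbb{R}$.

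For the upper tail \eqref{chernoff-e1}, Markov's inequality applied to $e^{tX}$ gives, for every $t>0$,
\[ \mathbb{P}\big[X\geqslant (1+\delta)\mu\big]\leqslant e^{-t(1+\delta)\mu}\,\mathbb{E}[e^{tX}]\leqslant \exp\!\Big(\mu\big(e^t-1-(1+\delta)t\big)\Big). \]
Optimising the exponent over $t>0$ leads to the choice $t=\ln(1+\delta)>0$, and substituting gives $\mathbb{P}\big[X\geqslant (1+\delta)\mu\big]\leqslant \big(e^{\delta}(1+\delta)^{-(1+\delta)}\big)^{\mu}$; the bound \eqref{chernoff-e1} then follows from the scalar inequality $(1+\delta)\ln(1+\delta)-\delta\geqslant \tfrac{\delta^2}{2+\delta}$, valid for all $\delta>0$. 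For the lower tail \eqref{chernoff-e2}, the same argument applied to $e^{-tX}$ gives, for every $t>0$,
\[ \mathbb{P}\big[X\leqslant (1-\delta)\mu\big]\leqslant e^{t(1-\delta)\mu}\,\mathbb{E}[e^{-tX}]\leqslant \exp\!\Big(\mu\big(e^{-t}-1+(1-\delta)t\big)\Big), \]
and for $0<\delta<1$ the optimal choice $t=-\ln(1-\delta)>0$ yields $\mathbb{P}\big[X\leqslant (1-\delta)\mu\big]\leqslant \big(e^{-\delta}(1-\delta)^{-(1-\delta)}\big)^{\mu}$, after which one invokes the scalar inequality $(1-\delta)\ln(1-\delta)+\delta\geqslant\tfrac{\delta^2}{2}$ for $0\leqslant\delta<1$. (The degenerate case $\delta=1$, if one wants it, follows either by letting $\delta\uparrow 1$ or directly from $\mathbb{P}[X\leqslant 0]=\prod_{i=1}^n(1-p_i)\leqslant e^{-\mu}\leqslant e^{-\mu/2}$.)

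The only part of this that is not purely mechanical is the verification of the two elementary convexity inequalities $(1+\delta)\ln(1+\delta)-\delta\geqslant\tfrac{\delta^2}{2+\delta}$ and $(1-\delta)\ln(1-\delta)+\delta\geqslant\tfrac{\delta^2}{2}$; each is a one-variable calculus exercise (e.g., show the difference of the two sides vanishes at $\delta=0$ and has nonnegative derivative, or compare power series), so I expect this to be the main — and only — obstacle, and it is a very mild one. Since this is a standard textbook fact, one may alternatively simply cite \cite{AS16}.
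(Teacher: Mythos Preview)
Your proposal is correct and is exactly the standard exponential-moment argument one finds in \cite{AS16}, which is precisely what the paper cites without giving its own proof. There is nothing to compare: the paper treats this as a known fact and provides no argument beyond the reference.
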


\subsection{Asymptotic notation} \label{sec3-asymptotic}

If $a_1,\dots,a_k$ are parameters, then we write $O_{a_1,\dots,a_k}(X)$ to denote a quantity that is bounded in magnitude by $X C_{a_1,\dots,a_k}$, where $C_{a_1,\dots,a_k}$ is a positive constant that depends on the parameters $a_1,\dots,a_k$; we also write $Y\lesssim_{a_1,\dots,a_k}\!X$ or $X\gtrsim_{a_1,\dots,a_k}\!Y$ for the estimate $|Y|=O_{a_1,\dots,a_k}(X)$. Finally, we write $Y=\Theta_{a_1,\dots,a_k}(X)$ if $Y=O_{a_1,\dots,a_k}(X)$ and $X=O_{a_1,\dots,a_k}(Y)$.


\part{Matou\v{s}ek's extrapolation for metric spaces} \label{part1}

\section{Empirical statistics and concentrated functions} \label{sec-empirical}

We start by defining the statistics of an embedding.

\begin{definition}[Empirical statistics, and empirical averages] \label{def3.1}
Let $\mathcal{M}=(M,\varrho)$ be a metric space, let $S$ be a nonempty finite set, and let $f\colon S\to M$. Also let $0<\tau<1$. By $Q_\tau(f):= Q_\tau(\mathcal{M},f)$ we denote the \emph{$\tau$-th empirical quantile} of $\varrho\big(f(v),f(u)\big)$, where $v,u$ are independent uniformly random elements of~$S$; that~is,
\begin{equation} \label{extra-e1}
Q_\tau(f):= \inf\Big\{ t>0\colon
\big|\big\{(v,u)\in S\times S \colon \varrho\big(f(v), f(u)\big) \leqslant t \big\}\big| \geqslant \tau |S|^2 \Big\}.
\end{equation}
Moreover, for every $q\geqslant 1$, by $\ave_{\mathcal{M}}(f,q)$ we denote the \emph{empirical average} of $\varrho\big(f(v),f(u)\big)^q$ defined by setting
\begin{equation} \label{extra-e2}
\ave_{\mathcal{M}}(f,q) := \frac{1}{|S|^2} \sum_{v,u \in S} \varrho\big(f(v),f(u)\big)^q.
\end{equation}
\end{definition}

The following definition will be used in the central dichotomy that enables our extrapolation results, as well as the proof of Theorem \ref{random-poincare}; see Section \ref{sec2-overview} for a high-level overview.

\begin{definition}[Concentrated functions] \label{def3.2}
Let $\mathcal{M}=(M,\varrho)$ be a metric space, let $S$ be a nonempty finite set, and let $f\colon S\to M$. Also let $K>0$, $q\geqslant 1$ and $0<\tau<1$. We say that the function $f$ is \emph{$(K,q,\tau)$-concentrated} if
\begin{equation} \label{extre-e3}
\ave_{\mathcal{M}}(f,q) \leqslant K\cdot Q_\tau(f)^q,
\end{equation}
where $\ave_{\mathcal{M}}(f,q)$ is as in \eqref{extra-e2}, and $Q_\tau(f)$ is as in \eqref{extra-e1}.
\end{definition}

Part of the interest in this notion stems from the fact that non-concentrated functions satisfy a nonlinear Poincar\'{e} inequality with a universal constant \emph{independent} of the target metric space $\mathcal{M}$. This is the content of the following proposition.

\begin{proposition}[Poincar\'{e} inequality for non-concentrated functions] \label{prop:non-concentrated}
Let $n\geqslant d\geqslant3$ be integers, and let $G\in G(n,d)$ with Cheeger constant $h(G)>0$. Let $\mathcal{M}=(M,\varrho)$ be a metric space, and let $f\colon [n]\to M$. Also let $q\geqslant1$, $C_R\geqslant 5^q$, and $\tau\in(1/n,1)$, and assume that the function $f$ is not $(C_R,q,\tau)$-concentrated. Set
\begin{equation}\label{eq:2.01}
\ell := \bigg\lceil\frac{\max\big\{\log_2\frac{1}{2\tau },0\big\}}{\log_2\big(1+\frac{h(G)}{d}\big)} \bigg\rceil +
\bigg\lceil\frac{1}{\log_2\big(1+\frac{h(G)}{2^{2q+4}d}\big)} \bigg\rceil.
\end{equation}
Then,
\begin{equation} \label{eq:2.02}
\frac{1}{n^2}\sum_{v,u\in [n]} \varrho\big( f(v), f(u) \big)^q \leqslant
\big(30 \cdot 16^q d^{\ell+1} \ell^{q+1}\big) \cdot \frac{1}{|E_G|}\sum_{\{v,u\}\in E_G} \varrho\big( f(v), f(u) \big)^q.
\end{equation}
\end{proposition}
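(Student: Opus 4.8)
The plan is to exploit the failure of $(C_R,q,\tau)$-concentration to locate, around a well-chosen ``center'' vertex, a dyadic family of far-away vertex clusters whose collective contribution to $\ave_{\cM}(f,q)$ is a constant fraction of the whole, and then to charge each cluster's contribution to the $q$-Dirichlet form via scalar (Cheeger-type) vertex expansion. First I would fix a center: since the diameter statistic $Q_\tau(f)$ is at most the median (for $\tau\le 1/2$), a pigeonhole/Markov argument produces some $\tilde v\in[n]$ with $|B|\ge \tau n$ (or $\ge n/2$), where $B:=\{u: \varrho(f(u),f(\tilde v))\le 2 Q_\tau(f)\}$; write $A:=B^\complement$. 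Because $f$ is \emph{not} $(C_R,q,\tau)$-concentrated, the within-$B$ contribution $\frac{1}{n^2}\sum_{v,u\in B}\varrho(f(v),f(u))^q$ is at most roughly $4^q Q_\tau(f)^q \le (4^q/C_R)\,\ave_{\cM}(f,q)\le \ave_{\cM}(f,q)/5^{?}$, hence negligible; combining this with the triangle inequality $\varrho(f(v),f(u))\le \varrho(f(v),f(\tilde v))+\varrho(f(\tilde v),f(u))$ gives $\frac{1}{n}\sum_{v\in A}\varrho(f(v),f(\tilde v))^q = \Theta(\ave_{\cM}(f,q))$. (This is exactly where $C_R\ge 5^q$ is used.)

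Next I would set up the dyadic decomposition $A_k:=\{v: 2^{k-1}Q_\tau(f) < \varrho(f(v),f(\tilde v)) \le 2^k Q_\tau(f)\}$ for $k\ge 1$ (noting $A=\bigcup_{k\ge 1}A_k$ since every $v\in A$ has $\varrho(f(v),f(\tilde v))>2Q_\tau(f)$), so that
\[
\ave_{\cM}(f,q) \lesssim \frac{1}{n}\sum_{v\in A}\varrho(f(v),f(\tilde v))^q \le \frac{Q_\tau(f)^q}{n}\sum_{k\ge1} 2^{kq}\,|A_k|.
\]
The heart of the matter is then a per-scale estimate: for each $k$ with $A_k\neq\emptyset$, I claim there is an integer $m_k\ge 1$ with
\[
\Big|\Big\{\{v,u\}\in E_G: \varrho(f(v),f(u)) \ge 2^{-m_k}\cdot 2^k Q_\tau(f)\Big\}\Big| \gtrsim 3^{m_k q}\,|A_k|,
\]
which, after multiplying through by $(2^{-m_k}2^k Q_\tau(f))^q$ and summing in $k$, bounds $\sum_{k} 2^{kq}Q_\tau(f)^q|A_k|$ by a constant multiple of $\sum_{\{v,u\}\in E_G}\varrho(f(v),f(u))^q$ (the geometric series $\sum_k (2^q/3^q)^{?}$ or more precisely the fact that the ``cost'' $2^{-m_k q}3^{m_k q}=(3/2)^{m_k q}$ is summable against a decaying tail converges, and each scale's edge set is counted with bounded multiplicity). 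This last per-scale claim is the content that should be isolated as Lemma \ref{l:2.4} in the main text.

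To prove the per-scale claim I would argue as follows. Fix $k$ and look at the vertex set $A_{\ge k-1}:=\{v:\varrho(f(v),f(\tilde v))>2^{k-2}Q_\tau(f)\}$, say; vertices in $A_k$ are ``deep'' (distance $>2^{k-1}Q_\tau(f)$ from $f(\tilde v)$) while $B$ is ``shallow'' and $|B|\ge\tau n$. Consider the level sets of $\varrho(f(\cdot),f(\tilde v))$ at the dyadic-within-dyadic scales $2^{k-1}Q_\tau(f)\cdot(1-j/M)$ or, more cleanly, run the standard Cheeger expansion argument: starting from $A_k$ and repeatedly taking the vertex boundary in $G$, after $\ell$ steps one reaches a set of size $\ge \min\{(1+h(G)/d)^{\ell}|A_k|,\, n/2\}$ (this is the discrete isoperimetric iteration; $h(G)/d$ is the relative vertex-expansion rate implied by the edge-Cheeger constant together with $d$-regularity). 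Choosing $\ell$ as in \eqref{eq:2.01} guarantees this blow-up of $A_k$ either reaches half the vertices—forcing it to intersect $B$, hence producing a short path (length $\le \ell$) in $G$ from some $v\in A_k$ to some $u\in B$—or, more efficiently, forces \emph{many} vertices to have grown out of $A_k$ within $\ell$ steps. Along any such length-$\le\ell$ path from a deep vertex to a shallow one, the $\varrho$-values of $f$ along consecutive vertices must drop from $>2^{k-1}Q_\tau(f)$ to $\le 2Q_\tau(f)$, so by pigeonhole one of the (at most $\ell$) edges on the path has $\varrho$-length $\gtrsim 2^{k}Q_\tau(f)/\ell$; quantifying ``how many'' such paths / edges we get, versus ``how long'' each needs to be, is exactly the trade-off encoded by $m_k$ (few long compensating edges vs.\ many short ones), and optimizing over this trade-off—with the bookkeeping that each edge of $G$ lies on at most $\ell\, d^{\ell}$ such short paths (since $G$ is $d$-regular)—yields the stated $3^{m_k q}|A_k|$ lower bound. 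The factors $16^q$, $d^{\ell+1}$, $\ell^{q+1}$ in \eqref{eq:2.02} come precisely from: the $2Q_\tau$ vs.\ $Q_\tau$ slack and the two-sided triangle inequality ($16^q$-type constants), the path-multiplicity bound ($d^{\ell}$), and the $1/\ell$ loss per path together with the number of scales in the $m_k$ optimization ($\ell^{q+1}$).

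\textbf{Main obstacle.} The routine parts—the center selection, the $C_R\ge 5^q$ bookkeeping, and assembling the geometric series in $k$—are straightforward. The genuine difficulty is the per-scale claim (Lemma \ref{l:2.4}): making the expansion-then-pigeonhole argument produce enough edges at a \emph{single} well-chosen scale $2^{-m_k}2^kQ_\tau(f)$, rather than a messy multi-scale collection, and doing so with the ``few long vs.\ many short'' optimization over $m_k$ handled cleanly so that the resulting constant is of the claimed shape $O(16^q d^{\ell+1}\ell^{q+1})$. In particular one must be careful that the edges harvested at different scales $k$ can be double-counted only $O(1)$ (or $O(\ell)$) times, so that summing the per-scale bounds does not blow up the constant; I expect this is handled by noting that an edge of $\varrho$-length $\asymp t$ is only ever charged to scales $k$ with $2^kQ_\tau(f)$ within a bounded factor of $t\cdot(\text{small power})$, or by absorbing the overlap into the $\ell^{q+1}$ factor.
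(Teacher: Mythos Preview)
Your overall architecture matches the paper's proof exactly: center selection via $Q_\tau$, the $C_R\ge 5^q$ calculation showing $\sum_{v\in A}\varrho(f(v),f(\tilde v))^q\gtrsim \ave_{\cM}(f,q)$, the dyadic decomposition $(A_k)$, the per-scale ``compensating edges'' lemma (this is precisely Lemma~\ref{l:2.4}), and the final assembly by summing in $k$ with a geometric-series control. Your instinct about the double-counting in the assembly is also right: the paper handles it by replacing the particular $m_k$ by a free index $m$ and summing $\sum_{m\ge0}(2/3)^m\sum_k(\cdot)$, which decouples the variables; for fixed $m$ the inner sum over $k$ is then just a dyadic layer-cake bounded by $2\sum_{\{v,u\}\in E_G}\varrho(f(v),f(u))^q$.

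Where your sketch is genuinely incomplete is the mechanism producing $m_k$ in Lemma~\ref{l:2.4}. A single $\ell$-step expansion of $A_k$ does \emph{not} suffice: if $|A_k|$ is tiny (say $|A_k|=1$), then $(1+h(G)/d)^\ell|A_k|$ with the given $\ell$ is only $\Theta_q(1)$, nowhere near $n/2$, and ``many vertices have grown out of $A_k$'' yields nothing by itself. The paper's fix is to \emph{iterate} the $\ell$-expansion while carrying a descending $\varrho$-threshold: set $A_{k,0}:=A_k$ and, for $m\ge1$, let $A_{k,m}$ be the vertices in $B_G(A_{k,m-1},\ell)$ with $\varrho(f(\cdot),f(\tilde v))$ still above roughly $(1-\tfrac14(1-2^{-m}))\cdot 2^kQ_\tau(f)$. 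The dichotomy is then: either $|A_{k,m}|\ge 3^q|A_{k,m-1}|$ (the ``high'' part persists), or at least $3^q|A_{k,m-1}|\ge 3^{mq}|A_k|$ vertices in $B_G(A_{k,m-1},\ell)$ have \emph{dropped} by $\gtrsim 2^{k-m}Q_\tau(f)$, and each such drop occurs along a path of length $\le\ell$, forcing an edge of $\varrho$-length $\ge 2^kQ_\tau(f)/(2^m\cdot 8\ell)$; the $\ell d^\ell$ path-multiplicity bound converts this to the claimed edge count. Once $|A_{k,m_k}|$ exceeds a fixed fraction of $n$ it meets the pre-expanded center set $B_G(N_{\tilde v},\ell)$ (this is where Lemma~\ref{l:2.3} and the two-part definition of $\ell$ in \eqref{eq:2.01} are used), and the same path argument terminates the iteration. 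So $m_k$ is not an ``optimized'' parameter but the first iteration at which either branch of the dichotomy fires; this is the one idea your proposal is missing.
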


Proposition \ref{prop:non-concentrated}, whose proof is deferred to Section \ref{sec-non-concentrated}, is the bulk of the proof of Theorem \ref{thm-extrapolation}. Before we proceed to the proof of Theorem \ref{thm-extrapolation}, let us present the following one-sided functional-wise version whose proof is also based on Proposition \ref{prop:non-concentrated}.

\begin{corollary}[One-sided functional-wise extrapolation] \label{cor-extrapolation}
Let $d\geqslant 3$ be an integer, let $G$ be a $d\text{-regular}$ graph with Cheeger constant $h(G)>0$, and let $\mathcal{M}=(M,\varrho)$ be an arbitrary metric space. Also let $1\leqslant p \leqslant q <\infty$, let $C>0$, and set
\begin{equation} \label{extra-intro-e1}
\Gamma=\Gamma\big(d,h(G),p,q,C\big):=\max\bigg\{ \exp\Big( 64\cdot 4^q  \frac{d}{h(G)}  \log d\Big), \, 5^q \, 2^{\frac{q}{p}} \, C^{\frac{q}{p}}\bigg\}.
\end{equation}
If $f\colon V_G\to M$ satisfies
\begin{equation} \label{extra-intro-e2}
\frac{1}{|V_G|^2} \sum_{v,u\in V_G} \varrho\big(f(v),f(u)\big)^p \leqslant C \cdot \frac{1}{|E_G|}
\sum_{\{v,u\}\in E_G} \varrho\big(f(v),f(u)\big)^p,
\end{equation}
then
\begin{equation} \label{extra-intro-e3}
\frac{1}{|V_G|^2} \sum_{v,u\in V_G} \varrho\big(f(v),f(u)\big)^q \leqslant \Gamma \cdot \frac{1}{|E_G|}
\sum_{\{v,u\}\in E_G} \varrho\big(f(v),f(u)\big)^q.
\end{equation}
\end{corollary}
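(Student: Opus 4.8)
\textbf{Proof proposal for Corollary \ref{cor-extrapolation}.}

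The plan is to split on whether $f$ is $(C_R,q,\tau)$-concentrated, for a suitable choice of the threshold $C_R$ and the quantile level $\tau$. I would take $\tau$ to be some fixed constant in $(0,1)$ (say $\tau = 1/2$, or whatever makes the arithmetic in \eqref{eq:2.01} cleanest), and $C_R := 5^q$, so that Proposition \ref{prop:non-concentrated} is applicable in the non-concentrated case. In that case, $\ell$ defined by \eqref{eq:2.01} is $O\big(\frac{d}{h(G)}\big)$ since $\log_2(1+x) \asymp x$ for small $x$, and \eqref{eq:2.02} gives a $q$-Poincar\'e inequality for $f$ with constant $30\cdot 16^q d^{\ell+1}\ell^{q+1}$, which is bounded by the first term $\exp\big(64\cdot 4^q \frac{d}{h(G)}\log d\big)$ in the definition \eqref{extra-intro-e1} of $\Gamma$ — up to checking the explicit constants, this is the role of that first term. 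Note this case does not even use hypothesis \eqref{extra-intro-e2}.

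In the concentrated case, I would run the elementary H\"older/Markov argument sketched in Subsection \ref{subsec2.2} of the overview. Suppose $f$ is $(5^q,q,\tau)$-concentrated, so $\ave_\cM(f,q) \leqslant 5^q Q_\tau(f)^q$. The point is to lower bound the $q$-Dirichlet form of $f$ by the $p$-Dirichlet form. Since $Q_\tau(f)$ is a $\tau$-quantile of the distances, at least a $\tau$-fraction of pairs $(v,u) \in [n]^2$ satisfy $\varrho(f(v),f(u)) \leqslant Q_\tau(f)$; but also the empirical $p$-average $\ave_\cM(f,p)$ relates to $Q_\tau(f)$ by a similar quantile bound in the other direction (roughly, $\ave_\cM(f,p)^{1/p} \geqslant (1-\tau)^{1/p} Q_\tau(f)$ or a comparable estimate), so that $\ave_\cM(f,q) \lesssim_{p,q,\tau} \ave_\cM(f,p)^{q/p}$. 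Combining with \eqref{extra-intro-e2} raised to the power $q/p$ — which is legitimate since both sides are nonnegative — I get $\ave_\cM(f,p)^{q/p} \leqslant C^{q/p} \big(\frac{1}{|E_G|}\sum_{\{v,u\}\in E_G}\varrho(f(v),f(u))^p\big)^{q/p}$, and then by the power-mean (Jensen) inequality $\big(\frac{1}{|E_G|}\sum \varrho^p\big)^{q/p} \leqslant \frac{1}{|E_G|}\sum \varrho^q$ since $q/p \geqslant 1$. Chaining these gives \eqref{extra-intro-e3} with a constant of the form $($absolute constant depending on $p,q,\tau) \cdot C^{q/p}$, which the second term $5^q 2^{q/p} C^{q/p}$ in \eqref{extra-intro-e1} is designed to absorb — again modulo pinning down the explicit constants, which is where the factor $2^{q/p}$ and the $5^q$ come from (the $5^q$ from the concentration constant, since $C_R = 5^q$).

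Finally, take $\Gamma$ to be the maximum of the two bounds obtained, which is exactly \eqref{extra-intro-e1}. The main obstacle is not conceptual but bookkeeping: verifying that the explicit constant produced by Proposition \ref{prop:non-concentrated} with $\ell = O(d/h(G))$ is genuinely dominated by $\exp(64\cdot 4^q \frac{d}{h(G)}\log d)$ — one needs $d^{\ell+1}\ell^{q+1} 16^q \cdot 30 \leqslant \exp(64\cdot 4^q\frac{d}{h(G)}\log d)$, which requires being slightly careful with the relation between $\log_2(1+h(G)/d)$ and $h(G)/d$ (valid since $h(G) \leqslant d$ always), and with how $\ell^{q+1}$ and $16^q$ get swallowed by the $4^q$ in the exponent — and, symmetrically, checking that the concentrated-case constant really fits under $5^q 2^{q/p}C^{q/p}$ with the chosen $\tau$. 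Picking $\tau$ a harmless absolute constant (rather than optimizing it) keeps this manageable.
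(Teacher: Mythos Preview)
Your proposal is correct and follows essentially the same argument as the paper's proof: split on $(5^q,q,1/2)$-concentration, apply Proposition~\ref{prop:non-concentrated} in the non-concentrated case, and in the concentrated case chain $\ave_{\cM}(f,q)\leqslant 5^q Q_{1/2}(f)^q$ with $\ave_{\cM}(f,p)\geqslant \tfrac12 Q_{1/2}(f)^p$, the hypothesis~\eqref{extra-intro-e2}, and the power-mean inequality. One small correction on the bookkeeping you flagged: with $\tau=1/2$ the first summand in~\eqref{eq:2.01} vanishes, but the second is governed by $\log_2(1+h(G)/(2^{2q+4}d))$, so $\ell\asymp 4^q\,d/h(G)$ rather than merely $O(d/h(G))$; this $4^q$ is precisely what makes $d^{\ell+1}$ fit under $\exp\big(64\cdot 4^q\tfrac{d}{h(G)}\log d\big)$.
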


\begin{proof}
If $f$ is not $(5^q,q,1/2)$-concentrated, then, by Proposition \ref{prop:non-concentrated} applied for $\tau=\frac12$, we have
\begin{align*}
& \frac{1}{|V_G|^2} \sum_{v,u\in V_G}  \varrho\big( f(v), f(u) \big)^q  \leqslant
\big(30 \cdot 16^q d^{\ell+1} \ell^{q+1}\big) \cdot \frac{1}{|E_G|}\sum_{\{v,u\}\in E_G} \varrho\big( f(v), f(u) \big)^q \\
& \ \ \ \ \ \leqslant \exp\Big( 64\cdot 4^q  \frac{d}{h(G)}  \log d\Big)
\cdot \frac{1}{|E_G|}\sum_{\{v,u\}\in E_G} \varrho\big( f(v), f(u) \big)^q
\stackrel{\eqref{extra-intro-e1}}{\leqslant} \Gamma \cdot \frac{1}{|E_G|}\sum_{\{v,u\}\in E_G} \varrho\big( f(v), f(u) \big)^q,
\end{align*}
where in the second inequality we have used the fact that $2^{2q+3} \frac{d}{h(G)} \leqslant \ell \leqslant \ell + 1 \leqslant 2^{2q+5} \frac{d}{h(G)}$ and $\ell$~is as in \eqref{eq:2.01}.

So, assume that $f$ is $(5^q,q,1/2)$-concentrated, that is, $\ave_{\mathcal{M}}(f,q) \leqslant 5^q\, Q_\frac{1}{2}(f)^q$. Then, by the definition of $Q_\frac{1}{2}(f)$, we see that
\begin{equation} \label{eq-aux-e1}
\ave_{\mathcal{M}}(f,p)\geqslant \frac{1}{2}\, Q_\frac{1}{2}(f)^p \geqslant \frac{1}{2\cdot 5^p}\, \ave_{\mathcal{M}}(f,q)^{\frac{p}{q}}.
\end{equation}
Consequently,
\begin{align*}
\ave_{\mathcal{M}}(f,q) & \stackrel{\eqref{eq-aux-e1}}{\leqslant} 2^\frac{q}{p} 5^q \, \ave_{\mathcal{M}}(f,p)^\frac{q}{p}
\stackrel{\eqref{extra-intro-e2}}{\leqslant} \big(2^\frac{q}{p} 5^q C^\frac{q}{p}\big)\cdot
\Big(\frac{1}{|E_G|}\sum_{\{v,u\}\in E_G} \varrho\big(f(v), f(u)\big)^p\Big)^\frac{q}{p} \\
& \ \leqslant \big(2^\frac{q}{p} 5^q C^\frac{q}{p}\big) \cdot
\frac{1}{|E_G|}\sum_{\{v,u\}\in E_G} \varrho\big(f(v), f(u)\big)^q
\stackrel{\eqref{extra-intro-e1}}{\leqslant} \Gamma \cdot \frac{1}{|E_G|}\sum_{\{v,u\}\in E_G} \varrho\big( f(v), f(u) \big)^q,
\end{align*}
where the penultimate inequality follows from H\"{o}lder's inequality.
\end{proof}

\subsection{Proof of Theorem \ref{thm-extrapolation}}

We first observe that we may assume that $1 \leqslant p \leqslant q < \infty$. Indeed, let $\mathcal{M}=(M,\varrho)$ be an arbitrary metric space, and let $p\leqslant q$ with $0<p<1$. Set $\eps:=1-p\in (0,1)$, and let $\mathcal{M}'=(M,\varrho^{1-\eps})$ be the $\eps$-snowflake of $\mathcal{M}$. Also set $p':=1$ and $q':= \frac{q}{1-\eps}=\frac{q}{p}$; clearly, we have $1=p'\leqslant q'<\infty$. Then notice that inequality \eqref{eq-extra-intro-new-e1} for $\mathcal{M},p,q$ follows by applying \eqref{eq-extra-intro-new-e1} for the metric space~$\mathcal{M}'$ and the exponents $p'$ and $q'$, and similarly, inequality \eqref{eq-extra-intro-new-e2} for $\mathcal{M},p,q$ follows by applying \eqref{eq-extra-intro-new-e2} for the metric space $\mathcal{M}'$ and the same exponents $p'$ and $q'$.

In the regime $1 \leqslant p \leqslant q < \infty$, we will actually prove the following quantitative version.

\begin{theorem}[Extrapolation for metric spaces---quantitative form]
\label{thm-extrapolation-quantitative}
Let $d, G$ and $\mathcal{M}$ be as in Theorem \ref{thm-extrapolation}. Then, for any $1 \leqslant p \leqslant q<\infty$, we have
\begin{equation} \label{eq-extra-intro-e1}
\gamma(G,\varrho^p) \leqslant \max\Big\{ C_1, \, C_2 \max\big\{1,\gamma(G,\varrho^q)\big\}\Big\}
\end{equation}
and
\begin{equation} \label{eq-extra-intro-e2}
\gamma(G,\varrho^q) \leqslant \max\Big\{ C_3, \, C_4\, \gamma(G,\varrho^p)^{\frac{q}{p}} \Big\},
\end{equation}
where the constants $C_1,C_2,C_3,C_4$ are given by
\begin{align}
\label{edit-e1} C_1:= \exp\Big( 64\cdot 4^p  \frac{d}{h(G)}  \log d\Big),
\ & \ \
C_2: = 24\cdot d\cdot 5^p \, \bigg(88\, p\, \Big(\frac{d}{h(G)}\Big)^2\bigg)^{2q-p},\\
\label{edit-e2} C_3:= \exp\Big( 64\cdot 4^q  \frac{d}{h(G)}  \log d\Big),
\ & \ \
C_4:= 5^q \, 2^{\frac{q}{p}}.
\end{align}
\end{theorem}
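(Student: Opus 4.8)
The plan is to prove the two inequalities \eqref{eq-extra-intro-e1} and \eqref{eq-extra-intro-e2} separately, in each case reducing to a \emph{function-wise} statement: for a fixed $f\colon V_G\to M$, either $f$ is concentrated (at the appropriate exponent and a conveniently chosen quantile level $\tau=\tfrac12$), in which case an elementary H\"older/Markov manipulation gives the comparison directly, or $f$ is not concentrated, in which case Proposition \ref{prop:non-concentrated} already yields a dimension-free Poincar\'e inequality for $f$ that dominates the claimed bound. The inequality \eqref{eq-extra-intro-e2}, the "right extrapolation", is exactly Corollary \ref{cor-extrapolation}: taking $C:=\gamma(G,\varrho^p)$ (so that \eqref{extra-intro-e2} holds for \emph{every} $f$ by definition of the Poincar\'e constant), the corollary gives \eqref{extra-intro-e3} for every $f$ with the constant $\Gamma$ of \eqref{extra-intro-e1}, and one checks that $\Gamma\leqslant\max\{C_3,C_4\,\gamma(G,\varrho^p)^{q/p}\}$ with $C_3,C_4$ as in \eqref{edit-e2}. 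So the only real work is \eqref{eq-extra-intro-e1}.

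For \eqref{eq-extra-intro-e1} I would fix $f$ and first dispose of the non-concentrated case: if $f$ is not $(5^p,p,\tfrac12)$-concentrated, apply Proposition \ref{prop:non-concentrated} with $q$ replaced by $p$, $C_R=5^p$, $\tau=\tfrac12$. The resulting constant $30\cdot 16^p d^{\ell+1}\ell^{p+1}$, with $\ell$ of order $\tfrac{d}{h(G)}$ (using $2^{2p+3}\tfrac{d}{h(G)}\leqslant\ell\leqslant 2^{2p+5}\tfrac{d}{h(G)}$ as in the proof of Corollary \ref{cor-extrapolation}), is bounded by $C_1=\exp(64\cdot 4^p\tfrac{d}{h(G)}\log d)$; hence $\gamma(G,\varrho^p;f)\leqslant C_1$. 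So we may assume every relevant $f$ is $(5^p,p,\tfrac12)$-concentrated, i.e.\ $\ave_{\mathcal M}(f,p)\leqslant 5^p\,Q_{1/2}(f)^p$; equivalently the $p$-average is comparable to (the $p$-th power of) the median $\med(f):=Q_{1/2}(f)$.

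The heart of the matter is the left extrapolation for concentrated $f$: bound $\gamma(G,\varrho^p;f)$ by $O(\max\{1,\gamma(G,\varrho^q)\})$. Write $\Gamma:=C\,\gamma(G,\varrho^q)$ for a suitable $C$, and split into the two "extreme cases" sketched in Subsection \ref{subsubsec2.2.1}: (a) the edges of small $\varrho$-length (at most $c_0\med(f)$) already carry a $\gtrsim\med(f)^q$ fraction of the $q$-Dirichlet form — then raising those small edge lengths to the power $p$ instead of $q$ only increases their relative weight (since $t\mapsto t^{p}/t^{q}$ is decreasing on $(0,c_0\med(f)]$ up to the constant $(c_0\med(f))^{p-q}$), and using the definition of $\gamma(G,\varrho^q)$ together with the $p$-concentration $\ave_{\mathcal M}(f,p)\asymp\med(f)^p$ finishes the case; (b) for some $\xi>0$ the set $\mathcal E_\xi$ of edges with $\varrho$-length $\geqslant\xi\,\ave_{\mathcal M}(f,p)^{1/p}$ satisfies $|\mathcal E_\xi|\geqslant\Gamma\xi^{-p}|E_G|$ — then just those edges make the $p$-Dirichlet form at least $\Gamma^{-1}\ave_{\mathcal M}(f,p)$, giving the $p$-Poincar\'e inequality. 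The main obstacle — and the step I expect to be genuinely delicate — is proving these two cases are \emph{exhaustive}: if neither holds, one constructs an auxiliary map $g$ agreeing with $f$ on a large vertex set, obtained by collapsing the "exceptional" long edges/clusters in a controlled multiscale fashion, so that $g$ violates the assumed $q$-Poincar\'e inequality $\gamma(G,\varrho^q)\geqslant\gamma(G,\varrho^q;g)$, a contradiction. This is the content of Lemma \ref{lemma-new}; carefully tracking how much of the $q$-Dirichlet form and of $\ave_{\mathcal M}(\cdot,q)$ survive the collapse, and choosing $g$ so that the former drops while the latter does not, is where the polynomial-in-$\tfrac{d}{h(G)}$ and the $2q-p$ exponent in $C_2$ of \eqref{edit-e1} come from. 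Once left extrapolation for concentrated $f$ is in hand with a function-wise constant $C_2\max\{1,\gamma(G,\varrho^q)\}$, combining with the non-concentrated case gives \eqref{eq-extra-intro-e1} with $\max\{C_1,C_2\max\{1,\gamma(G,\varrho^q)\}\}$, completing the proof; the qualitative Theorem \ref{thm-extrapolation} then follows, the case $0<p<1$ being reduced to $p\geqslant 1$ via the snowflake substitution already recorded above.
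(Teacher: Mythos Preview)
Your proposal is correct and follows essentially the same route as the paper: \eqref{eq-extra-intro-e2} is deduced from Corollary \ref{cor-extrapolation}, and \eqref{eq-extra-intro-e1} is obtained by the dichotomy into non-$(5^p,p,\tfrac12)$-concentrated $f$ (handled by Proposition \ref{prop:non-concentrated}, giving $C_1$) versus concentrated $f$, where the two-case Lemma \ref{lemma-new} (your cases (a) and (b), which are the paper's (H2) and (H1)) together with the auxiliary-function construction of $g$ yields the $C_2\max\{1,\gamma(G,\varrho^q)\}$ bound. One small slip: in your case (b) the hypothesis should read $|\mathcal E_\xi|\geqslant \Gamma^{-1}\xi^{-p}|E_G|$ (not $\Gamma\xi^{-p}|E_G|$), which then indeed gives the $p$-Dirichlet form $\geqslant\Gamma^{-1}\ave_{\mathcal M}(f,p)$ as you conclude.
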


\begin{proof}
The regime covered by \eqref{eq-extra-intro-e2} follows from Corollary \ref{cor-extrapolation}. Thus, we only need to show \eqref{eq-extra-intro-e1}. To this end, in what follows we will assume that the quantity $\gamma(G,\varrho^q)$ is well defined. In order to avoid trivialities, we will also assume that $\gamma(G,\varrho^q)\geqslant 1$.

Fix $f\colon V_G\to M$. Arguing as in the proof of Corollary \ref{cor-extrapolation}, if $f$ is not $(5^p,p,1/2)$-concentrated, then
\eqref{eq-extra-intro-e1} follows by Proposition \ref{prop:non-concentrated} applied to $\tau=\frac12$. Thus, setting
\begin{equation} \label{edit-e3}
c_0 := \frac{88\, p\, d^2}{h(G)^2} \ \ \ \text{ and } \ \ \ \Gamma_1 :=
24\, c_0^{q}\,\gamma(G,\varrho^q),
\end{equation}
it is enough to show that if $f$ is $(5^p,p,1/2)$-concentrated, then
\begin{equation} \label{eq-new2}
\ave_{\mathcal{M}}(f,p) \leqslant d\,5^p\, c_0^{q-p}\, \Gamma_1 \cdot \frac{1}{|E_G|} \sum_{\{v,u\}\in E_G} \varrho\big(f(v),f(u)\big)^p.
\end{equation}
Set
\begin{equation}
\label{eq-new3} \alpha:=\frac{h(G)}{2d}, \ \ \ c_1 := \frac{2}{h(G)} \cdot
\frac{1-\alpha}{(1 - (1-\alpha)^{1/p})^p},
 \ \ \ \epsilon:=\frac{1}{\Gamma_1},  \ \ \ \ell_*:=\bigg\lceil
\frac{\log\frac{1}{\epsilon}}{\log\frac{1}{1-\alpha}}\bigg\rceil,
\end{equation}
and notice that with these choices and \eqref{edit-e3} we have
\begin{equation} \label{edit-epsilon}
1\leqslant 22(c_1\, d)^{1/p}\leqslant c_0 \ \ \ \ \ \text{ and } \ \ \ \ \
\epsilon\leqslant \frac{1}{100}.
\end{equation}

Consider, first, the somewhat degenerate case that $|V_G|\leqslant 4\epsilon^{-1}$. Then, by H\"{o}lder's inequality and the fact that $\|\cdot\|_{\ell_q} \leqslant \|\cdot\|_{\ell_p}$ (since $q\geqslant p$), we see that
\begin{align*}
\ave_{\mathcal{M}}(f,p) & \leqslant \ave_{\mathcal{M}}(f,q)^{p/q} \leqslant \gamma(G,\varrho^q)^{p/q} \cdot
\Big( \frac{1}{|E_G|} \sum_{\{v,u\}\in E_G} \varrho\big(f(v),f(u)\big)^q\Big)^{p/q} \\
& \ \ \ \ \ \leqslant \big(\gamma(G,\varrho^q)^{p/q} \cdot
|E_G|^{1-p/q}\big) \cdot \frac{1}{|E_G|} \sum_{\{v,u\}\in E_G}
\varrho\big(f(v),f(u)\big)^p \\
& \stackrel{\eqref{edit-e3},\eqref{eq-new3}}{\leqslant} 2d\,\Gamma_1 \cdot \frac{1}{|E_G|} \sum_{\{v,u\}\in E_G} \varrho\big(f(v),f(u)\big)^p.
\end{align*}
As a consequence, \eqref{eq-new2} is satisfied if $|V_G|\leqslant 4\epsilon^{-1}$. Thus, in what follows, we may assume that
\begin{equation} \label{e-asymptotic}
|V_G|\geqslant 4 \epsilon^{-1}.
\end{equation}
For each $\xi>0$, define $\mathcal{E}_\xi \subseteq E_G$ by
\begin{equation} \label{eq-new4}
\mathcal{E}_\xi := \big\{ \{v,u\}\in E_G \colon \varrho\big(f(v),f(u)\big)
\geqslant  \xi \cdot \ave_{\mathcal{M}}(f,p)^{1/p} \big\}.
\end{equation}
The following lemma is the main step of the proof.

\begin{lemma} \label{lemma-new}
Assume that $f$ is $(5^p,p,1/2)$-concentrated. Then at least one of the following holds.
\begin{enumerate}
\item[(H1)] \label{edit-H1} For some $\xi>0$, we have $|\mathcal E_\xi| \geqslant \Gamma_1^{-1} \, \xi^{-p}\cdot |E_G|$.
\item[(H2)] \label{edit-H2} We have
\begin{equation} \label{edit-eq-H2}
\Gamma_1 \cdot \frac{1}{|E_G|} \sum_{\substack{\{v,u\}\in E_G \\
\varrho(f(v),f(u))\leqslant c_{0}\,Q_{\frac{1}{2}}(f)}}
\varrho\big(f(v),f(u)\big)^q \geqslant  Q_{\frac{1}{2}}(f)^q.
\end{equation}
\end{enumerate}
\end{lemma}

\begin{proof}[Proof of Lemma \ref{lemma-new}]
Assume for contradiction that neither (\hyperref[edit-H1]{H1}) nor (\hyperref[edit-H2]{H2}) holds. By the definition of
$Q_{\frac12}(f)$ and the pigeonhole principle, there is $v_0 \in V_G$ such that, setting
\[ S_0 := \big\{ u \in V_G \colon \varrho\big(f(u),f(v_0)\big) \leqslant Q_{\frac12}(f)\big\}, \]
we have $|S_0| \geqslant \frac{|V_G|}{2}$; refining if necessary, we may also assume that $|S_0|\leqslant (1-\epsilon)|V_G|$. Also notice that, by H\"{o}lder's inequality, we have\footnote{Recall that for any nonempty $A\subseteq M$, we set $\mathrm{diam}_{\varrho}(A):=\max\big\{\varrho(x,y)\colon x,y\in A\big\}$.} $\diam_\varrho\big(f(S_0)\big) \leqslant 2 \, Q_{\frac12}(f)$.

For every $\ell\in [\ell_*]$, we shall select, recursively, a set $S_\ell\subseteq B_G(S_0,\ell)$ such that
\begin{enumerate}
\item[(i)] either $|S_\ell^\complement| \leqslant \epsilon\,|V_G|$ or $|S_\ell^\complement| =|S_{\ell-1}^\complement|-\big\lceil \alpha
|S_{\ell-1}^\complement|\big\rceil$, and
\item[(ii)] $\mathrm{diam}_{\varrho}\big( f(S_\ell)\big) \leqslant \Big( 2+ 10\big(c_1 \,\frac{|E_G|}{|S_{\ell}^\complement|}
\,\Gamma_1^{-1}\big)^{1/p}\Big)\cdot Q_{\frac12}(f)$.
\end{enumerate}
Let $\ell\in [\ell_*]$, and assume that $S_{\ell-1}$ has been selected. If $|S_{\ell-1}|\geqslant (1-\epsilon)|V_G|$, then set $S_\ell:=S_{\ell-1}$. Otherwise, set
\[ \xi_\ell := \bigg( \frac{2}{h(G)}\cdot\Gamma_1^{-1}\cdot\frac{|E_G|}{|S_{\ell-1}^\complement|}\bigg)^{1/p}.\]
Since we assume the negation of (\hyperref[edit-H1]{H1}), this choice ensures that
\[ |\mathcal{E}_{\xi_\ell}| \leqslant \frac{h(G)}{2}\,|S_{\ell-1}^\complement|; \]
moreover, by \eqref{eq-new4} and the assumption that $f$ is $(5^p,p,1/2)$-concentrated, for all $\{v,u\}\in \mathcal{E}_{\xi_\ell}^\complement$,
\begin{equation}\label{eq:Ek-complement}
\varrho\big(f(v),f(u)\big) \leqslant  \xi_\ell\cdot \ave_{\mathcal{M}}(f,p)^{1/p} \leqslant  5\, \Big(\frac{2}{h(G)}\cdot
\frac{|E_G|}{|S_{\ell-1}^\complement|}\cdot\Gamma_1^{-1}\Big)^{1/p} \cdot Q_{\frac12}(f).
\end{equation}
Using the fact that $\frac{|V_G|}{2}\geqslant |S_{\ell-1}^{\complement}|\geqslant \epsilon |V_G|$, the definition of Cheeger's constant $h(G)$ and our estimate on $|\mathcal{E}_{\xi_\ell}|$, we obtain that
\[ \Big|\Big\{ \{v,u\}\in E_G\colon v\in S_{\ell-1}, u\in S_{\ell-1}^\complement \text{ and } \varrho\big(f(v),f(u)\big)
\leqslant \xi_\ell\cdot \ave_{\mathcal{M}}(f,p)^{1/p}\Big\}\Big|\geqslant
\frac{h(G)}{2}\, |S_{\ell-1}^\complement|. \]
Thus, by the $d$-regularity of $G$ and the choice of $\alpha$ in \eqref{eq-new3}, there exists a subset $\Theta_{\ell}$ of $\partial B_G(S_{\ell-1},1)$ with $|\Theta_{\ell}|= \big\lceil \alpha |S_{\ell-1}^\complement|\big\rceil$ such that for every $u\in \Theta_{\ell}$, we have $\varrho\big(f(u), f(S_{\ell-1})\big) \leqslant \xi_\ell\cdot \ave_{\mathcal{M}}(f,p)^{1/p}$. Set $S_\ell:=S_{\ell-1}\cup \Theta_\ell$. With this choice we have
\begin{equation} \label{eq:(i)}
|S_\ell^\complement| = |S_{\ell-1}^\complement|-\big\lceil \alpha |S_{\ell-1}^\complement|\big\rceil;
\end{equation}
in particular, part (i) of the selection is satisfied, To see that part (ii) is also satisfied, fix $v,u\in S_\ell$ and observe that our inductive assumptions and the triangle inequality yield
\begin{align} \label{eq-d1}
\varrho\big(f(v),&\, f(u)\big) \stackrel{\eqref{eq:Ek-complement}}{\leqslant}
\bigg( 2+ 10\Big(c_1 \,\frac{|E_G|}{|S_{\ell-1}^\complement|} \,\Gamma_1^{-1}\Big)^{1/p}  +
10\Big(\frac{2}{h(G)}\cdot \frac{|E_G|}{|S_{\ell-1}^\complement|}\cdot\Gamma_1^{-1} \Big)^{1/p}\bigg)
\cdot Q_{\frac12}(f) \\
& \stackrel{\eqref{eq:(i)}}{\leqslant} \bigg(2 +
10\Big(c_1^{1/p} + \Big(\frac{2}{h(G)}\Big)^{1/p}\Big)\cdot
(1-\alpha)^{1/p} \cdot \Big( \frac{|E_G|}{|S_{\ell}^\complement|} \,\Gamma_1^{-1} \Big)^{1/p}
\bigg) \cdot Q_{\frac12}(f) \nonumber \\
& \stackrel{\eqref{eq-new3}}{=} \bigg( 2+ 10\Big(c_1
\,\frac{|E_G|}{|S_{\ell}^\complement|} \,\Gamma_1^{-1}\Big)^{1/p} \bigg)
\cdot Q_{\frac12}(f). \nonumber
\end{align}
This completes the recursive selection of the sets $S_1,\dots, S_{\ell_*}$.

Now set $S:=S_{\ell_*}$ and notice that, by part (i) of the selection and the choice of $\ell_*$ in \eqref{eq-new3},
\begin{equation} \label{eq-d2}
|S| \geqslant \min\Big\{ (1-\epsilon)|V_G|, \big(1-(1-\alpha)^{\ell_*}\big)|V_G|\Big\}\geqslant (1-\epsilon)|V_G|;
\end{equation}
moreover, using part (i) of the selection once again, \eqref{e-asymptotic} and the choice of $\alpha$ in \eqref{eq-new3}, we have
\begin{equation} \label{edit-new}
|S^{\complement}| \geqslant \frac{\epsilon}{4}\, |V_G|.
\end{equation}
Recall $v_0\in V_G$ from the construction of $S_0$. Define the auxiliary function $g\colon V_G \to M$ by
\[ g(u) :=
\begin{cases}
f(u)   & u \in S,\\
f(v_0) & u \in S^\complement.
\end{cases} \]

\begin{claim} \label{cl-dnew}
We have
\begin{equation} \label{eq-d3}
\frac{\Gamma_1}{|E_G|} \sum_{\{v,u\}\in E_G} \varrho\big(g(v),g(u)\big)^q \leqslant (3c_0^q)\cdot Q_{\frac12}(f)^q.
\end{equation}
\end{claim}

\begin{proof}[Proof of Claim \ref{cl-dnew}]
Fix $\{v,u\} \in E_G$. If both $v,u \in S^{\complement}$, then $\varrho\big(g(v),g(u)\big) = 0$. On the other hand, if both $u,v \in S$,
then part (ii) of the recursive selection yields that
\begin{align}
\label{edit-eq1.1} \varrho\big(g(v),g(u)\big) & \ \, =
\varrho\big(f(v),f(u)\big) \leqslant \mathrm{diam}_{\varrho}\big(f(S)\big) \\
& \ \, \leqslant \bigg( 2+ 10\Big(c_1 \,\frac{|E_G|}{|S^\complement|}
\,\Gamma_1^{-1}\Big)^{1/p}\bigg)\cdot Q_{\frac12}(f) \nonumber \\
& \stackrel{\eqref{edit-new}}{\leqslant} \bigg( 2+ 10\Big(c_1
\,\frac{2d}{\epsilon} \,\Gamma_1^{-1}\Big)^{1/p}\bigg)\cdot
Q_{\frac12}(f)\leqslant 22\, \Big(\frac{c_1\, d}{\epsilon\,
\Gamma_1}\Big)^{1/p} \cdot Q_{\frac12}(f), \nonumber
\end{align}
that further implies, by \eqref{eq-new3} and \eqref{edit-epsilon}, that $\varrho\big(g(v),g(u)\big)=\varrho\big(f(v),f(u)\big)\leqslant c_0\, Q_{\frac12}(f)$; combining this estimate with the assumed negation of~(\hyperref[edit-H2]{H2}), we obtain that
\begin{equation} \label{edit-eq1.2}
\frac{\Gamma_1}{|E_G|} \sum_{\substack{\{v,u\}\in E_G \\ \{v,u\} \subseteq
S}} \varrho\big(f(v),f(u)\big)^q \leqslant Q_{\frac{1}{2}}(f)^q.
\end{equation}
Finally, if $v \in S^\complement$ and $u\in S$, then, as above,
\[ \varrho\big(g(v),g(u)\big) = \varrho\big(f(v_0),f(u)\big) \leqslant \mathrm{diam}_{\varrho}\big(f(S)\big)
\stackrel{\eqref{edit-eq1.1}}{\leqslant} 22\, \Big(\frac{c_1\, d}{\epsilon\, \Gamma_1}\Big)^{1/p} \cdot Q_{\frac12}(f); \]
note, however, that, by the $d$-regularity of $G$ and the fact that $|S^\complement|\leqslant \epsilon |V_G|$, there are at most $d\epsilon
|V_G|=2\epsilon |E_G|$ such edges. Therefore, invoking again \eqref{eq-new3} and \eqref{edit-epsilon},
\begin{equation} \label{edit-eq1.3}
\frac{\Gamma_1}{|E_G|} \sum_{\substack{\{v,u\}\in E_G \\ v\in S^\complement,\, u\in S}} \varrho\big(g(v),g(u)\big)^q  \leqslant
2\epsilon\,\Gamma_1\,  22^q\, \Big(\frac{c_1\, d}{\epsilon\, \Gamma_1}\Big)^{q/p} \cdot Q_{\frac12}(f)^q
\leqslant (2c_0^q) \cdot Q_{\frac12}(f)^q.
\end{equation}
The desired estimate \eqref{eq-d3} follows by \eqref{edit-eq1.2} and \eqref{edit-eq1.3}. The claim is proved.
\end{proof}

We are now in a position to derive the contradiction. By the definition of $Q_{\frac12}(f)$, the fact that $|S^\complement|\leqslant \epsilon |V_G|$, the estimate on $\epsilon$ in \eqref{edit-epsilon} and the definition of $g$, we have
\[ \frac{1}{|V_G|^2} \big|\big\{ (v,u)\in V_G\times V_G \colon
\varrho\big(g(v),g(u)\big)\geqslant Q_{\frac12}(f)\big\}\big|\geqslant \frac14, \]
that implies that
\begin{equation} \label{eq-new6}
\ave_{\mathcal{M}}(g,q)\geqslant \frac14 \, Q_{\frac12}(f)^q.
\end{equation}
Finally, by the definition of $\gamma(G,\varrho^q)$,
\begin{equation} \label{eq-new7}
\ave_{\mathcal{M}}(g,q)\leqslant \gamma(G,\varrho^q) \cdot \frac{1}{|E_G|} \sum_{\{v,u\}\in E_G} \varrho\big(g(v),g(u)\big)^q.
\end{equation}
By \eqref{eq-d3}, \eqref{eq-new6}, \eqref{eq-new7}, and the choice of $\Gamma_1$ in \eqref{edit-e3}, we arrive at the desired contradiction
\[ \frac14 \, Q_{\frac12}(f)^q \leqslant \ave_{\mathcal{M}}(g,q) \leqslant
\bigg(\frac{3\,c_0^q\, \gamma(G,\varrho^q)}{\Gamma_1}\bigg)\cdot
Q_{\frac{1}{2}}(f)^q \leqslant \frac{1}{8}\, Q_{\frac{1}{2}}(f)^q. \]
The proof of Lemma \ref{lemma-new} is thus completed.
\end{proof}

We are ready to complete the proof of \eqref{eq-new2}. If (\hyperref[edit-H1]{H1}) holds, then the result is immediate by definition
of $\mathcal E_\xi$. On the other hand, if (\hyperref[edit-H2]{H2}) holds, then, by our starting assumption that $\gamma(G,\varrho^p)\geqslant 1$ and the fact that $f$ is $(5^p,p,1/2)$-concentrated and $q\geqslant p$,
\begin{align*}
5^p \, c_0^{q-p}\, \Gamma_1 \cdot \frac{1}{|E_G|} \sum_{\{v,u\}\in E_G} &
\frac{\varrho\big(f(v),f(u)\big)^p}{\ave_{\mathcal{M}}(f,p)} \geqslant 5^p\, c_0^{q-p} \cdot \frac{\Gamma_1}{|E_G|}
\sum_{\substack{\{v,u\}\in E_G \\ \varrho(f(v),f(u)) \leqslant c_{0}\,Q_{\frac{1}{2}}(f)}}
\frac{\varrho\big(f(v),f(u)\big)^p}{\ave_{\mathcal{M}}(f,p)} \\
& \geqslant c_0^q \cdot  \frac{\Gamma_1}{|E_G|} \sum_{\substack{\{v,u\}\in
E_G \\ \varrho(f(v),f(u)) \leqslant c_{0}\,Q_{\frac{1}{2}}(f)}}
\bigg(\frac{\varrho\big(f(v),f(u)\big)}{c_0\,Q_{\frac{1}{2}}(f)}\bigg)^p \\
& \geqslant c_0^q \cdot \frac{\Gamma_1}{|E_G|} \sum_{\substack{\{v,u\}\in
E_G \\ \varrho(f(v),f(u)) \leqslant c_{0}\,Q_{\frac{1}{2}}(f)}} \bigg(
\frac{\varrho\big(f(v),f(u)\big)}{c_0\,Q_{\frac{1}{2}}(f)}\bigg)^q
\stackrel{\eqref{edit-eq-H2}}{\geqslant} c_0^{q-q} = 1.
\end{align*}
This completes the proof of \eqref{eq-new2}, and so, the entire proof of Theorem \ref{thm-extrapolation} is completed.
\end{proof}

\section{Proof of Proposition \ref*{prop:non-concentrated}} \label{sec-non-concentrated}

We start by observing that we may assume that $Q_\tau(f)>0$. Indeed, suppose that $Q_\tau(f)=0$. Let $\mathcal{M}'=(M\times [0,1],\varrho_{M\times [0,1]})$ be the metric space with
\[ \varrho_{M\times[0,1]}\big( (v,x),(u,y) \big) = \varrho(v,u) + |x-y|, \]
and define $g\colon M\to M\times [0,1]$ by setting $g(v):= (v,1/2)$, for all $v\in M$. Notice that $g$ is an isometry; consequently, $f$ satisfies \eqref{eq:2.02} if and only if $g\circ f$ satisfies the analogue of~\eqref{eq:2.02}. We select a sequence $(h_k)$ of one-to-one maps from $[n]$ into $M\times[0,1]$ that converge pointwise to $g \circ f$. Since $\tau>1/n$ and $h_k$ is one-to-one for all $k$, we see that $Q_\tau(h_k)>0$ for all $k$, and $Q_\tau(h_k)\to 0$; moreover, $\ave_{\mathcal{M}'}(h_k,q) \to \ave_{\mathcal{M}'}(f,q)$. Therefore, since $f$ is not $(C_R,q,\tau)$-concentrated, there exists a positive integer $k_0$ such that for all $k\geqslant k_0$, the map $h_k\colon [n]\to M\times [0,1]$ is not $(C_R,q,\tau)$-concentrated either. After noticing that
\[ \frac{1}{|E_G|}\sum_{\{v,u\}\in E_G} \varrho_{\mathcal{M}\times [0,1]}\big( h_k(v), h_k(u) \big)^q
\to \frac{1}{|E_G|}\sum_{\{v,u\}\in E_G} \varrho\big( f(v), f(u) \big)^q, \]
we conclude that if $h_k$ satisfies the analogue of \eqref{eq:2.02} for all $k\geqslant k_0$, then $f$ must also satisfy \eqref{eq:2.02}. So, in what follows, we will assume that $Q_\tau(f)>0$.

By the definition of $Q_\tau(f)$, there exists a vertex $\tilde{v}\in [n]$ such that, setting
\begin{equation}\label{eq:2.03}
N_{\tilde{v}}:=\big\{ v\in [n]\colon \varrho\big( f(v), f(\tilde{v})\big)\leqslant Q_\tau(f) \big\},
\end{equation}
we have
\begin{equation}\label{eq:2.04}
|N_{\tilde{v}}| \geqslant \tau n.
\end{equation}
Set
\begin{equation} \label{eq-A&B}
A:=\big\{ v\in [n]\colon \varrho\big( f(v), f(\tilde{v}) \big)> 2Q_\tau(f)\big\} \ \ \ \text{ and } \ \ \
B:=\big\{ v\in [n]\colon \varrho\big( f(v), f(\tilde{v}) \big) \leqslant 2Q_\tau(f)\big\}.
\end{equation}

\begin{lemma} \label{l:2.2}
We have
\begin{equation}\label{eq:2.05}
\frac{1}{n^2}\sum_{v,u\in [n]} \varrho\big( f(v), f(u) \big)^q \leqslant
\big(5\cdot 2^{q-1}\, d\big) \cdot \frac{1}{|E_G|} \sum_{v\in A} \varrho \big(f(v), f(\tilde{v})\big) ^q.
\end{equation}
\end{lemma}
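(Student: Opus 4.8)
The plan is to reduce the two‑parameter average $\ave_{\cM}(f,q)$ to a one‑parameter quantity measured from the single basepoint $\tilde v$, and then to exploit the failure of $(C_R,q,\tau)$‑concentration to discard the contribution of the ``ball'' $B$ around $\tilde v$ defined in \eqref{eq-A&B}. Write $T:=\frac1n\sum_{v\in A}\varrho\big(f(v),f(\tilde v)\big)^q$. Since $G$ is $d$‑regular we have $|E_G|=dn/2$, so the right‑hand side of \eqref{eq:2.05} equals $5\cdot 2^q\,T$, and it suffices to prove $\ave_{\cM}(f,q)\leqslant 5\cdot 2^q\,T$.

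First I would apply the triangle inequality together with the elementary convexity bound $(a+b)^q\leqslant 2^{q-1}(a^q+b^q)$, valid for $a,b\geqslant 0$ and $q\geqslant 1$, to every pair $(v,u)\in[n]^2$ routed through the intermediate point $f(\tilde v)$. Averaging over $v,u$ and collapsing the two resulting identical single sums gives
\[
\ave_{\cM}(f,q)\;\leqslant\; 2^{q-1}\cdot\frac{2}{n}\sum_{v\in[n]}\varrho\big(f(v),f(\tilde v)\big)^q \;=\; 2^{q}\cdot\frac1n\sum_{v\in[n]}\varrho\big(f(v),f(\tilde v)\big)^q .
\]

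Next I would split $[n]=A\sqcup B$. For $v\in B$ we have $\varrho\big(f(v),f(\tilde v)\big)\leqslant 2\,Q_\tau(f)$ by definition, so the $B$‑part of the last sum is at most $2^q\,Q_\tau(f)^q$ (using $|B|\leqslant n$), whence $\ave_{\cM}(f,q)\leqslant 2^q\,T+4^q\,Q_\tau(f)^q$. Since $f$ is \emph{not} $(C_R,q,\tau)$‑concentrated and $C_R\geqslant 5^q$, Definition~\ref{def3.2} gives $Q_\tau(f)^q<\ave_{\cM}(f,q)/5^q$, so $4^q\,Q_\tau(f)^q<(4/5)^q\,\ave_{\cM}(f,q)$. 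Rearranging and using that $(4/5)^q\leqslant 4/5$ for $q\geqslant 1$ (hence $1-(4/5)^q\geqslant 1/5$) yields
\[
\ave_{\cM}(f,q)\;\leqslant\;\frac{2^q}{1-(4/5)^q}\,T\;\leqslant\;5\cdot 2^q\,T,
\]
and recalling $|E_G|=dn/2$ converts this back into \eqref{eq:2.05}.

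There is essentially no obstacle here: this lemma is the elementary ``single basepoint'' reduction, and the only points needing care are keeping the constants $2^{q-1}$, $4^q$, $5^q$ aligned so that the non‑concentration hypothesis exactly absorbs the contribution of $B$. The substantive work of Proposition~\ref{prop:non-concentrated}—bounding the one‑basepoint sum $\sum_{v\in A}\varrho\big(f(v),f(\tilde v)\big)^q$ by the $q$‑Dirichlet form via a dyadic decomposition of $A$ and scalar expansion through Cheeger's inequality, in the spirit of \eqref{eq:overview nonconc}—is carried out in the subsequent lemmas and does not enter the proof of Lemma~\ref{l:2.2}.
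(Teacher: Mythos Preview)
Your proof is correct and follows essentially the same route as the paper: route every pair through $f(\tilde v)$ via the triangle inequality and the convexity bound $(a+b)^q\leqslant 2^{q-1}(a^q+b^q)$, split $[n]=A\sqcup B$, absorb the $B$-contribution using the non-concentration hypothesis $\ave_{\mathcal{M}}(f,q)>5^q Q_\tau(f)^q$, and convert via $|E_G|=dn/2$. The constants and the final rearrangement match the paper exactly.
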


\begin{proof}
By triangle inequality, the fact that $(a+b)^q\leqslant 2^{q-1}(a^q+b^q)$ for every $a,b\geqslant 0$ and the fact that $[n]$ is the disjoint union of $A$ and $B$, we obtain that
\begin{align} \label{eq:2.06}
\ave_{\mathcal{M}}(f,q) & \leqslant \frac{1}{n^2} \sum_{v,u\in [n]} \Big( \varrho( f(v), f(\tilde{v}) \big) +
\varrho\big( f(u), f(\tilde{v}) \big) \Big)^q \\
& \leqslant  \frac{2^{q-1}}{n^2} \sum_{v,u\in [n]} \Big(\varrho\big( f(v), f(\tilde{v}) \big)^q +
\varrho\big( f(u), f(\tilde{v}) \big)^q \Big) =
\frac{2^q}{n} \sum_{v\in [n]} \varrho\big( f(v), f(\tilde{v}) \big)^q \nonumber \\
& = \frac{2^q}{n} \sum_{v\in A}  \varrho\big( f(v), f(\tilde{v}) \big)^q +
\frac{2^q}{n} \sum_{v\in B} \varrho\big( f(v), f(\tilde{v}) \big)^q . \nonumber
\end{align}
Using the fact that $f$ is not $(C_R,q,\tau)$-concentrated and $C_R\geqslant 5^q$, we see that $\ave_{\mathcal{M}}(f,q)> 5^q Q_\tau(f)^q$. Thus, by the definition of $B$,
\begin{equation} \label{eq:2.07}
\frac{2^q}{n} \sum_{v\in B} \varrho\big( f(v), f(\tilde{v}) \big)^q \leqslant 4^q Q_\tau(f)^q \leqslant \Big(\frac{4}{5}\Big)^q \ave_{\mathcal{M}}(f,q).
\end{equation}
Since $q\geqslant 1$, by \eqref{eq:2.06} and \eqref{eq:2.07}, we have
\begin{equation}\label{eq:2.08}
\frac{2^q}{n} \sum_{v\in A} \varrho\big(f(v), f(\tilde{v}) \big)^q \geqslant \frac{1}{5}\,\ave_{\mathcal{M}}(f,q).
\end{equation}
Inequality \eqref{eq:2.05} follows from \eqref{eq:2.08} and the fact that $|E_G|=dn/2$.
\end{proof}

By the choice of $\ell$ in \eqref{eq:2.01} and the fact that $x\leqslant\log_2 (1+x)\leqslant x/\log 2$ for every $x\in [0,1]$,
\[ \ell \geqslant \bigg\lceil\frac{1}{\log_2\big(1+\frac{h(G)}{2^{2q+4}d}\big)} \bigg\rceil \geqslant
\bigg\lceil\frac{2^{2q+3}}{\log_2\big(1+\frac{h(G)}{d}\big)}\bigg\rceil \]
and, therefore,
\begin{equation}\label{eq:2.09}
\Big(1 + \frac{h(G)}{d} \Big)^\ell\geqslant 2^{2^{2q+3}} \geqslant 4^{q+1}.
\end{equation}

\begin{lemma} \label{l:2.3}
We have
\begin{equation}\label{eq:2.10}
\big| B_G(N_{\tilde{v}},\ell)\big| \geqslant \Big(1-\frac{1}{2^{2q+4}}\Big)n.
\end{equation}
\end{lemma}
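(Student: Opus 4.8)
The plan is to show that $B_G(N_{\tilde v},\ell)$ covers all but a $2^{-(2q+4)}$-fraction of $[n]$ by expanding the ball around $N_{\tilde v}$ in two phases, one for each summand of $\ell$ in \eqref{eq:2.01}; write $\ell_1$ for the first ceiling and $\ell_2$ for the second, so $\ell=\ell_1+\ell_2$. Both phases run on a single deterministic expansion estimate: \emph{if $S\subseteq [n]$ and $|S|\leqslant n/2$, then $|B_G(S,1)|\geqslant(1+\tfrac{h(G)}{d})|S|$.} This follows from the definition \eqref{eq-cheeger} of $h(G)$: there are at least $h(G)|S|$ edges between $S$ and $S^\complement$, each such edge has its $S^\complement$-endpoint in $\partial B_G(S,1)$, and each vertex lies on at most $d$ edges, so $|\partial B_G(S,1)|\geqslant\tfrac{h(G)}{d}|S|$. (I also use that $h(G)\leqslant d$, which follows by testing \eqref{eq-cheeger} on a singleton.)

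First I would carry out Phase 1: with $S_j:=B_G(N_{\tilde v},j)$, I claim $|S_{\ell_1}|\geqslant n/2$. If $\tau\geqslant 1/2$ then $\ell_1=0$ and the claim is \eqref{eq:2.04}. If $\tau<1/2$ then $\ell_1$ is exactly calibrated so that $(1+\tfrac{h(G)}{d})^{\ell_1}\tau\geqslant\tfrac12$, and a short dichotomy finishes it: either $|S_j|>n/2$ for some $j<\ell_1$, and then $|S_{\ell_1}|\geqslant|S_j|>n/2$ by monotonicity of balls, or $|S_j|\leqslant n/2$ for every $j<\ell_1$, and then the expansion estimate applies at each of the first $\ell_1$ steps and gives $|S_{\ell_1}|\geqslant(1+\tfrac{h(G)}{d})^{\ell_1}|N_{\tilde v}|\geqslant(1+\tfrac{h(G)}{d})^{\ell_1}\tau n\geqslant n/2$ by \eqref{eq:2.04}.

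Then, in Phase 2, I would set $T:=S_{\ell_1}$ and follow the \emph{complement} $W_j:=B_G(T,j)^\complement$, a decreasing family with $|W_0|=n-|T|\leqslant n/2$, hence $|W_j|\leqslant n/2$ for all $j$. The claim is $|W_j|\geqslant(1+\tfrac{h(G)}{d})|W_{j+1}|$ for every $j\geqslant 0$ (trivial if $W_{j+1}=\emptyset$). To see it, apply the expansion estimate to $W_{j+1}$: every edge from $W_{j+1}$ to $W_{j+1}^\complement=B_G(T,j+1)$ has its $B_G(T,j+1)$-endpoint at distance exactly $j+1$ from $T$ — if it were at distance $\leqslant j$, then its neighbour in $W_{j+1}$ would lie in $B_G(T,j+1)$ — hence in $\partial B_G(T,j+1)=W_j\setminus W_{j+1}$, and the $d$-regularity count then forces $|W_j\setminus W_{j+1}|\geqslant\tfrac{h(G)}{d}|W_{j+1}|$, i.e. the claim. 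Iterating, $|W_{\ell_2}|\leqslant(1+\tfrac{h(G)}{d})^{-\ell_2}|W_0|\leqslant(1+\tfrac{h(G)}{d})^{-\ell_2}\tfrac n2$. Using $x\leqslant\log_2(1+x)\leqslant x/\log 2$ on $[0,1]$ together with $h(G)\leqslant d$ — exactly as in the chain of inequalities preceding \eqref{eq:2.09} — one has $\ell_2\geqslant\lceil 2^{2q+3}/\log_2(1+\tfrac{h(G)}{d})\rceil$, hence $(1+\tfrac{h(G)}{d})^{\ell_2}\geqslant 2^{2q+3}$, so $|W_{\ell_2}|\leqslant 2^{-(2q+4)}n$. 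Finally $B_G(T,\ell_2)=B_G(N_{\tilde v},\ell_1+\ell_2)=B_G(N_{\tilde v},\ell)$ by the triangle inequality for $\dist_G$, so $|B_G(N_{\tilde v},\ell)|=n-|W_{\ell_2}|\geqslant(1-2^{-(2q+4)})n$, which is \eqref{eq:2.10}.

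The one place that needs genuine care is the combinatorial bookkeeping within each phase: identifying the vertices newly swallowed by the ball (Phase 1), respectively the vertices that leave the complement (Phase 2), with the distance-sphere $\partial B_G(\cdot,\cdot)$. It is precisely this identification that lets the crude $d$-regularity inequality substitute for genuine vertex expansion; everything else is two finite geometric progressions driven by $\ell_1$ and $\ell_2$.
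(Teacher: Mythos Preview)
Your proof is correct and follows the paper's two-phase strategy (grow $B_G(N_{\tilde v},\cdot)$ to size $n/2$ in $\ell_1$ steps, then push beyond in $\ell_2$ steps), with Phase~1 essentially identical to the paper's argument. The Phase~2 bookkeeping differs: the paper tracks the ball directly, noting that for $n/2\leqslant|S|\leqslant(1-2^{-(2q+4)})n$ one has $|S^\complement|\geqslant 2^{-(2q+4)}|S|$, applies Cheeger to $S^\complement$, and obtains the slower growth rate $|B_G(S,1)|\geqslant(1+\tfrac{h(G)}{2^{2q+4}d})|S|$, for which $\ell_2$ is exactly calibrated. You instead track the complements $W_j$ and observe that every boundary edge of $W_{j+1}$ must land on the sphere $\partial B_G(T,j+1)=W_j\setminus W_{j+1}$, which yields the sharper contraction $|W_{j+1}|\leqslant(1+\tfrac{h(G)}{d})^{-1}|W_j|$; you then appeal to the paper's established bound $\ell_2\geqslant\lceil 2^{2q+3}/\log_2(1+\tfrac{h(G)}{d})\rceil$ to finish. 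Your route is slightly slicker (it recycles the Phase~1 rate and avoids the case split on $|S|$), while the paper's version makes more transparent why $\ell_2$ has the particular form it does in \eqref{eq:2.01}.
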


\begin{proof}
Set
\[\ell_1:= \bigg\lceil\frac{\max\big\{\log_2\frac{1}{2\tau },0\big\}}{\log_2\big(1+\frac{h(G)}{d}\big)} \bigg\rceil
\ \ \ \text{ and } \ \ \
\ell_2 := \bigg\lceil\frac{1}{\log_2\big(1+\frac{h(G)}{2^{2q+4}d}\big)} \bigg\rceil.\]
Notice, in particular, that if $\tau\geqslant \frac12$, then $\ell_1=0$. Moreover, we have
\begin{equation} \label{eq:2.11}
\big| B_G(N_{\tilde{v}},\ell_1)\big|\geqslant \min\bigg\{\frac{n}{2}, \Big(1+\frac{h(G)}{d}\Big)^{\ell_1}|N_{\tilde{v}}|\bigg\}=\frac{n}{2}.
\end{equation}
Indeed, if $\tau\geqslant \frac12$, then \eqref{eq:2.11} follows immediately from \eqref{eq:2.04}; on the other hand, if $\tau<1/2$, then \eqref{eq:2.11} follows from the definition of $h(G)$ in \eqref{eq-cheeger}, the $d$-regularity of $G$, the choice of $\ell_1$ and \eqref{eq:2.04}. Next observe that, for every $S\subseteq [n]$ with $\frac{n}{2}\leqslant |S|\leqslant (1-2^{-(2q+4)})n$, we have $| S^{\complement}| \geqslant 2^{-(2q+4)} |S|$. Hence, invoking again the definition of $h(G)$ and the $d$-regularity of $G$, for every $S\subseteq [n]$ with $\frac{n}{2}\leqslant|S|\leqslant (1-2^{-(2q+4)})n$, we have
\begin{equation} \label{eq-new1}
\big|B_G(S,1)\big|\geqslant \min\bigg\{ \Big(1-\frac{1}{2^{2q+4}}\Big)n, \Big(1+\frac{h(G)}{2^{2q+4}d}\Big) |S| \bigg\}.
\end{equation}
The proof is completed by \eqref{eq:2.11} and \eqref{eq-new1} after observing that $\ell= \ell_1+\ell_2$.
\end{proof}

We define a dyadic partition of $A$ into $(A_k)_{k=1}^\infty$ by setting, for every integer $k\geqslant 1$,
\begin{equation} \label{eq-A_k}
A_k:= \Big\{v\in [n] \colon  2^{k-1}2Q_\tau(f) < \varrho\big(f(v),f(\tilde v)\big) \leqslant 2^k 2Q_\tau(f)]\Big\}.
\end{equation}

\begin{lemma}[Main lemma] \label{l:2.4}
For every integer $k\geqslant 1$, there exists an integer $m_k\geqslant 0$ such that
\begin{equation}\label{eq:2.12}
\Big| \Big\{ \{v,u\}\in E_G \colon \varrho\big(f(v), f(u)\big)\geqslant \frac{2^k2 Q_\tau(f)}{2^{m_k} 8\ell} \Big\} \Big|\geqslant \frac{3^{m_k q}}{2 \ell d^\ell} |A_k|.
\end{equation}
\end{lemma}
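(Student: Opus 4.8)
The plan is to manufacture long $G$-edges by routing vertices of $A_k$ through the graph toward the ``core'' $N_{\tilde v}$, turning the distance gap between $A_k$ and $N_{\tilde v}$ into the existence of long edges via the triangle inequality, and then controlling how often a single edge can be reused by a double-counting argument. The integer $m_k$ will encode a trade-off: when we can only guarantee a few very long edges we take $m_k$ small, while when instead there are many merely moderately long ones we take $m_k$ large; the exponent $3^{m_kq}$ in \eqref{eq:2.12} is precisely the amount of this flexibility that needs to survive later, when $q$-th powers are taken.

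Write $t:=\frac{2^{k+1}Q_\tau(f)}{8\ell}$ and, for $s>0$, $L_s:=\{\{v,u\}\in E_G:\varrho(f(v),f(u))\geqslant s\}$, so that \eqref{eq:2.12} asks for some $m_k\geqslant 0$ with $|L_{2^{-m_k}t}|\geqslant \frac{3^{m_kq}}{2\ell d^\ell}|A_k|$. By Lemma~\ref{l:2.3}, all but at most $2^{-(2q+4)}n$ vertices of $[n]$ lie in $B_G(N_{\tilde v},\ell)$. For each $v\in A_k\cap B_G(N_{\tilde v},\ell)$ fix a $G$-geodesic $P_v$ from $v$ to $N_{\tilde v}$; it has at most $\ell$ edges, and since its endpoint inside $N_{\tilde v}$ is within $\varrho$-distance $Q_\tau(f)$ of $f(\tilde v)$ while $\varrho(f(v),f(\tilde v))>2^kQ_\tau(f)$ (as $v\in A_k$ and $k\geqslant 1$), the triangle inequality gives $\sum_{\{v',u'\}\in P_v}\varrho(f(v'),f(u'))> (2^k-1)Q_\tau(f)\geqslant 2^{k-1}Q_\tau(f)=2\ell t$. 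Thus each $P_v$ has at most $\ell$ edges of total $\varrho$-length more than $2\ell t$, so it already contains an edge of $\varrho$-length $>2t$; a dyadic analysis of the $\varrho$-lengths along $P_v$ shows more, namely, distributing the mass $2\ell t$ across the scales $\{2^{-m}t\}_{m\geqslant 0}$ and pigeonholing with geometric weights produces, for each such $v$, a scale $m(v)\geqslant 0$ at which $P_v$ carries at least of order $\ell\,3^{-m(v)}$ edges of $\varrho$-length $\geqslant 2^{-m(v)}t$.

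I would then average this over $v$, pigeonhole once more to fix a single scale $m_k$ that works for a positive proportion of these vertices, and bound $\sum_v \#\{e\in P_v:\varrho\text{-length of }e\geqslant 2^{-m_k}t\}$ from below by a constant multiple of $3^{m_kq}\,|A_k|/\ell$. Against this incidence count stands an elementary multiplicity bound: if $e\in P_v$ then the endpoint of $e$ nearer to $v$ lies within graph-distance $\ell-1$ of $v$, so $e$ appears in $P_v$ for at most $2\,\max_w|B_G(w,\ell-1)|\leqslant 2\cdot\frac{d^\ell-1}{d-1}\leqslant d^\ell$ choices of $v$ (using $d\geqslant 3$). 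Dividing the incidence lower bound by $d^\ell$ yields the required lower bound on $|L_{2^{-m_k}t}|$, which is \eqref{eq:2.12}.

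The step I expect to be the real obstacle is making the dyadic/pigeonhole bookkeeping quantitatively correct, so that a single scale $m_k$ emerges with \emph{exactly} the exponent $3^{m_kq}$ (and not, say, $3^{m_k}$ or $2^{m_kq}$): this calibration is what guarantees, downstream in the proof of Proposition~\ref{prop:non-concentrated}, that each dyadic shell $A_k$ ``pays for itself'' with the harmless factor $(3/2)^{m_kq}\geqslant 1$. A related technical nuisance is the annuli $A_k$ that are small or that stick out of $B_G(N_{\tilde v},\ell)$: for those the geodesics $P_v$ have to be taken longer, say of length $\ell+j$, with a correspondingly worse multiplicity $d^{\ell+j}$ and shorter guaranteed edges $\tfrac{2\ell t}{\ell+j}$ (hence a forced increase of $m_k$), and one must use the geometric decay of $|B_G(N_{\tilde v},\ell+j)^\complement|$ in $j$ — coming from Cheeger's inequality exactly as in the proof of Lemma~\ref{l:2.3} — together with the extra room provided by the larger $m_k$ to absorb this loss shell by shell.
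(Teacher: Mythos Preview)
Your approach has a genuine gap in how the factor $3^{m_kq}$ is produced. The dyadic analysis along a \emph{single} path $P_v$ of length $\leqslant\ell$ can at best trade edge length for edge count: if $P_v$ has total $\varrho$-length $\geqslant 2\ell t$, pigeonholing across scales gives at most of order $\ell\cdot 3^{-m}$ edges at scale $2^{-m}t$, a count that \emph{decreases} in $m$ and carries no dependence on $q$. Your own next step then asserts an incidence lower bound ``a constant multiple of $3^{m_kq}\,|A_k|/\ell$'', which does not follow from the preceding one; in fact, combining your per-path count with the multiplicity bound $\ell d^\ell$ yields only $|L_{2^{-m_k}t}|\gtrsim 3^{-m_k}|A_k|/(\ell d^\ell)$, off from \eqref{eq:2.12} by a factor $3^{m_k(q+1)}$. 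The difficulty is most visible when $|A_k|$ is tiny: routing one vertex to $N_{\tilde v}$ along a path of length $\leqslant\ell$ can never produce more than $\ell$ edge incidences, whereas \eqref{eq:2.12} asks for $3^{m_kq}/(2\ell d^\ell)$ distinct edges, which must be allowed to grow to order $n/d^\ell$.

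The paper's mechanism is not a multiscale analysis of fixed paths but an \emph{iterated expansion} of the set $A_k$ itself. One defines $A_{k,m}\subseteq B_G(A_{k,m-1},\ell)$ to be those vertices whose $\varrho$-distance to $\tilde v$ is still $\geqslant(1-\tfrac14(1-2^{-m}))\,2^k Q_\tau(f)$. The choice of $\ell$ in \eqref{eq:2.01} is engineered so that $(1+h(G)/d)^\ell\geqslant 4^{q+1}$, hence $|B_G(A_{k,m-1},\ell)|\geqslant 4^{q+1}|A_{k,m-1}|$ while the latter is small. Now a dichotomy: either $|A_{k,m}|\geqslant 3^q|A_{k,m-1}|$, or else there are $\geqslant 3^q|A_{k,m-1}|\geqslant 3^{mq}|A_k|$ vertices in $B_G(A_{k,m-1},\ell)$ whose $\varrho$-distance to $\tilde v$ has dropped by $\gtrsim 2^{k-m}Q_\tau(f)$, and routing each of these back to $A_{k,m-1}$ along a path of length $\leqslant\ell$ forces a long edge, contradicting the assumed failure of \eqref{eq:2.12} at level $m$. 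Thus the growth persists until $|A_{k,m_k}|$ is comparable to $n$, at which point Lemma~\ref{l:2.3} lets one route $A_{k,m_k}\cap B_G(N_{\tilde v},\ell)$ to $N_{\tilde v}$ and conclude. The exponent $q$ enters only through the expansion budget $4^{q+1}$, not through any per-path pigeonhole; this is the idea your proposal is missing.
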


\begin{proof}
Fix $k$. If $A_k=\emptyset$, then \eqref{eq:2.12} is straightforward; so, suppose that $A_k\neq\emptyset$. Assume, towards a contradiction, that for every nonnegative integer $m$,
\begin{equation}\label{eq:2.13}
\Big| \Big\{ \{v,u\}\in E_G \colon \varrho\big(f(v), f(u)\big)\geqslant \frac{2^k2 Q_\tau(f)}{2^m 8\ell} \Big\} \Big|< \frac{3^{mq}}{2 \ell d^\ell} |A_k|.
\end{equation}
Set $A_{k,0}:=A_k$, and define recursively, for every positive integer $m$,
\[ A_{k,m}:=\bigg\{v\in[n]\colon \mathrm{dist}_G(v,A_{k,m-1})\leqslant \ell \text{ and }
\varrho\big(f(v),f(\tilde v)\big)\geqslant \Big(1-\frac{1}{4}\big(1-\frac{1}{2^m}\big)\Big) 2^{k-1}2Q_\tau(f)\bigg\}.\]
Notice that the sequence $(A_{k,m})_{m=0}^\infty$ is increasing with respect to inclusion.

\begin{claim} \label{cl:2.5}
For every integer $m\geqslant 1$, if\, $4^{q+1}|A_{k,m-1}|\leqslant \frac{n}{2}$, then $|A_{k,m}|\geqslant 3^q|A_{k,m-1}|$.
\end{claim}

\begin{proof}[Proof of Claim \ref{cl:2.5}]
Suppose not. We define $m_0$ to be the smallest positive integer such that
$4^{q+1}|A_{k,m_0-1}|\leqslant \frac{n}{2}$ and
\begin{equation} \label{eq:2.14}
|A_{k,m_0}|< 3^q |A_{k,m_0-1}|.
\end{equation}
Since the sequence $(A_{k,m})_{m=0}^\infty$ is increasing, by the choice of $m_0$, we must have
\begin{equation}\label{eq:2.15}
|A_{k,m_0-1}|\geqslant 3^{(m_0-1)q} |A_k|.
\end{equation}
Set
\[ U:= \bigg\{v\in[n]\colon \mathrm{dist}_G(v,A_{k,m_0-1})\leqslant \ell \text{ and }
\varrho\big(f(v),f(\tilde v)\big)< \Big(1-\frac{1}{4}\big(1-\frac{1}{2^{m_0}}\big)\Big) 2^{k-1}2Q_\tau(f)\bigg\}. \]
By the definition of $h(G)$, the $d$-regularity of $G$, the fact that $4^{q+1}|A_{k,m_0-1}|\leqslant \frac{n}{2}$ and \eqref{eq:2.09}, we~have
\[ \big| B_G(A_{k,m_0-1},\ell)\big|  \geqslant
\min\bigg\{\frac{n}{2}, \Big(1+\frac{h(G)}{d}\Big)^\ell |A_{k,m_0-1}| \bigg\} \geqslant 4^{q+1}  |A_{k,m_0-1}|. \]
After observing that $B_G(A_{k,m_0-1},\ell)$ is the disjoint union of $A_{k,m_0}$ and $U$, by \eqref{eq:2.14} and \eqref{eq:2.15},
\begin{equation}\label{eq:2.16}
|U|\geqslant 3^q |A_{k,m_0-1}| \geqslant 3^{m_0\, q}|A_k|.
\end{equation}
By the triangle inequality and the choice of $U$ and $A_{k,m_0-1}$, for every $u\in U$ and every $v\in A_{k,m_0-1}$,
\begin{equation}\label{eq:2.17}
\varrho\big(f(v), f(u)\big) \geqslant \frac{2^{k}2Q_\tau(f)}{2^{m_0}8}.
\end{equation}
Next observe that $U$ is a subset of $B_G(A_{k,m_0-1},\ell)$, and so, for every $u\in U$, there exists a path $P_u$ of length at most $\ell$ connecting $u$ to some vertex in $A_{k,m_0-1}$. By \eqref{eq:2.17}, for every $u\in U$, there exists an edge $\{v_u,w_u\}\in E_G$ that belongs to this path $P_u$ such that
\[ \varrho\big(f(v_u), f(w_u)\big) \geqslant \frac{2^{k}2Q_\tau(f)}{2^{m_0}8\ell}. \]
On the other hand, due to the $d$-regularity of $G$, every edge of the form $\{v_u,w_u\}$ belongs to at most $\ell d^\ell$ many paths of the form $P_{u'}$. Therefore,
\[ \Big| \Big\{ \{v,u\}\in E_G \colon \varrho\big(f(v), f(u)\big)\geqslant \frac{2^k2 Q_\tau(f)}{2^{m_0} 8\ell} \Big\} \Big|
\geqslant \frac{|U|}{\ell d^\ell} \stackrel{\eqref{eq:2.16}}{\geqslant} \frac{3^{m_0 q}}{\ell d^\ell} |A_k|, \]
which contradicts \eqref{eq:2.13}. The proof of Claim \ref{cl:2.5} is thus completed.
\end{proof}

We define $m_k$ to be the smallest nonnegative positive integer such that
\begin{equation}\label{eq:2.18}
4^{q+1}|A_{k,m_k}|>\frac{n}{2}.
\end{equation}
By repeated applications of Claim \ref{cl:2.5}, we obtain that
\begin{equation} \label{eq:2.19}
|A_{k,m_k}| \geqslant 3^{m_k q}|A_k|.
\end{equation}
Hence, by \eqref{eq:2.10}, \eqref{eq:2.18} and \eqref{eq:2.19}, we have
\begin{equation} \label{eq:2.20}
\big|B_G(N_{\tilde{v}},\ell) \cap A_{k,m_k}\big|\geqslant \frac{|A_{k,m_k}|}{2}\geqslant \frac{1}{2} \,3^{m_kq}|A_k|.
\end{equation}
By the triangle inequality and the choice of $N_{\tilde{v}}$ and $A_{k,m_k}$, for every $v\in N_{\tilde{v}}$ and every $u \in A_{k,m_k}$,
\begin{equation} \label{eq:2.21}
\varrho\big(f(v), f(u)\big) \geqslant \frac{2^{k}2Q_\tau(f)}{8}.
\end{equation}
Notice that for every $v\in B_G(N_{\tilde{v}},\ell) \cap A_{k,m_k}$, there exists a path $P_v$ of length at most $\ell$ connecting $v$ to some vertex in $N_{\tilde{v}}$. Therefore, by \eqref{eq:2.21}, for every $v\in B_G(N_{\tilde{v}},\ell) \cap A_{k,m_k}$, there exists an edge $\{u_v,w_v\}\in E_G$ that belongs to this path $P_v$ such that
\[ \varrho\big(f(u_v), f(w_v)\big) \geqslant \frac{2^{k}2Q_\tau(f)}{8\ell}. \]
As in the proof of Claim \ref{cl:2.5}, we note that, due to the $d$-regularity of $G$, every edge of the form $\{u_v,w_v\}$ belongs to at most $\ell d^\ell$ many paths of the form $P_{v'}$. Thus, we conclude that
\[ \Big| \Big\{ \{v,u\}\in E_G \colon \varrho\big(f(v), f(u)\big)\geqslant \frac{2^k2 Q_\tau(f)}{ 8\ell} \Big\} \Big|
\geqslant \frac{|B_G(N_{\tilde{v}},\ell) \cap A_{k,m_k}|}{\ell d^\ell}
\stackrel{\eqref{eq:2.20}}{\geqslant} \frac{3^{m_k q}}{2\ell d^\ell} |A_k|, \]
which contradicts, again, \eqref{eq:2.13}. The proof of Lemma \ref{l:2.4} is completed.
\end{proof}

We are finally in a position to complete the proof of Proposition \ref{prop:non-concentrated}. By the choice of $A$ in \eqref{eq-A&B}, the choice of the sequence $(A_k)_{k=1}^\infty$ in \eqref{eq-A_k}, and \eqref{eq:2.12}, we have
\begin{align}\label{eq:2.22}
& \sum_{v\in A} \varrho\big(  f(v), f(\tilde{v})\big) ^q  \leqslant \sum_{k=1}^{\infty}2^{kq}\big(2Q_\tau(f)\big)^q |A_k|\\
&  \ \leqslant  (2\,\ell^{q+1} d^\ell 8^q)\cdot \sum_{k=1}^{\infty} \Big(\frac{2}{3} \Big)^{m_k}
 \Big(\frac{2^k2 Q_\tau(f)}{2^{m_k} 8\ell}\Big)^q \,
\Big| \Big\{ \{v,u\}\in E_G \colon \varrho\big(f(v), f(u)\big)\geqslant \frac{2^k2 Q_\tau(f)}{2^{m_k} 8\ell} \Big\} \Big| \nonumber\\
& \ \leqslant  (2\,\ell^{q+1} d^\ell 8^q)\cdot \sum_{m=0}^{\infty} \Big(\frac{2}{3} \Big)^m \sum_{k=1}^{\infty}
 \Big(\frac{2^k2 Q_\tau(f)}{2^m 8\ell}\Big)^q \,
\Big| \Big\{ \{v,u\}\in E_G \colon \varrho\big(f(v), f(u)\big)\geqslant \frac{2^k2 Q_\tau(f)}{2^m 8\ell} \Big\} \Big| \nonumber\\
& \ \leqslant (4\, \ell^{q+1} d^\ell 8^q) \cdot \sum_{m=0}^{\infty} \Big(\frac{2}{3} \Big)^m  \sum_{\{v,u\}\in E_G} \varrho\big(f(v), f(u)\big)^q
\leqslant (12\, \ell^{q+1} d^\ell 8^q) \sum_{\{v,u\}\in E_G} \varrho\big(f(v), f(u)\big)^q. \nonumber
\end{align}
The desired estimate \eqref{eq:2.02} follows by combining \eqref{eq:2.05} and \eqref{eq:2.22}. The proof of Proposition \ref{prop:non-concentrated} is completed.


\part{Metric Poincar\'{e} inequalities for random regular graphs} \label{part2}

\section{Models for random regular graphs, and related random variables} \label{sec-models}

In what follows, let $n\geqslant d\geqslant 3$ be integers such that $dn$ is even. Recall that $G(n,d)$ denotes the set of all $d$-regular graphs on $[n]$. By $\mathbb{S}_n$ we denote the symmetric group over $[n]$. The purpose of the following sequence of definitions is to split the generation of a random regular graph into several stages: a partial edge set is selected, followed by a completion of the graph, followed by a random labelling of the vertices.

\subsection{The random variable $\boldsymbol{G}$} \label{subsec-G}

We fix a random variable $\boldsymbol{G}$ uniformly distributed on $G(n,d)$.

\subsection{The random variable $\boldsymbol{U}$} \label{subsec-U}

Define the ``isomorphism'' equivalence relation $\sim$ on $G(n,d)$ by
\begin{align} \label{rm-e1}
G_1 \sim G_2 \Leftrightarrow & \text{ there exists } \sigma\in \mathbb{S}_n  \text{ such that for all } e\in \binom{[n]}{2}, \\
& \ e\in E_{G_1} \text{ if and only if } \sigma(e)\in E_{G_2}. \nonumber
\end{align}
Namely, $G_1\sim G_2$ if and only if $G_1$ and $G_2$ are isomorphic as graphs. We fix, once and for all, a ``representation" map $r\colon G(n,d)\to G(n,d)$ such that for all $G_1,G_2\in G(n,d)$ with $G_1\sim G_2$, we have $G_1\sim r(G_1)=r(G_2)$, and we define
\begin{equation} \label{rm-e2}
\boldsymbol{U}:=r(\boldsymbol{G}).
\end{equation}

\subsection{The random variables $\boldsymbol{G}_\ell$ and $\boldsymbol{U}_\ell$} \label{subsec-Gl}

Let $\ell \leqslant \frac{dn}{2}$ be a positive integer parameter. Let $\boldsymbol{L}$ be a random variable such that, conditioned on any realization $G$ of $\boldsymbol{G}$, $\boldsymbol{L}$ is uniformly distributed on $\binom{E_G}{\ell}$, and set
\begin{equation} \label{rm-e3}
\boldsymbol{G}_{\ell} := \boldsymbol{L} \ \ \ \text{ and } \ \ \ \boldsymbol{G}_{-\ell} := E_{\boldsymbol{G}}\setminus \boldsymbol{L}.
\end{equation}
Namely, $\boldsymbol{G}_{\ell}$ is obtained by randomly selecting $\ell$ many edges of $\boldsymbol{G}$, while $\boldsymbol{G}_{-\ell}$ is obtained by randomly deleting $\ell$ many edges of $\boldsymbol{G}$. Notice that $\boldsymbol{G}_{\frac{dn}{2}-\ell}$ is equal in distribution to $\boldsymbol{G}_{-\ell}$.

Respectively, let $\boldsymbol{N}$ be a random variable such that, conditioned on any realization $U$ of $\boldsymbol{U}$, $\boldsymbol{N}$ is uniformly distributed on~$\binom{E_U}{\ell}$, and set
\begin{equation} \label{rm-e4}
\boldsymbol{U}_{\ell} := \boldsymbol{N} \ \ \ \text{ and } \ \ \ \boldsymbol{U}_{-\ell} := E_{\boldsymbol{U}}\setminus \boldsymbol{N}.
\end{equation}
Again, observe that  $\boldsymbol{U}_{\frac{dn}{2}-\ell}$ is equal in distribution to $\boldsymbol{U}_{-\ell}$.

\subsection{The random variables $\boldsymbol{H}$ and $\boldsymbol{H}_\ell$} \label{subsec-H}

As before, let $\ell \in \big[\frac{dn}{2}\big]$. We fix a random variable $\boldsymbol{\pi}$ that is uniformly distributed on $\mathbb{S}_n$ and independent of $\boldsymbol{U}$, $\boldsymbol{U}_\ell$ and $\boldsymbol{U}_{-\ell}$, and we set
\begin{equation} \label{rm-e5}
\boldsymbol{H}:= \boldsymbol{\pi}(\boldsymbol{U}), \ \ \
\boldsymbol{H}_\ell:= \boldsymbol{\pi}(\boldsymbol{U}_\ell) \ \ \ \text{ and } \ \ \
\boldsymbol{H}_{-\ell}:= \boldsymbol{\pi}(\boldsymbol{U}_{-\ell}).
\end{equation}
We will need the following lemma.

\begin{lemma}\label{rm-l1}
For every $\ell \in \big[\frac{dn}{2}\big]$, the pair $(\boldsymbol{G},\boldsymbol{G}_\ell)$ is equal in distribution to the pair~$(\boldsymbol{H},\boldsymbol{H}_\ell)$. Consequently, the pair $(\boldsymbol{G},\boldsymbol{G}_{-\ell})$ is equal in distribution to the pair $(\boldsymbol{H},\boldsymbol{H}_{-\ell})$.
\end{lemma}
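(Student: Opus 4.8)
The plan is to compute the joint law of $(\boldsymbol{H},\boldsymbol{H}_\ell)$ and match it against that of $(\boldsymbol{G},\boldsymbol{G}_\ell)$. By construction the latter is the law that first samples $G'$ uniformly from $G(n,d)$ and then, independently, samples an $\ell$-element subset of $E_{G'}$ uniformly at random. So I would reduce the claim to proving two statements: \textbf{(a)} $\boldsymbol{H}$ is uniform on $G(n,d)$; and \textbf{(b)} conditionally on $\{\boldsymbol{H}=G'\}$, the variable $\boldsymbol{H}_\ell$ is uniform on $\binom{E_{G'}}{\ell}$. Throughout, fix $G'\in G(n,d)$ and set $U^*:=r(G')$. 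Since $r$ is constant on isomorphism classes (and idempotent) and $\boldsymbol{\pi}(\boldsymbol{U})$ is always isomorphic to $\boldsymbol{U}$, the event $\{\boldsymbol{H}=G'\}$ is exactly $\{\boldsymbol{U}=U^*\}\cap\{\boldsymbol{\pi}\in\mathrm{Iso}(U^*,G')\}$, where $\mathrm{Iso}(U^*,G'):=\{\sigma\in\mathbb{S}_n:\sigma(U^*)=G'\}$.

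For \textbf{(a)}: $\mathrm{Iso}(U^*,G')$ is a left coset of $\mathrm{Aut}(U^*)$, hence has $|\mathrm{Aut}(U^*)|$ elements; meanwhile the $\mathbb{S}_n$-orbit of $U^*$ is its isomorphism class within $G(n,d)$, which by orbit--stabilizer has $n!/|\mathrm{Aut}(U^*)|$ elements. Using that $\boldsymbol{U}=r(\boldsymbol{G})$ with $\boldsymbol{G}$ uniform, and that $\boldsymbol{\pi}$ is uniform on $\mathbb{S}_n$ and independent of $\boldsymbol{U}$, this gives
\[
\Prob[\boldsymbol{H}=G'] = \Prob[\boldsymbol{U}=U^*]\cdot\Prob\big[\boldsymbol{\pi}\in\mathrm{Iso}(U^*,G')\big]
= \frac{n!/|\mathrm{Aut}(U^*)|}{|G(n,d)|}\cdot\frac{|\mathrm{Aut}(U^*)|}{n!} = \frac{1}{|G(n,d)|}.
\]
For \textbf{(b)}: I would condition further on $\boldsymbol{U}=U^*$ and on $\boldsymbol{\pi}=\sigma$ for an arbitrary fixed $\sigma\in\mathrm{Iso}(U^*,G')$. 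Since $\boldsymbol{\pi}$ is independent of $(\boldsymbol{U},\boldsymbol{U}_\ell)$, this conditioning does not change the conditional law of $\boldsymbol{U}_\ell$ given $\boldsymbol{U}=U^*$, which is uniform on $\binom{E_{U^*}}{\ell}$; and as $\sigma$ restricts to a bijection $E_{U^*}\to E_{G'}$, the variable $\boldsymbol{H}_\ell=\sigma(\boldsymbol{U}_\ell)$ is then uniform on $\binom{E_{G'}}{\ell}$. Since this holds for every $\sigma$ in the event $\{\boldsymbol{H}=G'\}$, \textbf{(b)} follows, and \textbf{(a)}${}+{}$\textbf{(b)} give $(\boldsymbol{H},\boldsymbol{H}_\ell)\stackrel{d}{=}(\boldsymbol{G},\boldsymbol{G}_\ell)$.

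Finally, the ``consequently'' clause follows with no extra work: $\boldsymbol{H}_{-\ell}=\boldsymbol{\pi}(\boldsymbol{U}_{-\ell})=\boldsymbol{\pi}(E_{\boldsymbol{U}}\setminus\boldsymbol{U}_\ell)=E_{\boldsymbol{H}}\setminus\boldsymbol{H}_\ell$, so $(\boldsymbol{H},\boldsymbol{H}_{-\ell})$ is the image of $(\boldsymbol{H},\boldsymbol{H}_\ell)$ under the deterministic map $(G',F)\mapsto(G',E_{G'}\setminus F)$, which also carries $(\boldsymbol{G},\boldsymbol{G}_\ell)$ to $(\boldsymbol{G},\boldsymbol{G}_{-\ell})$; equality in distribution is preserved under a common deterministic map. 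I do not expect a genuine obstacle; the one point that needs care is the bookkeeping with independence---that conditioning on the realized value of $\boldsymbol{\pi}$ (the step that decouples the random relabelling from the internal edge-choice $\boldsymbol{N}$) leaves the conditional law of $\boldsymbol{N}$ given $\boldsymbol{U}$ undisturbed, which is precisely what the assumed independence of $\boldsymbol{\pi}$ from $(\boldsymbol{U},\boldsymbol{U}_\ell,\boldsymbol{U}_{-\ell})$ guarantees.
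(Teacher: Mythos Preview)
Your proof is correct and follows essentially the same approach as the paper's: both first verify that $\boldsymbol{H}$ is uniform on $G(n,d)$ via the orbit--stabilizer identity, and then compute the conditional law of $\boldsymbol{H}_\ell$ given $\boldsymbol{H}=G'$ by conditioning on the pair $(\boldsymbol{U},\boldsymbol{\pi})$ and using the independence of $\boldsymbol{\pi}$ from $(\boldsymbol{U},\boldsymbol{U}_\ell)$. Your handling of the ``consequently'' clause via the deterministic map $(G',F)\mapsto(G',E_{G'}\setminus F)$ is a clean way to finish.
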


\begin{proof}
Fix $G_0\in G(n,d)$, set
\[ \mathrm{Hom}(G_0):=\big\{G\in G(n,d)\colon G\sim G_0\big\} \ \ \ \text{ and } \ \ \
\mathrm{Stab}(G_0):=\big\{ \sigma\in \mathbb{S}_n\colon \sigma(G_0)= G_0\big\},\]
and observe that $|\mathrm{Hom}(G_0)| = |\mathbb{S}_n|/|\mathrm{Stab}(G_0)|$. Hence,
\[ \mathbb{P}\big[\boldsymbol{H}=G_0\big] = \frac{|\mathrm{Hom}(G_0)|}{|G(n,d)|} \cdot \frac{|\mathrm{Stab}(G_0)|}{|\mathbb{S}_n|}=
\frac{1}{|G(n,d)|} = \mathbb{P}\big[\boldsymbol{G}=G_0\big]; \]
that is, $\boldsymbol{G}$ is equal in distribution to $\boldsymbol{H}$.

Next let $W_0$ be a subgraph of $G_0$ with exactly $\ell$ edges. Notice that
\begin{equation} \label{rm-e6}
\mathbb{P}\big[(\boldsymbol{G},\boldsymbol{G}_\ell)=(G_0,W_0)\big] = \frac{1}{|G(n,d)|} \cdot
\frac{1}{\binom{dn/2}{\ell}}.
\end{equation}
On the other hand, since $\boldsymbol{G}$ is equal in distribution to $\boldsymbol{H}$, we have
\begin{equation} \label{rm-e7}
\mathbb{P}\big[(\boldsymbol{H},\boldsymbol{H}_\ell)=(G_0,W_0)\big] =
\mathbb{P}\big[\boldsymbol{H}_\ell= W_0 \, \big| \, \boldsymbol{H}=G_0\big] \cdot \frac{1}{|G(n,d)|}.
\end{equation}
Finally, observe that
\begin{align} \label{rm-e8}
\mathbb{P}\big[\boldsymbol{H}_\ell & = W_0 \, \big| \, \boldsymbol{H}=G_0\big] =
\sum_{\substack{(U,\pi)\\\pi(U)=G_0}}
\mathbb{P}\big[\boldsymbol{H}_\ell= W_0 \, \big| \, (\boldsymbol{U},\boldsymbol{\pi})=(U,\pi)\big] \cdot
\frac{\mathbb{P}\big[(\boldsymbol{U},\boldsymbol{\pi})=(U,\pi)\big]}{\mathbb{P}\big[\boldsymbol{H}=G_0\big]} \\
& \stackrel{\eqref{rm-e5}}{=} \sum_{\substack{(U,\pi)\\\pi(U)=G_0}}
\mathbb{P}\big[ \boldsymbol{U}_\ell= \pi^{-1}(W_0) \, \big| \, (\boldsymbol{U},\boldsymbol{\pi})=(U,\pi)\big] \cdot
\frac{\mathbb{P}\big[(\boldsymbol{U},\boldsymbol{\pi})=(U,\pi)\big]}{\mathbb{P}\big[\boldsymbol{H}=G_0\big]} = \frac{1}{\binom{dn/2}{\ell}}. \nonumber
\end{align}
By \eqref{rm-e6}--\eqref{rm-e8}, the result follows.
\end{proof}

\section{Main technical results} \label{sec-technical}

The technical basis of Theorem \ref{random-poincare} is a compression procedure for maps $f\colon V_{\boldsymbol{G}} \to M$. The procedure relies on tracking only the images of a small number of vertices called \textit{seeds}. This section presents the main technical results and definitions needed to analyze this compression procedure.

\subsection{Seeds, and functions taking values on seeds} \label{subsec-seeds}

In what follows, we allow some functions to assign the value ``$\square$'', an auxiliary element that is our notation for ``undefined''. This allows for the formal construction of functions that may fail to assign a value to some inputs.

Let $G$ be a graph on $[n]$ (not necessarily regular), and let $m,k$ be positive integers with $m,k\leqslant n$. We view the vertices in  $[k]$ as ``seeds". We define a function $g_{G,m,k}\colon [n]\to [k]\cup\{\square\}$ as follows. Let $v\in [n]$ be arbitrary.
\begin{enumerate}
\item If $\partial B_G(v,m)\cap [k]\neq \emptyset$ (that is, there is a vertex in $[k]$ which is at distance from $v$ exactly~$m$), then let $g_{G,m,k}(v)$ be the smallest, with respect to the natural order of $[n]$, element $s\in [k]$ that satisfies $\dist_G(s,v)=m$.
\item Otherwise, if there is no vertex $s\in [k]$ with $\dist_G(s,v)=m$, then set $g_{G,m,k}(v):=\square$.
\end{enumerate}

We also introduce the following definition.

\begin{definition}[Invariance] \label{defn-invariant}
Assume the setting in Section \ref{sec-models}. Let $\mathcal{L}$ be a random $(\boldsymbol{U},\boldsymbol{\pi})$-measurable set of unordered pairs of elements of\, $[n]$. We say that $\mathcal{L}$ is \emph{invariant} if, almost everywhere,
\begin{equation} \label{e1-invariance}
\mathcal{L}(\boldsymbol{U},\boldsymbol{\pi})= \boldsymbol{\pi}\big(\mathcal{L}(\boldsymbol{U},\mathrm{Id})\big),
\end{equation}
where $\mathrm{Id}$ denotes the identity permutation on $[n]$.
\end{definition}

The following proposition is the most technically demanding result of this article. Its proof is given in Section \ref{sec-prop-5.2-new}. Recall that a \emph{perfect matching} of a set $X$ is a collection of pairwise vertex-disjoint edges of the complete graph on $X$, such that every element of $X$ belongs to some edge.

\begin{proposition} \label{prop-seeds-and-conc-final}
For every positive integer $d\geqslant 3$ there exists $K_1=K_1(d)\geqslant 1$ with the following property. Let $K\geqslant K_1$ and let $m,n$ be integers with
\begin{equation} \label{eq:068a}
K \leqslant m\leqslant  K ^{-1}\log_{d-1} n.
\end{equation}
Set $\ell_0:=\big\lfloor\frac{d n}{K m}\big\rfloor$ and\, $k_0:=\big\lfloor\frac{K n}{(d-1)^m}\big\rfloor$. Assume the setting in Section \ref{sec-models}, and let $\mathcal{L}$ be a random $(\boldsymbol{U},\boldsymbol{\pi})\text{-measurable}$ invariant (in the sense of Definition \ref{defn-invariant}) set of unordered pairs of vertices such that almost everywhere,
\begin{equation} \label{eq:069a}
|\mathcal{L}|\leqslant  K ^{-1} (d-1)^m\,n.
\end{equation}
Finally, define the event $\mathcal{X} := \big[|\mathcal{T}(\boldsymbol{U},m)|\geqslant n - \sqrt{n}\big]$, where $\mathcal{T}(\boldsymbol{U},m)$ is as in \eqref{eq-tree}. Then, there exists a $(\boldsymbol{U},\boldsymbol{U}_{-\ell_0}, \boldsymbol{\pi})$-measurable set\, $\mathcal{O}$ of vertices such that, for every realization $U$ of\, $\boldsymbol{U}$ on $\mathcal{X}$, conditioned on the event $[\boldsymbol{U}=U]$, with probability at least $1 - \frac{3}{\sqrt[4]{K}}$, the following hold.
\begin{enumerate}
\item [(i)] \label{prop-i} For every $v\in\mathcal{O}$,
\begin{enumerate}
\item[(i.1)] \label{prop-i.1} $\mathrm{deg}_{\boldsymbol{H}_{-\ell_0}}(v)=d-1$,
\item[(i.2)] \label{prop-i.2} $g_{\boldsymbol{H},m,k_0}(v)=g_{\boldsymbol{H}_{-\ell_0},m,k_0}(v)\neq\square$,
\item[(i.3)] \label{prop-i.3} $\big\{v,g_{\boldsymbol{H},m,k_0}(v)\big\}\not\in \mathcal{L}$, and
\item[(i.4)] \label{prop-i.4} $\big|\big\{w\in\mathcal{O} \colon g_{\boldsymbol{H}_{-\ell_0},m,k_0}(w)=g_{\boldsymbol{H}_{-\ell_0},m,k_0}(v)\big\}\big| \leqslant \frac{(d-1)^m}{m}$.
\end{enumerate}
\item[(ii)] \label{prop-ii} The set of edges $E_{\boldsymbol{H}}\setminus E_{\boldsymbol{H}_{-\ell_0}}$ contains a perfect matching of the set $\mathcal{O}$.
\item[(iii)] \label{prop-iii} We have $|\mathcal{O}|\geqslant 0.6 \ell_0$.
\end{enumerate}
\end{proposition}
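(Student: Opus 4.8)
Fix a realization $U$ of $\boldsymbol{U}$ lying in $\mathcal{X}$. Conditionally on $\boldsymbol{U}=U$, the only remaining randomness is the uniformly random set $\boldsymbol{U}_{\ell_0}$ of $\ell_0$ deleted edges (so $\boldsymbol{U}_{-\ell_0}=E_U\setminus\boldsymbol{U}_{\ell_0}$) and the independent uniform permutation $\boldsymbol{\pi}$; in $U$-coordinates the seeds of $\boldsymbol{H}=\boldsymbol{\pi}(U)$ form the uniformly random $k_0$-set $R:=\boldsymbol{\pi}^{-1}([k_0])$, and by invariance the forbidden-pair set of $\mathcal{L}$ in $U$-coordinates is the fixed set $\mathcal{L}_0:=\mathcal{L}(U,\mathrm{Id})$, with $|\mathcal{L}_0|\leq K^{-1}(d-1)^m n$. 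The first step is a local reduction: for a tree-like vertex $w$ set $x^\ast(w):=\arg\min_{x\in\partial B_U(w,m)}\boldsymbol{\pi}(x)$; then, writing $v=\boldsymbol{\pi}(w)$, I claim that $g_{\boldsymbol{H},m,k_0}(v)=g_{\boldsymbol{H}_{-\ell_0},m,k_0}(v)=\boldsymbol{\pi}(x^\ast(w))\neq\square$ as soon as (a) $\partial B_U(w,m)\cap R\neq\emptyset$ and (b) the unique length-$m$ path in $U$ from $w$ to $x^\ast(w)$ avoids $\boldsymbol{U}_{\ell_0}$. The only nontrivial point uses that $B_U(w,3m)$ is a tree: a vertex at $U$-distance $<m$ from $w$ can never sit at $\boldsymbol{U}_{-\ell_0}$-distance exactly $m$, so preserving this single path already pins down both seed functions.

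Next I would build $\mathcal{O}$ in stages. Let $W_0$ consist of the vertices $w$ with $w\in\mathcal{T}(U,m)$, $\mathrm{deg}_{\boldsymbol{U}_{-\ell_0}}(w)=d-1$, and whose unique incident deleted edge joins $w$ to another such vertex; a first-moment bound shows that at most $o(\ell_0)$ vertices lie in two or more deleted edges, so $W_0$ is a union of $\ell_0-o(\ell_0)$ deleted edges. From $W_0$ remove, together with matched partners, every $w$ that fails (a) or (b) or has $\{w,x^\ast(w)\}\in\mathcal{L}_0$, and finally remove matched pairs greedily until every seed is the assigned seed of at most $(d-1)^m/m$ surviving vertices. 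Pushing the resulting set forward by $\boldsymbol{\pi}$ yields a $(\boldsymbol{U},\boldsymbol{U}_{-\ell_0},\boldsymbol{\pi})$-measurable set $\mathcal{O}$ for which (i.1)--(i.3) and (ii) hold by construction; it remains to prove the size bound (iii) and the probability estimate.

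For the relevant first moments: for tree-like $w$ one has $\mathbb{P}[\partial B_U(w,m)\cap R=\emptyset]\leq e^{-\Omega(K)}$ since $k_0|\partial B_U(w,m)|/n\asymp K$; conditionally on (a) the point $x^\ast(w)$ is uniform on $\partial B_U(w,m)$, so the path to $x^\ast(w)$ begins with the deleted edge at $w$ with probability exactly $1/d$, while its other $m-1$ edges meet $\boldsymbol{U}_{\ell_0}$ with probability $\leq 2m\ell_0/(dn)\leq 2/K$. Because the two endpoints of a deleted edge have disjoint distance-$m$ spheres in the surrounding tree, the two ``$1/d$'' events are independent, so a deleted edge survives tests (a) and (b) with probability $\geq((d-1)/d)^2-O(1/K)\geq 4/9-O(1/K)$. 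Averaging the $\mathcal{L}_0$-test over deleted edges and using $|\mathcal{L}_0|\leq K^{-1}(d-1)^m n$ costs only $O(\ell_0/K)$ edges in expectation, and a tail bound for $\sum_s(\mathrm{usage}(s)-(d-1)^m/m)_+$---each $\mathrm{usage}(s)$ being sub-Poisson with mean $\asymp(d-1)^m/(Km)$, far below the threshold $(d-1)^m/m$---shows the overuse repair removes only $o(\ell_0)$ vertices. Hence $\mathbb{E}|\mathcal{O}|\geq(8/9-O(1/K))\ell_0$.

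The main obstacle is upgrading these expectations to high-probability statements: the deleted-edge endpoints form a comparatively dense set at scale $m$ and entangle the randomness of $\boldsymbol{U}_{\ell_0}$ and $\boldsymbol{\pi}$, so one cannot simply invoke pairwise independence of ``$v$ is typical'' events. Here I would use that every predicate above is determined inside an $O(m)$-neighbourhood, together with the numerical fact $(d-1)^{O(m)}\leq n^{O(1/K)}=o(\sqrt n)$: a McDiarmid-type bounded-difference inequality for the jointly sampled pair $(\boldsymbol{U}_{\ell_0},\boldsymbol{\pi})$, with per-coordinate fluctuation $\lesssim(d-1)^m$, forces the count of surviving deleted edges to deviate by $\lesssim(d-1)^m\sqrt{n}\ll\ell_0$ with probability $1-e^{-n^{\Omega(1)}}$; the auxiliary counts (non-tree-like seeds, deleted edges sharing a vertex, $\mathcal{L}_0$-bad endpoints, total overuse) are each bounded by Markov's inequality and fail with probability $\ll K^{-1/4}$ once $K\geq K_1(d)$. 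Summing these few bad events yields the claimed $1-3K^{-1/4}$, and on their complement $|\mathcal{O}|\geq(8/9-o(1))\ell_0>0.6\ell_0$, which together with the construction establishes (i)--(iii).
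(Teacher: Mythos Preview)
Your strategy is broadly correct and arrives at the same construction of $\mathcal{O}$ as the paper (pushforward by $\boldsymbol{\pi}$ of those deleted-edge endpoints in $U$ that pass the local tests), but the concentration machinery you invoke is genuinely different from what the paper does.

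The paper does not attempt a single bounded-difference inequality on the joint randomness $(\boldsymbol{U}_{\ell_0},\boldsymbol{\pi})$. Instead it splits $\boldsymbol{\pi}$ into the pair $(\boldsymbol{R},\boldsymbol{\alpha})=(\boldsymbol{\pi}^{-1}([k_0]),\text{induced order})$, and processes the three independent sources $\boldsymbol{U}_{-\ell_0}$, $\boldsymbol{R}$, $\boldsymbol{\alpha}$ in stages. In the first stage only first-moment/Markov arguments are used to locate a set $\mathcal{V}_1\subseteq\mathcal{D}_{d-1}\cap\mathcal{T}(U,m)$ of size $\geq 2\ell_0(1-O(K^{-1/4}))$ on which $|\partial B_{\boldsymbol{U}_{-\ell_0}}(v,m)|$, $|\mathcal{J}_v|$, $|\mathcal{R}_v|$ are all controlled. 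The genuine concentration step is then obtained, separately for $\boldsymbol{R}$ and for $\boldsymbol{\alpha}$, via the Hajnal--Szemer\'edi equitable-coloring theorem: one colors $\mathcal{V}_1$ (and later $\mathcal{V}_2(\boldsymbol{R})$) with $M+1\leq (d-1)^{2m+1}$ colors so that vertices of the same color are at $U$-distance $>2m$, whence the indicator events become genuinely independent within each color class and a Chernoff bound applies piecewise. The overuse bound (your (i.4)) is handled not by a tail bound on $\mathrm{usage}(s)$ but by defining $\mathcal{R}:=\{s:|\partial B_U(s,m)\cap\mathcal{D}_{d-1}|\geq (d-1)^m/m\}$, proving $\mathbb{P}[s\in\mathcal{R}]$ is tiny, and then requiring $x^*(v)\notin\mathcal{R}_v$.

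Your McDiarmid route is viable: the count of surviving deleted edges is indeed Lipschitz with constant $O((d-1)^m)$ under edge-swaps in $\boldsymbol{U}_{\ell_0}$ and transpositions in $\boldsymbol{\pi}$, and since $(d-1)^{2m}\leq n^{2/K}$ one gets deviation $o(\ell_0)$ with probability $1-\exp(-n^{\Omega(1)})$. Two points warrant more care than your sketch supplies. First, the product structure (uniform subset $\times$ uniform permutation) needs an explicit martingale exposure, e.g.\ generate $\boldsymbol{U}_{\ell_0}$ as the first $\ell_0$ elements of a random ordering of $E_U$ and then append the $\boldsymbol{\pi}$-transposition filtration; the bounded differences are easy but should be stated. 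Second, your ``sub-Poisson'' claim for $\mathrm{usage}(s)$ is not literally about $\mathrm{usage}(s)$ (the events $x^*(w)=s$ share $\boldsymbol{\pi}$); what actually works is the domination $\mathrm{usage}(s)\leq|\partial B_U(s,m)\cap\mathcal{D}_{d-1}|$, which is a function of $\boldsymbol{U}_{\ell_0}$ alone and for which the paper's hypergeometric calculation yields $\mathbb{P}[\,\cdot\,\geq(d-1)^m/m]\leq (O(d)/K)^{(d-1)^m/m}$. With that fix, your $\sum_s(\mathrm{usage}(s)-T)_+$ argument goes through. The paper's staged approach buys a cleaner separation of randomness and avoids the joint-McDiarmid bookkeeping; your approach is shorter once the Lipschitz and domination claims are nailed down.
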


\subsection{Seeds, and concentrated functions} \label{subsec-concentrated}

We will need the following result that shows that concentrated functions, in the sense of Definition \ref{def3.2}, have the same statistics when restricted on typical, relatively large, sets.

\begin{lemma} \label{lemma-5.2}
Let $0<\varepsilon\leqslant \frac{1}{31}$, and let $n\geqslant k\geqslant \frac{2}{\varepsilon}$ be integers. Let $\mathcal{M}=(M,\varrho)$ be a metric space, let $f\colon [n]\to M$, and assume that $f$ is $(5,1,\varepsilon)$-concentrated according to Definition~\ref{def3.2}. Let $\boldsymbol{\pi}$ be a random permutation uniformly distributed on\, $\mathbb{S}_n$, and define the random function $\boldsymbol{f}\colon [n] \to M$ by setting $\boldsymbol{f}:= f\circ \boldsymbol{\pi}^{-1}$. (Note that, by construction, $\ave_{\mathcal{M}}(\boldsymbol{f},1)=\ave_{\mathcal{M}}(f,1)$ almost everywhere.) Then we have
\begin{equation} \label{e1-lemma-5.2}
\mathbb{P}\bigg[ \Big|\Big\{ \{v,u\}\in \binom{[k]}{2}\colon \varrho\big(\boldsymbol{f}(v),\boldsymbol{f}(u)\big) \geqslant
\frac{\ave_{\mathcal{M}}(f,1)}{5}\Big\}\Big| \geqslant (1-2\varepsilon)\binom{k}{2} \bigg] \geqslant 1-\frac{15}{\varepsilon^2\, k}.
\end{equation}
\end{lemma}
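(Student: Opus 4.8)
The argument is a second‑moment (Chebyshev) estimate for the number of ``bad'' seed‑pairs that appear after a random relabelling. Write $A:=\ave_{\mathcal M}(f,1)$. If $A=0$ then $f$ is constant, every pair is ``good'', and \eqref{e1-lemma-5.2} is trivial; so assume $A>0$. Since $f$ is $(5,1,\varepsilon)$‑concentrated we have $Q_\varepsilon(f)\ge A/5$, and hence, by the definition of the empirical quantile, the number of ordered pairs $(v,u)\in[n]^2$ with $\varrho(f(v),f(u))<A/5$ is at most $\varepsilon n^2$. Discarding the $n$ diagonal pairs, the graph $\Gamma$ on vertex set $[n]$ whose edge set consists of all unordered pairs $\{v,u\}$ with $v\ne u$ and $\varrho(f(v),f(u))<A/5$ has $b:=|E(\Gamma)|<\varepsilon n^2/2$. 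Because $\boldsymbol\pi$ is uniform on $\mathbb S_n$, the set $T:=\{\boldsymbol\pi^{-1}(1),\dots,\boldsymbol\pi^{-1}(k)\}$ is a uniformly random $k$‑subset of $[n]$, and the random quantity $X:=\binom k2-\big|\{\{v,u\}\in\binom{[k]}{2}\colon \varrho(\boldsymbol f(v),\boldsymbol f(u))\ge A/5\}\big|$ is exactly $e(\Gamma[T])$, the number of edges of $\Gamma$ with both endpoints in $T$ (the map $\{v,u\}\mapsto\{\boldsymbol\pi^{-1}(v),\boldsymbol\pi^{-1}(u)\}$ being a bijection $\binom{[k]}{2}\to\binom{T}{2}$). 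Thus it suffices to show $\mathbb P\big[X>2\varepsilon\binom k2\big]\le 15/(\varepsilon^2 k)$.

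For the moments, note that a fixed pair, triple, resp. quadruple of vertices lies inside $T$ with probability $\alpha:=\frac{k(k-1)}{n(n-1)}$, $\beta:=\frac{k(k-1)(k-2)}{n(n-1)(n-2)}$, resp. $\gamma:=\frac{k(k-1)(k-2)(k-3)}{n(n-1)(n-2)(n-3)}$. Hence $\mathbb E[X]=b\alpha$, and using $n\ge k\ge 2/\varepsilon\ge 62$ one gets $\mathbb E[X]<\varepsilon\,\frac{n}{n-1}\binom k2\le\frac{62}{61}\varepsilon\binom k2$, so $2\varepsilon\binom k2-\mathbb E[X]\ge\frac{60}{61}\varepsilon\binom k2$. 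Writing $X=\sum_{e\in E(\Gamma)}\mathbf 1[e\subseteq T]$ and splitting $\mathbb E[X^2]$ according to whether two edges coincide, share exactly one vertex, or are disjoint gives $\mathbb E[X^2]=b\alpha+P\beta+(b(b-1)-P)\gamma$, where $P:=\sum_{v\in[n]}d_\Gamma(v)(d_\Gamma(v)-1)\le 2bn$ is the number of ordered pairs of distinct edges sharing a vertex; therefore $\Var(X)=b\alpha(1-\alpha)+P(\beta-\alpha^2)+(b(b-1)-P)(\gamma-\alpha^2)$. The one point that needs a small computation is that $\gamma\le\alpha^2$ whenever $4\le k\le n$ (equivalently, the indicators $\mathbf 1[v\in T]$ are negatively associated, so covariances across disjoint edges are $\le 0$); together with $b(b-1)-P\ge 0$ this kills the last term, leaving $\Var(X)\le b\alpha+P\beta$. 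Plugging in $b<\varepsilon n^2/2$, $P\le 2bn<\varepsilon n^3$, and the elementary bounds $\alpha\le 2k^2/n^2$, $\beta\le 2k^3/n^3$ (valid for $n\ge 62$) yields $\Var(X)<\varepsilon k^2+2\varepsilon k^3\le 3\varepsilon k^3$.

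Finally, Chebyshev's inequality gives
\[
\mathbb P\Big[X>2\varepsilon\tbinom k2\Big]\ \le\ \frac{\Var(X)}{\big(2\varepsilon\binom k2-\mathbb E[X]\big)^2}\ \le\ \frac{3\varepsilon k^3}{\big(\tfrac{60}{61}\varepsilon\binom k2\big)^2}\ \le\ \frac{13}{\varepsilon k}\ \le\ \frac{15}{\varepsilon^2 k},
\]
the penultimate step using $k\ge 62$ to absorb the ratios $\frac{k}{(k-1)^2}$, and the last using $\varepsilon<1$. Taking complements proves \eqref{e1-lemma-5.2}.

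\textbf{Main obstacle.} The only place requiring care is the variance bound: the naive estimate $\mathbb E[X^2]\le b\alpha+P\beta+b^2\gamma$ is too lossy (it would only give a bound of the form $C/\varepsilon k$ times an unwanted $\varepsilon^{-1}$‑type loss once one plugs in $b\lesssim\varepsilon n^2$), so one genuinely needs the disjoint‑edge covariances to be non‑positive, i.e. $\gamma\le\alpha^2$ for $4\le k\le n$. Everything else is routine bookkeeping, exploiting $\varepsilon\le\frac1{31}$ and $k\ge\frac2\varepsilon$ to control the quotients $\frac n{n-1}$, $\frac k{k-1}$, and $\frac{k}{(k-1)^2}$.
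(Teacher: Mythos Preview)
Your proof is correct and follows essentially the same second-moment/Chebyshev strategy as the paper: both reduce to estimating the number of ``bad'' (resp.\ ``good'') pairs inside a uniformly random $k$-subset of $[n]$, compute first and second moments, and apply Chebyshev. The only cosmetic difference is that you work with the complementary count $X=e(\Gamma[T])$ of bad pairs and make the variance estimate fully explicit (in particular the negative-correlation step $\gamma\le\alpha^2$), whereas the paper counts good pairs and records the second-moment bound $\mathbb{E}[X^2]\le\mathbb{E}[X]^2(1+15/k)$ as a ``straightforward computation''; your route even yields the sharper intermediate bound $13/(\varepsilon k)$ before relaxing to the stated $15/(\varepsilon^2 k)$.
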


\begin{proof}
Define the set of ``good'' pairs of vertices by
\begin{equation} \label{e2-lemma-5.2}
\mathcal{G}:= \bigg\{ \{v,u\}\in \binom{[n]}{2}\colon \varrho\big(f(v),f(u)\big) \geqslant \frac{\ave_{\mathcal{M}}(f,1)}{5}\bigg\}.
\end{equation}
Since $f$ is $(5,1,\varepsilon)$-concentrated, by the definition of $Q_{\varepsilon}(f)$, we see that $|\mathcal{G}|\geqslant (1-\varepsilon) \binom{n}{2}$.

Let $\boldsymbol{R}$ be a random variable uniformly distributed on $\binom{[n]}{k}$; it is enough to show that
\begin{equation} \label{e3-lemma-5.2}
\mathbb{P}\bigg[ \Big|\mathcal{G} \cap \binom{[\boldsymbol{R}]}{2}\Big| \geqslant (1-2\varepsilon)\binom{k}{2} \bigg] \geqslant 1-\frac{15}{\varepsilon^2\, k}.
\end{equation}
Let $X:=\sum_{\{v,u\}\in\mathcal{G}} \mathbbm{1}_{\boldsymbol{R}}(v)\mathbbm{1}_{\boldsymbol{R}}(u)$ denote the random variable that counts the number of good pairs of vertices that are contained in $\boldsymbol{R}$. Then, a straightforward computation shows that
\begin{equation} \label{e4-lemma-5.2}
\binom{k}{2} \geqslant \mathbb{E}[X]\geqslant (1-\varepsilon) \binom{k}{2} \ \ \ \text{ and } \ \ \
\mathbb{E}[X^2]\leqslant \mathbb{E}[X]^2 \, \Big(1+\frac{15}{k}\Big).
\end{equation}
Therefore, by Chebyshev's inequality,
\begin{equation} \label{e5-lemma-5.2}
\mathbb{P}\bigg[ \Big|\mathcal{G} \cap \binom{[\boldsymbol{R}]}{2}\Big| < (1-2\varepsilon)\binom{k}{2} \bigg] \leqslant
\mathbb{P}\bigg[ \big|X-\mathbb{E}[X]\big|\geqslant \varepsilon \binom{k}{2}\bigg] \leqslant \frac{15}{\varepsilon^2\, k}. \qedhere
\end{equation}
\end{proof}

\subsection{Matchings} \label{subsec-matchings}

For every nonempty finite set $X$ with even cardinality, let $\mathrm{M}(X)$ denote the set of all perfect matchings of $X$, and let $\mathbb{P}_{\mathrm{M}(X)}$ denote the uniform probability measure on $\mathrm{M}(X)$.

We will need the following result that expresses the fact that a uniformly random perfect matching of a set $X$ is unlikely to avoid a given large set of pairs of $X$.

\begin{lemma} \label{lemma-matchings}
Let $\ell=2r\geqslant 4$ be an even positive integer, and let\, $0<c\leqslant \varepsilon \leqslant \frac12$. Also let\, $Y\subseteq \binom{[\ell]}{2}$ with\, $|Y|\geqslant (1-\varepsilon)\, \binom{\ell}{2}$. Then,
\begin{equation} \label{match-e1}
\mathbb{P}_{\mathrm{M}(X)}\big[ \mu\colon |\mu\cap Y|\leqslant cr\big] \leqslant \exp\bigg(-\Big(\frac{1-2c}{4}\Big) \cdot \log\Big(\frac{1-2c}{16e}\cdot \frac{1}{\varepsilon}\Big) \cdot \ell\bigg).
\end{equation}
\end{lemma}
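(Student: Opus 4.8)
The plan is to bound the probability by a union bound over the possible structure of the "bad" edges of the matching that lie outside $Y$. Write $Z := \binom{[\ell]}{2}\setminus Y$, so $|Z|\leqslant \varepsilon\binom{\ell}{2}\leqslant \varepsilon\ell^2/2$. A perfect matching $\mu$ with $|\mu\cap Y|\leqslant cr$ has at least $r - cr = (1-c)r$ of its $r$ edges inside $Z$; call this subcollection $\nu\subseteq\mu\cap Z$, and fix $t:=\lceil(1-c)r\rceil$ of these edges, so $\nu$ is a partial matching of $[\ell]$ with exactly $t$ edges, all lying in $Z$. The idea is: first choose $\nu$ (a $t$-edge partial matching contained in $Z$), then complete it to a perfect matching of $[\ell]$ arbitrarily, and sum the probabilities.

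The key steps are as follows. First, count the number of $t$-edge partial matchings contained in $Z$: such a matching is determined by an ordered sequence of $t$ edges of $Z$ divided by $t!$, so there are at most $|Z|^t/t! \leqslant (\varepsilon\ell^2/2)^t/t!$ of them. Second, for a \emph{fixed} partial matching $\nu$ with $t$ edges, compute $\mathbb{P}_{\mathrm{M}([\ell])}[\nu\subseteq\mu]$: a uniform perfect matching of $[\ell]$ can be generated by repeatedly pairing off vertices, and the probability that $t$ prescribed disjoint pairs all appear is exactly $\frac{(\ell-2t)!\,2^{t}\, (r-t)!}{\ell!/(2^{r}r!)}\cdot(\ldots)$; more cleanly, $\mathbb{P}[\nu\subseteq\mu] = \prod_{i=0}^{t-1}\frac{1}{\ell-2i-1} \leqslant \left(\frac{1}{\ell-2t}\right)^{t}$, and since $t\leqslant r = \ell/2$ we can bound $\ell - 2t \geqslant \ell - 2\lceil(1-c)r\rceil \geqslant c\ell - 2$, which for $\ell\geqslant 4$ and the stated range of $c$ is $\gtrsim c\ell$. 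Third, multiply and use $t!\geqslant (t/e)^t$ together with $t\asymp (1-c)\ell/2$ to get a bound of the shape
\[
\mathbb{P}_{\mathrm{M}([\ell])}\big[|\mu\cap Y|\leqslant cr\big] \;\leqslant\; \frac{1}{t!}\left(\frac{\varepsilon\ell^2}{2}\cdot\frac{1}{c\ell}\right)^{t} \;\leqslant\; \left(\frac{e\,\varepsilon\,\ell}{2c\,t}\right)^{t} \;\leqslant\; \left(\frac{C\varepsilon}{1-2c}\right)^{(1-2c)\ell/4},
\]
after absorbing the constants appropriately; taking logarithms yields the claimed exponent $-\left(\tfrac{1-2c}{4}\right)\log\!\left(\tfrac{1-2c}{16e}\cdot\tfrac1\varepsilon\right)\ell$, where the precise constants $16e$ and the factor $4$ are chosen to make all the crude bounds above go through when $\ell\geqslant 4$ and $c\leqslant\varepsilon\leqslant\tfrac12$.

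I expect the main obstacle to be purely bookkeeping: getting the constants in the exponent to match exactly what is stated, since several steps (the bound $\ell-2t\gtrsim c\ell$, the Stirling bound on $t!$, replacing $t$ by $(1-c)r$ versus $(1-2c)\ell/4$, and summing over $t\geqslant(1-c)r$ rather than fixing one value) each lose a constant factor, and one must be careful that the final geometric-type sum over the possible values of $|\mu\cap Y|$ does not swamp the bound — this is handled by noting the per-term bound decays geometrically in $t$ so the sum is dominated by its smallest-$t$ term up to a factor of $2$. The one genuinely non-routine point to get right is the exact formula $\mathbb{P}[\nu\subseteq\mu]=\prod_{i=0}^{t-1}(\ell-2i-1)^{-1}$ for a fixed $t$-edge partial matching $\nu$; everything else is estimation. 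No appeal to the earlier results in the paper is needed — this lemma is self-contained combinatorics.
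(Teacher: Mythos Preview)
Your union-bound strategy over $t$-edge partial matchings contained in $Z=Y^\complement$ is a legitimate alternative to the paper's proof, but there is a genuine gap in the execution. You take $t=\lceil(1-c)r\rceil$ and then bound $\mathbb{P}[\nu\subseteq\mu]=\prod_{i=0}^{t-1}(\ell-2i-1)^{-1}\leqslant(\ell-2t)^{-t}$ together with $\ell-2t\geqslant c\ell-2\gtrsim c\ell$. But the hypothesis only says $0<c\leqslant\varepsilon\leqslant\tfrac12$, with no lower bound on $c$; when $c\ell$ is $O(1)$ (and in the paper's application $c=C_d^{-1/4}$ is taken extremely small) the quantity $c\ell-2$ can be negative and the bound $(\ell-2t)^{-t}$ is useless. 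Even where it is positive, plugging $\ell-2t\approx c\ell$ and $t\approx(1-c)\ell/2$ into your expression gives roughly $\bigl(e\varepsilon/(c(1-c))\bigr)^{(1-c)\ell/2}$, which carries a $1/c$ inside the base that is absent from the stated bound and cannot be ``absorbed into constants.''

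The fix is to use fewer bad edges: since $|\mu\cap Z|\geqslant(1-c)r$ certainly implies $|\mu\cap Z|\geqslant t'$ for $t'=\lceil(1-2c)r/2\rceil$, run your argument with $t'$ in place of $t$. Then $\ell-2t'\geqslant \ell/2-O(1)$ uniformly in $c$, and your computation yields $\bigl(O(\varepsilon)/(1-2c)\bigr)^{(1-2c)\ell/4}$, which is exactly the shape of the stated bound. The paper achieves the same effect by a different mechanism: it orders the edges of the matching randomly as $\xi_1,\dots,\xi_r$, discards the second half, observes that $\mathbb{P}[\xi_i\in Z]\leqslant 8\varepsilon$ for each $i\leqslant\lceil r/2\rceil$ (the remaining vertex pool has size at least $\ell/2$), and then stochastically dominates $\sum_{i\leqslant\lceil r/2\rceil}\mathbbm{1}_Z(\xi_i)$ by a $\mathrm{Bin}(\lceil r/2\rceil,8\varepsilon)$ random variable. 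Both routes ultimately exploit the same idea---throw away half the matching to keep denominators bounded away from zero---but the paper's sequential/binomial phrasing avoids the partial-matching count entirely.
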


\begin{proof}
Let $(\boldsymbol{\xi}_1,\dots,\boldsymbol{\xi}_r) \in \binom{[\ell]}{2}^r$ be a random vector that is uniformly distributed on the set of perfect matchings of $[\ell]$. Observe that
\begin{align} \label{match-e2}
\mathbb{P}_{\mathrm{M}(X)}\big[ \mu\colon & |\mu\cap Y|\leqslant cr\big] =
\mathbb{P}\Big[ \sum_{i=1}^r \mathbbm{1}_Y(\boldsymbol{\xi}_i) \leqslant cr\Big]\leqslant
\mathbb{P}\bigg[ \sum_{i=1}^{\lceil r/2\rceil} \mathbbm{1}_Y(\boldsymbol{\xi}_i) \leqslant cr\bigg] \\
& = \mathbb{P}\bigg[ \sum_{i=1}^{\lceil r/2\rceil} \mathbbm{1}_{Y^\complement}(\boldsymbol{\xi}_i) > \lceil r/2\rceil - cr\bigg]
\leqslant  \mathbb{P}\bigg[ \sum_{i=1}^{\lceil r/2\rceil} \mathbbm{1}_{Y^\complement}(\boldsymbol{\xi}_i) \geqslant \lceil r/2\rceil \cdot (1-2c)\bigg]. \nonumber
\end{align}
Moreover, since $\ell \geqslant 4$, for every $i\in \big[\lceil r/2\rceil\big]$,
\begin{equation} \label{match-e3}
\mathbb{P}\big[ \boldsymbol{\xi}_i\in Y^\complement\big] \leqslant \frac{\varepsilon \cdot \binom{\ell}{2}}{\binom{\ell-2(i-1)}{2}}
\leqslant \frac{\varepsilon \cdot \binom{\ell}{2}}{\binom{\ell-r}{2}}=
\frac{\varepsilon \cdot \binom{\ell}{2}}{\binom{\ell/2}{2}}= 4\varepsilon \cdot \frac{\ell}{\ell-2}\leqslant 8\varepsilon.
\end{equation}
Finally, note that the random variable $\sum_{i=1}^{\lceil r/2\rceil} \mathbbm{1}_{Y^\complement}(\boldsymbol{\xi}_i)$ is stochastically dominated\footnote{Recall that a (real-valued) random variable $X$ is \emph{stochastically dominated} by a (real-valued) random variable $Y$ if $\mathbb{P}[X\geqslant t]\leqslant \mathbb{P}[Y\geqslant t]$, for all $t\in\mathbb{R}$.} by the binomial $\mathrm{Bi}\big(\lceil r/2\rceil, 8\varepsilon\big)$. Thus, setting $k:=\big\lceil (1-2c)\lceil r/2\rceil\big\rceil$, we have
\begin{align} \label{match-e4}
\mathbb{P}\bigg[ \sum_{i=1}^{\lceil r/2\rceil} & \mathbbm{1}_{Y^\complement}(\boldsymbol{\xi}_i) \geqslant \lceil r/2\rceil \cdot (1-2c)\bigg] \leqslant \mathbb{P}\Big[ \mathrm{Bi}\big(\lceil r/2\rceil, 8\varepsilon\big)\geqslant \lceil r/2\rceil \cdot (1-2c)\Big] \\
& \leqslant \binom{\lceil r/2\rceil}{k} \cdot (8\varepsilon)^k \leqslant \Big( \frac{\lceil r/2\rceil \cdot e}{k} \cdot 8\varepsilon\Big)^k
\leqslant \exp\bigg(-\Big(\frac{1-2c}{4}\Big) \cdot \log\Big(\frac{1-2c}{16e}\cdot \frac{1}{\varepsilon}\Big) \cdot \ell\bigg). \nonumber
\end{align}
The proof is completed by combining \eqref{match-e2}--\eqref{match-e4}.
\end{proof}

\section{Small metric spaces and well-conditioned metric spaces} \label{sec-dichotomy}

The proof of Theorem \ref{random-poincare} is sensitive to the structure of a given metric space $\mathcal{M}$. In this section, we prove Theorem \ref{random-poincare} in two special settings. First, we give a proof in the case where $\mathcal{M}$ has small cardinality. Second, we consider the setting where $\mathcal{M}$ has moderate cardinality, and at most exponential {\it aspect ratio}.

\begin{definition}[Well-conditioned metric spaces]  \label{def-regular-metric}
Let $\mathcal{M}=(M,\varrho)$ be a finite metric space, let
\begin{equation} \label{e-diam-m}
\mathrm{diam}(\mathcal{M}):=\max\big\{\varrho(x,y)\colon x,y\in M\big\} \ \text{ and } \
m(\mathcal{M}):=\min\big\{\varrho(x,y)\colon x,y\in M \text{ and } x\neq y\big\}
\end{equation}
denote the diameter and the minimal distance between distinct points of $\mathcal{M}$, respectively, and define the \emph{aspect ratio} of $\mathcal{M}$ by setting
\begin{equation} \label{eq-aspect-ratio}
a(\mathcal{M}):= \frac{\mathrm{diam}(\mathcal{M})}{m(\mathcal{M})}.
\end{equation}
We say that the metric space $\mathcal{M}$ is \emph{well-conditioned} if\, $a(\mathcal{M})\leqslant \exp\big(|M|\big)$.
\end{definition}

\begin{remark}
Notice that every connected graph equipped with the shortest-path distance is a $\text{well-conditioned}$ metric space in the sense of Definition \ref{def-regular-metric}. Also observe that if $\mathrm{diam}(\mathcal{M})\leqslant 1$, then $\mathcal{M}=(M,\varrho)$ is well-conditioned if $\varrho(x,y)\geqslant e^{-|M|}$, for all distinct $x,y\in M$. Thus, in a $\text{well-conditioned}$ metric space with bounded diameter, there is no pair of points having exponentially small distance.
\end{remark}

As already noted, our first theorem establishes Theorem \ref{random-poincare} under a restriction on the size of~$\cM$.

\begin{theorem}[Small metric spaces] \label{random-poincare-2}
For every integer $d\geqslant 3$, there exist constants $C_d\geqslant 1$ and $\tau>0$, that depend only on $d$, such that for any metric space $\mathcal{M}=(M,\varrho)$ with $N:=|M|\geqslant d$,
\begin{equation} \label{rp-e1}
\mathbb{P}_{G(n,d)}\Big[G\colon \gamma(G,\varrho)\leqslant C_d\, \log_{d-1}\log_{d-1} N\Big] \geqslant
1- O_d\Big(\frac{1}{n^{\tau}}\Big) - O_d\bigg( \exp\Big( - \frac{n}{N^2}\Big)\bigg).
\end{equation}
\end{theorem}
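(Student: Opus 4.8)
The plan is to prove Theorem~\ref{random-poincare-2} by reducing, via the comparison results of Part~1, to controlling $\gamma(\boldsymbol{G},\varrho;f)$ only for $1$-concentrated functions $f\colon[n]\to M$, and then running a union bound over such functions that is affordable precisely because $N$ is small. First I would fix the degree $d$ and invoke Theorem~\ref{thm-bourgain} together with Matou\v{s}ek's extrapolation (in the quantitative form, Theorem~\ref{thm-extrapolation-quantitative}) to reduce the claimed bound to the regime $\log_{d-1}\log_{d-1}N \ll \log n$; in the complementary regime the Bourgain bound $\gamma(\boldsymbol{G},\varrho)\lesssim_d \log n$ already suffices. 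Next, by Proposition~\ref{prop:non-concentrated} applied with $q=1$, $\tau=\tfrac12$ (and the Friedman-type expansion estimate of Corollary~\ref{Friedman}, which holds with probability $1-O_d(n^{-\tau_1})$), every function that is \emph{not} $(5,1,\tfrac12)$-concentrated automatically satisfies $\gamma(\boldsymbol{G},\varrho;f)=O_d(1)$. So it remains to bound, with the stated failure probability, the supremum of $\gamma(\boldsymbol{G},\varrho;f)$ over $(5,1,\tfrac12)$-concentrated $f$.

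For a single fixed $1$-concentrated $f$, I would show that, with probability $1-\exp(-\Theta_d(n))$ over $\boldsymbol{G}\in G(n,d)$, one has $\gamma(\boldsymbol{G},\varrho;f)=O_d(1)$. The mechanism is the one sketched in Subsection~\ref{subsec2.3}: reveal the edges of $\boldsymbol{G}$ one at a time in a configuration/pairing model; since $f$ is $(5,1,\tfrac12)$-concentrated, each newly formed edge $\{v,u\}$ has at least a constant probability (conditioned on the past) of satisfying $\varrho(f(v),f(u))\geqslant \ave_\cM(f,1)/5$, because a constant fraction of all pairs of vertices are ``good'' in the sense of \eqref{e2-lemma-5.2}; a standard martingale/Chernoff argument then gives that a constant fraction of the $dn/2$ edges are good, so the $1$-Dirichlet form of $f$ is $\gtrsim_d \ave_\cM(f,1)$, i.e.\ $\gamma(\boldsymbol{G},\varrho;f)=O_d(1)$, all with probability $1-\exp(-\Theta_d(n))$. (One subtlety: the pairing model produces multigraphs, so I would either work with the configuration model and condition on simplicity, at a $\Theta_d(1)$ probability cost, or reveal edges with the standard sequential-switching care; this is routine.)

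Then I would take a union bound. The number of $(5,1,\tfrac12)$-concentrated functions $f\colon[n]\to M$ is at most the total number of functions, $N^n$. Multiplying $N^n$ by the per-function failure probability $\exp(-\Theta_d(n))$ forces the restriction on $N$: this union bound is only productive when $N^n \exp(-\Theta_d(n))$ is small, i.e.\ roughly when $\log N \ll_d 1$. To push the argument up to the stated range (and to get the $\exp(-n/N^2)$ error term rather than something trivial), I would not union bound over all of $M^n$ directly but rather \emph{first} condition on $\ave_\cM(f,1)$ up to a factor of, say, $2$ — there are only $O(\log(a(\cM)) + 1)\le O(N + \log n)$ relevant scales, or one can use the coarser bound $(n+N)^N$ on the number of realizable values of the empirical average mentioned in Subsection~\ref{subsubsec2.3.1} — and \emph{then}, given the scale, the good/bad status of each pair is determined by the metric, and one replaces the count $N^n$ by something like $\exp(O(n/N^2))$ using the fact that, once we only care about whether $\varrho(f(v),f(u))$ exceeds a fixed threshold, the function $f$ is effectively a coloring with far fewer than $N$ meaningfully-distinct ``cells''. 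More precisely, I expect to use that among $(5,1,\tfrac12)$-concentrated functions at a fixed scale, the number of distinct ``good-pair patterns'' $\{\{v,u\}: \varrho(f(v),f(u))\ge \ave_\cM(f,1)/5\}$ is controlled by a VC/shatter-type bound, or simply to note that it suffices to union bound over the at most $\binom{n^2}{\le n^2}$ possible good-pair sets with a net argument — but the cleanest route, matching the $\exp(-n/N^2)$ in \eqref{rp-e1}, is: there are $\le (n+N)^{N}$ choices for the average-scale data (acceptable when $N^2\log(n+N)\ll n$, which is where the $\exp(-n/N^2)$ term comes from), and for each scale the per-$f$ failure probability $\exp(-\Theta_d(n))$ beats the $N^n$-type count once $\log N = o_d(1)\cdot$ something; combining these two counting inputs yields a total failure probability $O_d(n^{-\tau}) + O_d(\exp(-n/N^2))$.

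The main obstacle I anticipate is making the per-function concentration estimate $\Prob[\gamma(\boldsymbol{G},\varrho;f)=O_d(1)]\ge 1-\exp(-\Theta_d(n))$ fully rigorous inside a model for $\boldsymbol{G}$ that is actually uniform on $G(n,d)$: the sequential edge-exposure naturally lives in the pairing model, the conditioning on simplicity changes probabilities by a bounded but not-quite-trivial factor, and one must ensure the ``each edge is good with constant probability conditioned on the past'' claim survives this conditioning. I would handle this either by the switching method (showing the number of good edges is concentrated under uniform $G(n,d)$ directly) or by transferring the pairing-model Chernoff bound through the $\Theta_d(1)$ simplicity probability; both are standard but need to be written carefully. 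A secondary subtlety is the exact bookkeeping that turns the two counting inputs (average-scale count $(n+N)^N$ and per-function probability $\exp(-\Theta_d(n))$) into the precise error term $O_d(n^{-\tau}) + O_d(\exp(-n/N^2))$ in \eqref{rp-e1}, and confirming that $C_d\log_{d-1}\log_{d-1}N$ (rather than $O_d(1)$) is genuinely needed — it is not needed in this small-$N$ regime, where in fact $O_d(1)$ holds, so here one simply absorbs the constant; the $\log\log N$ growth only becomes essential in the moderate-cardinality regime treated later in the paper.
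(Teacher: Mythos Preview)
Your reduction to $(5,1,\tfrac12)$-concentrated functions via Proposition~\ref{prop:non-concentrated} is correct and matches the paper, and your per-function estimate $\mathbb{P}[\gamma(\boldsymbol{G},\varrho;f)=O_d(1)]\geqslant 1-\exp(-\Theta_d(n))$ is exactly what the paper's overview (Subsection~\ref{subsec2.3}) observes. But the union bound you propose is precisely the obstacle the paper identifies as \emph{fatal}: $N^n\cdot\exp(-\Theta_d(n))$ is small only when $N\leqslant e^{c_d}$ for a constant $c_d$, whereas Theorem~\ref{random-poincare-2} must handle $N$ up to roughly $\sqrt{n}$. Your attempted repairs do not close this gap. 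Conditioning on the scale of $\ave_{\mathcal{M}}(f,1)$ contributes only a factor $(n+N)^N$ to the count; it does nothing to reduce the $N^n$ functions \emph{at that scale}. The assertion that ``once we only care about whether $\varrho(f(v),f(u))$ exceeds a fixed threshold, the function $f$ is effectively a coloring with far fewer than $N$ meaningfully-distinct cells,'' and that the count drops to $\exp(O(n/N^2))$, is unfounded: distinct functions $f\colon[n]\to M$ generically induce distinct good-pair patterns on $\binom{[n]}{2}$, and no VC-type argument produces the dependence you claim.

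The paper's actual mechanism is entirely different and is what you are missing. It introduces a \emph{compression method}: one fixes $k_0\asymp n/(d-1)^m$ ``seed'' vertices with $m\asymp\log_{d-1}\log_{d-1}N$, and union-bounds only over the restriction $f|_{[k_0]}$ (cost $N^{k_0}$, which is $\leqslant\exp(\ell_0)$ by the choice of $m$) and over the average (cost $(n+N)^N\leqslant n^{2N}$, which is where the $\exp(-n/N^2)$ error enters). To make this work one needs (i)~an approximation scheme showing that for most vertices $v$, $f(v)$ is close to $f(g(v))$ for a seed $g(v)$ at graph-distance $m$, which \emph{requires} the hypothesis $\gamma(\boldsymbol{G},\varrho;f)\gg m$ to force typical length-$m$ paths to be $\varrho$-short; and (ii)~a multistage generation of $\boldsymbol{G}$ (via $\boldsymbol{U},\boldsymbol{U}_{-\ell_0},\boldsymbol{\pi}$, Proposition~\ref{prop-seeds-and-conc-final}, and the matching Lemma~\ref{lemma-matchings}) to decouple the seeds from the reserved randomness. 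In particular, your closing claim that ``$O_d(1)$ holds'' in this regime and the $\log\log N$ is cosmetic is false: the approximation radius $m$ is the source of the $\log_{d-1}\log_{d-1}N$ in the bound, and Proposition~\ref{prop:optofrandom-poincare} shows this growth is necessary even when $N\leqslant\sqrt{n}$.
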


\begin{remark} \label{rem8.4}
Note that the estimate in \eqref{rp-e1} is non-trivial only if $n\geqslant N^2$. Also observe that \eqref{rp-e1} yields the claimed bound in Theorem \ref{random-poincare} as long as $n\geqslant N^{2+\Theta(1)}$, that is, when the size of $\mathcal{M}$ is relatively small compared with the size of the vertex set of the random graph.
\end{remark}

The next result complements Theorem \ref{random-poincare-2} by expanding the range of $n,N$ considered, under the additional assumption that $\cM$ is well-conditioned. The assumption on the aspect ratio is removed in Section \ref{reduction}.

\begin{theorem}[Well-conditioned metric spaces] \label{regular-random-poincare}
For every integer $d\geqslant 3$, there exist constants $C_d, A\geqslant 1$ and $\tau>0$, that depend only on $d$, such that for any well-conditioned metric space $\mathcal{M}=(M,\varrho)$ with $N:=|M|\geqslant d$,
\begin{equation} \label{regular-e1}
\mathbb{P}_{G(n,d)}\Big[G\colon \gamma(G,\varrho)\leqslant C_d\, \log_{d-1}\log_{d-1} N\Big] \geqslant
1- O_d\Big(\frac{1}{n^{\tau}}\Big) - O_d\bigg( \exp\Big( - \frac{n}{(\log_{d-1}N)^A}\Big)\bigg).
\end{equation}
\end{theorem}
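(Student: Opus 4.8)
The plan is to implement, in the moderate-cardinality regime, the compression scheme of Subsection~\ref{subsec2.3}; the well-conditioned hypothesis is needed precisely to replace the prohibitively large union bound over the value of $\ave_{\cM}(f,1)$ by a cheap union bound over a single dyadic \emph{scale} of that quantity. \emph{Reductions.} By Corollary~\ref{Friedman} together with \eqref{intro-e1} one has $\gamma(\boldsymbol G,\varrho)\lesssim_d\log n$ off an event of probability $O_d(n^{-\tau_1})$, which already proves \eqref{regular-e1} unless $\log_{d-1}\log_{d-1}N\lesssim_d\log n$; so restrict to that regime, fix a radius $m:=\Theta_d\!\big(\log_{d-1}\log_{d-1}N\big)$ and a large $K=K(d)$ (at least the constant $K_1(d)$ of Proposition~\ref{prop-seeds-and-conc-final}), arrange $K\le m\le K^{-1}\log_{d-1}n$ and $(d-1)^m\gtrsim_d(\log_{d-1}N)^2$, and set $\ell_0:=\lfloor dn/(Km)\rfloor$, $k_0:=\lfloor Kn/(d-1)^m\rfloor$, so that $k_0\log N=o(\ell_0)$. (For the boundedly many $N$ with $\log_{d-1}\log_{d-1}N\le K_1(d)$ one simply invokes Theorem~\ref{random-poincare-2}.) By Proposition~\ref{prop:non-concentrated} applied with $q=1$, $\tau=\tfrac{1}{31}$, $C_R=5$ (controlling $h(\boldsymbol G)$ via Corollary~\ref{Friedman}), every $f\colon[n]\to M$ that is \emph{not} $(5,1,\tfrac{1}{31})$-concentrated satisfies $\gamma(\boldsymbol G,\varrho;f)=O_d(1)$ off an event of probability $O_d(n^{-\tau_1})$. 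It therefore suffices to fix a $(5,1,\tfrac{1}{31})$-concentrated $f$, bound the probability of $\mathcal B_f:=[\gamma(\boldsymbol G,\varrho;f)>C_d\,m]$ after conditioning on a small amount of ``compressed information'' about $f$ and the graph, and union bound over the possible values of that information rather than over all $f$.

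\emph{The typical set.} Work with the coupled pair $(\boldsymbol H,\boldsymbol H_{-\ell_0})$, which by Lemma~\ref{rm-l1} has the same law as $(\boldsymbol G,\boldsymbol G_{-\ell_0})$, and condition on a locally tree-like realization $\boldsymbol U=U$ (the complement has probability $O_d(n^{-1/2})$ by Proposition~\ref{prop-tree}). A double count over the length-$m$ paths of $\boldsymbol H$ followed by Markov's inequality, exactly as in Subsubsection~\ref{subsubssec3.3.2}, shows that the set
\[ \mathcal L:=\Big\{\{v,s\}\colon s\in[k_0],\ \dist_{\boldsymbol H}(v,s)=m,\ \textstyle\sum_{\{x,y\}\in P_{v,s}}\varrho\big(f(x),f(y)\big)\gg_d\ave_{\cM}(f,1)/m\Big\}, \]
where $P_{v,s}$ is a shortest $v$--$s$ path, is $(\boldsymbol U,\boldsymbol\pi)$-measurable and invariant in the sense of Definition~\ref{defn-invariant}, and satisfies $|\mathcal L|\le K^{-1}(d-1)^m n$ whenever $\mathcal B_f$ holds (set $\mathcal L=\emptyset$ otherwise). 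Feeding $\mathcal L$ into Proposition~\ref{prop-seeds-and-conc-final} produces a $(\boldsymbol U,\boldsymbol U_{-\ell_0},\boldsymbol\pi)$-measurable set $\mathcal O$ with $|\mathcal O|\ge0.6\,\ell_0$ which, off an event of probability $3K^{-1/4}$, consists of vertices of degree $d-1$ in $\boldsymbol H_{-\ell_0}$ whose seeds $g(v):=g_{\boldsymbol H,m,k_0}(v)$ are well-defined, stable under the edge deletion, $\mathcal L$-avoiding, and not overused, with $E_{\boldsymbol H}\setminus E_{\boldsymbol H_{-\ell_0}}$ containing a perfect matching of $\mathcal O$. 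In parallel, Lemma~\ref{lemma-5.2} (with $\varepsilon=\tfrac{1}{31}$, $k=k_0$) gives that, off an event of probability $O(k_0^{-1})$, at least $(1-\tfrac{2}{31})\binom{k_0}{2}$ of the seed pairs $\{s,s'\}$ satisfy $\varrho(f(s),f(s'))\ge\ave_{\cM}(f,1)/5$.

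\emph{Derandomization, matching argument, and union bound.} Since $\mathcal O$ lies in the $\boldsymbol H_{-\ell_0}$-measurable set of degree-deficient vertices of $\boldsymbol H_{-\ell_0}$ (of size $\le2\ell_0$), there are at most $2^{2\ell_0}$ possibilities for its relative position therein. Conditioning simultaneously on $\boldsymbol H_{-\ell_0}$, on this relative position, on the dyadic scale $r$ with $\ave_{\cM}(f,1)\in[2^{-r},2^{-r+1})\cdot\diam(\cM)$ — the only place the hypothesis enters, since $a(\cM)\le e^{N}$ (Definition~\ref{def-regular-metric}) together with $2m(\cM)/n^2\le\ave_{\cM}(f,1)\le\diam(\cM)$ for non-constant $f$ confines $r$ to $O(N+\log n)$ values — and on the restriction of $f$ to the $k_0$ seeds ($N^{k_0}$ possibilities), we reach a situation in which $\boldsymbol H_{-\ell_0}$, $\mathcal O$, and a $\boldsymbol H_{-\ell_0}$-measurable ``certified far'' set $\mathcal Q\subseteq\binom{\mathcal O}{2}$ with $|\mathcal Q|\ge(1-\tfrac{2}{31})\binom{|\mathcal O|}{2}$ are all determined, while $E_{\boldsymbol H}\setminus E_{\boldsymbol H_{-\ell_0}}$ restricts to a uniformly random perfect matching of $\mathcal O$. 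By Lemma~\ref{lemma-matchings} this matching contains at least $c\,|\mathcal O|$ pairs of $\mathcal Q$ except on an event of probability $\exp(-\Theta_d(\ell_0))$; on that event, for each such pair $\{v,u\}$,
\[ \varrho\big(f(v),f(u)\big)\ \ge\ \varrho\big(f(g(v)),f(g(u))\big)-\varrho\big(f(v),f(g(v))\big)-\varrho\big(f(u),f(g(u))\big)\ \gtrsim_d\ \ave_{\cM}(f,1), \]
the first term being $\ge\ave_{\cM}(f,1)/5$ (certified-far seed pair) and the last two $\ll_d\ave_{\cM}(f,1)$ (seeds avoid $\mathcal L$). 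This exhibits $\Theta_d(\ell_0)=\Theta_d(n/m)$ edges of $\boldsymbol H$ of $\varrho$-length $\gtrsim_d\ave_{\cM}(f,1)$, forcing $\gamma(\boldsymbol H,\varrho;f)=O_d(m)$ and contradicting $\mathcal B_f$ once $C_d$ is chosen large. As $\exp(-\Theta_d(\ell_0))$ beats the count $O(N+\log n)\cdot N^{k_0}\cdot2^{2\ell_0}$ of realizations of the compressed data — using $k_0\log N=o(\ell_0)$ and that the exponent of Lemma~\ref{lemma-matchings} carries a large constant — summing over that data gives a bound of the form $\exp(-\Theta_d(\ell_0))$ for $\mathbb P[\mathcal B_f\mid\boldsymbol U=U,\ \mathcal O\text{ good}]$; adding the failure probabilities from the previous steps, using $\ell_0=\Theta_d(n/\log_{d-1}\log_{d-1}N)$, choosing $K$ large enough that the $K^{-1/4}$ loss is absorbed, and transferring back to $\boldsymbol G$, yields \eqref{regular-e1} with suitable $A=A(d)$ and $\tau=\tau(d)$.

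\emph{Main obstacle.} The heart of the matter is the decoupling of three objects that are \emph{a priori} entangled — the seeds, the location of the typical set $\mathcal O$, and the edge set of $\boldsymbol G$ — which is exactly what the multistage construction of Section~\ref{sec-models} and the derandomization above are engineered to achieve: enough randomness (the permutation $\boldsymbol\pi$ and the reinserted edges $E_{\boldsymbol H}\setminus E_{\boldsymbol H_{-\ell_0}}$) must be held in reserve so that, after conditioning, the reinserted matching of $\mathcal O$ is genuinely fresh with respect to $\mathcal O$ and the seed values, enabling Lemma~\ref{lemma-matchings}. Concretely, the delicate points are: verifying the hypotheses of Proposition~\ref{prop-seeds-and-conc-final} — above all the bound $|\mathcal L|\le K^{-1}(d-1)^m n$ from the path double count and the invariance of $\mathcal L$ — and the careful bookkeeping of which events are conditioned upon, so that the per-realization failure probability provably dominates the number of compressed data; the highly technical construction of $\mathcal O$ itself is the content of Proposition~\ref{prop-seeds-and-conc-final}, which we take as given.
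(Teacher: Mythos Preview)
Your plan is essentially the paper's own proof: it follows Theorem~\ref{random-poincare-2} step by step and modifies only Step~\hyperref[rp-sec4]{4}, replacing the union bound over the exact value of $\ave_{\cM}(\boldsymbol f,1)$ by a union bound over $O(N+\log n)$ dyadic scales using the well-conditioned hypothesis. All key ingredients---Proposition~\ref{prop:non-concentrated} for non-concentrated maps, Proposition~\ref{prop-seeds-and-conc-final} for the typical set $\mathcal O$, Lemma~\ref{lemma-5.2} for seed-pair separation, the derandomization over (scale, seed values, relative position of $\mathcal O$), and Lemma~\ref{lemma-matchings} for the endgame---are correctly identified, as is the parameter bookkeeping $k_0\log N=o(\ell_0)$.

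There is, however, a real (if easily repaired) slip in the framing. You write ``fix a $(5,1,\tfrac{1}{31})$-concentrated $f$'' and then define $\mathcal L$ in terms of that fixed $f$ together with the condition $s\in[k_0]$; neither choice survives the invariance check required by Proposition~\ref{prop-seeds-and-conc-final}. With a \emph{fixed} $f$, the event $\mathcal B_f$ and the set $\mathcal L$ depend on $\boldsymbol\pi$ in a way that is not of the form $\boldsymbol\pi(\cdot)$ applied to a $\boldsymbol U$-measurable object, and the explicit restriction $s\in[k_0]$ breaks invariance outright (since $[k_0]$ is not $\boldsymbol\pi$-equivariant). The paper avoids both problems by choosing, on the event $\mathcal E_1$, a $\boldsymbol U$-measurable ``worst'' map $f_{\boldsymbol U}$ and setting $\boldsymbol f:=f_{\boldsymbol U}\circ\boldsymbol\pi^{-1}$; then $\mathcal L$ is defined as in \eqref{eq:009b} over \emph{all} pairs at $\boldsymbol H$-distance $m$ (no seed restriction), which is manifestly invariant, and the bound $|\mathcal L|\le K^{-1}(d-1)^m n$ follows from the path double count exactly as you indicate. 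Once you make this single correction---replace ``fix $f$'' by ``let $f_{\boldsymbol U}$ be $\boldsymbol U$-measurable and drop the $s\in[k_0]$ clause from $\mathcal L$''---your outline and the paper's proof coincide.
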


\begin{remark} \label{rem8.6}
Again observe that \eqref{regular-e1} is non-trivial only if $n\geqslant (\log_{d-1} N)^A$; moreover, in the regime $n\geqslant (\log_{d-1} N)^{A+\Theta(1)}$, the bound in \eqref{regular-e1} implies the claimed bound in Theorem \ref{random-poincare}.
\end{remark}

\begin{remark} \label{remark-new}
The requirement in Theorems \ref{random-poincare-2} and \ref{regular-random-poincare} that $N\geqslant d$ is a technical convenience. Indeed, if $3\leqslant N\leqslant d-1$, then, by Theorem \ref{thm-bourgain} and Corollary \ref{Friedman}, it is easy to see that
\[ \mathbb{P}_{G(n,d)}\big[G\colon \gamma(G,\varrho) \leqslant c\, \log d \big]\geqslant 1-O_d\Big(\frac{1}{n^{\tau_1}}\Big), \]
where $\tau_1=\tau_1(d)>0$ and $c>0$ is a universal constant.
\end{remark}

\subsection{Organization}

Theorem \ref{random-poincare-2} is proven imminently in Subsection \ref{proof-random-poincare-2}; the proof of Theorem~\ref{regular-random-poincare} is given in Subsection \ref{proof-regular-random-poincare}. Although Theorem \ref{regular-random-poincare} is quantitatively stronger than Theorem \ref{random-poincare-2}, its proof follows the same steps as the proof of Theorem \ref{random-poincare-2}. Thus, a full proof of Theorem \ref{random-poincare-2} is given, and then the minor adjustments needed to prove Theorem \ref{regular-random-poincare} are detailed in Subsection \ref{proof-regular-random-poincare}.

As already mentioned, in the subsequent Section \ref{reduction}, the condition on the aspect ratio in Theorem~\ref{regular-random-poincare} will be removed via a reduction argument, establishing Theorem \ref{random-poincare} for metric spaces of small and moderate cardinality. The remaining cases of metric spaces of huge cardinality follow easily from Theorem~\ref{random-poincare-2}; the argument is given in Section \ref{sec-proof-random-poincare}. This will complete the proof of Theorem \ref{random-poincare}.

\subsection{Proof of Theorem \ref{random-poincare-2}} \label{proof-random-poincare-2}

In what follows, besides $C_d$ and $\tau$, we also use three auxiliary parameters $C\geqslant K \geqslant 1$ and $\varepsilon>0$ whose exact value will be determined in the course of the proof and will eventually depend \emph{only} on the degree $d$. The parameter $K$ will be large enough in terms of $d$, the parameter $C$ will be large enough in terms of $d,K$ and the parameter $\varepsilon^{-1}$ will be large enough in terms of $d, K, C$; finally, the constant $C_d$ will be selected large enough in terms of $d, K, C$ and~$\varepsilon^{-1}$. An observation that will be frequently used is that while our choices of $d$, $K$ and $C$ are coupled, the tuple $(d,K,C)$ as a whole depends only on $d$.

We proceed to the details. As we have already pointed out in Remark \ref{rem8.4}, the desired estimate~\eqref{rp-e1} is non-trivial only if $n\geqslant N^2$. Thus, in what follows, we may assume that $n$ is sufficiently large in terms of $d$ and satisfies $n\geqslant N^2$. We start by setting
\begin{equation} \label{rp-e2}
m := \big\lfloor C \, \log_{d-1}\log_{d-1} N \big\rfloor.
\end{equation}
Notice that if $n$ is sufficiently large in terms of $d$ and satisfies $n\geqslant N^2$, then
\begin{equation} \label{rp-enew}
m\leqslant \frac{1}{25} \log_{d-1}n.
\end{equation}
(Let us record, for later usage in the proof of Theorem \ref{regular-random-poincare}, that $n \geqslant (\log_{d-1}N)^{25\,C}$ also suffices in place of $n \geqslant N^2$). Define the $\boldsymbol{U}$-measurable events
\begin{equation} \label{rp-e3}
\mathcal{X}:= \big[|\mathcal{T}(\boldsymbol{U},m)|\geqslant n - \sqrt{n}\big]
\ \ \ \text{ and } \ \ \
\mathcal{Y} := \big[ h(\boldsymbol{U}) \geqslant 0.005d \big],
\end{equation}
where $\mathcal{T}(\boldsymbol{U},m)$ is as in \eqref{eq-tree}. By \eqref{rp-enew}, Proposition \ref{prop-tree} and Corollary \ref{Friedman}, there is $\tau_1=\tau_1(d)>0$ so that, if $n$ is sufficiently large in terms of $d$ and satisfies $n\geqslant N^2$,
\begin{equation} \label{rp-e4}
\mathbb{P}\big[\mathcal{X}\cap \mathcal{Y}\big] \geqslant 1-O_d\Big(\frac{1}{n^{\tau_1}}\Big).
\end{equation}
Define the $\boldsymbol{U}$-measurable event
\begin{equation}
\label{rp-e5} \mathcal{E}_1 := \big[\gamma(\boldsymbol{U},\varrho)> C_d\, \log_{d-1}\log_{d-1} N\big]
\cap (\mathcal{X} \cap \mathcal{Y}).
\end{equation}
By \eqref{rp-e4}, it is enough to show that if $n$ is sufficiently large in terms of $d$ and satisfies $n\geqslant N^2$, then
\begin{equation} \label{eq-goal}
\mathbb{P}\big[\mathcal{E}_1\big]\leqslant \exp\big( - \frac{n}{N^2}\big).
\end{equation}

\subsection*{Step 1: obtaining potential counterexamples} \label{rp-sec1}

We define a random  $\boldsymbol{U}$-measurable function $f_{\boldsymbol{U}}\colon [n] \to M$ as follows. For every realization $U$ of\, $\boldsymbol{U}$ on the event $\mathcal{E}_1$, let $f_U\colon [n]\to M$ be a function such that
\begin{equation}\label{eq:005b}
\frac{\ave_{\mathcal{M}}(f_U,1)}{C_d\, \log_{d-1}\log_{d-1} N}> \frac{2}{dn}\sum_{\{v,u\}\in E_U} \varrho\big(f_U(v),f_U(u)\big);
\end{equation}
such a function exists since $\gamma(U,\varrho)> C_d\, \log_{d-1}\log_{d-1} N$. If $U$ is a realization of $\boldsymbol{U}$ on $\mathcal{E}_1^\complement$, then let $f_U\colon [n]\to M$ be any constant function.

Note that if $U$ is any realization of $\boldsymbol{U}$ on the event $\mathcal{E}_1$, then $h(U)\geqslant 0.005d$. Hence, if $n$ is sufficiently large in terms of $d$ and the function $f_U$ is not $(5,1,\varepsilon)$-concentrated in the sense of Definition \ref{def3.2}, then, by Proposition \ref{prop:non-concentrated}, $f_U$ must satisfy Poincar\'{e} inequality with constant $O_{d,\varepsilon}(1)$. Thus, selecting
\begin{enumerate}
\item[(C1)] \label{rp-C1} $C_d$ large enough in terms of $d$ and $\varepsilon^{-1}$,
\end{enumerate}
we have that
\begin{equation} \label{rp-e7}
f_{\boldsymbol{U}} \text{ is $(5,1,\varepsilon)$-concentrated almost everywhere on the event } \mathcal{E}_1.
\end{equation}

We also define the random $(\boldsymbol{U},\boldsymbol{\pi})$-measurable function $\boldsymbol{f}\colon [n]\to M$ by setting
\begin{equation} \label{eq:006b}
\boldsymbol{f} := f_{\boldsymbol{U}}\circ \boldsymbol{\pi}^{-1}.
\end{equation}
Observe that, by \eqref{eq:005b}, almost everywhere on the event $\mathcal{E}_1$,
\begin{align} \label{eq:007b}
\frac{\ave_{\mathcal{M}}(\boldsymbol{f},1)}{C_d\, \log_{d-1}\log_{d-1} N}
& = \frac{\ave_{\mathcal{M}}(f_{\boldsymbol{U}},1)}{C_d\, \log_{d-1}\log_{d-1} N}
> \frac{2}{dn}\sum_{\{v,u\}\in E_{\boldsymbol{U}}} \varrho\big(f_{\boldsymbol{U}}(v),f_{\boldsymbol{U}}(u)\big)\\
& = \frac{2}{dn}\sum_{\{v,u\}\in E_{\boldsymbol{U}}}
\varrho\Big(\boldsymbol{f}\big(\boldsymbol{\pi}(v)\big),\boldsymbol{f}\big(\boldsymbol{\pi}(u)\big)\Big)
=\frac{2}{dn}\sum_{\{v,u\}\in E_{\boldsymbol{H}}} \varrho\big(\boldsymbol{f}(v),\boldsymbol{f}(u)\big). \nonumber
\end{align}

\subsection*{Step 2: application of Proposition \ref{prop-seeds-and-conc-final}} \label{rp-sec2}

Let $K_1=K_1(d)\geqslant 1$ be as in Proposition \ref{prop-seeds-and-conc-final}, and select the parameter $K$ so that
\begin{enumerate}
\item[(C2)] \label{rp-C2} $K_1 \leqslant K \leqslant C$.
\end{enumerate}
Observe that, if $n$ is sufficiently large in terms of $d$ with $n\geqslant N^2$, then
\begin{equation} \label{rp-e8}
K \leqslant m\leqslant K^{-1} \log_{d-1}n,
\end{equation}
where $m$ is as in \eqref{rp-e2}. (We again record, for future usage in the proof of Theorem \ref{regular-random-poincare}, that \eqref{rp-e8} is also satisfied if $n\geqslant (\log_{d-1}N)^A$ with $A\geqslant CK$, rather than $n \geqslant N^2$.) Also define
\begin{gather}
\label{eq:014b} \ell_0:=\Big\lfloor\frac{d n}{ K  m}\Big\rfloor \ \ \ \text{ and } \ \ \
k_0:=\Big\lfloor\frac{ K  n}{(d-1)^m}\Big\rfloor, \\
\label{eq:009b} \mathcal{L} := \bigg\{\{v,u\}\in\binom{[n]}{2}\colon \mathrm{dist}_{\boldsymbol{H}}(v,u)=m \text{ and }
\varrho\big(\boldsymbol{f}(v),\boldsymbol{f}(u)\big)> \frac{\ave_{\mathcal{M}}(\boldsymbol{f},1)}{\sqrt{C_d}}\bigg\},
\end{gather}
where $\boldsymbol{f}$ is as in \eqref{eq:006b}. Note that $\mathcal{L}$ is $(\boldsymbol{U},\boldsymbol{\pi})$-measurable and invariant in the sense of Definition~\ref{defn-invariant}.

\begin{claim} \label{cl:7.2}
If $n$ is large enough with respect to $d$, then, almost everywhere,
\begin{equation}\label{eq:010b}
|\mathcal{L}| \leqslant n\frac{(d-1)^m}{K}.
\end{equation}
\end{claim}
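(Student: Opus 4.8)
The plan is to run a double-counting argument over paths of length $m$. Everything is trivial when $\boldsymbol{U}$ falls outside the event $\mathcal{E}_1$ of \eqref{rp-e5}: there $f_{\boldsymbol{U}}$, hence $\boldsymbol{f}$, is constant, so $\mathcal{L}=\emptyset$ and \eqref{eq:010b} holds. I therefore restrict to $\mathcal{E}_1$, where $\boldsymbol{f}$ is non-constant by \eqref{eq:005b}, so $\ave_{\cM}(\boldsymbol{f},1)>0$, and \eqref{eq:007b} rearranges to
\[ \sum_{\{v,u\}\in E_{\boldsymbol{H}}}\varrho\big(\boldsymbol{f}(v),\boldsymbol{f}(u)\big) < \frac{dn}{2}\cdot\frac{\ave_{\cM}(\boldsymbol{f},1)}{C_d\,\log_{d-1}\log_{d-1}N}. \]

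Next, let $\mathcal{P}_m$ be the set of paths of length $m$ in $\boldsymbol{H}$ and, for $P\in\mathcal{P}_m$, write $\ell_\varrho(P)$ for the sum of the $\varrho$-lengths of its edges. Since $\boldsymbol{H}$ is $d$-regular, each edge of $\boldsymbol{H}$ lies in at most $2m(d-1)^{m-1}$ members of $\mathcal{P}_m$ (choose the position of the edge among the $m$ slots, its orientation, and then extend greedily in both directions), so double counting gives $\sum_{P\in\mathcal{P}_m}\ell_\varrho(P)\leqslant 2m(d-1)^{m-1}\sum_{\{v,u\}\in E_{\boldsymbol{H}}}\varrho(\boldsymbol{f}(v),\boldsymbol{f}(u))$. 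On the other hand, every $\{v,u\}\in\mathcal{L}$ supplies, via $\mathrm{dist}_{\boldsymbol{H}}(v,u)=m$ and the triangle inequality, a path $P_{v,u}\in\mathcal{P}_m$ with $\ell_\varrho(P_{v,u})\geqslant\varrho(\boldsymbol{f}(v),\boldsymbol{f}(u))>\ave_{\cM}(\boldsymbol{f},1)/\sqrt{C_d}$, and distinct pairs of $\mathcal{L}$ yield distinct paths; hence by Markov's inequality $|\mathcal{L}|\leqslant \frac{\sqrt{C_d}}{\ave_{\cM}(\boldsymbol{f},1)}\sum_{P\in\mathcal{P}_m}\ell_\varrho(P)$. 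Chaining the three estimates, the $\ave_{\cM}(\boldsymbol{f},1)$ cancels and, using $m\leqslant C\log_{d-1}\log_{d-1}N$ from \eqref{rp-e2}, the bound collapses to $|\mathcal{L}|\leqslant \frac{Cd}{(d-1)\sqrt{C_d}}\,n(d-1)^m$; imposing the further constraint that $C_d$ be large enough in terms of $d,K,C$ (concretely $C_d\geqslant (CdK/(d-1))^2$) yields $|\mathcal{L}|\leqslant n(d-1)^m/K$, which is \eqref{eq:010b}. This fits the parameter hierarchy of the proof, where $C_d$ is chosen last.

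The one point requiring care is the bookkeeping of the factor $m$: extracting a single ``long edge'' from each pair of $\mathcal{L}$ (rather than keeping the whole path) would cost an extra factor $m$ and the argument would not close; keeping the full path makes the $2m(d-1)^{m-1}$ from the edge-in-paths count cancel precisely against the $\log_{d-1}\log_{d-1}N$ in the denominator of \eqref{eq:007b}, since $m\asymp\log_{d-1}\log_{d-1}N$ by \eqref{rp-e2}. I also need $\ave_{\cM}(\boldsymbol{f},1)>0$ for the Markov step, which is exactly why the reduction to $\mathcal{E}_1$ is performed first; note also that no local tree-likeness is needed here, as only an \emph{upper} bound on the number of length-$m$ paths is used.
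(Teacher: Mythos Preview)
Your proof is correct and follows the same path-counting plus Markov strategy as the paper. The paper first establishes a \emph{lower} bound $|\mathcal{P}_m|\geqslant n(d-1)^m/4$ using the tree-likeness built into $\mathcal{E}_1\subseteq\mathcal{X}$, then applies Markov's inequality to the \emph{normalized} sum $\frac{1}{|\mathcal{P}_m|}\sum_P\ell_\varrho(P)$, and finally caps $|\mathcal{P}_m|\leqslant n(d-1)^m$ to finish. You bypass the lower bound entirely: by observing that each $\{v,u\}\in\mathcal{L}$ contributes a \emph{distinct} path of $\varrho$-length exceeding $\ave_{\cM}(\boldsymbol{f},1)/\sqrt{C_d}$, you get $|\mathcal{L}|\cdot\ave_{\cM}(\boldsymbol{f},1)/\sqrt{C_d}\leqslant\sum_P\ell_\varrho(P)$ directly, and only the edge-in-paths \emph{upper} bound is needed. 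This is a genuine (if minor) simplification---as you note, it makes the argument independent of local tree-likeness---and both routes land on the same constraint $C_d\gtrsim(CK)^2$, matching the paper's condition (C3).
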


\begin{proof}[Proof of Claim \ref{cl:7.2}]
Since $|\mathcal{L}|=0$ almost everywhere on the event $\mathcal{E}_1^\complement$, it is enough to show \eqref{eq:010b} almost everywhere on the event $\mathcal{E}_1$. Let $\mathcal{P}_m$ denote the set of simple paths (no vertex is revisited) of length $m$ in $\boldsymbol{H}$. By the $d$-regularity of $\boldsymbol{H}$, for every edge $e\in E_{\boldsymbol{H}}$,
\begin{equation}\label{eq:011b}
\big|\big\{ P\in\mathcal{P}_m \colon e\in P\big\}\big| \leqslant m(d-1)^{m-1}.
\end{equation}
Moreover, almost everywhere of the event $\mathcal{E}_1$, for every $v\in \mathcal{T}(\boldsymbol{H},m)$, the number of paths in $\mathcal{P}_m$ having $v$ as one end-poind is equal to $d(d-1)^{m-1}$. Hence, almost everywhere on the event $\mathcal{E}_1$, if $n$ is large enough in terms of $d$,
\begin{equation}\label{eq:012b}
|\mathcal{P}_m|\geqslant \frac{1}{2}(n-\sqrt{n})d(d-1)^{m-1}\geqslant\frac{n(d-1)^m}{4}.
\end{equation}
By \eqref{eq:007b}, \eqref{eq:011b} and \eqref{eq:012b}, almost everywhere on $\mathcal{E}_1$,
\begin{align*}
& \frac{\ave_{\mathcal{M}}(\boldsymbol{f},1)}{C_d\, \log_{d-1}\log_{d-1} N} >
\frac{2}{dn}\sum_{\{v,u\}\in E_{\boldsymbol{H}}} \varrho\big(\boldsymbol{f}(v),\boldsymbol{f}(u)\big) \\
& \ \ \ \ \ \geqslant \frac{2}{dn m(d-1)^{m-1}}
\sum_{P\in\mathcal{P}_m}\sum_{\{v,u\}\in P} \varrho\big(\boldsymbol{f}(v),\boldsymbol{f}(u)\big)
\geqslant \frac{d-1}{2dm}\cdot\frac{1}{|\mathcal{P}_m|}
\sum_{P\in\mathcal{P}_m}\sum_{\{v,u\}\in P} \varrho\big(\boldsymbol{f}(v),\boldsymbol{f}(u)\big),
\end{align*}
and so, by the choice of $m$ in \eqref{rp-e2}, almost everywhere on the event $\mathcal{E}_1$,
\[ \frac{1}{|\mathcal{P}_m|} \sum_{P\in\mathcal{P}_m}\sum_{\{v,u\}\in P} \varrho\big(\boldsymbol{f}(v),\boldsymbol{f}(u)\big)
< \frac{3C\, \ave_{\mathcal{M}}(\boldsymbol{f},1)}{C_d}. \]
On the other hand, almost everywhere we have $|\mathcal{P}_m|\leqslant nd(d-1)^{m-1}/2\leqslant n(d-1)^m$, due to the $d$-regularity of $\boldsymbol{H}$. By the triangle inequality, Markov's inequality and the choice of $\mathcal{L}$ in \eqref{eq:009b}, we have, almost everywhere on the event $\mathcal{E}_1$,
\begin{equation} \label{eq:016b}
|\mathcal{L}|\leqslant \Big|\Big\{P\in\mathcal{P}_m \colon \sum_{\{v,u\}\in P} \!\!
\varrho\big(\boldsymbol{f}(v),\boldsymbol{f}(u)\big)> \frac{\ave_{\mathcal{M}}(\boldsymbol{f},1)}{\sqrt{C_d}}\Big\}\Big|
\leqslant \frac{C}{\sqrt{C_d}}\, |\mathcal{P}_m|  \leqslant n\, \frac{(d-1)^m}{\sqrt{C_d}/(3C)}.
\end{equation}
Selecting $C_d$ so that
\begin{enumerate}
\item[(C3)] \label{rp-C3} $(3CK)^2\leqslant C_d$,
\end{enumerate}
we see that \eqref{eq:010b} follows from \eqref{eq:016b}.
\end{proof}

By \eqref{rp-e8} and Claim \ref{cl:7.2}, we are in a position to apply Proposition \ref{prop-seeds-and-conc-final}. More precisely, selecting $K$ so that
\begin{enumerate}
\item[(C4)] \label{rp-C4} $K\geqslant 12^{5}$,
\end{enumerate}
we obtain an event $\mathcal{E}_{2,1}\subseteq\mathcal{E}_1$ with
\begin{equation}\label{eq:015b}
\mathbb{P}\big[\mathcal{E}_{2,1}\big]\geqslant \mathbb{P}\big[\mathcal{E}_1\big] \,
\Big(1 - \frac{3}{\sqrt[4]{K}}\Big)
\geqslant \frac{3}{4}\, \mathbb{P}\big[ \mathcal{E}_1\big],
\end{equation}
and a random set $\mathcal{O}$ of vertices such that, almost everywhere on $\mathcal{E}_{2,1}$, it satisfies properties (\hyperref[prop-i]{i}), (\hyperref[prop-ii]{ii}) and (\hyperref[prop-iii]{iii}) described in Proposition \ref{prop-seeds-and-conc-final}.

\subsection*{Step 3: application of Lemma \ref{lemma-5.2}} \label{rp-sec3}

If $n$ is sufficiently large in terms of $d$ and selecting
\begin{enumerate}
\item[(C5)] \label{rp-C5} $0<\varepsilon \leqslant \frac{1}{31}$ and $K\geqslant \frac{60}{\varepsilon^2}$,
\end{enumerate}
by \eqref{rp-e8} and \eqref{eq:014b}, we see that $k_0\geqslant \frac{2}{\varepsilon}$. By \eqref{rp-e7}, (\hyperref[rp-C5]{C5}) and applying Lemma \ref{lemma-5.2}, we obtain an event $\mathcal{E}_{2,2}\subseteq \mathcal{E}_1$ with
\begin{equation}\label{eq:020b}
\mathbb{P}\big[\mathcal{E}_{2,2}\big] \geqslant
\mathbb{P}\big[ \mathcal{E}_1\big] \, \Big(1-\frac{15}{\varepsilon^2\, k_0}\Big)
\geqslant \frac{3}{4}\, \mathbb{P}\big[\mathcal{E}_1\big]
\end{equation}
such that, almost everywhere on $\mathcal{E}_{2,2}$,
\begin{equation}\label{eq:021b}
\Big|\Big\{ \{v,u\}\in \binom{[k_0]}{2}\colon \varrho\big(\boldsymbol{f}(v),\boldsymbol{f}(u)\big) \geqslant
\frac{\ave_{\mathcal{M}}(\boldsymbol{f},1)}{5}\Big\}\Big| \geqslant (1-2\varepsilon) \binom{k_0}{2},
\end{equation}
where the random function $\boldsymbol{f}$ is as in \eqref{eq:006b}. Set
\begin{equation} \label{eq:022b}
\mathcal{E}_2:= \mathcal{E}_{2,1}\cap\mathcal{E}_{2,2}
\end{equation}
and notice that, by \eqref{eq:015b} and \eqref{eq:020b}, if $n$ is sufficiently large in terms of $d$ with $n\geqslant N^2$, then
\begin{equation}\label{eq:023b}
\mathbb{P}\big[\mathcal{E}_2\big]\geqslant \frac{1}{2}\, \mathbb{P}\big[\mathcal{E}_1\big] .
\end{equation}

\subsection*{Step 4: conditioning on the value of $\ave_{\mathcal{M}}(\boldsymbol{f},1)$} \label{rp-sec4}

In order to obtain the desired bound on~$\mathbb{P}\big[\mathcal{E}_1\big]$, we need to derandomize some characteristics of the random objects constructed so far. This will be achieved in Steps \hyperref[rp-sec4]{4}--\hyperref[rp-sec6]{6}. In this step, we will condition on the value of $\ave_{\mathcal{M}}(\boldsymbol{f},1)$.

More precisely, we first observe that the number of all possible values of $\ave_{\mathcal{M}}(\boldsymbol{f},1)$ is upper bounded by $n^{2N}$. Indeed, notice that the average $\ave_\mathcal{M}(f,1)$ of an arbitrary map $f\colon [n]\to M$ is completely determined by the tuple $(|f^{-1}(x)|)_{x\in M}$, and this is an $N$-tuple of nonnegative integers that sum up to $n$. If $n$ is sufficiently large in terms of $d$ with $n\geqslant N^2$, then the number of $N$-tuples of nonnegative integers that sum up to $n$ is at most
\[ \binom{n+N-1}{N-1} \leqslant \binom{n+N}{N} \leqslant (n+N)^{N} \leqslant n^{2N}, \]
as claimed. On the other hand, by the choices of $m$ and $\ell_0$ in \eqref{rp-e2} and \eqref{eq:014b} respectively, we have for $n$ sufficiently large in terms of $d$ with $n\geqslant N^2$,
\[ n^{2N} = \exp\big(2N\log n\big) \leqslant \exp\Big( \frac{dn}{2KC \log_{d-1}\log_{d-1}N}\Big)
\leqslant \exp\Big( \frac{dn}{2Km}\Big) \leqslant \exp(\ell_0). \]
Thus, there exists $r_0>0$ such that, setting
\begin{equation} \label{eq:024b}
\mathcal{E}_3 := \mathcal{E}_2\cap \big[\ave_\mathcal{M}(\boldsymbol{f},1)=r_0\big],
\end{equation}
we have, if $n$ is sufficiently large in terms of $d$ with $n\geqslant N^2$,
\begin{equation} \label{eq:025b}
\mathbb{P}\big[\mathcal{E}_3\big] \geqslant \mathbb{P}\big[\mathcal{E}_2\big] \, n^{-2N} \geqslant
\mathbb{P}\big[\mathcal{E}_2\big] \, \exp(-\ell_0) \stackrel{\eqref{eq:023b}}{\geqslant}
\frac{1}{2} \, \exp(-\ell_0) \, \mathbb{P}\big[\mathcal{E}_1\big].
\end{equation}

\subsection*{Step 5: conditioning on the values of $\boldsymbol{f}$ on $[k_0]$} \label{rp-sec5}

If $n$ is sufficiently large in terms of $d$ with $n\geqslant N^2$, then, selecting
\begin{enumerate}
\item[(C6)] \label{rp-C6} $C$ sufficiently large in terms of $K$ and $d$,
\end{enumerate}
we have, by the choices of $m, \ell_0$ and $k_0$ in \eqref{rp-e2} and \eqref{eq:014b} respectively,
\[ N^{k_0} = \exp\big(k_0 \log N \big) \leqslant
\exp\bigg(\frac{dK\log(d-1)\, n}{(\log_{d-1} N)^{C-1}} \bigg)
\leqslant \exp\Big( \frac{dn}{2KC \log_{d-1}\log_{d-1}N}\Big) \leqslant \exp(\ell_0), \]
where we have used the fact that $N\geqslant d$. Since the number of all maps from $[k_0]$ into $M$ is equal to $N^{k_0}$, there exists a map $\alpha\colon [k_0]\to M$ such that, setting
\begin{equation} \label{eq:026b}
\mathcal{E}_4 := \mathcal{E}_3\cap \big[ \boldsymbol{f}|_{[k_0]} = \alpha\big],
\end{equation}
we have, if $n$ is sufficiently large in terms of $d$ with $n\geqslant N^2$,
\begin{equation} \label{eq:027b}
\mathbb{P}\big[\mathcal{E}_4\big] \geqslant \mathbb{P}\big[\mathcal{E}_3\big] \, N^{-k_0} \geqslant
\mathbb{P}\big[\mathcal{E}_3\big] \, \exp(-\ell_0) \stackrel{\eqref{eq:025b}}{\geqslant}
\frac{1}{2} \, \exp(-2\ell_0) \, \mathbb{P}\big[\mathcal{E}_1\big].
\end{equation}

\subsection*{Step 6: conditioning on the relative position of the set $\mathcal{O}$} \label{rp-sec6}

We need to introduce some pieces of notation. Fix a nonempty subset $B$ of $[2\ell_0]$; for every subset $A$ of $[n]$ with $2\ell_0 \geqslant |A|\geqslant |B|$, let $\{a_1<\dots <a_{|A|}\}$ denote its increasing enumeration, and set
\begin{equation} \label{eq:028b}
r_B(A) := \Big\{ a_i \colon i\in B\cap \big[|A|\big]\Big\}.
\end{equation}
Thus, $r_B(A)$ is a subset of $A$ with cardinality $|B|$ whose relative position (in the usual order) inside $A$ is completely determined by $B$; more precisely, note that for any nonempty subset $\Gamma$~of~$A$, there exists a unique set $B\subseteq [2\ell_0]$ such that $\Gamma=r_B(A)$.

Define a random $\boldsymbol{H}_{-\ell_0}$-measurable set of vertices by
\begin{equation} \label{eq:029b}
\mathcal{D}_{d-1}:=\big\{ v\in [n] \colon \mathrm{deg}_{\boldsymbol{H}_{-\ell_0}}(v)=d-1\big\}.
\end{equation}
Notice that, almost everywhere on the event $\mathcal{E}_4 \subseteq \mathcal{E}_{2,1}$, the set $\mathcal{O}$ is a subset of $\mathcal{D}_{d-1}$; on the other hand, almost everywhere, $|\mathcal{D}_{d-1}|\leqslant 2\ell_0$. Therefore, there exists a subset $B$ of $[2\ell_0]$ with $|B|\geqslant 0.6\, \ell_0$ such that, setting
\begin{equation} \label{eq:030b}
\mathcal{E}_5 := \mathcal{E}_4 \cap \big[ r_B(\mathcal{D}_{d-1}) = \mathcal{O}\big],
\end{equation}
we have, if $n$ is sufficiently large in terms of $d$ with $n\geqslant N^2$,
\begin{equation} \label{eq:031b}
\mathbb{P}\big[\mathcal{E}_5\big] \geqslant \mathbb{P}\big[\mathcal{E}_4\big] \, 2^{-2\ell_0} \geqslant
\mathbb{P}\big[\mathcal{E}_4\big] \, 2\exp(-2\ell_0) \stackrel{\eqref{eq:027b}}{\geqslant}
\exp(-4\ell_0) \, \mathbb{P}\big[\mathcal{E}_1\big].
\end{equation}

\subsection*{Step 7: an $\boldsymbol{H}_{-\ell_0}$-measurable set of pairs of vertices} \label{rp-sec7}

Define a random $\boldsymbol{H}_{-\ell_0}$-measurable set of pairs of vertices by
\begin{equation} \label{def-Q}
\mathcal{Q}:= \bigg\{ \{v,u\}\in\binom{r_B(\mathcal{D}_{d-1})}{2}\colon
\varrho\Big(\alpha\big( g_{\boldsymbol{H}_{-\ell_0},m,k_0}(v)\big),
\alpha\big( g_{\boldsymbol{H}_{-\ell_0},m,k_0}(u)\big) \Big) \geqslant \frac{r_0}{10}\bigg\}.
\end{equation}
We gather, below, several properties of $r_B(\mathcal{D}_{d-1})$ and $\mathcal{Q}$  that are satisfied almost everywhere on the event $\mathcal{E}_5$.

\begin{claim} \label{cl:7.3}
If $n$ is sufficiently large in terms of $d$ with $n\geqslant N^2$, then, almost everywhere on the event $\mathcal{E}_5$, the following hold.
\begin{enumerate}
\item[(1)] \label{pr1} We have $|r_B(\mathcal{D}_{d-1})|\geqslant 0.6\ell_0$.
\item[(2)] \label{pr2} The set of edges $E_{\boldsymbol{H}}\setminus E_{\boldsymbol{H}_{-\ell_0}}$ contains a perfect matching of the set $r_B(\mathcal{D}_{d-1})$.
\item[(3)] \label{pr3} We have $|\mathcal{Q}| \geqslant (1-5\varepsilon K^4)\binom{|r_B(\mathcal{D}_{d-1})|}{2}$.
\item[(4)] \label{pr4} We have $\big|\big\{\{v,u\}\in E_{\boldsymbol{H}}\setminus E_{\boldsymbol{H}_{-\ell_0}} \colon \{v,u\}\in \mathcal{Q}\big\}\big| \leqslant \frac{1}{\sqrt[4]{C_d}}\, |r_B(\mathcal{D}_{d-1})|$.
\end{enumerate}
\end{claim}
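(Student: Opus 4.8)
The plan is to verify the four assertions in order, working almost everywhere on $\mathcal{E}_5$, on which, by construction, $r_B(\mathcal{D}_{d-1})=\mathcal{O}$, $\boldsymbol{f}|_{[k_0]}=\alpha$, $\ave_{\mathcal{M}}(\boldsymbol{f},1)=r_0$, and --- since $\mathcal{E}_5\subseteq\mathcal{E}_2=\mathcal{E}_{2,1}\cap\mathcal{E}_{2,2}\subseteq\mathcal{E}_1$ --- properties (\hyperref[prop-i]{i})--(\hyperref[prop-iii]{iii}) of Proposition \ref{prop-seeds-and-conc-final} for $\mathcal{O}$, the estimate \eqref{eq:021b}, and the counterexample inequality \eqref{eq:007b} all hold. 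Throughout I write $g:=g_{\boldsymbol{H}_{-\ell_0},m,k_0}$ and $\tilde{g}:=g_{\boldsymbol{H},m,k_0}$; by property (\hyperref[prop-i.2]{i.2}), $g=\tilde{g}$ on $\mathcal{O}$ and takes values in $[k_0]$ there. Parts (1) and (2) are then immediate: on $\mathcal{E}_5$ we have $|r_B(\mathcal{D}_{d-1})|=|\mathcal{O}|\geqslant 0.6\ell_0$ by (\hyperref[prop-iii]{iii}), and $E_{\boldsymbol{H}}\setminus E_{\boldsymbol{H}_{-\ell_0}}$ contains a perfect matching of $\mathcal{O}=r_B(\mathcal{D}_{d-1})$ by (\hyperref[prop-ii]{ii}).

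For part (3), I would observe that a pair $\{v,u\}\in\binom{r_B(\mathcal{D}_{d-1})}{2}=\binom{\mathcal{O}}{2}$ fails to lie in $\mathcal{Q}$ only if either $g(v)=g(u)$, or $g(v)\neq g(u)$ and $\varrho\big(\alpha(g(v)),\alpha(g(u))\big)<r_0/10$ (here one uses that $r_0=\ave_{\mathcal{M}}(\boldsymbol{f},1)>0$, since $\boldsymbol{f}$ is non-constant on $\mathcal{E}_1$ by \eqref{eq:005b}). The number of pairs of the first kind is $\sum_{s\in[k_0]}\binom{|g^{-1}(s)\cap\mathcal{O}|}{2}$, which by property (\hyperref[prop-i.4]{i.4}) is at most $\frac{(d-1)^m}{2m}\,|\mathcal{O}|$; since $(d-1)^m\leqslant n^{1/K}$ by \eqref{rp-e8} while $|\mathcal{O}|\geqslant 0.6\ell_0$ with $\ell_0=\lfloor dn/(Km)\rfloor$, this is $o\big(\binom{|\mathcal{O}|}{2}\big)$ for $n$ large. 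For the second kind, since $\alpha=\boldsymbol{f}$ on $[k_0]$, inequality \eqref{eq:021b} gives at most $2\varepsilon\binom{k_0}{2}$ seed pairs $\{s,s'\}\in\binom{[k_0]}{2}$ with $\varrho(\alpha(s),\alpha(s'))<r_0/5$, hence at most that many with $\varrho(\alpha(s),\alpha(s'))<r_0/10$; each such seed pair arises as $\{g(v),g(u)\}$ for at most $\big((d-1)^m/m\big)^2$ pairs $\{v,u\}\in\binom{\mathcal{O}}{2}$ by (\hyperref[prop-i.4]{i.4}). Using $k_0\leqslant Kn/(d-1)^m$ and $m\leqslant C\log_{d-1}\log_{d-1}N$, the total count of second-kind pairs is at most $2\varepsilon\binom{k_0}{2}\tfrac{(d-1)^{2m}}{m^2}\leqslant \varepsilon K^2n^2/m^2$, while $\binom{|\mathcal{O}|}{2}\gtrsim (dn/(Km))^2$. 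Bookkeeping the absolute constants (recall $d\geqslant 3$) shows the number of pairs not in $\mathcal{Q}$ is at most $5\varepsilon K^4\binom{|\mathcal{O}|}{2}$, which is part (3).

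For part (4), I would take a deleted edge $\{v,u\}\in E_{\boldsymbol{H}}\setminus E_{\boldsymbol{H}_{-\ell_0}}$ with $\{v,u\}\in\mathcal{Q}$, so that $v,u\in r_B(\mathcal{D}_{d-1})=\mathcal{O}$. Property (\hyperref[prop-i.3]{i.3}) gives $\{v,\tilde{g}(v)\}\notin\mathcal{L}$, and as $\dist_{\boldsymbol{H}}(v,\tilde{g}(v))=m$ by definition of $\tilde{g}$, the definition \eqref{eq:009b} of $\mathcal{L}$ (with $\ave_{\mathcal{M}}(\boldsymbol{f},1)=r_0$) forces $\varrho\big(\boldsymbol{f}(v),\boldsymbol{f}(\tilde{g}(v))\big)\leqslant r_0/\sqrt{C_d}$; since $\tilde{g}(v)=g(v)\in[k_0]$ and $\boldsymbol{f}|_{[k_0]}=\alpha$, this reads $\varrho\big(\boldsymbol{f}(v),\alpha(g(v))\big)\leqslant r_0/\sqrt{C_d}$, and likewise for $u$. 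Combined via the triangle inequality with $\varrho\big(\alpha(g(v)),\alpha(g(u))\big)\geqslant r_0/10$ (as $\{v,u\}\in\mathcal{Q}$), this yields $\varrho\big(\boldsymbol{f}(v),\boldsymbol{f}(u)\big)\geqslant r_0/10-2r_0/\sqrt{C_d}\geqslant r_0/20$ once $C_d$ is large enough (say $C_d\geqslant 1600$). Hence all edges counted in part (4) lie in the set of $e=\{v,u\}\in E_{\boldsymbol{H}}$ with $\varrho(\boldsymbol{f}(v),\boldsymbol{f}(u))\geqslant r_0/20$, whose cardinality, by Markov's inequality and \eqref{eq:007b} (using $\ave_{\mathcal{M}}(\boldsymbol{f},1)=r_0$), is at most $\tfrac{20}{r_0}\sum_{\{v,u\}\in E_{\boldsymbol{H}}}\varrho(\boldsymbol{f}(v),\boldsymbol{f}(u))<\tfrac{10dn}{C_d\log_{d-1}\log_{d-1}N}$. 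Comparing with $|r_B(\mathcal{D}_{d-1})|=|\mathcal{O}|\geqslant 0.6\ell_0\gtrsim \tfrac{dn}{KC\log_{d-1}\log_{d-1}N}$ and choosing $C_d$ large in terms of $K$ and $C$ (hence, ultimately, in terms of $d$) gives the bound $\tfrac{1}{\sqrt[4]{C_d}}|r_B(\mathcal{D}_{d-1})|$ of part (4).

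The main obstacle is part (3): one must simultaneously control ``seed collisions'' (vertices of $\mathcal{O}$ receiving equal seeds) and ``close seed pairs'' (distinct seeds with nearby $\alpha$-images), and both rely on the fiber bound (\hyperref[prop-i.4]{i.4}) of Proposition \ref{prop-seeds-and-conc-final} together with the concentration estimate \eqref{eq:021b}; the remaining work is purely a matter of checking that the accumulated loss fits inside $5\varepsilon K^4$. Part (4) is conceptually the most delicate chain of inequalities --- it is where the $\mathcal{L}$-avoidance property (\hyperref[prop-i.3]{i.3}) is finally exploited to transfer the closeness of $\boldsymbol{f}$ to its seed values --- but it is routine once that transfer is set up.
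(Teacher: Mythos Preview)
Your proof is correct and follows essentially the same approach as the paper's. The only notable (and harmless) difference is in part~(4): the paper first restricts to the set $\boldsymbol{E}$ of deleted edges lying inside $r_B(\mathcal{D}_{d-1})$ and then applies Markov's inequality to the average of $\varrho(\boldsymbol{f}(v),\boldsymbol{f}(u))$ over $\boldsymbol{E}$ (using that $|\boldsymbol{E}|=|r_B(\mathcal{D}_{d-1})|/2$ from the perfect-matching structure), whereas you apply Markov directly over all of $E_{\boldsymbol{H}}$; your route is slightly simpler and yields the same conclusion up to the choice of threshold ($r_0/20$ versus the paper's $r_0/50$) and the resulting constraint on $C_d$.
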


\begin{proof}[Proof of Claim \ref{cl:7.3}]
Since $\mathcal{E}_5\subseteq\mathcal{E}_{2,1}$, by the selection of $\mathcal{E}_{2,1}$ in Step \hyperref[rp-sec2]{2}, parts (1) and (2) of the claim follow by properties (\hyperref[prop-iii]{iii}) and (\hyperref[prop-ii]{ii}) of Proposition \ref{prop-seeds-and-conc-final}, respectively.

Next observe that since $\mathcal{E}_5\subseteq \mathcal{E}_3\cap \mathcal{E}_4\cap \mathcal{E}_{2,2}$, by \eqref{eq:021b}, part (\hyperref[prop-i.4]{i.4}) of Proposition \ref{prop-seeds-and-conc-final} and the triangle inequality, almost everywhere on the event $\mathcal{E}_5$,
\begin{equation} \label{eq:033bnew}
\!\! \bigg|\bigg\{ \{v,u\}\in\binom{r_B(\mathcal{D}_{d-1})}{2}:
\varrho\Big(\alpha\big( g_{\boldsymbol{H}_{-\ell_0},m,k_0}(v)\big),
\alpha\big( g_{\boldsymbol{H}_{-\ell_0},m,k_0}(u)\big) \Big) \leqslant \frac{r_0}{10}\bigg\}\bigg|
\leqslant \varepsilon k_0^2\,  \frac{(d-1)^{2m}}{m^2}.
\end{equation}
Moreover, if $n$ is sufficiently large in terms of $d$ with $n\geqslant N^2$, by the choices of $\ell_0$ and $k_0$ in \eqref{eq:014b} and part (1) of the claim,
\begin{equation} \label{rp-enewnew}
\varepsilon k_0^2\,  \frac{(d-1)^{2m}}{m^2} \leqslant \varepsilon \frac{K^2 n^2}{m^2} \leqslant
\frac{4\varepsilon K^4}{d^2} \, \ell_0^2 \leqslant \frac{16\varepsilon K^4}{0.6^2\, d^2} \, \binom{r_B(\mathcal{D}_{d-1})}{2}
\leqslant 5\varepsilon K^4 \, \binom{r_B(\mathcal{D}_{d-1})}{2}.
\end{equation}
Thus, part (3) of the claim follows by \eqref{eq:033bnew}, \eqref{rp-enewnew} and the choice of $\mathcal{Q}$ in \eqref{def-Q}.

We now proceed to the proof of part (4). Observe that, since $\mathcal{E}_5\subseteq\mathcal{E}_{2,1}$ and $\mathcal{E}_5\subseteq\mathcal{E}_3$, by property~(\hyperref[prop-i]{i}) of Proposition \ref{prop-seeds-and-conc-final}, the choice of $\mathcal{L}$ in \eqref{eq:009b}, and the construction of $g_{\cdot,m,\cdot}(v)$ as a vertex at distance exactly $m$ from $v$, it holds almost everywhere on the event $\mathcal{E}_5$ that for every $v\in r_B(\mathcal{D}_{d-1})$,
\begin{equation} \label{eq:032b}
\varrho \Big( \boldsymbol{f}(v), \alpha\big(g_{\boldsymbol{H}_{-\ell_0},m,k_0}(v)\big) \Big) =
\varrho \Big( \boldsymbol{f}(v), \boldsymbol{f}\big(g_{\boldsymbol{H},m,k_0}(v)\big) \Big)
\leqslant \frac{r_0}{\sqrt{C_d}} \leqslant \frac{r_0}{50},
\end{equation}
where the last inequality is satisfied by selecting
\begin{enumerate}
\item[(C7)] \label{rp-C7} $C_d\geqslant 50^4$.
\end{enumerate}
Let $\boldsymbol{E}:=\big\{ e\in E_{\boldsymbol{H}}\setminus E_{\boldsymbol{H}_{-\ell_0}} \colon e \in \binom{r_B(\mathcal{D}_{d-1})}{2}\big\}$ denote the set of edges in $E_{\boldsymbol{H}}\setminus E_{\boldsymbol{H}_{-\ell_0}}$ that are contained in $r_B(\mathcal{D}_{d-1})$. Notice that, by parts (1) and (2) of the claim, almost everywhere on~$\mathcal{E}_5$,
\[ |\boldsymbol{E}| = \frac{\big| r_B(\mathcal{D}_{d-1})\big|}{2} \geqslant \frac{0.6\ell_0}{2} \]
and so, by \eqref{eq:007b} and the choices of $m$ and $\ell_0$ in \eqref{rp-e2} and \eqref{eq:014b} respectively,
\begin{align*}
\frac{r_0}{C_d\, \log_{d-1}\log_{d-1} N} & >
\frac{0.6\ell_0}{dn} \cdot \frac{1}{|\boldsymbol{E}|} \sum_{\{v,u\}\in \boldsymbol{E}}
\varrho\big(\boldsymbol{f}(v),\boldsymbol{f}(u)\big) \\
& \geqslant \frac{1}{K C \log_{d-1}\log_{d-1} N} \cdot \frac{1}{|\boldsymbol{E}|}
\sum_{\{v,u\}\in \boldsymbol{E}}  \varrho\big(\boldsymbol{f}(v),\boldsymbol{f}(u)\big),
\end{align*}
where we have used the fact that $n$ can be taken large in terms of $d$. Thus, by condition (\hyperref[rp-C3]{C3}),
\begin{equation} \label{rp-malakia}
\frac{1}{|\boldsymbol{E}|} \sum_{\{v,u\}\in \boldsymbol{E}}
\varrho\big(\boldsymbol{f}(v),\boldsymbol{f}(u)\big) \leqslant \frac{r_0}{\sqrt{C_d}},
\end{equation}
which further implies, by Markov's inequality, condition (\hyperref[rp-C7]{C7}) and part (2) of the claim,
\begin{equation} \label{rp-malakia^2}
\Big|\Big\{ \{v,u\}\in \boldsymbol{E} \colon \varrho\big(\boldsymbol{f}(v),\boldsymbol{f}(u)\big)
\geqslant \frac{r_0}{50}\Big\}\Big|\leqslant \frac{|\boldsymbol{E}|}{\sqrt[4]{C_d}}
\leqslant \frac{|r_B(\mathcal{D}_{d-1})|}{\sqrt[4]{C_d}}.
\end{equation}
Observe that, by \eqref{eq:032b} and the triangle inequality, if $\{v,u\}\in \boldsymbol{E}$ with $\{v,u\}\in\mathcal{Q}$, then necessarily we have $\varrho\big(\boldsymbol{f}(v),\boldsymbol{f}(u)\big)\geqslant \frac{r_0}{30}$. The proof of part (4) of the claim is thus completed by \eqref{rp-malakia^2}.
\end{proof}

\subsection*{Step 8: completion of the proof} \label{rp-sec8}

At this point, the previous steps have established that a large portion of the deleted edges $E_{\boldsymbol{H}}\setminus E_{\boldsymbol{H}_{-\ell_0}}$ forms a perfect matching of a $\boldsymbol{H}_{-\ell_0}$-measurable set of vertices and, at the same time, this matching essentially avoids a large and (crucially) also $\boldsymbol{H}_{-\ell_0}$-measurable set of pairs of vertices of the aforementioned set. The goal of this step is to show, using Lemma \ref{lemma-matchings}, that this event is unlikely.

We proceed to the details. Define the following random $\boldsymbol{G}_{-\ell_0}$-measurable sets of vertices and pair of vertices, respectively,
\begin{gather}
\label{eq:038b} \mathcal{D}'_{d-1}:=\big\{ v\in [n] \colon \mathrm{deg}_{\boldsymbol{G}_{-\ell_0}}(v)=d-1\big\}, \\
\label{eq:039b} \mathcal{Q}':= \bigg\{ \{v,u\}\in\binom{r_B(\mathcal{D}'_{d-1})}{2}\colon
\varrho\Big(\alpha\big( g_{\boldsymbol{G}_{-\ell_0},m,k_0}(v)\big),
\alpha\big( g_{\boldsymbol{G}_{-\ell_0},m,k_0}(u)\big) \Big) \geqslant \frac{r_0}{10}\bigg\}.
\end{gather}
Also define the events
\begin{align}
\label{eq:040b} \mathcal{E}_6 & :=\bigg[ |r_B(\mathcal{D}_{d-1})|\geqslant 0.6\ell_0, \text{ the set of edges }
E_{\boldsymbol{H}}\setminus E_{\boldsymbol{H}_{-\ell_0}} \text{ contains a perfect matching } \\
& \ \ \ \ \ \ \ \ \ \ \ \ \ \ \ \ \ \ \ \ \ \ \ \ \ \
\text{of the set } r_B(\mathcal{D}_{d-1}), \text{ and }
|\mathcal{Q}| \geqslant (1-5\varepsilon K^4)\binom{|r_B(\mathcal{D}_{d-1})|}{2} \bigg], \nonumber\\
\label{eq:041b} \mathcal{E}_6' & :=\bigg[   |r_B(\mathcal{D}'_{d-1})|\geqslant 0.6\ell_0, \text{ the set of edges }
E_{\boldsymbol{G}}\setminus E_{\boldsymbol{G}_{-\ell_0}}  \text{ contains a perfect matching } \\
& \ \ \ \ \ \ \ \ \ \ \ \ \ \ \ \ \ \ \ \ \ \ \ \ \ \
\text{of the set } r_B(\mathcal{D}'_{d-1}), \text{ and }
|\mathcal{Q}'| \geqslant (1-5\varepsilon K^4)\binom{|r_B(\mathcal{D}'_{d-1})|}{2} \bigg]. \nonumber
\end{align}
By Lemma \ref{rm-l1}, Claim \ref{cl:7.3} and \eqref{eq:031b}, we have, provided that $n$ is sufficiently large in terms of $d$ with $n\geqslant N^2$,
\begin{align*}
\mathbb{P}\bigg[ \big|\mathcal{Q}'\cap\, & (E_{\boldsymbol{G}}\setminus E_{\boldsymbol{G}_{-\ell_0}})\big|
\leqslant \frac{1}{\sqrt[4]{C_d}}\, |r_B(\mathcal{D}'_{d-1})| \,\bigg|\, \mathcal{E}_6' \bigg] = \\
& \mathbb{P}\bigg[ \big|\mathcal{Q}\cap (E_{\boldsymbol{H}}\setminus E_{\boldsymbol{H}_{-\ell_0}})\big|
\leqslant \frac{1}{\sqrt[4]{C_d}}\, |r_B(\mathcal{D}_{d-1})| \,\bigg|\, \mathcal{E}_6 \bigg]
\geqslant \mathbb{P}\big[\mathcal{E}_5\big] \geqslant  \exp(-4\ell_0) \, \mathbb{P}\big[\mathcal{E}_1\big].
\end{align*}
Thus, there exists a realization $G_{-\ell_0}$ of $\boldsymbol{G}_{-\ell_0}$ such that, denoting by $Q_0'$ and $F_0'$ the realizations of $\mathcal{Q}'$ and $r_B(\mathcal{D}'_{d-1})$ on the event $[\boldsymbol{G}_{-\ell_0} = G_{-\ell_0}]$, if $n$ is sufficiently large in terms of $d$ with $n\geqslant N^2$, we have $\mathbb{P}\big[ \mathcal{E}_6' \cap [\boldsymbol{G}_{-\ell_0} = G_{-\ell_0}]\big]>0$ and
\begin{equation} \label{eq:043b}
\mathbb{P}\bigg[ \big|Q'_0\cap (E_{\boldsymbol{G}}\setminus E_{G_{-\ell_0}})\big|
\leqslant \frac{1}{\sqrt[4]{C_d}}\, |F_0'| \,\bigg|\,
\mathcal{E}_6' \cap [\boldsymbol{G}_{-\ell_0} = G_{-\ell_0}] \bigg]
\geqslant \exp(-4\ell_0) \, \mathbb{P}\big[\mathcal{E}_1\big].
\end{equation}

Notice that, almost everywhere on the event $\mathcal{E}_6' \cap [\boldsymbol{G}_{-\ell_0} = G_{-\ell_0}]$, the following are satisfied.
\begin{enumerate}
\item[(1$'$)] \label{pr1'} We have $|F'_0|\geqslant 0.6\ell_0$; moreover, $\mathrm{deg}_{G_{-\ell_0}}(v)=d-1$, for every $v\in F'_0$.
\item[(2$'$)] \label{pr2'} The set of edges $E_{\boldsymbol{G}}\setminus E_{G_{-\ell_0}}$ contains a perfect matching of the set $F'_0$.
\item[(3$'$)] \label{pr3'} We have $Q_0' \subseteq \binom{F'_0}{2}$ and $|Q_0'| \geqslant (1-5\varepsilon K^4)\binom{|F'_0|}{2}$.
\end{enumerate}
By properties (\hyperref[pr1']{$1'$}) and (\hyperref[pr1']{$2'$}), almost everywhere on the event $\mathcal{E}_6' \cap [\boldsymbol{G}_{-\ell_0} = G_{-\ell_0}]$, every edge in $E_{\boldsymbol{G}}\setminus E_{G_{-\ell_0}}$ either is contained in $F'_0$, or is disjoint from $F'_0$. Consequently, for any set of edges $E_0$ disjoint from $F'_0$ with $|E_0|=\ell_0 - \frac{|F'_0|}{2}$, either (i) $\mathcal{E}_6' \cap [\boldsymbol{G}_{-\ell_0} = G_{-\ell_0}]\cap [E_0\subseteq E_{\boldsymbol{G}}\setminus E_{G_{-\ell_0}}]=\emptyset$, or (ii) the random variable $E_{\boldsymbol{G}}\setminus (E_{G_{-\ell_0}}\cup E_0)$ conditioned on $\mathcal{E}_6' \cap [\boldsymbol{G}_{-\ell_0} = G_{-\ell_0}]\cap [E_0\subseteq E_{\boldsymbol{G}}\setminus E_{G_{-\ell_0}}]$ is uniformly distributed on the set of perfect matchings of $F_0'$.

By \eqref{eq:043b}, if $n$ is sufficiently large in terms of $d$ with $n\geqslant N^2$, there exists a set of edges $E_0$ disjoint from $F'_0$ with $|E_0|=\ell_0 - \frac{|F'_0|}{2}$ that satisfies $\mathbb{P}\big[\mathcal{E}_6' \cap [\boldsymbol{G}_{-\ell_0} = G_{-\ell_0}]\cap [E_0\subseteq E_{\boldsymbol{G}}\setminus E_{G_{-\ell_0}}]\big]>0$ (thus, the random variable $E_{\boldsymbol{G}}\setminus (E_{G_{-\ell_0}}\cup E_0)$ conditioned on $\mathcal{E}_6' \cap [\boldsymbol{G}_{-\ell_0} = G_{-\ell_0}]\cap [E_0\subseteq E_{\boldsymbol{G}}\setminus E_{G_{-\ell_0}}]$ is uniformly distributed on the set of perfect matchings of $F_0'$), and
\begin{equation} \label{eq:044b}
\mathbb{P}\big[\mathcal{E}_1\big]\!\leqslant \exp(4\ell_0) \,
\mathbb{P}\bigg[ \big|Q'_0\cap (E_{\boldsymbol{G}}\setminus E_{G_{-\ell_0}})\big|
\!\leqslant \! \frac{1}{\sqrt[4]{C_d}}\, |F_0'| \,\bigg|\, \mathcal{E}_6' \cap
[\boldsymbol{G}_{-\ell_0} = G_{-\ell_0}]\cap [E_0\subseteq E_{\boldsymbol{G}}\setminus E_{G_{-\ell_0}}]\bigg];
\end{equation}
moreover, selecting
\begin{enumerate}
\item[(C8)] \label{rp-C8} $\varepsilon$ small enough so that $5\varepsilon K^4\leqslant \exp\big(-10^3 KC\big)$, and
$C_d$ large enough so that $1/\sqrt[4]{C_d}\leqslant 5\varepsilon K^4$,
\end{enumerate}
by Lemma \ref{lemma-matchings} applied for ``$[\ell]=F'_0$'', ``$Y=Q'_0$'', ``$\varepsilon = 5\varepsilon K^4$'' and
``$c = 1/\sqrt[4]{C_d}$'', we have
\begin{align} \label{eq:044bnew}
\mathbb{P}\bigg[ \big|Q'_0\cap (E_{\boldsymbol{G}}\setminus E_{G_{-\ell_0}})\big| \leqslant
\frac{1}{\sqrt[4]{C_d}}|F_0'|  \,\bigg| \, & \mathcal{E}_6' \cap [\boldsymbol{G}_{-\ell_0} = G_{-\ell_0}]
\cap [E_0\subseteq E_{\boldsymbol{G}}\setminus E_{G_{-\ell_0}}] \bigg]   \\
& \ \ \ \ \ \ \ \ \  \leqslant \exp\big(-10KC|F'_0|\big) \leqslant \exp\big(-5KC\ell_0\big).\nonumber
\end{align}
By \eqref{eq:043b}, \eqref{eq:044bnew} and the choices of $m$ and $\ell_0$ in \eqref{rp-e2} and \eqref{eq:014b} respectively, we conclude that, if $n$ is sufficiently large in terms of $d$ with $n\geqslant N^2$,
\[ \mathbb{P}\big[\mathcal{E}_1\big] \leqslant \exp\big(-KC\ell_0\big) \leqslant \exp\Big(-\frac{Cn}{m}\Big) \leqslant \exp\Big(-\frac{n}{\log_{d-1}\log_{d-1}N}\Big) \leqslant \exp\Big(-\frac{n}{N^2}\Big), \]
as desired. Finally, notice that the values of $K,C,\varepsilon$ and $C_d$ can be determined by conditions (\hyperref[rp-C1]{C1})--(\hyperref[rp-C8]{C8}). The proof of Theorem \ref{random-poincare-2} is completed.

\subsection{Proof of Theorem \ref{regular-random-poincare}} \label{proof-regular-random-poincare}

As already mentioned, Theorem \ref{regular-random-poincare} is proved arguing precisely as in the proof of Theorem \ref{random-poincare-2}, except with slightly different conditioning for derandomizing $\ave_{\mathcal{M}}(\boldsymbol{f},1)$ in Step \hyperref[rp-sec4]{4}. As a consequence, we shall only indicate the necessary changes.

Again we start by observing that, by Remark \ref{rem8.6}, we may assume that $n\geqslant (\log_{d-1}N)^A$ with $A$ sufficiently large in terms of $d$. Set
\begin{equation} \label{reg-rp-e2}
m := \big\lfloor C \, \log_{d-1}\log_{d-1} N \big\rfloor,
\end{equation}
and observe that if $n\geqslant (\log_{d-1}N)^A$ with $A\geqslant 25\, C$, then $m\leqslant \frac{1}{25} \log_{d-1}n$. Thus, setting
\begin{equation} \label{reg-rp-e3}
\mathcal{X}:= \big[|\mathcal{T}(\boldsymbol{U},m)|\geqslant n - \sqrt{n}\big]
\ \ \ \text{ and } \ \ \
\mathcal{Y} := \big[ h(\boldsymbol{U}) \geqslant 0.005d \big],
\end{equation}
where $\mathcal{T}(\boldsymbol{U},m)$ is as in \eqref{eq-tree}, by Proposition \ref{prop-tree} and Corollary \ref{Friedman}, there is $\tau_1=\tau_1(d)>0$ so that, if $n\geqslant (\log_{d-1}N)^A$ with $A$ sufficiently large in terms of $d$,
\begin{equation} \label{reg-rp-e4}
\mathbb{P}\big[\mathcal{X}\cap \mathcal{Y}\big] \geqslant 1-O_d\Big(\frac{1}{n^{\tau_1}}\Big).
\end{equation}
Consequently, setting
\begin{equation}
\label{reg-rp-e5} \mathcal{E}_1 := \big[\gamma(\boldsymbol{U},\varrho)> C_d\, \log_{d-1}\log_{d-1} N\big]
\cap (\mathcal{X} \cap \mathcal{Y}),
\end{equation}
it is enough to show that if $n\geqslant (\log_{d-1}N)^A$ with $A$ sufficiently large in terms of $d$, then
\begin{equation} \label{eq-goal-2} \mathbb{P}\big[\mathcal{E}_1\big]\leqslant \exp\Big( - \frac{n}{(\log_{d-1}N)^A}\Big).
\end{equation}
The proof of this fact follows the same steps of the proof of Theorem \ref{random-poincare-2}. The main difference is that in Step \hyperref[rp-sec4]{4}, our assumption that $\cM$ is well-conditioned allows for a much more efficient union bound over possible values of $\ave_{\mathcal{M}}(\boldsymbol{f},1)$. Specifically, set $J:=\big\lfloor\log_2\big(2n^2 e^N\big)\big\rfloor$, and notice that for every non-constant function $f\colon [n]\to M$ there exists $j\in [J]$ such that
\[ 2^{-j}\, \mathrm{diam}(\mathcal{M}) < \mathrm{ave}_{\mathcal{M}}(f,1) \leqslant 2^{-j+1}\, \mathrm{diam}(\mathcal{M}). \]
Indeed, since $f$ is assumed to be non-constant, we have
\[ \frac{\diam(\cM)}{2^J} <
\frac{\diam(\cM)}{n^2 e^N} \leqslant
\frac{m(\cM)}{n^2} \leqslant \ave_{\cM}(f,1) \leqslant \diam(\cM), \]
where the first (strict) inequality follows from the choice of $J$, and the second inequality follows from the assumption that $\cM$ is well-condition. Moreover, by the choice of $m$ in \eqref{reg-rp-e2}, if $n\geqslant (\log_{d-1}N)^A$ with $A$ sufficiently large in terms of $d, K$ and $C$,
\begin{align*}
J \leqslant 2N+2\log n & \leqslant
\exp\Big( 2\log N+ 2\log\log n\Big)   \\
&\leqslant \exp\Big( \frac{dn}{2KC \log_{d-1}\log_{d-1}N}\Big)
\leqslant \exp\Big( \frac{dn}{2Km}\Big) \leqslant \exp(\ell_0),
\end{align*}
where $\ell_0$ is again defined as in \eqref{eq:014b} and we have used the fact that $N\geqslant d$. Thus, letting $\mathcal{E}_2$ denote the same event as in the proof of Theorem \ref{random-poincare-2}, there exists $j_0\in [J]$ such that if $n\geqslant (\log_{d-1}N)^A$ with $A$ sufficiently large in terms of $d$, then, setting
\begin{equation} \label{reg-eq:024b}
\mathcal{E}_3 := \mathcal{E}_2\cap \big[2^{-j_0}\, \mathrm{diam}(\mathcal{M}) <\ave_\mathcal{M}(\boldsymbol{f},1)
\leqslant 2^{-j_0+1}\, \mathrm{diam}(\mathcal{M})\big],
\end{equation}
we have
\begin{equation} \label{reg-eq:025b}
\mathbb{P}\big[\mathcal{E}_3\big] \geqslant \mathbb{P}\big[\mathcal{E}_2\big] \, J^{-1} \geqslant
\mathbb{P}\big[\mathcal{E}_2\big] \, \exp(-\ell_0).
\end{equation}
The rest of the proof of Theorem \ref{regular-random-poincare} follows exactly the same steps of the proof of Theorem \ref{random-poincare-2} replacing $r_0$ with its proxy $2^{-j_0}\, \mathrm{diam}(\mathcal{M})$.

\section{Reduction to well-conditioned metric spaces} \label{reduction}

The purpose of this section is to present a result that will enable us to reduce---in a certain regime---the case of arbitrary metric spaces in Theorem \ref{random-poincare} to the case of well-conditioned metric spaces. Recall that $a(\mathcal{M})$ denotes the aspect ratio of a metric space $\mathcal{M}=(M,\varrho)$, defined in \eqref{eq-aspect-ratio}.

\begin{proposition} [Reduction to well-conditioned spaces]\label{prop-aspect-ratio}
Let $n, N$ be any pair of integers with $n,N\geqslant 2$. Then for every metric space $\mathcal{M}=(M,\varrho)$ with $|M|=N$, there exists a metric space $\mathcal{M}'=(M',\varrho')$ with
\begin{equation} \label{red-e1}
N\leqslant |M'|\leqslant N^3 \ \ \ \text{ and } \ \ \ a(\mathcal{M}')\leqslant n^4,
\end{equation}
such that for every connected graph\footnote{Here, for any metric space $\mathcal{N}=(N,\rho)$, $\gamma(G,\rho)$ denotes the optimal Poincar\'{e} constant for functions from the vertex-set of $G$ into the metric space $\mathcal{N}$, that is, the smallest constant $\gamma\in (0,\infty]$ such that, for any $f\colon [n] \to N$, we have $\frac{1}{n^2} \sum_{v,u\in [n]} \rho\big(f(v),f(u)\big) \leqslant
\frac{\gamma}{|E_G|} \sum_{\{v,u\}\in E_G} \rho\big(f(v),f(u)\big)$.} $G$ on $[n]$,
\begin{equation} \label{eq:gap comparison}
\gamma(G,\varrho)\leqslant 2\,\gamma(G,\varrho').
\end{equation}
\end{proposition}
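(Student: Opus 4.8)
\textbf{Proof plan for Proposition \ref{prop-aspect-ratio}.}

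The plan is to realize $\mathcal{M}'$ as a disjoint union of $N^2$ ``truncated'' copies of $\mathcal{M}$, one for each possible value of the pairwise distances occurring in $\mathcal{M}$, and then glue these copies together at a large common scale so that the result is a genuine metric space with controlled aspect ratio. First I would list the set of values $\{\tau_1 < \tau_2 < \cdots < \tau_t\}$ taken by $\varrho(x,y)$ over distinct $x,y \in M$; note $t \leqslant \binom{N}{2} \leqslant N^2$. For each index $i$ I would define a truncated metric $\varrho_i$ on $M$ by setting $\varrho_i(x,y) := \max\{\varrho(x,y),\, \tau_i/n^3\}$ for $x\neq y$ (and $0$ on the diagonal); a short check using the triangle inequality for $\varrho$ together with $\max$-truncation shows $\varrho_i$ is again a metric, and by construction $m((M,\varrho_i)) \geqslant \tau_i/n^3$ while $\operatorname{diam}((M,\varrho_i)) = \operatorname{diam}(\mathcal{M}) \leqslant \tau_t$. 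To assemble $\mathcal{M}'$, take the disjoint union $M' := \bigsqcup_{i=1}^t M_i$ of $t$ labelled copies $M_i$ of $M$; inside $M_i$ use $\varrho_i$, and for $x\in M_i,\ y \in M_j$ with $i\neq j$ set $\varrho'(x,y) := \Lambda$ for a single large constant $\Lambda$ (say $\Lambda := n^4 \tau_1 / n^3 = n\,\tau_1$, chosen below the aspect-ratio budget and above all intra-copy diameters once we also truncate from below). One then verifies $|M'| = tN \leqslant N^3$, and that $a(\mathcal{M}') \leqslant n^4$: the minimal distance is $\min\{\min_i \tau_i/n^3,\ \Lambda\} = \tau_1/n^3$, and the maximal distance is $\max\{\operatorname{diam}(\mathcal{M}),\Lambda\}$, so the ratio is at most $\max\{\tau_t, \Lambda\} \cdot n^3 / \tau_1 \leqslant n^4$ provided $\Lambda$ and the truncation constant are tuned so that $\tau_t/\tau_1 \leqslant n^3$ is \emph{not} assumed — here is the subtlety: $\tau_t/\tau_1$ is a priori unbounded, so the between-copy distance $\Lambda$ must dominate but the bottom truncation $\tau_i/n^3$ inside copy $i$ must be measured against copy $i$'s own scale $\tau_i$, not a global one. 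This is exactly why we use $N^2$ scale-indexed copies rather than one: within copy $i$ the aspect ratio is $\operatorname{diam}(\mathcal{M})/(\tau_i/n^3) = n^3 \operatorname{diam}(\mathcal{M})/\tau_i$, which can still be large, so in fact the correct truncation inside copy $i$ should truncate \emph{from above} as well, replacing $\varrho$ by $\min\{\max\{\varrho(x,y),\tau_i/n^3\},\, n\tau_i\}$, making copy $i$ have aspect ratio at most $n^4$; the between-copy gluing constant $\Lambda$ is then chosen larger than all $n\tau_i$ but within a factor $n^{O(1)}$ of the smallest, which forces $\Lambda \asymp n\cdot\tau_1$ and hence requires ordering copies by scale and checking the $\Lambda$-gluing still satisfies the triangle inequality (it does, since all within-copy distances are $\leqslant n\tau_i \leqslant \Lambda$ and $\Lambda + \Lambda \geqslant \Lambda$).

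Next I would prove the Poincaré comparison \eqref{eq:gap comparison}. Let $G$ be connected on $[n]$, and let $f\colon [n] \to M$ be a non-constant map witnessing $\gamma(G,\varrho)$ up to a $(1+o(1))$ factor; we want to produce $f'\colon [n]\to M'$ with nearly the same average and Dirichlet form. Set $\tau = \tau(f) := \max\{\varrho(f(v),f(u)) : v,u \in [n]\}$; this equals $\tau_i$ for some index $i$, and I would define $f'$ to be $f$ composed with the canonical inclusion $M \hookrightarrow M_i$ into the $i$-th copy, so that $\varrho'(f'(v),f'(u)) = \min\{\max\{\varrho(f(v),f(u)),\tau/n^3\},n\tau\}$. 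The upper truncation at $n\tau$ never triggers since $\varrho(f(v),f(u)) \leqslant \tau < n\tau$, so effectively $\varrho'(f'(v),f'(u)) = \max\{\varrho(f(v),f(u)),\tau/n^3\}$. Now the key observation: since $G$ is connected, there is a path of length at most $n$ joining the pair of vertices realizing $\tau$, so some edge $\{v_0,u_0\} \in E_G$ has $\varrho(f(v_0),f(u_0)) \geqslant \tau/n$, whence
\[
\frac{1}{|E_G|}\sum_{\{v,u\}\in E_G}\varrho'\big(f'(v),f'(u)\big)
\leqslant \frac{1}{|E_G|}\sum_{\{v,u\}\in E_G}\Big(\varrho\big(f(v),f(u)\big) + \frac{\tau}{n^3}\Big)
\leqslant \frac{1}{|E_G|}\sum_{\{v,u\}\in E_G}\varrho\big(f(v),f(u)\big) + \frac{\tau}{n^3}
\]
and the additive error $\tau/n^3$ is at most $\frac{1}{n^2}\cdot\frac{\tau}{n} \leqslant \frac{1}{n^2}\varrho(f(v_0),f(u_0)) \leqslant \frac{1}{|E_G|}\sum_{\{v,u\}\in E_G}\varrho(f(v),f(u))\cdot\frac{1}{n}$ (using $|E_G|\leqslant n^2$ crudely, adjusting constants), so the $\varrho'$-Dirichlet form exceeds the $\varrho$-Dirichlet form by at most a $(1+o(1))$ multiplicative factor — in any case at most a factor $2$ after absorbing constants. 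Simultaneously, for the averages: there is at least one pair at distance exactly $\tau$, and there are $n^2$ pairs, so $\ave_{\mathcal{M}}(f,1) \geqslant \tau/n^2$; hence the bottom-truncation perturbs each summand of $\ave$ by at most $\tau/n^3 = o(\ave_{\mathcal{M}}(f,1))$ on average, giving $\ave_{\mathcal{M}'}(f',1) \geqslant \ave_{\mathcal{M}}(f,1) \geqslant (1-o(1))\ave_{\mathcal{M}'}(f',1)$ — in particular $\ave_{\mathcal{M}'}(f',1) \geqslant \tfrac12 \ave_{\mathcal{M}}(f,1)$ after adjusting. Combining, for this $f'$,
\[
\gamma(G,\varrho') \;\geqslant\; \frac{\ave_{\mathcal{M}'}(f',1)}{\frac{1}{|E_G|}\sum_{\{v,u\}\in E_G}\varrho'(f'(v),f'(u))}
\;\geqslant\; \frac{\tfrac12\,\ave_{\mathcal{M}}(f,1)}{\,(1+o(1))\cdot\frac{1}{|E_G|}\sum_{\{v,u\}\in E_G}\varrho(f(v),f(u))\,}
\;\geqslant\; \tfrac12\,\gamma(G,\varrho;f),
\]
and taking the supremum over $f$ (recalling $\gamma(G,\varrho)$ is this supremum, and constant $f$ contributes nothing) yields $\gamma(G,\varrho)\leqslant 2\,\gamma(G,\varrho')$. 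I would be a little careful that $f'$ lands in the single copy $M_i$ determined by $f$ so that no between-copy distance $\Lambda$ ever appears in either sum — this is automatic since $f'([n]) \subseteq M_i$.

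The main obstacle, and the point deserving the most care in the writeup, is the \emph{simultaneous} control of both statistics under truncation when the aspect ratio of $\mathcal{M}$ is wild: the bottom-truncation level $\tau/n^3$ must be small enough not to inflate the Dirichlet form (which uses $\Theta(n)$ edges, the smallest contributing edge having length $\geqslant \tau/n$, so we need truncation $\ll \tau/n$ — giving the exponent $3$ some slack) \emph{and} small enough not to inflate the average (which uses $n^2$ pairs, one of length exactly $\tau$, so average $\geqslant \tau/n^2$, again needing truncation $\ll \tau/n^2$), while the copy-index $i$ must be chosen \emph{from $f$} yet the resulting $\mathcal{M}'$ must be fixed \emph{before} seeing $f$ — which is precisely why we take a copy for every one of the $\leqslant N^2$ possible values of $\tau(f)$. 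A secondary technical point is verifying that the disjoint-union-with-constant-gluing genuinely yields a metric and that stitching copies of different internal scales together with one global gluing constant $\Lambda$ respects the triangle inequality across all three-point configurations (within a copy, two in one copy and one in another, all three in distinct copies); each case reduces to the inequality $\Lambda \geqslant \max_i n\tau_i$ together with $2\Lambda \geqslant \Lambda$, both of which hold by construction, and these give the cardinality bound $|M'|\leqslant N\cdot N^2 = N^3$ and the aspect-ratio bound $a(\mathcal{M}')\leqslant n^4$ after routine bookkeeping with the constant $\Lambda \asymp n\tau_1$.
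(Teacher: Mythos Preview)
Your overall strategy---build $\mathcal{M}'$ as a disjoint union of scale-indexed copies of $\mathcal{M}$, one per value in the distance set, and map each $f$ into the copy indexed by $\tau(f)$---is exactly the paper's idea, and your Poincar\'e comparison argument (find an edge of $\varrho$-length $\geqslant \tau/n$ via connectivity, then control the perturbation of numerator and denominator) is also essentially the paper's. But there is a genuine gap in the construction of $\mathcal{M}'$.

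The problem is your gluing constant $\Lambda$. You correctly note that the triangle inequality across copies forces $\Lambda \geqslant \max_i n\tau_i = n\tau_t$, and that the aspect-ratio bound forces $\Lambda$ to be within a factor $n^{O(1)}$ of the minimum distance $\tau_1/n^3$. These two requirements together give $n\tau_t \leqslant \Lambda \leqslant n^{O(1)}\tau_1$, which is impossible when $\tau_t/\tau_1$ is unbounded---precisely the situation the proposition is meant to handle. Your own remark that ``$\tau_t/\tau_1$ is a priori unbounded'' identifies the issue, but the sentence ``$\Lambda$ is chosen larger than all $n\tau_i$ but within a factor $n^{O(1)}$ of the smallest'' does not resolve it; it asserts two incompatible constraints. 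The paper avoids this by \emph{normalizing} each copy: inside copy $\tau$ it uses $\varrho_\tau(x,y) := \min\{n^2,\ \varrho(x,y)/\tau + 1/n^2\}$, so every copy lives on the common scale $[n^{-2},n^2]$ regardless of $\tau$, and a single gluing constant $n^2$ works uniformly. With that change your Poincar\'e argument goes through almost verbatim (the factor $\tau$ appears symmetrically in numerator and denominator and cancels in the ratio, leaving the factor $2$ from the additive $1/n^2$).
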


\begin{proof}
Fix $n,N$ and $\mathcal{M}=(M,\varrho)$, and let $\mathcal{D}$ be the set of distances within $\mathcal{M}$, that is,
\[ \mathcal{D}:=\big\{\varrho(x,y) \colon x,y\in M \text{ and } x\neq y\big\}. \]
For every $\tau\in\mathcal{D}$, define an auxiliary metric space $\mathcal{M}_\tau=(M, \varrho_\tau)$ on the same underlying set with $\mathcal{M}$, with the metric $\varrho_\tau$ defined by setting
\[ \varrho_\tau(x,y):=\min\Big\{n^2, \frac{\varrho(x,y)}{\tau}+\frac{1}{n^2}\Big\} \]
for every $x,y\in M$ with $x\neq y$, and $\varrho_\tau(x,x)=0$ otherwise.

\begin{claim} \label{ods:1}
For every $\tau\in\mathcal{D}$, $\mathcal{M}_\tau$ is a metric space.
\end{claim}

\begin{proof}[Proof of Claim \ref{ods:1}]
Fix $\tau\in\mathcal{D}$; it is enough to show that $\varrho_\tau$ satisfies the triangle inequality. To this end, let $x,y,z\in M$. Notice that if either $\varrho_\tau(x,y)=n^2$, or $\varrho_\tau(y,z)=n^2$, then we immediately obtain that $\varrho_\tau(x,z)\leqslant n^2\leqslant \varrho_\tau(x,y) +\varrho_\tau(y,z)$. Thus, we may assume that
\[ \varrho_\tau(x,y) = \frac{\varrho(x,y)}{\tau}+ \frac{1}{n^2}\leqslant n^2 \ \ \ \text{ and } \ \ \
\varrho_\tau(y,z) = \frac{\varrho(y,z)}{\tau}+\frac{1}{n^2}\leqslant n^2. \]
Then, by the definition of $\varrho_\tau$ and the triangle inequality for the metric $\varrho$, we have
\[ \varrho_\tau(x,z)\leqslant \frac{\varrho(x,z)}{\tau}+\frac{1}{n^2} \leqslant
\frac{\varrho(x,y)}{\tau}+\frac{1}{n^2} + \frac{\varrho(y,z)}{\tau}+\frac{1}{n^2} =
\varrho_\tau(x,y) +\varrho_\tau(y,z). \qedhere \]
\end{proof}

Given this family $(\mathcal{M}_\tau)_{\tau\in\mathcal{D}}$ of auxiliary metric spaces, we define $\mathcal{M}'=(M',\varrho')$ as the disjoint union of $(\mathcal{M}_\tau)_{\tau\in\mathcal{D}}$, where the distance between any two points $v,w$ belonging to different
``clusters'' is set to be equal to $n^2$. Explicitly,
\[ M' := \prod_{\tau\in\mathcal{D}} \big(\{\tau\}\times M\big), \]
and $\varrho'\big((\tau_1,x), (\tau_2,x) \big) = \varrho_{\tau_1}(x,y)$ if $\tau_1=\tau_2$, and $\varrho'\big((\tau_1,x), (\tau_2,x) \big) = n^2$ otherwise.

\begin{claim}\label{ods:2}
We have that $\mathcal{M}'$ is a metric space with $N\leqslant |M'|\leqslant N^3$ and $a(\mathcal{M}')\leqslant n^4$.
\end{claim}

\begin{proof}[Proof of Claim \ref{ods:2}]
It is clear that $N\leqslant |M'|\leqslant N^3$. To verify that $\mathcal{M}'$ is a metric space, it suffices to check that $\varrho'$ satisfies the triangle inequality. So let $(\tau_1,x),(\tau_2,y),(\tau_3,z)\in M'$. If $\tau_1 = \tau_3 = \tau_2$, then, by the definition of $\varrho'$ and Claim \ref{ods:1},
\[ \varrho'\big( (\tau_1,x), (\tau_3,z) \big)=\varrho_{\tau_1}(x,z) \leqslant
\varrho_{\tau_1}(x,y) + \varrho_{\tau_1}(y,z) =
\varrho'\big( (\tau_1,x), (\tau_2,y) \big)  + \varrho'\big( (\tau_2,y), (\tau_3,z) \big). \]
If $\tau_1 = \tau_3\neq \tau_2$, then, by the definition of $\varrho'$,
\[ \varrho'\big( (\tau_1,x), (\tau_3,z) \big)= \varrho_{\tau_1}(x,z) \leqslant n^2 \leqslant 2n^2 =
\varrho'\big( (\tau_1,x), (\tau_2,y) \big) + \varrho'\big( (\tau_2,y), (\tau_3,z) \big). \]
Lastly, if $\tau_1 \neq \tau_3$, then either $\tau_1\neq\tau_2$ or $\tau_2\neq\tau_3$, and consequently, by the definition of $\varrho'$,
\[ \varrho'\big( (\tau_1,x), (\tau_3,z) \big)=n^2\leqslant
\varrho'\big( (\tau_1,x), (\tau_2,y) \big) + \varrho'\big( (\tau_2,y), (\tau_3,z) \big); \]
this completes the proof that $\mathcal{M}'$ is a metric space. Finally, notice that $\mathrm{diam}(\mathcal{M}')\leqslant n^2$ and $m(\mathcal{M}')\geqslant n^{-2}$, that yield that $a(\mathcal{M}')\leqslant n^4$.
\end{proof}

It remains to show \eqref{eq:gap comparison}. To this end, let $G$ be a connected graph on $[n]$, and notice that it is enough to show that for every non-constant $f\colon [n] \to M$, there is some $f'\colon [n] \to M'$ such that
\begin{equation}\label{eq:gap comparison f}
\frac{\sum_{\{v,u\} \in E_G} \varrho'\big(f'(v),f'(u)\big)}{\sum_{v,u \in [n]} \varrho'\big(f'(v),f'(u)\big)}
\leqslant 2\, \frac{\sum_{\{v,u\}\in E_G} \varrho\big(f(v),f(u)\big)}{\sum_{v,u \in [n]} \varrho\big(f(v),f(u)\big)}.
\end{equation}
To this end, fix some non-constant $f\colon [n] \to M$, and define
\[ \tau = \tau(f): = \max\Big\{ \varrho\big(f(v),f(u)\big)\colon v,u \in [n]\Big\}. \]
Since $f$ is non-constant, $\tau$ is strictly positive. Moreover, since $G$ is connected, there is a simple path $P$ in $G$ connecting the
vertices $v$ and $u$ which saturate $\tau$. Then, by the triangle inequality,
\begin{equation} \label{eq:003transf}
\sum_{\{v,u\} \in E_G} \varrho\big(f(v),f(u)\big) \geqslant \sum_{\{v,u\} \in P} \varrho\big(f(v),f(u)\big)\geqslant \tau.
\end{equation}
Define the function $f'\colon [n] \to M'$ by setting $f'(v):= \big(\tau, f(v)\big)$ for every $v \in[n]$. By \eqref{eq:003transf} and the fact that $|E_G|\leqslant n^2$, we have
\begin{align} \label{eq:004transf}
\sum_{\{v,u\} \in E_G} \varrho\big(f(v),f(u)\big) & \geqslant \frac{1}{2}\sum_{\{v,u\} \in E_G}
\Big(\varrho\big(f(v),f(u)\big) + \frac{\tau}{n^2}\Big) \geqslant \frac{\tau}{2}\,
\sum_{\{v,u\} \in E_G} \varrho_\tau\big( f(v),f(u)\big)  \\
& = \frac{\tau}{2}\, \sum_{\{v,u\} \in E_G} \varrho'\big( f'(v),f'(u)\big). \nonumber
\end{align}
On the other hand, by the choice of $\tau$, we have $\varrho\big(f(v),f(u)\big)\leqslant\tau$ for every $v,u\in[n]$. Hence, by the definition of $\varrho_\tau$ and the fact that $n\geqslant 2$, we see that $\varrho_\tau\big(f(v), f(u)\big)= \tau^{-1}\varrho\big(f(v), f(u)\big)+n^{-2}$ for every $v,u\in[n]$. Thus,
\begin{align} \label{eq:005transf}
\sum_{v,u \in [n]} \varrho\big(f(v),f(u)\big)
&  \leqslant \sum_{v,u \in [n]} \Big( \varrho\big(f(v),f(u)\big) + \frac{\tau}{n^2}\Big)
= \tau \sum_{v,u\in[n]} \varrho_\tau\big( f(v),f(u)\big) \\
& = \tau \sum_{v,u \in [n]} \varrho' \big( f'(u), f'(v)\big). \nonumber
\end{align}
Inequality \eqref{eq:gap comparison f} follows by \eqref{eq:004transf} and \eqref{eq:005transf}. The proof is completed.
\end{proof}


\section{Completion of the proof of Theorem  \ref*{random-poincare}} \label{sec-proof-random-poincare}

We start by isolating the following consequence of Theorems \ref{thm-bourgain} and \ref{regular-random-poincare}.

\begin{corollary}[Theorem \ref{random-poincare} for well-conditioned metric spaces] \label{corollary-regular}
For every integer $d\geqslant 3$, there exist constants $C_d'\geqslant 1$ and $\tau'>0$, that depend only on the degree $d$, such that for any $\text{well-conditioned}$ metric space $\mathcal{M}=(M,\varrho)$ with $N:=|M|\geqslant 3$,
\begin{equation} \label{regular-e2}
\mathbb{P}_{G(n,d)}\Big[G\colon \gamma(G,\varrho)\leqslant C'_d \min\big\{\log n, \, \log_{d-1}\log_{d-1} N\big\} \Big]
\geqslant 1- O_d\Big(\frac{1}{n^{\tau'}}\Big).
\end{equation}
\end{corollary}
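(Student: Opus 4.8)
The plan is to combine the two available estimates—Bourgain's embedding theorem (Theorem~\ref{thm-bourgain}) paired with Matou\v{s}ek's extrapolation (Theorem~\ref{thm-extrapolation}), which together yield the $\log n$ bound, and Theorem~\ref{regular-random-poincare}, which yields the $\log_{d-1}\log_{d-1} N$ bound in the well-conditioned regime—and simply take the better of the two. The subtlety is that Theorem~\ref{regular-random-poincare} is only nontrivial when $n \geqslant (\log_{d-1} N)^A$; outside that range the $\log n$ bound is already the dominant (and only needed) term, so we need to check that the case split is consistent.

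First I would dispose of the $\log n$ bound. Given any $G\in G(n,d)$ with $h(G)>0$, Bourgain's theorem gives a bi-Lipschitz embedding of $(G,\mathrm{dist}_G)$—wait, that's the wrong direction; instead, for the target metric space $\mathcal{M}$, one uses the standard fact (stated as~\eqref{intro-e1} in the introduction) that $\gamma(G,\varrho)\lesssim \log(|V_G|)\cdot \frac{d}{d-\lambda_2(G)}$. More precisely, one first applies Bourgain's theorem to embed the \emph{finite} metric space $f([n])\subseteq M$ (which has at most $n$ points) into $\ell_1$ with distortion $O(\log n)$, then uses the linear Poincar\'e inequality in $\ell_1$ (equivalently, the $1$-Poincar\'e inequality for $\ell_1$, which holds with constant $O\big(\frac{d}{d-\lambda_2(G)}\big)$ via cut-metric decomposition), and finally transfers back with a loss of $O(\log n)^2$—or, more efficiently, one invokes Matou\v{s}ek's extrapolation. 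In any case, combining Corollary~\ref{Friedman} (which gives $h(\boldsymbol{G})\geqslant 0.005 d$, hence $d-\lambda_2(\boldsymbol{G})\gtrsim_d 1$, with probability $1-O_d(n^{-\tau_1})$) with this deterministic bound yields $\gamma(\boldsymbol{G},\varrho)\lesssim_d \log n$ with probability $1-O_d(n^{-\tau_1})$, valid for \emph{every} metric space $\mathcal{M}$ and every $n$.

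Next I would handle the $\log_{d-1}\log_{d-1} N$ bound. If $\mathcal{M}$ is well-conditioned and $n \geqslant (\log_{d-1}N)^{A}$ with $A$ the constant from Theorem~\ref{regular-random-poincare}, then that theorem gives $\gamma(\boldsymbol{G},\varrho)\leqslant C_d\,\log_{d-1}\log_{d-1} N$ with probability $1-O_d(n^{-\tau})-O_d\big(\exp(-n/(\log_{d-1}N)^A)\big)\geqslant 1-O_d(n^{-\tau'})$ for a suitable $\tau'>0$ (using that the exponential error is negligible in this range; one may need to shrink $A$ or pass to $n\geqslant (\log_{d-1}N)^{A+1}$, absorbing the discrepancy into constants). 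In the complementary range $n < (\log_{d-1}N)^{A}$ we have $\log_{d-1}\log_{d-1}N \gtrsim_d \log n$ (taking logs twice: $\log_{d-1} n < A\log_{d-1}\log_{d-1} N$, so $\log_{d-1}\log_{d-1}N \geqslant A^{-1}\log_{d-1}\log_{d-1}n$... one should be a bit careful and instead note $n<(\log_{d-1}N)^A$ forces $\log n \lesssim_{d,A} \log\log_{d-1}N \leqslant \log_{d-1}\log_{d-1}N$ up to constants), so the $\log n$ bound from the previous paragraph already establishes $\gamma(\boldsymbol{G},\varrho)\lesssim_d \log_{d-1}\log_{d-1}N$ and hence also the claimed minimum. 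Taking $C_d' := \max$ of the implicit constants and $\tau' := \min\{\tau_1,\tau\}$, and intersecting the two high-probability events, completes the proof.

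The main obstacle—really a bookkeeping obstacle rather than a conceptual one—is reconciling the three different parameter regimes (tiny $n$, moderate $n$, the threshold $n\asymp(\log_{d-1}N)^A$) so that in every regime \emph{at least one} of the two bounds matches $\min\{\log n,\log_{d-1}\log_{d-1}N\}$ up to a $d$-dependent constant, and making sure the error probabilities $O_d(n^{-\tau_1})$, $O_d(n^{-\tau})$, $O_d(\exp(-n/(\log_{d-1}N)^A))$ all fold into a single clean $O_d(n^{-\tau'})$. One has to verify that in the regime where only Bourgain's bound is invoked, namely $n<(\log_{d-1}N)^A$, the inequality $\log n\leqslant C_d'\,\log_{d-1}\log_{d-1}N$ genuinely holds (so that $\min\{\log n,\log_{d-1}\log_{d-1}N\}$ is comparable to $\log n$), which follows from elementary manipulation of the inequality $n<(\log_{d-1}N)^A$. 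Everything else is a routine union bound.
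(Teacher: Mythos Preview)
Your proposal is correct and follows essentially the same approach as the paper: combine the deterministic $\log n$ bound (Bourgain plus Matou\v{s}ek extrapolation, together with Friedman's spectral gap via Corollary~\ref{Friedman}) with Theorem~\ref{regular-random-poincare}, and split into cases according to whether $n$ is above or below a threshold $(\log_{d-1}N)^{\Theta(A)}$. The paper makes exactly the same case split (with threshold $(\log_{d-1}N)^{2A}$), handles the small-$N$ edge case $3\leqslant N<d$ via Remark~\ref{remark-new}, and absorbs the $\exp(-n/(\log_{d-1}N)^A)$ error into $O_d(n^{-\tau'})$ precisely as you anticipate.
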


\begin{proof}
As we have already noted in Remark \ref{remark-new}, we may assume that $N\geqslant d$; thus, in what follows, we have Theorem \ref{regular-random-poincare} at our disposal. Next observe that, by Theorem \ref{thm-bourgain} and Matou\v{s}ek's extrapolation argument \cite{Ma97}---see, also, \cite[Lemma 5.5]{BLMN05}---we have, for every $G\in G(n,d)$,
\[ \gamma(G,\varrho) \leqslant c_{\mathrm{B}} \cdot \log n  \cdot \frac{d}{2(d-\lambda_2(G))}. \]
Therefore, by Corollary \ref{Friedman}, there exist $C_1=C_1(d)\geqslant 1$ and $\tau_1=\tau_1(d)>0$ so that
\begin{equation} \label{regular-e4}
\mathbb{P}_{G(n,d)}\big[G\colon \gamma(G,\varrho) \leqslant C_1 \log n\big]\geqslant 1-O_d\Big(\frac{1}{n^{\tau_1}}\Big).
\end{equation}
Finally, let $C_d,A\geqslant 1$ and $\tau>0$ be as in Theorem \ref{regular-random-poincare}, and set
\[ C'_d:=2\,C_1\, C_d\, A\, \log (d-1) \ \ \ \text{ and } \ \ \ \tau':=\min\{\tau,\tau_1,1\}. \]

Let $n\geqslant d$ be arbitrary. If $n \leqslant(\log_{d-1}N)^{2A}$, then $\min\big\{\log n, \, \log_{d-1}\log_{d-1} N\big\}\geqslant \frac{\log n}{2A\log(d-1)}$; therefore, in this case, \eqref{regular-e2} follows from \eqref{regular-e4}. On the other hand, if $n \geqslant(\log_{d-1}N)^{2A}$, then $\min\big\{\log n, \, \log_{d-1}\log_{d-1} N\big\} =\log_{d-1}\log_{d-1} N$ and $\frac{n}{(\log_{d-1}N)^A} \geqslant \sqrt{n}$, and so, by Theorem~\ref{regular-random-poincare} and the choices of $C'_d$ and $\tau'$,
\begin{equation} \label{regular-e5}
\mathbb{P}_{G(n,d)}\Big[G\colon \gamma(G,\varrho)\leqslant C'_d\, \log_{d-1}\log_{d-1} N\Big] \geqslant
1- O_d\Big(\frac{1}{n^{\tau}}\Big) - O_d\Big( \exp\big( - \sqrt{n}\big)\Big) \geqslant 1- O_d\Big(\frac{1}{n^{\tau'}}\Big);
\end{equation}
thus, in this case, \eqref{regular-e2} follows from \eqref{regular-e5}.
\end{proof}

We are finally in a position to complete the proof of Theorem \ref{random-poincare}.

\begin{proof}[Completion of proof of Theorem \ref{random-poincare}]
As in the proof of Corollary \ref{corollary-regular}, we start by observing that, by Remark \ref{remark-new}, it is enough to show that for every integer $d\geqslant 3$, there exist constants $C_d''\geqslant 1$ and $\tau''>0$, that depend only on the degree $d$, such that for any metric space $\mathcal{M}=(M,\varrho)$ with $N:=|M|\geqslant \max\{d^3,10\}$, we have
\begin{equation} \label{proof-e1}
\mathbb{P}_{G(n,d)}\Big[G\colon \gamma(G,\varrho)\leqslant C''_d \min\big\{\log n, \, \log_{d-1}\log_{d-1} N\big\} \Big]
\geqslant 1- O_d\Big(\frac{1}{n^{\tau''}}\Big).
\end{equation}
Let $C_d\geqslant 1$ and $\tau>0$ be as in Theorem \ref{random-poincare-2}, let $C_d'\geqslant 1$ and $\tau'>0$ be as in Corollary \ref{corollary-regular}, let $\tau_1>0$ be as in Corollary \ref{Friedman}, and set
\[ C_d'':= 4\, C_d\, C_d' \ \ \ \text{ and } \ \ \ \tau'':=\min\{\tau_1,\tau,\tau',1\}. \]
We claim that, with these choices, \eqref{proof-e1} is satisfied. Indeed, let $n\geqslant d$ and consider cases.

If $n^4\geqslant \exp(N)$, then $\min\big\{\log n, \, \log_{d-1}\log_{d-1} N\big\} =\log_{d-1}\log_{d-1} N$ and $\frac{n}{N^2} \geqslant \sqrt{n}$, and so, by Theorem \ref{random-poincare-2},
\begin{equation} \label{proof-e2}
\mathbb{P}_{G(n,d)}\Big[G\colon \gamma(G,\varrho)\leqslant C_d\, \log_{d-1}\log_{d-1} N\Big] \geqslant
1- O_d\Big(\frac{1}{n^{\tau}}\Big) - O_d\Big( \exp\big( - \sqrt{n}\big)\Big) \geqslant 1- O_d\Big(\frac{1}{n^{\tau'}}\Big);
\end{equation}
consequently, in this case, \eqref{proof-e1} follows from \eqref{proof-e2}.

Next assume that $n^4\leqslant \exp(N)$. Let $\mathcal{M}'=(M',\varrho')$ be the metric space obtained by Proposition~\ref{reduction} applied to $\mathcal{M}$. Since $N\leqslant |M'| \leqslant N^3$ and $a(\mathcal{M}')\leqslant n^4\leqslant \exp(N)\leqslant \exp\big(|M'|\big)$, we see that the metric space $\mathcal{M}'$ is well-conditioned in the sense of Definition \ref{def-regular-metric}. Therefore, by Corollary \ref{corollary-regular},
\begin{align} \label{proof-e3}
\mathbb{P}_{G(n,d)}\Big[G &\, \colon \gamma(G,\varrho')\leqslant 2C'_d \min\big\{\log n, \, \log_{d-1}\log_{d-1} N
\big\} \Big] \\
& \geqslant
\mathbb{P}_{G(n,d)}\Big[G\colon \gamma(G,\varrho')\leqslant C'_d \min\big\{\log n, \, \log_{d-1}\log_{d-1} \big(|M'|\big)\big\} \Big]
\geqslant 1- O_d\Big(\frac{1}{n^{\tau'}}\Big). \nonumber
\end{align}
Observe that, by Corollary \ref{Friedman}, we have $\mathbb{P}_{G(n,d)}\big[G \text{ is connected}\big] \geqslant 1- O_d\big(\frac{1}{n^{\tau_1}}\big)$. Thus, in this case, \eqref{proof-e1} follows from \eqref{proof-e3} and \eqref{eq:gap comparison}. The above cases are exhaustive, and so the proof is completed.
\end{proof}


\section{Optimality of Theorem~\ref*{random-poincare}} \label{sec-optimality}

Our goal in this section is to show that the estimate \eqref{random-poincare-intro-e1} obtained by Theorem~\ref{random-poincare} is optimal for all values of the parameters $n,N$. Specifically, we have the following proposition.

\begin{proposition}[Optimality of Theorem~\ref{random-poincare}] \label{prop:optofrandom-poincare}
For every integer $d\geqslant 3$, there exist constants $c_d,\tau_d>0$, depending only on $d$, such that the following holds true. For every integer $N\geqslant 3$, there is a metric space $\mathcal{M}=(M,\varrho)$ with $|M|=N$, such that for every integer $n\geqslant d$,
\begin{equation} \label{sec11-e1}
\mathbb{P}_{G(n,d)}\Big[G\colon \gamma(G,\varrho)\geqslant c_d \min\big\{\log n, \, \log\log N\big\} \Big] \geqslant 1- O_d\Big(\frac{1}{n^{\tau_d}}\Big).
\end{equation}
\end{proposition}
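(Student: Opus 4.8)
The plan is to take $\mathcal{M}$ to be a finite version of a metric space that is \emph{bi-Lipschitz universal for bounded-degree graphs with the best distortion that $N$ points permit}, and to exploit the soft fact that a graph embedding well into a space forces that space to have a large Poincar\'e constant. Precisely, if a connected graph $G$ on $n$ vertices admits a $D$-embedding into $\mathcal{M}=(M,\varrho)$, then testing the definition of $\gamma(G,\varrho)$ on the embedding $e$ itself, and using that every edge has $\dist_G$-length $1$, gives
\[
\gamma(G,\varrho)\;\geqslant\;\gamma(G,\varrho;e)\;\geqslant\;\frac1D\cdot\frac1{|V_G|^2}\sum_{v,u\in V_G}\dist_G(v,u).
\]
For a random $d$-regular graph $G$ on $n$ vertices the right-hand average is $\gtrsim_d\log_{d-1}n$ with probability $1-O_d(n^{-\tau_d})$ (a standard ball-counting estimate, also implied by Proposition~\ref{prop-tree}). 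Hence it suffices to produce, for each $N$, a metric space $\mathcal{M}$ with $|M|=N$ into which a random $d$-regular graph on $n$ vertices admits a $D(n)$-embedding, with probability $1-O_d(n^{-\tau_d})$ and $D(n)\lesssim_d\max\{1,\log n/\log\log N\}$; this yields $\gamma(G,\varrho)\gtrsim_d\log n/D(n)\asymp\min\{\log n,\log\log N\}$, which is \eqref{sec11-e1}. (When $N$ or $n$ is bounded, $\min\{\log n,\log\log N\}$ is bounded and the estimate already follows from the elementary bound $\gamma(G,\varrho)\gtrsim_d1$, obtained by testing $\gamma(G,\varrho)$ on the indicator of a near-balanced bisection of $G$ and invoking Corollary~\ref{Friedman}.)

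For the construction I would take $\mathcal{M}$ to be a disjoint union $\bigsqcup_j\mathcal{U}_j$, with the distance between distinct clusters set equal to the common diameter, where $\mathcal{U}_j$ is the metric space supplied by Matou\v{s}ek's argument recalled in Appendix~\ref{appendix-A} into which every connected graph on $2^j$ vertices admits a $D_j$-embedding, and the distortion parameter is calibrated to $D_j:=\max\{1,\lceil C_d\,j/\log\log N\rceil\}$ for a large constant $C_d$. The index $j$ runs over all values for which $D_j$ stays in Matou\v{s}ek's admissible window $D_j\leqslant\log(2^j)/\log\log(2^j)$; this holds exactly for $2^j\leqslant n^\ast$ with $n^\ast=\exp\big((\log N)^{\Omega(1)}\big)$. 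The cardinality estimate $|\mathcal{U}_j|\lesssim\exp\big((2^j)^{C/D_j}\big)$ (with $C$ the universal constant of that argument) makes each summand $N^{o(1)}$, so $\sum_j|\mathcal{U}_j|\ll N$, and one pads with far-away points to obtain exactly $N$. Then for every $n\leqslant n^\ast$, choosing $j$ with $n\leqslant2^j\leqslant2n$ and applying the reduction of the previous paragraph to the sub-metric-space $\mathcal{U}_j\subseteq\mathcal{M}$ (into which $G$, being a connected graph on at most $2^j$ vertices, $D_j$-embeds), and noting $D_j\asymp\max\{1,\log n/\log\log N\}$, gives \eqref{sec11-e1} in this range.

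It remains to handle $n>n^\ast$, where $\min\{\log n,\log\log N\}=\log\log N$ since $\log n>\log n^\ast=(\log N)^{\Omega(1)}\geqslant\log\log N$. Here no connected graph on $n>n^\ast\geqslant N$ vertices embeds into $\mathcal{M}$ at all (embeddings are injective), so one must build a bad map by hand. The idea is that $\mathcal{M}$ also contains an isometric copy of a $d$-regular expander $\mathcal{M}_0$ of girth and diameter both $\asymp(\log\log N)^2$, on $\exp\big(O((\log\log N)^2)\big)\ll N$ vertices, so that $\gamma(\mathcal{M}_0,\dist_{\mathcal{M}_0})\asymp_d(\log\log N)^2$. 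Set $m\asymp\log\log N$. Using Proposition~\ref{prop-tree} and Corollary~\ref{Friedman}, partition $V_G$ into Voronoi cells $C_i$ around a maximal $2m$-separated subset of $\mathcal{T}(G,m)$; each $C_i$ induces a $(d-1)$-ary tree of radius $\leqslant m$, and since $m$ is much smaller than the girth of $\mathcal{M}_0$ it embeds isometrically into $\mathcal{M}_0$ around any prescribed centre $c_i$. Choosing the $c_i$ via an auxiliary placement of the ``cluster graph'' into $\mathcal{M}_0$ that sends adjacent cells to centres within distance $O(m)$ yet remains spread across $\mathcal{M}_0$, one arranges the resulting map $f\colon V_G\to V_{\mathcal{M}_0}$ so that each of the (unavoidably $\Theta(n)$, by expansion) inter-cell edges has $\varrho$-length $O(m)$, whence its Dirichlet form is $O_d(m)$, while its empirical average is $\asymp_d\diam(\mathcal{M}_0)$; this gives $\gamma(G,\varrho)\geqslant\gamma(G,\dist_{\mathcal{M}_0};f)\gtrsim_d\diam(\mathcal{M}_0)/m\asymp\log\log N$ on an event of probability $1-O_d(n^{-\tau_d})$, completing this range.

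I expect the main obstacle to be precisely the construction of the auxiliary placement in the last paragraph, and more generally the calibration of the girth, the diameter of $\mathcal{M}_0$, and $m$: since any partition of an expander into bounded-size pieces has $\Theta(n)$ crossing edges, these must be made to have $\varrho$-length far below the typical image distance, and simultaneously the cells must be distributed so that the empirical average stays of order $\diam(\mathcal{M}_0)$---the two requirements pull against each other and reconciling them is the technical heart. A secondary, bookkeeping obstacle is to verify that one fixed $\mathcal{M}$ of cardinality exactly $N$ handles all $n\geqslant d$ simultaneously, which forces the layered construction above and a careful tally of the cardinalities $|\mathcal{U}_j|$.
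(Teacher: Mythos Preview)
Your approach differs substantially from the paper's, and the large-$n$ regime has a genuine gap.

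For $n\leqslant n^*$ your idea---a layered union of Matou\v{s}ek universal spaces together with the implication ``$D$-embedding $\Rightarrow$ $\gamma(G,\varrho)\gtrsim(\log n)/D$''---is sound, and the cardinality bookkeeping can be made to work (this mechanism is essentially what underlies Proposition~\ref{prop-optimality}). But for $n>n^*$ the proposal is incomplete in exactly the place you flag: the ``auxiliary placement'' of the cluster graph $\Gamma$ into $\mathcal{M}_0$ is not constructed, and there is good reason to believe it cannot be with the parameters you chose. Since $\Gamma$ is itself an expander, the Poincar\'{e} inequality for $\Gamma$ into $\mathcal{M}_0$ says that \emph{any} placement with edge-lengths $O(m)$ has empirical average at most $\gamma(\Gamma,\dist_{\mathcal{M}_0})\cdot O(m)$; achieving average $\asymp\diam(\mathcal{M}_0)=(\log\log N)^2$ therefore requires exhibiting a placement that saturates the Poincar\'{e} inequality for $\Gamma$---which is precisely the type of problem you set out to solve, transported to the cluster graph. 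Worse, with $|V_{\mathcal{M}_0}|=\exp\big(\Theta((\log\log N)^2)\big)$ one has $\log\log|V_{\mathcal{M}_0}|=\Theta(\log\log\log N)$, so the very theorem whose optimality you are proving predicts that $\gamma(\Gamma,\dist_{\mathcal{M}_0})$ should be far smaller than the $\Theta(\log\log N)$ you need. The tension you identify is thus not a technical nuisance but an actual obstruction at these scales.

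The paper sidesteps all of this with a single explicit space and map that handle every $n$ simultaneously. Take $\mathcal{M}=\{-k,\dots,k\}^s\subset\ell_\infty^s$ with $k=\lfloor\log\log N\rfloor$ and $s=\lfloor\log N/\log(2k+1)\rfloor$, padded to exactly $N$ points. For $G\in G(n,d)$ set $s_0=\min\{n,s\}$, $r_0=\lfloor\log_{d-1}(n/s_0)\rfloor$, and define $f(v)_i=\mathrm{trunc}_k\big(\dist_G(v,i)-r_0\big)$ for $i\leqslant s_0$ and $f(v)_i=0$ otherwise. This map is $1$-Lipschitz, so its Dirichlet form is at most $1$. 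Using only the long-range expansion of $G$ (which holds with probability $1-O_d(n^{-\tau_d})$), a constant fraction of vertices $v$ lie within $r_0$ of some landmark $s_v\in[s_0]$, and for each such $v$ and every $u$ at distance greater than $\lfloor\log_{d-1}n\rfloor-1$ from $s_v$ the $s_v$-coordinate alone already gives $\varrho\big(f(v),f(u)\big)\geqslant\min\{k,\log_{d-1}s_0-1\}\gtrsim_d\min\{\log n,\log\log N\}$. Summing over such pairs yields the empirical-average lower bound directly---no regime split, no target expander, and no auxiliary placement to construct.
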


For the proof of Proposition \ref{prop:optofrandom-poincare}, we will need the following definitions.

\begin{definition} \label{truncation}
For every nonnegative integer $k$, let ${\rm trunc}_k(x) := \mathrm{sign}(x) \cdot \min\big\{k,|x|\big\}$ denote the truncation at level $k$ of $x\in\R$.
\end{definition}

\begin{definition} \label{metric-space}
Let $N\geqslant 3$ be an integer, and set
\begin{equation}\label{eq:opt001}
k:=\lfloor \log \log N \rfloor \ \ \ \text{ and } \ \ \
s:=\bigg\lfloor \frac{\log N}{\log (2k+1)} \bigg\rfloor.
\end{equation}
Define $\widetilde{\cM}$ as the metric space of size $(2k+1)^{s}$ obtained by restricting $\ell_\infty^{s}$ to the set $\{-k,\dots, k\}^s$; notice that $\widetilde{\cM}$ has at most $N$ points. We extend (arbitrarily) $\widetilde{\cM}$ to a metric space, denoted by~$\cM_N$, with exactly $N$ points.
\end{definition}

\begin{definition}\label{defn_opt_function}
Let $d,N\geqslant 3$ and $n\geqslant d$ be integers, let $k, s, \mathcal{M}_N$ be as in Definition \ref{metric-space},~and~set
\begin{equation} \label{eq:opt004}
s_0:= \min \{n, s\} \ \ \ \text{ and } \ \ \
r_0:= \bigg\lfloor \log_{d-1}\Big(\frac{n}{s_0}\Big) \bigg\rfloor.
\end{equation}
Moreover, for every $d$-regular graph $G$ on $[n]$, define a function $f_{G,N}\colon [n] \to \cM_N$ as follows. For every~$v\in [n]$, set $f_{G,N}(v):=(x_i)_{i=1}^s$, where
\[  x_i := {\rm trunc}_k \big(\mathrm{dist}_G(v,i)-r_0\big) \]
if $i\in [s_0]$, and $x_i=0$ if $i\in [s]\setminus[s_0]$.
\end{definition}

We are ready to proceed to the proof of Proposition \ref{prop:optofrandom-poincare}.

\begin{proof}[Proof of Proposition \ref{prop:optofrandom-poincare}]
We may assume\footnote{Indeed, by Cheeger's inequality \eqref{e-cheeger-e1}, we have $\gamma(G,\varrho)\gtrsim \frac{d}{d-\lambda_2(G)}$ for any metric space $\mathcal{M}=(M,\varrho)$ with at least two points and any $d$-regular graph $G$.} that $N$ is sufficiently large. Let $k, s, \cM_N$ be as in Definition \ref{metric-space}. We claim that the metric space $\cM_N=(M,\varrho)$ satisfies the conclusion of the proposition.

To this end, let $n\geqslant d\geqslant 3$ be integers; notice that we may also assume that $n$ is sufficiently large in terms of $d$. Given a parameter $\alpha\in(0,1)$, we say that a $d$-regular graph $G$ on $[n]$ satisfies the \emph{long-range expansion property $\mathrm{Expan}(\alpha)$} if for every nonempty $S\subseteq [n]$ and every nonnegative integer $\ell$,
\[ |B_G(S,\ell)| \geqslant \min\Big\{ \frac{3n}{4}, \alpha (d-1)^\ell|S| \Big\}. \]
By \cite[Proposition 1.9, part (a)]{ADTT24}, there exist positive constants $\alpha_d$ and $\tau_d$, depending only on $d$, such that
\[ \mathbb{P}_{G(n,d)}\big[ G\colon G \text{ satisfies property } \mathrm{Expan}(\alpha_d)\big] \geqslant  1- O_d\Big(\frac{1}{n^{\tau_d}}\Big). \]
Set $c_d:=\frac{\alpha_d}{4d^2}$. It is enough to show that if $G\in G(n,d)$ satisfies property $\mathrm{Expan}(\alpha_d)$, then
\begin{equation} \label{eq:opt003}
\gamma(G,\varrho) \geqslant c_d \min \{\log n,\log\log N\}.
\end{equation}
Indeed, let $s_0$ and $r_0$ be as in \eqref{eq:opt004}. Notice that, if $N$ is sufficiently large,
\begin{equation} \label{eq:opt002}
\log_{d-1}(s_0) - 1\geqslant \frac{1}{2\log d} \min\big\{\log n, \log\log N\big\}.
\end{equation}
Let $f:=f_{G,N}\colon [n]\to \cM_N$ be as in Definition \ref{defn_opt_function}, and set
\[  V = B_G\big([s_0], r_0\big). \]
Observe that, by property $\mathrm{Expan}(\alpha_d)$,
\begin{equation}\label{eq:opt005}
|V|\geqslant \frac{\alpha_d}{d-1} n.
\end{equation}
Moreover, by the $d$-regularity of $G$, for every $v\in [n]$,
\begin{equation}\label{eq:opt006}
\big| B_G\big(v, \lfloor \log_{d-1}n\rfloor - 1\big)^\complement\big|
\geqslant \Big(1-\frac{d}{(d-1)^2}\Big)n.
\end{equation}
For every $v\in V$, select a vertex $s_v\in [s_0]$ such that $\mathrm{dist}_G\big(v, s_v\big)\leqslant r_0$, and set
\[ W_v := B_G\big( s_v, \lfloor \log_{d-1}n\rfloor - 1 \big)^\complement. \]
Then, for every $v\in V$ and every $u\in W_v$, setting $f(v) = (x_i^v)_{i=1}^s$ and $f(u) = (x_i^u)_{i=1}^s$, we have $x^v_{s_v}\leqslant 0 $ and, therefore, by the choice of $r_0$ in \eqref{eq:opt004}, \eqref{eq:opt002} and taking $N$ sufficiently large,
\begin{align} \label{eq:opt007}
\varrho\big(f(v), f(u)\big) & \geqslant  x^u_{s_v} \geqslant \min \big\{k, \lfloor\log_{d-1}n\rfloor-r_0\big\}  \geqslant\min \big\{k, \log_{d-1}(s_0)-1\big\} \\
& \geqslant \frac{1}{2\log d}\min\big\{\log n, \log\log N\big\}, \nonumber
\end{align}
where the last inequality follows by taking $n$ sufficiently large in terms of $d$.
Thus, by \eqref{eq:opt005}--\eqref{eq:opt007},
\begin{align}\label{eq:opt008}
\frac{1}{n^2} \sum_{v,u\in[n]} & \varrho\big(f(v), f(u)\big)
\geqslant  \frac{1}{n^2} \sum_{v\in V} \sum_{u\in W_v}
\varrho\big(f(v), f(u)\big) \\
& \geqslant \frac{\alpha_d\Big(1-\frac{d}{(d-1)^2}\Big)}{2(d-1)\log d} \,
\min\big\{\log n, \log\log N\big\} \geqslant c_d\, \min\big\{\log n, \log\log N\big\}.\nonumber
\end{align}
Finally, by the triangle inequality, for every $\{v,u\}\in E_G$, we have $\varrho\big(f(v), f(u)\big)\leqslant 1$, and so
\begin{equation}\label{eq:opt009}
\frac{1}{|E_G|}\sum_{\{v,u\}\in E_G} \varrho\big(f(v), f(u)\big) \leqslant1.
\end{equation}
The desired inequality \eqref{eq:opt003} follows by \eqref{eq:opt008} and \eqref{eq:opt009}.
\end{proof}


\section{Proof of Proposition \ref*{prop-seeds-and-conc-final}} \label{sec-prop-5.2-new}

\subsection{Preliminaries} \label{subsec-preliminaries-prop-5.2-new}

Recall that we have separated the generation of a random regular graph by first selecting its isomorphism class, and then a labeling of the vertices provided by a random permutation. We begin by introducing some notation for the ordering that a labeling induces on a subset of the vertices.

For every nonempty finite set $R$, by $\mathrm{LO}(R)$ we denote the set of all (total) linear orders on $R$; we will use the letters $\alpha$ and $\beta$ to denote linear orders, and we will write $r_1 <_{\alpha} r_2$ to denote the fact that $r_1$ is smaller than~$r_2$ in the linear order $\alpha$. If $R\in \binom{\mathbb{N}}{k}$ for some positive integer $k$ and $\alpha\in \mathrm{LO}\big([k]\big)$, then we define $\alpha(R)\in \mathrm{LO}(R)$ as follows. We write the set $R$ in increasing order as $\{r_1<\dots<r_k\}$, and we set, for every $i,j\in [k]$,
\begin{equation} \label{preli-e1}
r_i \, <_{\alpha(R)} \, r_j \ \text{ if and only if } \ i \, <_{\alpha} \, j;
\end{equation}
moreover, for every nonempty subset $A$ of $R$, let
\begin{equation} \label{preli-e2}
\min_{\alpha(R)}(A)
\end{equation}
denote the least, with respect to the order $\alpha(R)$, element of $A$. We isolate, for future use, the following elementary observations.
\begin{enumerate}
\item[(O1)] \label{obs1} Let $A\subseteq R\subseteq \mathbb{N}$ be nonempty finite sets, and set $k:=|R|$ and $j:=|A|$. Let $\boldsymbol{\alpha}$ and $\boldsymbol{\beta}$ be random linear orders uniformly distributed on $\mathrm{LO}\big([k]\big)$ and $\mathrm{LO}\big([j]\big)$, respectively. Then both random variables $\min_{\boldsymbol{\alpha}(R)}(A)$ and $\min_{\boldsymbol{\beta}(A)}(A)$ are uniformly distributed on $A$.
\item[(O2)] \label{obs2} Let $R\subseteq \mathbb{N}$ be nonempty, set $k:=|R|$, and let $B_1,\dots,B_j$ be pairwise disjoint nonempty subsets of $R$. Let $\boldsymbol{\alpha}$ be a random linear order uniformly distributed on $\mathrm{LO}\big([k]\big)$. Then the random variables $\min_{\boldsymbol{\alpha}(R)}(B_1),\dots, \min_{\boldsymbol{\alpha}(R)}(B_j)$ are independent.
\end{enumerate}

We proceed to introduce an analogue of the function $g_{G,m,k}$ defined in Subsection \ref{subsec-seeds}. Again, we will use the value ``$\square$'' to denote an auxiliary element that is our notation for ``undefined''. Let $n$ be a positive integer, let $G$ be a graph on $[n]$ (not necessarily regular), and let $m,k$ be positive integers with $k,m\leqslant n$. Also let $R\in\binom{[n]}{k}$, and let $\alpha\in\mathrm{LO}\big([k]\big)$. We define a function $h_{G,m,R,\alpha}\colon [n]\to R\cup\{\square\}$ as follows. Let $v\in [n]$ be arbitrary.
\begin{enumerate}
\item If $\partial B_G(v,m)\cap R\neq \emptyset$ (that is, there is a vertex in $R$ which is at distance from $v$ exactly $m$), then let $h_{G,m,R,\sigma}(v)$ be the smallest, with respect to the linear order $\alpha(R)$ on $R$, element $s\in R$ that satisfies $\dist_G(s,v)=m$.
\item Otherwise, if there is no vertex $s\in R$ with $\dist_G(s,v)=m$, then set $h_{G,m,R,\alpha}(v):=\square$.
\end{enumerate}

Next, towards developing concentration estimates for ``local properties'' on graphs (that is, properties which only depend on a graph neighborhood), we introduce the following combinatorial result.

\begin{lemma}[Decomposition] \label{prop5.6}
Let $d,m,n$ be integers with $3 \leqslant d\leqslant m \leqslant\log_{d-1}n$, and set
\begin{equation} \label{eq:001prop5.2**}
M=M(d,m):= \sum_{j=1}^{2m}d (d-1)^{j-1}.
\end{equation}
Let $G$ be a graph on $[n]$ of maximum degree at most $d$, and let $W$ be a subset of $[n]$. Then there exists a partition of\, $W$ into sets $W_1,\dots,W_{M+1}$ with the following properties.
\begin{enumerate}
\item [(i)] For every $i\in [M+1]$ and every distinct $v,u\in W_i$, we have $\mathrm{dist}_G(v,u)\geqslant 2m+1$.
\item [(ii)] For every $i\in [M+1]$, we have $|W_i|\geqslant \big\lfloor\frac{|W|}{M+1}\big\rfloor$.
\end{enumerate}
\end{lemma}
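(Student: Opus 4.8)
The plan is to build the partition greedily, processing the vertices of $W$ one at a time and assigning each to the first available class. Concretely, fix an arbitrary enumeration $v_1,\dots,v_{|W|}$ of $W$, and maintain classes $W_1,\dots,W_{M+1}$ that are initially empty. When we come to $v_t$, we want to place it into some $W_i$ that currently contains no vertex within graph-distance $2m$ of $v_t$. The key counting point is that the ball $B_G(v_t,2m)$ has size at most $1+\sum_{j=1}^{2m} d(d-1)^{j-1}$ — using that the maximum degree is at most $d$, so from $v_t$ there are at most $d$ neighbours and at most $(d-1)$ new vertices at each subsequent step — and hence $B_G(v_t,2m)\setminus\{v_t\}$ meets at most $\sum_{j=1}^{2m} d(d-1)^{j-1}=M$ of the classes. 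Since there are $M+1$ classes, at least one class $W_i$ is ``free'' for $v_t$, i.e.\ contains no vertex at distance $\le 2m$ from $v_t$; we assign $v_t$ to (say) the least-indexed such free class. This immediately gives property~(i): if $v,u\in W_i$ are distinct and, say, $u$ was added after $v$, then at the moment $u$ was added the class $W_i$ already contained $v$, so $W_i$ being free for $u$ forces $\dist_G(u,v)\ge 2m+1$.

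For property~(ii) the natural approach is a balancing refinement of the greedy step: rather than always taking the \emph{least}-indexed free class, assign $v_t$ to a free class of \emph{minimum current cardinality}. I would then argue that at the end every class has size at least $\lfloor |W|/(M+1)\rfloor$. The cleanest way to see this: suppose at the end some class $W_i$ has $|W_i|\le \lfloor |W|/(M+1)\rfloor-1$. Summing sizes, $\sum_j |W_j| = |W|$, so some class $W_j$ has $|W_j|\ge \lceil |W|/(M+1)\rceil \ge |W_i|+1$; pick the \emph{last} vertex $v_t$ that was ever placed into such an ``over-full'' class and examine the moment it was added. Since $|W_i|$ only ever grew after that (it never decreases) and ended below $\lfloor|W|/(M+1)\rfloor$, at time $t$ we had $|W_i| \le |W_i^{\mathrm{final}}| \le \lfloor|W|/(M+1)\rfloor-1$, which is strictly less than the size of the class $v_t$ was actually placed into; but by the argument above $W_i$ is one of the $\ge 1$ free classes for $v_t$ (it is free because it is small — more carefully, among the $\ge 1$ free classes for $v_t$ there is one of minimum size, and that minimum is $\le$ the size of any class, in particular $\le |W_i|$ at time $t$, so the class chosen for $v_t$ had size $\le |W_i|$ at time $t$), contradicting the choice of a minimum-cardinality free class. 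I expect the bookkeeping in this last paragraph to be the only delicate point, and the safest fully rigorous route is probably the following slight variant: process $W$ in \emph{rounds}, where in each round we go through the not-yet-assigned vertices and, for each, try to assign it to the current round's classes under the ``free'' constraint; an even simpler and completely clean alternative is to observe that the greedy assignment to the least-indexed free class already yields classes $W_1,\dots,W_{M+1}$ whose sizes are non-increasing in $i$ only ``approximately'', and instead just prove the weaker-but-sufficient statement by a direct exchange argument — if $|W_i| \le |W_j| - 2$ for some $i<j$, move the last-added vertex $u$ of $W_j$ to $W_i$ (it remains legal for $W_i$ since any vertex of $W_i$ already conflicting with $u$ would have blocked $W_i$ when $u$ was processed, because $W_i\subseteq$ its final state and $i<j$ means $W_i$ was considered first). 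Iterating such swaps terminates and forces $\max_i|W_i|-\min_i|W_i|\le 1$, giving (ii).

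The main obstacle is thus not the neighbourhood-counting bound (that is routine: it is exactly the geometric sum defining $M$, and it is why $M+1$ classes suffice), but rather organizing the balancing argument so that re-assigning or choosing minimum-size classes provably never violates the distance constraint~(i). I would therefore write the proof by first establishing the existence of \emph{some} valid $(M+1)$-coloring satisfying~(i) via the plain greedy argument, and then separately running the exchange/rebalancing argument on an arbitrary such coloring to upgrade it to one satisfying both~(i) and~(ii) simultaneously, checking at each swap that legality is preserved. The hypotheses $d\le m$ and $m\le \log_{d-1} n$ are not actually needed for the combinatorics and are presumably included only to make $M$ and the statement fit the intended application; I would not rely on them.
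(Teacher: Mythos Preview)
Your reduction is exactly the one in the paper: form the ``conflict graph'' $H$ on $W$ with an edge between $u,v\in W$ whenever $\dist_G(u,v)\le 2m$, observe via your ball count that $H$ has maximum degree at most $M$, and then seek a proper $(M+1)$-colouring of $H$ whose colour classes are as equal in size as possible. Property~(i) alone is then trivial (greedy colouring of a graph of maximum degree $M$ with $M+1$ colours). The entire content of the lemma lies in property~(ii), and what you are trying to prove there is precisely that every graph of maximum degree $\Delta$ admits an \emph{equitable} $(\Delta+1)$-colouring. This is the Hajnal--Szemer\'edi theorem, which the paper invokes as a black box; it is not a bookkeeping issue, and none of your proposed balancing arguments establish it.

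Concretely: in your exchange argument, if $u$ was placed in $W_j$ under the least-index rule and $i<j$, then $W_i$ was \emph{blocked} for $u$ at that moment, so some vertex of $W_i$ (still present in the final $W_i$) already conflicts with $u$, and the move is illegal --- your implication runs the wrong way. Worse, under the least-index rule the undersized classes carry the \emph{largest} indices, so the hypothesis ``$i<j$ with $|W_i|\le|W_j|-2$'' is typically vacuous: on the $6$-cycle (maximum degree $2$, hence three classes) processed in its natural cyclic order, least-index greedy produces class sizes $(3,3,0)$, your exchange never applies, and (ii) fails. The min-size greedy variant also fails on the $6$-cycle: processing $v_1,v_4,v_2,v_5,v_3,v_6$ with one natural tie-break forces $v_3$ into a class already of size $2$ and terminates at sizes $(3,2,1)$, even though the equitable colouring $\{v_1,v_4\},\{v_2,v_5\},\{v_3,v_6\}$ exists. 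The correct fix is simply to cite Hajnal--Szemer\'edi.
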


\begin{proof}
Construct a graph $G'$ on $[n]$ with edge-set $E_{G'}:= \big\{ \{v,u\}\in \binom{[n]}{2}\colon \mathrm{dist}_G(v,u) \leqslant 2m\big\}$, and let $H:= G'[W]$ be the induced on $W$ subgraph of $G'$. Notice that $H$ has degree at most $M$. Also recall that an equitable $k$-coloring of $H$ is a proper vertex coloring of $H$ with the property that each color is used either $\big\lfloor |W|/k \big\rfloor$ or $\big\lceil |W|/k \big\rceil$ times. By the classical Hajnal–Szemer\'edi theorem~\cite{HS70}, there is an equitable $M+1$ coloring of $W$, and the result follows.
\end{proof}

\subsection{Main result} \label{subsec-statement-prop-5.2-new}

The following proposition is the main result in this section.

\begin{proposition} \label{prop-seeds-and-conc}
For every positive integer $d\geqslant 3$ there exists $K_0=K_0(d)\geqslant 1$ with the following property.
Let $K\geqslant K_0$ and let $m,n$ be integers with
\begin{equation} \label{eq:001prop5.2}
K \leqslant m\leqslant  K ^{-1}\log_{d-1} n.
\end{equation}
Set $\ell_0:=\big\lfloor\frac{d n}{K m}\big\rfloor$ and\, $k_0:=\big\lfloor\frac{K n}{(d-1)^m}\big\rfloor$. Assume the setting in Section \ref{sec-models}, and let $\mathcal{J}$ be a random $\boldsymbol{U}\text{-measurable}$ set of pairs of vertices such that almost everywhere,
\begin{equation} \label{eq:002prop5.2}
|\mathcal{J}|\leqslant  K ^{-1} (d-1)^m\,n.
\end{equation}
Let $\boldsymbol{R}$ and $\boldsymbol{\alpha}$ be two independent random variables, independent of $(\boldsymbol{U}, \boldsymbol{U}_{-\ell_0})$, that are uniformly distributed on $\binom{[n]}{k_0}$ and\, $\mathrm{LO}\big([k_0]\big)$, respectively. Set
\begin{gather}
\label{eq:003prop5.2}  V_{d-1} :=\big\{v\in [n]\colon \mathrm{deg}_{\boldsymbol{U}_{-\ell_0}}(v) = d-1 \text{ and }
h_{\boldsymbol{U}_{-\ell_0},m,\boldsymbol{R}, \boldsymbol{\alpha}}(v) \neq \square\big\}, \\
\label{eq:004prop5.2}  V'_{d-1} := \Big\{ v\in V_{d-1} \colon
h_{\boldsymbol{U}_{-\ell_0},m,\boldsymbol{R}, \boldsymbol{\alpha}}(v) =
h_{\boldsymbol{U},m,\boldsymbol{R}, \boldsymbol{\alpha}}(v) \text{ and }
\big\{v,h_{\boldsymbol{U},m,\boldsymbol{R}, \boldsymbol{\alpha}}(v)\big\}\not\in\mathcal{J}\Big\},\\
\label{eq:005prop5.2ba}
 V''_{d-1} := \Big\{ v\in V_{d-1} \colon |\{w\in V_{d-1}\colon
h_{\boldsymbol{U},m,\boldsymbol{R}, \boldsymbol{\alpha}}(w) =
h_{\boldsymbol{U},m,\boldsymbol{R}, \boldsymbol{\alpha}}(v) \}| \leqslant \frac{(d-1)^m}{m} \Big\},
\end{gather}
and define the events
\begin{align}
\label{eq:009prop5.2ba}
\mathcal{F}_1 & := \Big[ |V_{d-1}| \geqslant 2\ell_0
\Big(1-\frac{5}{\sqrt[4]{K}}\Big) \Big], \\
\label{eq:010prop5.2ba}
\mathcal{F}_2 & := \Big[ |V'_{d-1}| \geqslant \frac{d-1}{d}
\Big(1-\frac{1}{\sqrt[7]{K}}\Big) |V_{d-1}| \Big], \\
\label{eq:011prop5.2ba}
\mathcal{F}_3 & := \Big[ \big|V''_{d-1}\big| \geqslant
\Big(1-\frac{2}{\sqrt[3]{K}}\Big) |V_{d-1}| \Big].
\end{align}
Finally set $\mathcal{X} := \big[|\mathcal{T}(\boldsymbol{U},m)|\geqslant n - \sqrt{n}\big]$, where $\mathcal{T}(\boldsymbol{U},m)$ is as in \eqref{eq-tree}. Then, for every realization $U$ of\, $\boldsymbol{U}$ on the event $\mathcal{X}$, we have
\begin{equation} \label{eq:005prop5.2}
\mathbb{P}\big[ \mathcal{F}_1\cap\mathcal{F}_2\cap\mathcal{F}_3 \, \big|\, \boldsymbol{U}=U\big]
\geqslant 1 - \frac{2}{\sqrt[4]{K}}-3e^{-\sqrt{n}}.
\end{equation}
\end{proposition}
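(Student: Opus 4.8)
The proof establishes three separate concentration statements---one for each of the events $\mathcal F_1$, $\mathcal F_2$, $\mathcal F_3$---and then combines them by a union bound, absorbing the additive loss $3e^{-\sqrt n}$ into the failure of the locally tree-like structure. Throughout we condition on a fixed realization $U$ of $\boldsymbol U$ lying in $\mathcal X$; on this event at least $n-\sqrt n$ vertices $v$ have $B_U(v,3m)$ a tree, and the remaining randomness is carried by (a) the deletion $\boldsymbol N$ that produces $\boldsymbol U_{-\ell_0}$, and (b) the independent pair $(\boldsymbol R,\boldsymbol\alpha)$ selecting the seed set and its order. The key structural feature, to be exploited repeatedly, is \emph{locality}: whether a vertex $v$ lies in $V_{d-1}$, or in $V'_{d-1}$, depends only on $\boldsymbol U_{-\ell_0}$ and $\boldsymbol R,\boldsymbol\alpha$ restricted to the ball $B_U(v,2m)$ together with the random choices inside it; hence for vertices $v,u$ with $\dist_U(v,u)>4m$ these indicator variables are ``almost'' independent, modulo the mild global constraints that $|\boldsymbol N|=\ell_0$ and $|\boldsymbol R|=k_0$.

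\textbf{The event $\mathcal F_1$ (size of $V_{d-1}$).} First I would handle the degree condition. A vertex $v$ of $U$ ends in $\boldsymbol U_{-\ell_0}$ with degree $d-1$ precisely when exactly one of its $d$ incident edges is deleted. Since $\ell_0\asymp n/(Km)$ edges are deleted from roughly $dn/2$, each edge is deleted with probability $\asymp 1/(Km)$; the expected number of degree-$(d-1)$ vertices is thus $\Theta(n/(Km))=\Theta(\ell_0)$, and one computes the leading constant to be $2$ (each deleted edge contributes its two endpoints, and with high probability almost all deleted edges have both endpoints of degree exactly $d-1$ since double-deletions at a vertex are lower order when $m\geqslant K$ is large). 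The fluctuation is controlled either by a martingale/bounded-differences inequality in the deletion process or by the stochastic-domination-by-binomial trick used in Lemma~\ref{lemma-matchings}; either way the deviation probability is $\exp(-\Theta(\ell_0/\mathrm{poly}(d)))$, which for $K$ large is well below $1/\sqrt[4]{K}$ in the relevant sense (in fact it is exponentially small). Second, the seed condition $h_{\boldsymbol U_{-\ell_0},m,\boldsymbol R,\boldsymbol\alpha}(v)\neq\square$: this says $\partial B_{\boldsymbol U_{-\ell_0}}(v,m)\cap\boldsymbol R\neq\emptyset$. On the tree-like set, $|\partial B_U(v,m)|\asymp (d-1)^m$, deletions shrink this sphere only slightly (again $m\geqslant K$ large keeps the loss a small multiplicative factor), and since $|\boldsymbol R|=k_0\asymp Kn/(d-1)^m$ is chosen so that $k_0(d-1)^m/n\asymp K$ is large, the probability that a random $k_0$-set misses a sphere of size $\asymp(d-1)^m$ is $\leqslant e^{-\Theta(K)}$. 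A first-moment computation then gives $\mathbb E|V_{d-1}|\geqslant 2\ell_0(1-O(K^{-1/4}))$ once all error factors are collected; a second-moment (Chebyshev) computation using the near-independence of $\{v\in V_{d-1}\}$ for far-apart $v$---via the decomposition Lemma~\ref{prop5.6} into $M(d,m)+1$ classes of pairwise $(2m{+}1)$-separated vertices, on each of which the relevant indicators are genuinely independent after conditioning appropriately---yields the lower-tail bound. This is where I expect the main technical bookkeeping.

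\textbf{The events $\mathcal F_2$ and $\mathcal F_3$.} For $\mathcal F_2$, the two conditions defining $V'_{d-1}$ inside $V_{d-1}$ are: (a) the seed assigned using $\boldsymbol U_{-\ell_0}$ agrees with the seed assigned using $\boldsymbol U$, and (b) the edge $\{v,h(v)\}$ is not in $\mathcal J$. Condition (a) can fail only if some deleted edge lies inside $B_U(v,m)$ (changing distances) or if the $\alpha(R)$-minimal seed at distance $m$ in $\boldsymbol U_{-\ell_0}$ is no longer at distance exactly $m$ in $\boldsymbol U$; on the tree-like set, a deleted edge lands inside $B_U(v,m)$ with probability $\asymp (d-1)^m\cdot\frac{1}{Km\cdot (d-1)^m/\text{const}}\asymp 1/K$, so this costs only $O(K^{-1})$ in expectation---but the sharper exponent $K^{-1/7}$ with the factor $\frac{d-1}{d}$ suggests that the dominant loss is the $1/d$ chance that the unique deleted edge at a degree-$(d-1)$ vertex $v$ is precisely the first edge of the geodesic to $h(v)$, which would change $h$; I would isolate this as the main term and bound the rest as lower order. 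Condition (b): since $|\mathcal J|\leqslant K^{-1}(d-1)^m n$ and the seed $h(v)$ is, conditionally on $\boldsymbol U_{-\ell_0}$, essentially uniform over a sphere of size $\asymp(d-1)^m$ (this uses observation (O1) for the uniformity of $\min_{\boldsymbol\alpha(\boldsymbol R)}$ over the sphere), the probability that $\{v,h(v)\}\in\mathcal J$ is $O(1/K)$ on average, so a first-moment bound suffices for the $\mathcal J$-part and the exceptional set has the claimed size after Markov. For $\mathcal F_3$, $v\notin V''_{d-1}$ means the seed $h_{\boldsymbol U,m,\boldsymbol R,\boldsymbol\alpha}(v)$ is ``overused'' by more than $(d-1)^m/m$ vertices of $V_{d-1}$; since each seed is at distance $m$ from only $\asymp (d-1)^m$ vertices total, overuse beyond a $1/m$-fraction is a rare event---here one counts, for each seed $s\in\boldsymbol R$, the random set of $v\in V_{d-1}$ mapping to it, whose size is a sum of near-independent indicators with mean $\asymp |V_{d-1}|(d-1)^m/n\cdot(\text{per-seed probability})$; a Chernoff bound plus a union over the $k_0$ seeds, together with Markov over $v$, gives the $K^{-1/3}$ bound. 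Finally I would assemble: $\mathbb P[\mathcal F_1^\complement\cup\mathcal F_2^\complement\cup\mathcal F_3^\complement\mid\boldsymbol U=U]\leqslant 2K^{-1/4}+3e^{-\sqrt n}$, where the $3e^{-\sqrt n}$ comes from the events (bounded number of them) where the tree-like count or the decomposition-based independence fails on a $\sqrt n$-sized exceptional set---these are exactly the places where one replaces ``all $n$ vertices'' by ``all but $\sqrt n$ vertices'' and pays $e^{-\sqrt n}$ via Chernoff. The main obstacle, as flagged, is organizing the second-moment/Chernoff arguments so that the coupling between $\boldsymbol N$, $(\boldsymbol R,\boldsymbol\alpha)$, and vertex locations is cleanly handled by the $(2m{+}1)$-separation decomposition of Lemma~\ref{prop5.6}, and tracking all the $1+O(1/K)$ multiplicative factors so that they combine into the stated explicit powers $K^{-1/4}$, $K^{-1/7}$, $K^{-1/3}$.
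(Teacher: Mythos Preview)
Your proposal captures the essential ingredients---locality of the relevant indicators, the $(2m{+}1)$-separation decomposition of Lemma~\ref{prop5.6} to manufacture independence, and Chernoff-type concentration---and your interpretation of the $(d-1)/d$ factor in $\mathcal F_2$ is exactly right. However, the organization and the attribution of error terms differ from the paper in ways worth noting.

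The paper does \emph{not} treat $\mathcal F_1,\mathcal F_2,\mathcal F_3$ in parallel. Instead it stages the randomness: first it conditions only on $\boldsymbol U_{-\ell_0}$ (Step~1), defining an auxiliary set $\mathcal V_1\subseteq\mathcal D_{d-1}\cap\mathcal T(U,m)$ of vertices with well-behaved spheres, small $\mathcal J_v$, and small $\mathcal R_v$ (the latter being the set of potential seeds $w$ with $|\partial B_U(w,m)\cap\mathcal D_{d-1}|\geqslant(d-1)^m/m$, which controls $\mathcal F_3$ later). All of Step~1 uses only \emph{first-moment/Markov} arguments---no concentration---and this is where the entire $2/\sqrt[4]{K}$ loss comes from; the powers $K^{-1/4},K^{-1/7},K^{-1/3}$ arise from iterating Markov on bounded nonnegative random variables whose expectations are $O(1/K)$, not from the expectation of $|V_{d-1}|$ itself (which is $2\ell_0(1-O(1/K))$). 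The deterministic cap $|\mathcal D_{d-1}|\leqslant 2\ell_0$ makes Markov sufficient here, so your proposed martingale/bounded-differences argument for this stage is unnecessary.

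Only in Steps~2 and~3 does the paper bring in $\boldsymbol R$ and then $\boldsymbol\alpha$, each time conditioning on everything prior, applying the decomposition lemma to $\mathcal V_1$ (resp.\ $\mathcal V_2(\boldsymbol R)$), and running Chernoff on each class $W_i$ of size $\gtrsim n^{0.9}$. This is where all three $e^{-\sqrt n}$ terms originate---they are genuine concentration bounds, not a reflection of the $\sqrt n$ vertices outside $\mathcal T(U,m)$ as you suggest. The final step is purely set-theoretic: the auxiliary sets $\mathcal V_2(\boldsymbol R),\mathcal V_3,\mathcal V_4$ are shown to be subsets of $V_{d-1},V'_{d-1},V''_{d-1}$ respectively. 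Your direct per-event approach could likely be made to work, but the staged construction is what keeps the coupling between $\boldsymbol N$, $\boldsymbol R$, and $\boldsymbol\alpha$ clean and avoids the bookkeeping you anticipate as the main obstacle.
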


The proof of Proposition \ref{prop-seeds-and-conc} is deferred to Subsection \ref{subsec-proof-5.2-new}. At this point, let us derive Proposition~\ref{prop-seeds-and-conc-final} from Proposition \ref{prop-seeds-and-conc}.

\begin{proof}[Proof of Proposition \ref{prop-seeds-and-conc-final} assuming Proposition \ref{prop-seeds-and-conc}]
First, we define a random $\boldsymbol{U}$-measurable set $\mathcal{J}$ of pairs of vertices as follows: for every realization $U$ of $\boldsymbol{U}$, let $\mathcal{J}$ be equal, on the event $[\boldsymbol{U}=U]$, to the value of $\mathcal{L}$ on the event $[\boldsymbol{U}=U,\boldsymbol{\pi}=\mathrm{Id}]$, where $\mathrm{Id}$ denotes the identity map on $[n]$.

Next, define the random variable $\boldsymbol{R}:=\boldsymbol{\pi}^{-1}\big([k_0]\big)$; notice that $\boldsymbol{R}$ is uniformly distributed on $\binom{[n]}{k_0}$. We also define a random linear order $\boldsymbol{\alpha}$ of $[k_0]$ by setting $i<_{\boldsymbol{a}} j$ if and only if $\boldsymbol{\pi}^{-1}(i)<\boldsymbol{\pi}^{-1}(j)$. Note that $\boldsymbol{\alpha}$ is uniformly distributed on $\mathrm{LO}\big([k_0]\big)$; moreover, the random variables $\boldsymbol{R}$ and $\boldsymbol{\alpha}$ are independent, and independent of $(\boldsymbol{U}, \boldsymbol{U}_{-\ell_0})$.

We apply Proposition \ref{prop-seeds-and-conc} to $\mathcal{J}$ and $\boldsymbol{R},\boldsymbol{\alpha}$. Let $V_{d-1}, V'_{d-1}, V''_{d-1}$ be as in \eqref{eq:003prop5.2}--\eqref{eq:005prop5.2}, and let $\mathcal{F}_1,\mathcal{F}_2, \mathcal{F}_3$ be as in \eqref{eq:009prop5.2ba}--\eqref{eq:011prop5.2ba}. Selecting $K_1\geqslant K_0$, by Proposition \ref{prop-seeds-and-conc}, we have
\begin{equation} \label{eq:070a}
\mathbb{P}\big[ \mathcal{F}_1\cap\mathcal{F}_2\cap\mathcal{F}_3 \, \big|\, \boldsymbol{U}=U\big]
\geqslant 1 - \frac{2}{\sqrt[4]{K}}-3e^{-\sqrt{n}}
\geqslant 1 - \frac{3}{\sqrt[4]{K}}
\end{equation}
for every realization $U$ of\, $\boldsymbol{U}$ on the event $\mathcal{X}$. Next set
\begin{gather}
\label{eq:071a} \widehat{V}_{d-1} :=\big\{v\in [n]\colon \mathrm{deg}_{\boldsymbol{H}_{-\ell_0}}(v) = d-1 \text{ and }
g_{\boldsymbol{H}_{-\ell_0},m,k_0}(v) \neq \square\big\}, \\
\label{eq:072a}  \widehat{V}'_{d-1} := \Big\{ v\in \widehat{V}_{d-1} \colon
g_{\boldsymbol{H}_{-\ell_0},m,k_0}(v) = g_{\boldsymbol{H},m,k_0}(v) \text{ and }
\big\{v,g_{\boldsymbol{H},m,k_0}(v)\big\}\not\in\mathcal{L}\Big\}, \\
\label{eq:073a}  \widehat{V}''_{d-1} := \Big\{v\in \widehat{V}_{d-1} \colon
\big|\big\{w\in \widehat{V}_{d-1} \colon g_{\boldsymbol{H},m,k_0}(w)=g_{\boldsymbol{H},m,k_0}(v)\big\}\big|
\leqslant\frac{(d-1)^m}{m}\Big\}.
\end{gather}
Notice that, almost everywhere, for every $v\in [n]$,
\[ \boldsymbol{\pi}\big(\partial B_{\boldsymbol{U}}(\boldsymbol{\pi}^{-1}(v),m)\big) =
\partial B_{\boldsymbol{H}}(v,m) \ \ \ \text{ and } \ \ \
\boldsymbol{\pi}\big(\partial B_{\boldsymbol{U}_{-\ell_0}}(\boldsymbol{\pi}^{-1}(v),m)\big)
=\partial B_{\boldsymbol{H}_{-\ell_0}}(v,m). \]
Hence, almost everywhere, for every $v\in [n]$,
\begin{gather}
\label{eq:078a} \boldsymbol{\pi}\big(\partial B_{\boldsymbol{U}}(\boldsymbol{\pi}^{-1}(v),m)\cap\boldsymbol{R}\big)
= \partial B_{\boldsymbol{H}}(v,m)\cap[k_0], \\
\label{eq:079a} \boldsymbol{\pi}\big(\partial B_{\boldsymbol{U}_{-\ell_0}}(\boldsymbol{\pi}^{-1}(v),m)\cap\boldsymbol{R}\big)
= \partial B_{\boldsymbol{H}_{-\ell_0}}(v,m)\cap[k_0].
\end{gather}
By \eqref{eq:078a}, almost everywhere, for every $v\in [n]$,
\begin{align}\label{eq:080a}
g_{\boldsymbol{H},m,k_0}(v) & = \min\big(\partial B_{\boldsymbol{H}}(v,m)\cap [k_0]\big) =
\min\Big(\boldsymbol{\pi}\big(\partial B_{\boldsymbol{U}}(\boldsymbol{\pi}^{-1}(v),m)\cap\boldsymbol{R}\big)\Big) \\
& =\boldsymbol{\pi}\Big( \min_{\boldsymbol{\alpha}(\boldsymbol{R})} \big(\partial B_{\boldsymbol{U}}(\boldsymbol{\pi}^{-1}(v),m)\cap\boldsymbol{R}\big) \Big) =
\boldsymbol{\pi}\Big( h_{\boldsymbol{U},m,\boldsymbol{R}, \boldsymbol{\alpha}} \big(\boldsymbol{\pi}^{-1}(v)\big)\Big), \nonumber
\end{align}
where $\min(A)$ denotes the minimum of $A$ in the natural order and is equal to $\square$ if $A$ is empty. Similarly, using \eqref{eq:079a} instead of \eqref{eq:078a}, almost everywhere, for every $v\in [n]$,
\begin{equation} \label{eq:081a}
g_{\boldsymbol{H}_{-\ell_0},m,k_0}(v) =
\boldsymbol{\pi}\Big( h_{\boldsymbol{U}_{-\ell_0},m,\boldsymbol{R}, \boldsymbol{\alpha}} \big(\boldsymbol{\pi}^{-1}(v)\big)\Big).
\end{equation}
Observe that, almost everywhere, for every $v\in [n]$, we have $\mathrm{deg}_{\boldsymbol{H}_{-\ell_0}}(v)
=\mathrm{deg}_{\boldsymbol{U}_{-\ell_0}}\big(\boldsymbol{\pi}^{-1}(v)\big)$; thus, by \eqref{eq:081a}, almost everywhere,
\begin{equation} \label{eq:082a}
\widehat{V}_{d-1} = \boldsymbol{\pi}(V_{d-1}).
\end{equation}
Since $\mathcal{L}$ is invariant (see Definition \ref{defn-invariant}), by \eqref{eq:080a}--\eqref{eq:082a}, almost everywhere,
\begin{equation} \label{eq:083a}
\widehat{V}'_{d-1} = \boldsymbol{\pi}(V'_{d-1}).
\end{equation}
Finally notice that, by \eqref{eq:080a}--\eqref{eq:082a}, almost everywhere,
\begin{equation} \label{eq:084a}
\widehat{V}''_{d-1} = \boldsymbol{\pi}(V''_{d-1}).
\end{equation}

Set $\widehat{V}:=\widehat{V}_{d-1}\cap\widehat{V}'_{d-1}\cap\widehat{V}''_{d-1}$, and observe that $\widehat{V}$ satisfies part (\hyperref[prop-i]{i}) of Proposition \ref{prop-seeds-and-conc-final}. Moreover, by \eqref{eq:082a}--\eqref{eq:084a}, almost everywhere on the event $\mathcal{F}_1\cap\mathcal{F}_2\cap\mathcal{F}_3$, we have
\begin{equation} \label{eq:085a}
|\widehat{V}|\geqslant \frac{d-1}{d}
\Big(1-\frac{7}{\sqrt[7]{K}} -
\frac{2d}{(d-1)\sqrt[3]{K}}\Big)\Big(1-\frac{5}{\sqrt[4]{K}}\Big)2\ell_0
\geqslant 1.3\, \ell_0,
\end{equation}
where the last inequality follows by taking $K_1$ sufficiently large. Next set
\[ E_{\mathrm{con}} := \big\{ e\in E_{\boldsymbol{H}}\setminus E_{\boldsymbol{H}_{-\ell_0}}\colon e\subseteq \widehat{V}\big\}
\ \ \ \text{ and } \ \ \
E_{\mathrm{att}} := \big\{ e\in E_{\boldsymbol{H}}\setminus E_{\boldsymbol{H}_{-\ell_0}}\colon |e\cap\widehat{V}|=1\big\}, \]
and define
\[ \mathcal{O} := \bigcup_{e\in E_{\mathrm{con}}} e. \]
Since, almost everywhere, for every $v\in\widehat{V}$, we have $\mathrm{deg}_{\boldsymbol{H}_{\ell_0}}(v)=d-1$, by \eqref{eq:085a}, almost everywhere on the event $\mathcal{F}_1\cap\mathcal{F}_2\cap\mathcal{F}_3$,
\[ 2|E_{\mathrm{con}}|+|E_{\mathrm{att}}| = |\widehat{V}| \stackrel{\eqref{eq:085a}}{\geqslant} 1.3\, \ell_0. \]
Since $|E_{\mathrm{con}}|+|E_{\mathrm{att}}|\leqslant\ell_0$, we thus have, almost everywhere on the event $\mathcal{F}_1\cap\mathcal{F}_2\cap\mathcal{F}_3$,
\begin{equation}\label{eq:086a}
  |E_{\mathrm{con}}| \geqslant 1.3\ell_0 - |E_{\mathrm{con}}| - |E_{\mathrm{att}}|\geqslant 0.3\ell_0.
\end{equation}
Clearly, $\mathcal{O}$ is a subset of $\widehat{V}$ and, consequently, $\mathcal{O}$ satisfies part (\hyperref[prop-i]{i}) of the proposition. Notice that, almost everywhere, for every $v\in\mathcal{O}$, we have $\mathrm{deg}_{\boldsymbol{H}_{\ell_0}}(v)=d-1$; therefore, the edges in $E_{\mathrm{con}}$ are pairwise disjoint. By the definitions of $E_{\mathrm{con}}$ and $\mathcal{O}$, we see that  $\mathcal{O}$ also satisfies part (\hyperref[prop-ii]{ii}) of the proposition. Finally, by \eqref{eq:086a}, almost everywhere on the event $\mathcal{F}_1\cap\mathcal{F}_2\cap\mathcal{F}_3$, we have $|\mathcal{O}|\geqslant 0.6\, \ell_0$; therefore, $\mathcal{O}$ satisfies part (\hyperref[prop-iii]{iii}) of the proposition. By \eqref{eq:070a}, the proof is completed.
\end{proof}

\subsection{Proof of Proposition \ref{prop-seeds-and-conc}} \label{subsec-proof-5.2-new}

For the rest of the proof, we fix a realization $U$ of\, $\boldsymbol{U}$ that satisfies $|\mathcal{T}(U,m)|\geqslant n-\sqrt{n}$. We shall proceed by defining several auxiliary events.

\subsection*{Step 1: definition of auxiliary event $\mathcal{A}_1$} \label{step1}

Notice that, due to the $d$-regularity of $U$, for every $v,u\in [n]$, we have $\mathbb{P}[\mathrm{deg}_{\boldsymbol{U}_{-\ell_0}}(v) = d-1 \,|\,\boldsymbol{U}=U] = \mathbb{P}[\mathrm{deg}_{\boldsymbol{U}_{-\ell_0}}(u)= d-1\, |\,\boldsymbol{U}=U]$; that is, the probability $\mathbb{P}[\mathrm{deg}_{\boldsymbol{U}_{-\ell_0}}(v) = d-1 \,|\,\boldsymbol{U}=U]$ is independent of $v$. Set
\begin{equation} \label{eq:007prop5.2}
\delta := \mathbb{P}\big[\mathrm{deg}_{\boldsymbol{U}_{-\ell_0}}(v) = d-1\,\big|\,\boldsymbol{U}=U\big],
\end{equation}
where $v \in [n]$ is an arbitrary vertex. We claim that
\begin{equation}\label{eq:008prop5.2}
\frac{2\ell_0}{n} \Big( 1- \frac{1}{K} \Big) \leqslant \delta \leqslant \frac{2\ell_0}{n}.
\end{equation}
Indeed, denoting by $(n)_k=n(n-1)\cdots (n-k+1)$ the falling factorial, we have
\begin{equation}\label{eq:009prop5.2}
\delta = d\, \frac{\binom{\frac{dn}{2}-d}{\ell_0-1}}{\binom{\frac{dn}{2}}{\ell_0}} =
\frac{2\ell_0}{n} \cdot \frac{(\frac{dn}{2} - d)_{\ell_0-1}}{(\frac{dn}{2} - 1)_{\ell_0-1}}.
\end{equation}
The right-hand side of \eqref{eq:008prop5.2} follows immediately from \eqref{eq:009prop5.2}. Selecting $K_0\geqslant 4d$, by the choice of $\ell_0$ and \eqref{eq:001prop5.2}, we have $\frac{\ell_0}{d}\leqslant\frac{n}{4}$. Thus, by \eqref{eq:001prop5.2} and \eqref{eq:009prop5.2}, Bernoulli's inequality and the choice of $\ell_0$,
\[ \delta \geqslant \frac{2\ell_0}{n} \Big( 1- \frac{d}{\frac{dn}{2}-\ell_0} \Big)^{\ell_0-1}
\geqslant \frac{2\ell_0}{n} \Big( 1- \frac{4(\ell_0-1)}{n} \Big) \geqslant
\frac{2\ell_0}{n} \Big( 1- \frac{4d}{K^2} \Big) \geqslant
\frac{2\ell_0}{n} \Big( 1- \frac{1}{K} \Big).\]

\subsubsection*{Substep 1.0: definition of auxiliary event $\mathcal{A}_{1,0}$} \label{substep1.0}

Set
\begin{gather}
\label{eq:010prop5.2} \mathcal{D}_{d-1} := \big\{v \in [n]\colon \mathrm{deg}_{\boldsymbol{U}_{-\ell_0}}(v)=d-1 \big\}, \\
\label{eq:011prop5.2*} \mathcal{A}_{1,0}:= \bigg[|\mathcal{D}_{d-1}\cap\mathcal{T}(\boldsymbol{U},m)|\geqslant 2\ell_0 \Big(1-\frac{1}{\sqrt{K}}\Big) \bigg].
\end{gather}
Notice that $\mathcal{D}_{d-1}$ is a random $\boldsymbol{U}_{-\ell_0}$-measurable set of vertices. We claim that
\begin{equation} \label{eq:012prop5.2*}
\mathbb{P}\big[\mathcal{A}_{1,0}\,\big|\,\boldsymbol{U}=U\big]\geqslant 1- \frac{2}{\sqrt{K}}.
\end{equation}
Indeed, first observe that $|\mathcal{D}_{d-1}|\leqslant 2\ell_0$ almost everywhere. Since \eqref{eq:001prop5.2} implies that $K^2\leqslant n$, by \eqref{eq:008prop5.2} and the fact that $|\mathcal{T}(U,m)|\geqslant n-\sqrt{n}$, we have
\begin{equation} \label{eq:013prop5.2*}
\mathbb{E}\bigg[ \frac{2\ell_0}{n} - \frac{|\mathcal{D}_{d-1}\cap\mathcal{T}(\boldsymbol{U},m)|}{n} \, \bigg|\, \boldsymbol{U}=U\bigg]
= \frac{2\ell_0}{n}-\frac{|\mathcal{T}(U,m)|}{n}\cdot\delta \leqslant\frac{2\ell_0}{n}\cdot\frac{2}{K};
\end{equation}
thus, \eqref{eq:011prop5.2*} follows by \eqref{eq:013prop5.2*} and Markov's inequality.

\subsubsection*{Substep 1.1: definition of auxiliary event $\mathcal{A}_{1,1}$} \label{substep1.1}

Next set
\begin{equation} \label{eq:011prop5.2}
\mathcal{A}_{1,1} := \bigg[ \frac{1}{|\mathcal{T}(\boldsymbol{U},m)|} \sum_{v\in\mathcal{T}(\boldsymbol{U},m)} \frac{|\partial B_{\boldsymbol{U}_{-\ell_0}}(v,m)|}{(d-1)^m}  \mathbbm{1}_{\mathcal{D}_{d-1}}(v) \geqslant \frac{2\ell_0}{|\mathcal{T}(\boldsymbol{U},m)|} \Big(1 - \sqrt{\frac{5}{K}}\Big) \bigg].
\end{equation}
We claim that
\begin{equation}\label{eq:012prop5.2}
\mathbb{P}\big[ \mathcal{A}_{1,1}\,\big|\, \boldsymbol{U} = U\big] \geqslant 1 - \frac{3}{\sqrt{5K}}.
\end{equation}
To this end, first observe that
\begin{align} \label{eq:013prop5.2}
\mathbb{E}&\bigg[ \frac{1}{|\mathcal{T}(\boldsymbol{U},m)|} \sum_{v\in\mathcal{T}(\boldsymbol{U},m)} \frac{|\partial B_{\boldsymbol{U}_{-\ell_0}}(v,m)|}{(d-1)^m}  \mathbbm{1}_{\mathcal{D}_{d-1}}(v)\,\bigg|\, \boldsymbol{U}=U \bigg] \\
& = \frac{1}{|\mathcal{T}(U,m)|} \sum_{v\in\mathcal{T}(U,m)} \frac{1}{(d-1)^m}  \sum_{w\in\partial B_U(v,m)}
\mathbb{P}\big[w\in\partial B_{\boldsymbol{U}_{-\ell_0}}(v,m) \text{ and }v\in\mathcal{D}_{d-1} \,\big|\, \boldsymbol{U}=U\big]. \nonumber
\end{align}
Selecting $K_0\geqslant d$, we have $\ell_0-1\leqslant \frac{dn}{4}$ and, by \eqref{eq:001prop5.2}, $m\geqslant d$. Thus, by the choice of $\ell_0$ and Bernoulli's inequality, for every $v\in\mathcal{T}(U,m)$ and every $w\in\partial B_U(v,m)$, we have
\begin{align*}
\mathbb{P}\big[ w\in\partial B_{\boldsymbol{U}_{-\ell_0}}(v,m) \text{ and } & v\in\mathcal{D}_{d-1}\,\big|\, \boldsymbol{U}=U\big]
= (d-1)\frac{\binom{\frac{dn}{2}-m-d+1}{\ell_0-1}}{\binom{\frac{dn}{2}}{\ell_0}} \\
& \geqslant \frac{d-1}{d} \cdot \frac{2\ell_0}{n} \cdot \Big(1-8\frac{m}{dn}\Big)^{\ell_0-1}
\geqslant \frac{d-1}{d} \cdot \frac{2\ell_0}{n} \cdot \Big(1-\frac{8}{K}\Big),
\end{align*}
and since $|\partial B_U(v,m)|= d(d-1)^{m-1}$ for every $v\in \mathcal{T}(U,m)$, by \eqref{eq:013prop5.2}, we obtain that
\begin{equation} \label{eq-5.2-new1}
\mathbb{E}\bigg[ \frac{1}{|\mathcal{T}(\boldsymbol{U},m)|} \sum_{v\in\mathcal{T}(\boldsymbol{U},m)} \frac{|\partial B_{\boldsymbol{U}_{-\ell_0}}(v,m)|}{(d-1)^m}  \mathbbm{1}_{\mathcal{D}_{d-1}}(v) \,\bigg|\, \boldsymbol{U}=U\bigg] \geqslant \frac{2\ell_0}{n}\Big(1-\frac{8}{K}\Big).
\end{equation}
As already noted, \eqref{eq:001prop5.2} implies that $K^2\leqslant n$. By \eqref{eq-5.2-new1} and the fact that $|\mathcal{T}(U,m)|\geqslant n-\sqrt{n}$,
\begin{equation}\label{eq:014prop5.2}
\mathbb{E}\bigg[ \frac{2\ell_0}{|\mathcal{T}(\boldsymbol{U},m)|} -\frac{1}{|\mathcal{T}(\boldsymbol{U},m)|} \sum_{v\in\mathcal{T}(\boldsymbol{U},m)} \frac{|\partial B_{\boldsymbol{U}_{-\ell_0}}(v,m)|}{(d-1)^m}  \mathbbm{1}_{\mathcal{D}_{d-1}}(v)\,\bigg|\, \boldsymbol{U}=U \bigg]
\leqslant \frac{2\ell_0}{|\mathcal{T}(U,m)|} \cdot \frac{9}{K}.
\end{equation}
Finally observe that $\frac{1}{|\mathcal{T}(\boldsymbol{U},m)|} \sum_{v\in\mathcal{T}(\boldsymbol{U},m)} \frac{|\partial B_{\boldsymbol{U}_{-\ell_0}}(v,m)|}{(d-1)^m}  \mathbbm{1}_{\mathcal{D}_{d-1}}(v)\leqslant \frac{2\ell_0}{|\mathcal{T}(U,m)|}$ almost everywhere on the event $[\boldsymbol{U} = U]$. Consequently, \eqref{eq:012prop5.2} follows by \eqref{eq:014prop5.2} and Markov's inequality.

\subsubsection*{Substep 1.2: definition of auxiliary event $\mathcal{A}_{1,2}$} \label{substep1.2}

For every $v\in [n]$, define a random $\boldsymbol{U}$-measurable set of vertices by
\begin{equation} \label{eq-new-J}
\mathcal{J}_v:=\big\{u\in [n] \colon u\in \partial B_{\boldsymbol{U}}(v,m) \text{ and } \{v,u\}\in \mathcal{J}\big\}.
\end{equation}
By \eqref{eq:002prop5.2}, we see that $\sum_{v\in [n]} |\mathcal{J}_v|\leqslant 2 K ^{-1} (d-1)^m\,n$ almost everywhere. Thus, by Markov's inequality, setting
\begin{equation} \label{eq:015prop5.2}
\mathcal{B}_\mathcal{J} := \Big\{v\in[n] \colon |\mathcal{J}_v|\geqslant\frac{(d-1)^m}{\sqrt{K/2}}\Big\},
\end{equation}
we have
\begin{equation} \label{eq:016prop5.2}
|\mathcal{B}_\mathcal{J}|\leqslant \frac{n}{\sqrt{K/2}}.
\end{equation}
Next set
\begin{equation}\label{eq:017prop5.2}
\mathcal{A}_{1,2}:= \bigg[ \frac{1}{n}\sum_{v\in [n]} \mathbbm{1}_{\mathcal{D}_{d-1}}(v)
\mathbbm{1}_{\mathcal{B}_{\mathcal{J}}}(v)\leqslant \frac{2\ell_0}{n}\sqrt[4]{\frac{2}{K}}\bigg].
\end{equation}
We claim that
\begin{equation} \label{eq:018prop5.2}
\mathbb{P}\big[ \mathcal{A}_{1,2}\,\big|\, \boldsymbol{U} = U\big] \geqslant 1 - \sqrt[4]{\frac{2}{K}}.
\end{equation}
Indeed, by \eqref{eq:008prop5.2} and \eqref{eq:016prop5.2}, we have
\begin{equation}\label{eq:019prop5.2}
\mathbb{E}\bigg[ \frac{1}{n} \sum_{v\in[n]} \mathbbm{1}_{\mathcal{D}_{d-1}}(v)
\mathbbm{1}_{\mathcal{B}_{\mathcal{J}}}(v) \,\bigg|\, \boldsymbol{U}=U\bigg]
= \frac{|\mathcal{B}_\mathcal{J}|}{n} \delta \leqslant \frac{2\ell_0}{n}\sqrt{\frac{2}{K}}.
\end{equation}
Therefore, \eqref{eq:018prop5.2} follows by \eqref{eq:019prop5.2} and Markov's inequality.

\subsubsection*{Substep 1.3: definition of auxiliary event $\mathcal{A}_{1,3}$} \label{substep1.3}

Set
\[ \mathcal{R} := \Big\{ w\in[n]\colon |\partial B_{\boldsymbol{U}}(w, m) \cap \mathcal{D}_{d-1}| \geqslant\frac{(d-1)^m}{m}\Big\}, \]
and for every $v\in [n]$, set $\mathcal{R}_v := \mathcal{R}\cap \partial B_{\boldsymbol{U}}(v,m)$. Notice that $\mathcal{R}$ and $\mathcal{R}_v$ are both random $(\boldsymbol{U},\boldsymbol{U}_{-\ell_0})\text{-measurable}$ sets of vertices. Also set
\begin{equation}\label{eq:020prop5.2}
\mathcal{A}_{1,3}:=\bigg[\frac{1}{|\mathcal{T}(\boldsymbol{U},m)|}\sum_{v\in \mathcal{T}(\boldsymbol{U},m)} \mathbbm{1}_{\mathcal{D}_{d-1}}(v) \cdot
|\mathcal{R}_v|\leqslant d(d-1)^{m-1} \frac{2\ell_0}{n} \cdot \frac{1}{K^4}\bigg].
\end{equation}
We claim that
\begin{equation}\label{eq:021prop5.2}
\mathbb{P}\big[ \mathcal{A}_{1,3}\, \big| \, \boldsymbol{U} = U\big] \geqslant 1 - \frac{1}{K^4}.
\end{equation}
Indeed, first observe that
\begin{align} \label{eq:022prop5.2}
\mathbb{E}\bigg[ \frac{1}{|\mathcal{T}(\boldsymbol{U},m)|} & \sum_{v\in \mathcal{T}(\boldsymbol{U},m)} \mathbbm{1}_{\mathcal{D}_{d-1}}(v)\cdot
|\mathcal{R}_v| \,\bigg|\, \boldsymbol{U}=U\bigg] \\
& = \frac{1}{|\mathcal{T}(U,m)|}\sum_{v\in \mathcal{T}(U,m)} \sum_{w\in \partial B_U(v,m)}
\mathbb{P}\big[v\in\mathcal{D}_{d-1} \text{ and } w\in\mathcal{R} \,\big|\, \boldsymbol{U}=U\big].\nonumber
\end{align}
Set $t:= \big\lceil\frac{(d-1)^m}{m}\big\rceil$, and select $K_0$ large enough so that: (i) $t-1 \geqslant \frac{(d-1)^m}{2m}\geqslant 16$, and (ii) $K_0\geqslant(6ed)^2$. Fix $v\in \mathcal{T}(U,m)$ and $w\in \partial B_U(v,m)$, and notice that $|\partial B_U(w,m)|=d(d-1)^{m-1}$. Hence, by \eqref{eq:001prop5.2} and the choices of $\mathcal{R}$ and $\ell_0$,
\begin{align*}
\mathbb{P}\big[v\in\mathcal{D}_{d-1} \text{ and } w\in\mathcal{R} \,\big|\, \boldsymbol{U}=U\big]
& \leqslant \binom{\frac{dn}{2}}{\ell_0}^{-1} d^t
\binom{d(d-1)^{m-1}-1}{t-1} \binom{\frac{dn}{2}-dt}{\ell_0-t}\\
& \leqslant \frac{2\ell_0}{n} \Big(\frac{6ed}{K}\Big)^{t-1}
\leqslant \frac{2\ell_0}{n} \frac{1}{K^8};
\end{align*}
therefore, by \eqref{eq:022prop5.2},
\begin{equation} \label{eq:023prop5.2}
\mathbb{E}\bigg[ \frac{1}{|\mathcal{T}(\boldsymbol{U},m)|}\sum_{v\in \mathcal{T}(\boldsymbol{U},m)}
\mathbbm{1}_{\mathcal{D}_{d-1}}(v)\cdot |\mathcal{R}_v|
\,\bigg|\, \boldsymbol{U}=U\bigg] \leqslant d(d-1)^{m-1} \frac{2\ell_0}{n} \frac{1}{K^8}.
\end{equation}
The desired estimate \eqref{eq:021prop5.2} follows by \eqref{eq:023prop5.2} and Markov's inequality.

\subsubsection*{Substep 1.4: definition of $\mathcal{A}_{1}$ and verification of its properties} \label{substep1.4}

Selecting $K_0$ sufficiently large, by \eqref{eq:012prop5.2*}, \eqref{eq:012prop5.2}, \eqref{eq:018prop5.2} and \eqref{eq:021prop5.2}, we see that
\begin{equation}\label{eq:024prop5.2}
\mathbb{P}\big[ \mathcal{A}_{1,0}\cap\mathcal{A}_{1,1}\cap \mathcal{A}_{1,2}\cap \mathcal{A}_{1,3}\,\big|\,
\boldsymbol{U} = U\big] \geqslant 1 - \frac{2}{\sqrt[4]{K}}.
\end{equation}
Define the random $(\boldsymbol{U}, \boldsymbol{U}_{-\ell_0})$-measurable set of vertices
\begin{align} \label{eq:025prop5.2}
\mathcal{V}_1 :=\Big\{v\in \mathcal{T}(\boldsymbol{U},m)\cap\mathcal{D}_{d-1} \colon &
|\partial B_{\boldsymbol{U}_{-\ell_0}}(v,m)|\geqslant \Big(1-\sqrt[4]{\frac{5}{K}}\Big)(d-1)^m, \\
& \ \ \ \ \ |\mathcal{J}_v|\leqslant\frac{(d-1)^m}{\sqrt{K/2}} \text{ and } |\mathcal{R}_v|\leqslant\frac{(d-1)^m}{K} \Big\}, \nonumber
\end{align}
and set
\begin{equation} \label{eq:026prop5.2**}
\mathcal{A}_1:= \bigg[ |\mathcal{V}_1|\geqslant 2\ell_0\Big(1-\frac{3}{\sqrt[4]{K}}\Big)\bigg].
\end{equation}

\begin{claim} \label{cl:01prop5.2}
Almost everywhere on the event $[\boldsymbol{U}=U]$, we have $\mathcal{A}_{1,0}\cap\mathcal{A}_{1,1}\cap\mathcal{A}_{1,2}\cap\mathcal{A}_{1,3}\subseteq \mathcal{A}_1$.
\end{claim}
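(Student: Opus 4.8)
The plan is to view $\mathcal{V}_1$ as the set $\mathcal{W}:=\mathcal{T}(\boldsymbol{U},m)\cap\mathcal{D}_{d-1}$ with three ``bad'' subsets deleted, to bound the size of each bad subset on the corresponding event $\mathcal{A}_{1,j}$ by a Markov/averaging argument, and to subtract these from the lower bound on $|\mathcal{W}|$ provided by $\mathcal{A}_{1,0}$. Concretely, I would fix an outcome on $\mathcal{A}_{1,0}\cap\mathcal{A}_{1,1}\cap\mathcal{A}_{1,2}\cap\mathcal{A}_{1,3}$ (and in the full-measure set where $|\mathcal{D}_{d-1}|\leqslant 2\ell_0$, where $\sum_{v}|\mathcal{J}_v|\leqslant 2K^{-1}(d-1)^m n$, and where $|\partial B_{\boldsymbol{U}}(v,m)|=d(d-1)^{m-1}$ for every $v\in\mathcal{T}(\boldsymbol{U},m)$). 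Then $\mathcal{V}_1=\mathcal{W}\setminus(\mathcal{B}_1\cup\mathcal{B}_2\cup\mathcal{B}_3)$, where $\mathcal{B}_1,\mathcal{B}_2,\mathcal{B}_3$ consist of those $v\in\mathcal{W}$ violating, respectively, the conditions $|\partial B_{\boldsymbol{U}_{-\ell_0}}(v,m)|\geqslant(1-\sqrt[4]{5/K})(d-1)^m$, $|\mathcal{J}_v|\leqslant(d-1)^m/\sqrt{K/2}$, and $|\mathcal{R}_v|\leqslant(d-1)^m/K$, so that $|\mathcal{V}_1|\geqslant|\mathcal{W}|-|\mathcal{B}_1|-|\mathcal{B}_2|-|\mathcal{B}_3|$.

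The one point that needs care is the elementary structural estimate $|\partial B_{\boldsymbol{U}_{-\ell_0}}(v,m)|\leqslant(d-1)^m$ for every $v\in\mathcal{W}$ — equivalently, that every summand of the average controlled by $\mathcal{A}_{1,1}$ is at most $1$. Since $v\in\mathcal{T}(\boldsymbol{U},m)$, the ball $B_{\boldsymbol{U}}(v,3m)$ is a tree; since $v\in\mathcal{D}_{d-1}$, exactly one edge at $v$ is deleted, and this severs one of the $d$ depth-$m$ ``fans'' below $v$, each of size $(d-1)^{m-1}$, leaving at most $(d-1)\cdot(d-1)^{m-1}=(d-1)^m$ vertices that can remain at distance exactly $m$. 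No further vertex can enter $\partial B_{\boldsymbol{U}_{-\ell_0}}(v,m)$: deleting edges only increases distances, and a vertex of $B_{\boldsymbol{U}}(v,3m)$ whose unique tree-path to $v$ is cut can only be reconnected through a path leaving $B_{\boldsymbol{U}}(v,3m)$, which has length at least $3m+1>m$, while vertices outside $B_{\boldsymbol{U}}(v,3m)$ already lie at distance $>3m$.

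With this in hand the three counting bounds are routine. Multiplying the defining inequality of $\mathcal{A}_{1,1}$ by $|\mathcal{T}(\boldsymbol{U},m)|$ gives $\sum_{v\in\mathcal{W}}|\partial B_{\boldsymbol{U}_{-\ell_0}}(v,m)|/(d-1)^m\geqslant 2\ell_0(1-\sqrt{5/K})$; since $|\mathcal{W}|\leqslant 2\ell_0$ and each summand lies in $[0,1]$, bounding the left side from above by the contribution of $\mathcal{B}_1$ yields $|\mathcal{B}_1|\leqslant 2\ell_0\sqrt[4]{5/K}$. On $\mathcal{A}_{1,2}$ one has $\mathcal{B}_2\subseteq\mathcal{D}_{d-1}\cap\mathcal{B}_{\mathcal{J}}$ and $|\mathcal{D}_{d-1}\cap\mathcal{B}_{\mathcal{J}}|=\sum_v\mathbbm{1}_{\mathcal{D}_{d-1}}(v)\mathbbm{1}_{\mathcal{B}_{\mathcal{J}}}(v)\leqslant 2\ell_0\sqrt[4]{2/K}$. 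On $\mathcal{A}_{1,3}$, multiplying by $|\mathcal{T}(\boldsymbol{U},m)|\leqslant n$ gives $\sum_{v\in\mathcal{W}}|\mathcal{R}_v|\leqslant 2\ell_0\,d(d-1)^{m-1}K^{-4}$, and since each $v\in\mathcal{B}_3$ contributes more than $(d-1)^m/K$ to this sum, $|\mathcal{B}_3|\leqslant\frac{2d}{d-1}\,\ell_0 K^{-3}$.

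Combining these with $|\mathcal{W}|\geqslant 2\ell_0(1-1/\sqrt{K})$ from $\mathcal{A}_{1,0}$,
\[
|\mathcal{V}_1|\ \geqslant\ 2\ell_0\Big(1-\tfrac{1}{\sqrt{K}}-\sqrt[4]{\tfrac{5}{K}}-\sqrt[4]{\tfrac{2}{K}}-\tfrac{2d}{(d-1)K^{3}}\Big),
\]
and since $5^{1/4}+2^{1/4}<3$, for $K\geqslant K_0$ with $K_0$ a suitable absolute constant (well below the thresholds already imposed on $K_0$ in the substeps) the quantity in parentheses is at least $1-3/\sqrt[4]{K}$; hence the chosen outcome belongs to $\mathcal{A}_1$, which is the asserted inclusion. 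I expect the structural estimate of the second paragraph to be the only non-mechanical ingredient; everything else is bookkeeping with Markov's inequality and elementary comparisons of powers of $K$.
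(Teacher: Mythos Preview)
Your proposal is correct and follows essentially the same approach as the paper: decompose $\mathcal{V}_1$ as $\mathcal{W}$ minus three bad sets, bound $|\mathcal{W}|$ via $\mathcal{A}_{1,0}$, and bound each bad set by a Markov-type averaging using $\mathcal{A}_{1,1}$, $\mathcal{A}_{1,2}$, $\mathcal{A}_{1,3}$ respectively. Your detailed justification of $|\partial B_{\boldsymbol{U}_{-\ell_0}}(v,m)|\leqslant(d-1)^m$ for $v\in\mathcal{W}$ is more explicit than the paper's (which simply asserts it), and your final arithmetic has a harmless factor-of-two overcount in the $\mathcal{B}_3$ term, but the argument is otherwise identical.
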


\begin{proof}[Proof of Claim \ref{cl:01prop5.2}]
Set $\mathcal{A}:=\mathcal{A}_{1,0}\cap\mathcal{A}_{1,1}\cap\mathcal{A}_{1,2}\cap\mathcal{A}_{1,3}$. By the choice of $\mathcal{A}_{1,1}$ in \eqref{eq:011prop5.2}, the fact that $|\mathcal{D}_{d-1}|\leqslant2\ell_0$ and the fact that $|\partial B_{\boldsymbol{U}_{-\ell_0}}(v,m)|\leqslant (d-1)^m$ for every $v\in \mathcal{D}_{d-1}$, we have, almost everywhere on the event $\mathcal{A}\cap [\boldsymbol{U}=U]$,
\[ \sum_{v\in \mathcal{D}_{d-1}\cap\mathcal{T}(\boldsymbol{U},m)}1-\frac{|\partial B_{\boldsymbol{U}_{-\ell_0}}(v,m)|}{(d-1)^m}
\leqslant2\ell_0\sqrt{\frac{5}{K}}. \]
Therefore, by Markov's inequality, almost everywhere on the event $\mathcal{A}\cap [\boldsymbol{U}=U]$,
\[ \Big|\Big\{v\in\mathcal{D}_{d-1}\cap\mathcal{T}(\boldsymbol{U},m)\colon \frac{|\partial B_{\boldsymbol{U}_{-\ell_0}}(v,m)|}{(d-1)^m}
\leqslant 1-\sqrt[4]{\frac{5}{K}}\Big\}\Big|\leqslant 2\ell_0 \sqrt[4]{\frac{5}{K}}. \]
Thus, by the choice of $\mathcal{A}_{1,0}$ in \eqref{eq:011prop5.2*}, almost everywhere on the event $\mathcal{A}\cap[\boldsymbol{U}=U]$,
\begin{equation} \label{eq:029prop5.2}
\Big|\Big\{v\in\mathcal{D}_{d-1}\cap\mathcal{T}(\boldsymbol{U},m)\colon \frac{|\partial B_{\boldsymbol{U}_{-\ell_0}}(v,m)|}{(d-1)^m}
\geqslant 1-\sqrt[4]{\frac{5}{K}}\Big\}\Big|\geqslant 2\ell_0 \Big(1-\frac{1}{\sqrt{K}}-\sqrt[4]{\frac{5}{K}}\Big).
\end{equation}
Next observe that, by \eqref{eq:029prop5.2}, the choice of $\mathcal{A}_{1,2}$ in \eqref{eq:017prop5.2} and the choice of $\mathcal{B}_J$ in \eqref{eq:015prop5.2}, almost everywhere on the event $\mathcal{A}\cap[\boldsymbol{U}=U]$, the random set
\[ \bigg\{ v\in \mathcal{D}_{d-1}\cap \mathcal{T}(\boldsymbol{U},m) \colon
|\partial B_{\boldsymbol{U}_{-\ell_0}}(v,m)|\geqslant \Big(1-\sqrt[4]{\frac{5}{K}}\Big)(d-1)^m \text{ and } |\mathcal{J}_v|\leqslant\frac{(d-1)^m}{\sqrt{K/2}} \bigg\} \]
has size at least $2\ell_0 \big(1-\frac{1}{\sqrt{K}}-\sqrt[4]{\frac{5}{K}}-\sqrt[4]{\frac{2}{K}}\big)$.
Finally, by the choice of $\mathcal{A}_{1,3}$ in \eqref{eq:020prop5.2}, we have, almost everywhere on the event $\mathcal{A}\cap[\boldsymbol{U}=U]$,
\[ \sum_{v\in\mathcal{D}_{d-1}\cap\mathcal{T}(\boldsymbol{U},m)} \frac{|\mathcal{R}_v|}{(d-1)^m}\leqslant\frac{3\ell_0}{K^4}, \]
that implies, by Markov's inequality, that
\[ \Big|\Big\{v\in\mathcal{D}_{d-1}\cap\mathcal{T}(\boldsymbol{U},m)\colon
\frac{|\mathcal{R}_v|}{(d-1)^m}\geqslant\frac{1}{K} \Big\}\Big| \leqslant 2\ell_0\, \frac{3}{2K^3}. \]
Summing up, almost everywhere on the event $\mathcal{A}\cap[\boldsymbol{U}=U]$, the random set $\mathcal{V}_1$ has size at least $2\ell_0 \big(1-\frac{1}{\sqrt{K}}-\sqrt[4]{\frac{5}{K}}-\sqrt[4]{\frac{2}{K}}-\frac{3}{2K^3}\big)$. The proof is completed by selecting $K_0$ sufficiently large.
\end{proof}

By \eqref{eq:024prop5.2} and Claim \ref{cl:01prop5.2}, we conclude that
\begin{equation}\label{eq:032prop5.2}
\mathbb{P}\big[ \mathcal{A}_1 \, \big|\, \boldsymbol{U} = U\big] \geqslant 1 - \frac{2}{\sqrt[4]{K}}.
\end{equation}
This completes the proof of Step \hyperref[step1]{1}.

\subsection*{Step 2: definition of auxiliary event $\mathcal{A}_2$} \label{step2}

For every subset $R$ of $[n]$, we define a random $(\boldsymbol{U},\boldsymbol{U}_{-\ell_0})\text{-measurable}$ set of vertices by
\begin{align} \label{eq:033prop5.2}
\mathcal{V}_2(R)  := \Big\{ v\in \mathcal{V}_1 \colon &
\big|R\cap \big( \partial B_{\boldsymbol{U}_{-\ell_0}}(v,m)\setminus(\mathcal{J}_v\cup\mathcal{R}_v) \big)\big|\geqslant K\Big(1-\frac{1}{\sqrt[4]{K}}\Big)\Big(1-\frac{3}{\sqrt[4]{K}}\Big), \\
& \, \big|R\cap \big(\partial B_{\boldsymbol{U}}(v,m)\setminus \partial B_{\boldsymbol{U}_{-\ell_0}}(v,m)\big)\big|
\leqslant\Big(1+\frac{1}{\sqrt[6]{K}}\Big)\frac{K}{d-1}, \nonumber \\
& \, \text{and }|R\cap (\mathcal{J}_v\cup\mathcal{R}_v)|\leqslant4\sqrt{K} \Big\}, \nonumber
\end{align}
where $\mathcal{J}_v$ is as in Substep \hyperref[substep1.2]{1.2} and $\mathcal{R}_v$ is as in Substep \hyperref[substep1.3]{1.3}. We set
\begin{equation} \label{eq:034prop5.2}
\mathcal{A}_2:= \bigg[|\mathcal{V}_2(\boldsymbol{R})|\geqslant 2\ell_0 \Big(1-\frac{2}{K}\Big)\Big(1-\frac{3}{\sqrt[4]{K}}\Big)\bigg].
\end{equation}

\begin{claim} \label{cl:02prop5.2}
We have
\begin{equation} \label{eq:035prop5.2}
\mathbb{P}\big[ \mathbb{\mathcal{A}}_2 \,\big|\, [\boldsymbol{U}=U]\cap\mathcal{A}_1\big]\geqslant 1-e^{-\sqrt{n}}.
\end{equation}
\end{claim}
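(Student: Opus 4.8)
The plan is to condition on $(\boldsymbol U,\boldsymbol U_{-\ell_0})$. Since $\boldsymbol R$ is independent of this pair and $\mathcal V_1$---together with all the sets $\partial B_{\boldsymbol U_{-\ell_0}}(v,m)$, $\mathcal J_v$, $\mathcal R_v$ occurring in its definition---is $(\boldsymbol U,\boldsymbol U_{-\ell_0})$-measurable, it is enough to fix a realization $(U,U_{-\ell_0})$ with $U\in\mathcal X$ and $U_{-\ell_0}\in\mathcal A_1$, so that these sets become fixed, $|\mathcal V_1|\geqslant L:=2\ell_0\big(1-3/\sqrt[4]{K}\big)$, and only the uniform $k_0$-subset $\boldsymbol R$ remains random (we may assume $k_0<n$, otherwise $\boldsymbol R=[n]$ is deterministic and $\mathcal A_2$ follows at once from the size bounds built into $\mathcal V_1$). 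For $v\in\mathcal V_1$ write $E_v$ for the event ``$v\notin\mathcal V_2(\boldsymbol R)$'' and $B:=\{v\in\mathcal V_1\colon E_v\}$; since $\mathcal V_2(\boldsymbol R)=\mathcal V_1\setminus B$, the inequality $|B|\leqslant\tfrac{2}{K}|\mathcal V_1|$ already gives $|\mathcal V_2(\boldsymbol R)|\geqslant(1-\tfrac{2}{K})|\mathcal V_1|\geqslant 2\ell_0(1-\tfrac{2}{K})(1-3/\sqrt[4]{K})$, i.e.\ $\mathcal A_2$. So it suffices to show $\mathbb P\big[|B|>\tfrac{2}{K}|\mathcal V_1|\big]\leqslant e^{-\sqrt n}$.

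The second ingredient is a per-vertex estimate. Decompose $E_v=E_v^{(1)}\cup E_v^{(2)}\cup E_v^{(3)}$ according to which of the three defining inequalities of $\mathcal V_2(\boldsymbol R)$ is violated; each $E_v^{(t)}$ is a tail event for $|\boldsymbol R\cap S|$ for a suitable $S\subseteq[n]$, of mean $\tfrac{k_0}{n}|S|$. For $v\in\mathcal V_1$ the membership conditions give $|\partial B_{\boldsymbol U_{-\ell_0}}(v,m)|\geqslant(1-\sqrt[4]{5/K})(d-1)^m$, $|\mathcal J_v|\leqslant(d-1)^m/\sqrt{K/2}$ and $|\mathcal R_v|\leqslant(d-1)^m/K$, together with $\tfrac{k_0}{n}(d-1)^m\in[K-1,K]$; a direct computation then shows that each of the three thresholds is separated from the corresponding mean by at least $c_d K^{3/4}$ in the first (tightest) case and by at least $c_d\sqrt K$ in the other two. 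The multiplicative Chernoff bounds of Lemma~\ref{chernoff} therefore yield $\mathbb P[E_v]\leqslant p:=\exp(-c_d\sqrt K)$ for some $c_d=c_d(d)>0$.

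The main obstacle is that the events $(E_v)_{v\in\mathcal V_1}$ are not independent: $E_v$ is a function of $\boldsymbol R\cap\partial B_{\boldsymbol U}(v,m)$, and these spheres overlap whenever $\mathrm{dist}_{\boldsymbol U}(v,u)\leqslant 2m$. To decouple them I would apply the Decomposition Lemma~\ref{prop5.6} to $U$ with $W=[n]$, obtaining a partition $[n]=W_1\sqcup\dots\sqcup W_{M+1}$ with $M+1\leqslant 4(d-1)^{2m}\leqslant 4n^{2/K}$ in which any two distinct vertices of the same class are at $\boldsymbol U$-distance $\geqslant 2m+1$; since $v\in\mathcal V_1\subseteq\mathcal T(\boldsymbol U,m)$, the ball $B_{\boldsymbol U}(v,3m)$ is a tree, so $\partial B_{\boldsymbol U_{-\ell_0}}(v,m)\subseteq\partial B_{\boldsymbol U}(v,m)$ and $\mathcal J_v,\mathcal R_v\subseteq\partial B_{\boldsymbol U}(v,m)$ (hence $E_v$ really is a function of $\boldsymbol R\cap\partial B_{\boldsymbol U}(v,m)$), while distinct $v,u\in\mathcal V_1\cap W_i$ have disjoint spheres. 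Finally I would replace $\boldsymbol R$ by the product measure $\boldsymbol R'$ that retains each vertex independently with probability $k_0/n$; this changes probabilities by at most a factor $n+1$ (as $k_0$ is the mode of $\mathrm{Bin}(n,k_0/n)$) and leaves all the means above unchanged, and it makes the indicators $(\mathbbm{1}_{E_v})_{v\in\mathcal V_1\cap W_i}$ genuinely independent within each class~$i$.

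To conclude, set $\ell_i:=|\mathcal V_1\cap W_i|$ and call a class \emph{large} if $\ell_i\geqslant K\sqrt n\log n$ and \emph{small} otherwise. For a large class, $X_i:=\sum_{v\in\mathcal V_1\cap W_i}\mathbbm{1}_{E_v}$ is a sum of $\ell_i$ independent Bernoullis of mean at most $p\leqslant\tfrac{1}{2K}$, so Lemma~\ref{chernoff} gives $\mathbb P\big[X_i\geqslant\lceil\ell_i/K\rceil\big]\leqslant\exp(-\ell_i/6K)\leqslant e^{-\sqrt n\log n/6}$; a union bound over the at most $4n^{2/K}$ classes shows that, with probability at least $1-e^{-2\sqrt n}$, every large class satisfies $X_i<\lceil\ell_i/K\rceil$ and hence contributes at most $\ell_i/K$ to $|B|$. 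For the small classes, $\sum_{i\text{ small}}\ell_i\leqslant(M+1)\,K\sqrt n\log n\leqslant 4K\,n^{1/2+2/K}\log n$, which---using that \eqref{eq:001prop5.2} forces $n\geqslant(d-1)^{K^2}$, and choosing $K_0=K_0(d)$ large enough---is at most $\tfrac{1}{K}L\leqslant\tfrac{1}{K}|\mathcal V_1|$. On the above event we therefore get $|B|=\sum_{i}X_i\leqslant\sum_{i\text{ small}}\ell_i+\tfrac{1}{K}\sum_{i\text{ large}}\ell_i\leqslant\tfrac{1}{K}|\mathcal V_1|+\tfrac{1}{K}|\mathcal V_1|=\tfrac{2}{K}|\mathcal V_1|$, so $\mathcal A_2$ holds; undoing the passage to $\boldsymbol R'$ costs the factor $n+1$ and yields $\mathbb P\big[|B|>\tfrac{2}{K}|\mathcal V_1|\big]\leqslant(n+1)e^{-2\sqrt n}\leqslant e^{-\sqrt n}$ for $n$ large, which is the desired bound. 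Throughout, the standing lower bound $n\geqslant(d-1)^{K^2}$ from \eqref{eq:001prop5.2} absorbs all the polynomial-in-$n$ losses (such as $n^{2/K}$ and $n+1$) once $K_0$ is chosen large enough in terms of $d$.
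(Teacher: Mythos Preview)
Your proposal is correct and follows essentially the same strategy as the paper: condition on $(\boldsymbol U,\boldsymbol U_{-\ell_0})$, pass from the uniform $k_0$-subset $\boldsymbol R$ to the $p$-biased product measure, obtain a per-vertex failure bound via Chernoff, use the Decomposition Lemma~\ref{prop5.6} to make the indicators independent within each class, and then apply Chernoff again class by class. The one structural difference is that you partition $W=[n]$ and then handle possibly-small pieces $\mathcal V_1\cap W_i$ by a separate large/small case split, whereas the paper partitions $W=\mathcal V_1$ directly and invokes the \emph{equitability} part (ii) of Lemma~\ref{prop5.6} to guarantee every $|W_i|\gtrsim n^{0.9}$ outright---this eliminates the case split and also lets the paper get away with the slightly tighter Stirling-based factor $3\sqrt n$ in place of your $n+1$ when comparing measures.
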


\begin{proof}[Proof of Claim \ref{cl:02prop5.2}]
Almost everywhere on the event $[\boldsymbol{U}=U]\cap\mathcal{A}_1$, by Lemma \ref{prop5.6} applied for ``$W = \mathcal{V}_1$'', where $\mathcal{V}_1$ is as in \eqref{eq:025prop5.2}, there is a random partition of $\mathcal{V}_1$ into $W_1,\dots,W_{M+1}$~such~that
\begin{enumerate}
\item [(i)] for every $i\in [M+1]$ and every distinct $v,u\in W_i$, we have $\mathrm{dist}_{\boldsymbol{U}}(v,u)\geqslant2m+1$, and
\item [(ii)] for every $i\in [M+1]$, we have $|W_i|\geqslant \big\lfloor\frac{|\mathcal{V}_1|}{M+1}\big\rfloor$,
\end{enumerate}
where $M$ is as in \eqref{eq:001prop5.2**}. Note that the random sets $W_1,\dots,W_{M+1}$ are $(\boldsymbol{U},\boldsymbol{U}_{-\ell_0})$-measurable. Moreover, by the choice of $M$ in \eqref{eq:001prop5.2**}, we have  $M+1\leqslant(d-1)^{2m+1}$; thus, selecting $K_0$ sufficiently large and invoking \eqref{eq:001prop5.2}, the choice of $\mathcal{A}_1$ in \eqref{eq:026prop5.2**} and the fact that $\ell_0=\big\lfloor \frac{dn}{Km}\big\rfloor$, almost everywhere on the event $[\boldsymbol{U}=U]\cap\mathcal{A}_1$, we have, for every $i\in [M+1]$,
\begin{equation} \label{eq:037prop5.2**}
|W_i|\geqslant \frac{dn\Big(1-\frac{1}{\sqrt[4]{K}}\Big)}{n^{1/K}\log_{d-1}n}
\geqslant n^{0.9}\frac{2K^2}{1-\frac{1}{K}}.
\end{equation}
Set $p:=\frac{k_0}{n}$, and notice that, by the choice of $k_0$,
\begin{equation}\label{eq:037prop5.2*}
\frac{K}{(d-1)^m}-\frac{1}{n}\leqslant p\leqslant\frac{K}{(d-1)^m}.
\end{equation}
Let $\hat{\boldsymbol{R}}$ be a random subset of $[n]$ drawn from the $p$-biased distribution that is independent of $(\boldsymbol{U},\boldsymbol{U}_{-\ell_0})$. By a non-asymptotic version of Stirling's approximation---see, e.g., \cite{Ro55}---and elementary computations, we see that
\begin{equation} \label{eq-prop-5.2-new.e3}
\mathbb{P}\big[ |\hat{\boldsymbol{R}}|=k_0\big] \geqslant \frac{24}{25\sqrt{2\pi}} \, \sqrt{\frac{n}{k_0(n-k_0)}} \geqslant \frac{1}{3\sqrt{n}}.
\end{equation}
Moreover, by the choice of $\mathcal{A}_1$ in \eqref{eq:026prop5.2**} and the choice of $\mathcal{A}_2$ in \eqref{eq:034prop5.2},
\begin{align}\label{eq:036prop5.2}
\mathbb{P}\big[ \mathbb{\mathcal{A}}_2^\complement \,\big|\, [\boldsymbol{U}=U] & \cap\mathcal{A}_1\big]
\leqslant \mathbb{P}\Big[ |\mathcal{V}_2(\boldsymbol{R})| \leqslant\Big(1-\frac{2}{K}\Big)|\mathcal{V}_1| \,\Big|\,
[\boldsymbol{U}=U]\cap\mathcal{A}_1\Big] \\
& \stackrel{\eqref{eq-prop-5.2-new.e3}}{\leqslant} 3\sqrt{n}\cdot
\mathbb{P}\Big[ |\mathcal{V}_2(\hat{\boldsymbol{R}})| \leqslant\Big(1-\frac{2}{K}\Big)|\mathcal{V}_1| \,\Big|\, [\boldsymbol{U}=U]\cap\mathcal{A}_1\Big] \nonumber \\
& \ \, \leqslant 3\sqrt{n}\sum_{i=1}^{M+1} \mathbb{P}\Big[ |\mathcal{V}_2(\hat{\boldsymbol{R}})\cap W_i| \leqslant\Big(1-\frac{2}{K}\Big)|\mathcal{V}_1\cap W_i| \,\Big|\, [\boldsymbol{U}=U]\cap\mathcal{A}_1\Big]. \nonumber
\end{align}

For every $v\in [n]$, set
\begin{equation} \label{eq-ev}
\mathcal{E}_v:=[\boldsymbol{U}=U]\cap\mathcal{A}_1 \cap [v\in \mathcal{V}_1].
\end{equation}
Selecting $K_0$ sufficiently large, we may assume that $\sqrt[4]{5/K} + \sqrt{2/K}+(1/K)\leqslant2/\sqrt[4]{K}$. Then,  by the choice of the set $\mathcal{V}_1$ in \eqref{eq:025prop5.2}, for every $v\in [n]$, almost everywhere on the event $\mathcal{E}_v$, we have $|\partial B_{\boldsymbol{U}_{-\ell_0}}(v,m)\setminus(\mathcal{J}_v\cup\mathcal{R}_v)|\geqslant(d-1)^m(1-2/\sqrt[4]{K})$. Hence, by \eqref{eq:001prop5.2}, \eqref{eq:037prop5.2*} and selecting $K_0$ sufficiently large, for every $v\in [n]$, almost everywhere on the event $\mathcal{E}_v$,
\[  p\cdot|\partial B_{\boldsymbol{U}_{-\ell_0}}(v,m)\setminus(\mathcal{J}_v\cup\mathcal{R}_v)|
\geqslant \Big(1-\frac{2}{\sqrt[4]{K}}\Big)K-\Big(1-\frac{2}{\sqrt[4]{K}}\Big)\frac{(d-1)^m}{n}
\geqslant \Big(1-\frac{3}{\sqrt[4]{K}}\Big)K. \]
Recall that $\mathcal{A}_1$ is $(\boldsymbol{U}, \boldsymbol{U}_{-\ell_0})$-measurable and that $\hat{\boldsymbol{R}}$ is independent of $(\boldsymbol{U}, \boldsymbol{U}_{-\ell_0})$. Therefore, by part \eqref{chernoff-e2} of Lemma \ref{chernoff} and taking $K_0$ sufficiently large, for every $v\in [n]$, conditioning on any realization of $(\boldsymbol{U}, \boldsymbol{U}_{-\ell_0})$ on the event $\mathcal{E}_v$,
\begin{equation} \label{eq:038prop5.2}
\mathbb{P}\bigg[ \big|\hat{\boldsymbol{R}}\cap \big(\partial B_{\boldsymbol{U}_{-\ell_0}}(v,m)\setminus
\big(\mathcal{J}_v\,\cup\, \mathcal{R}_v)\big)\big|\leqslant K\Big(1 -\frac{1}{\sqrt[4]{K}}\Big)\Big(1-3\frac{1}{\sqrt[4]{K}}\Big)\bigg]
\leqslant \exp\big(-\sqrt[4]{K}\big).
\end{equation}

Now observe that, for every $v\in [n]$, almost everywhere on the event $\mathcal{E}_v$,
\[(d-1)^{m-1}\leqslant
|\partial B_{\boldsymbol{U}}(v,m)\setminus \partial B_{\boldsymbol{U}_{-\ell_0}}(v,m)|
\leqslant (d-1)^{m-1}\Big(1+\frac{1}{\sqrt[5]{K}}\Big)\]
and so, by \eqref{eq:037prop5.2*},
\[ \frac{1}{2}\cdot\frac{K}{d-1}\leqslant p \cdot|\partial B_{\boldsymbol{U}(v,m)}\setminus \partial B_{\boldsymbol{U}_{-\ell_0}(v,m)}|\leqslant\frac{K}{d-1}
\Big(1+\frac{1}{\sqrt[5]{K}}\Big). \]
Again recall that $\mathcal{A}_1$ is $(\boldsymbol{U}, \boldsymbol{U}_{-\ell_0})$-measurable and that $\hat{\boldsymbol{R}}$ is independent of $(\boldsymbol{U}, \boldsymbol{U}_{-\ell_0})$. Applying part \eqref{chernoff-e1} of Lemma \ref{chernoff} this time and taking again $K_0$ sufficiently large, for every $v\in [n]$, conditioning on any realization of $(\boldsymbol{U}, \boldsymbol{U}_{-\ell_0})$ on the event $\mathcal{E}_v$,
\begin{equation} \label{eq:040prop5.2}
\mathbb{P}\bigg[ \big| \hat{\boldsymbol{R}}\cap \big(\partial B_{\boldsymbol{U}}(v,m) \setminus \partial B_{\boldsymbol{U}_{-\ell_0}}(v,m)\big) \big| \geqslant \Big(1+\frac{1}{\sqrt[6]{K}}\Big)\frac{K}{d-1}\bigg]
\leqslant\exp\big(-\sqrt[4]{K}\big).
\end{equation}

Next, select $K_0$ sufficiently large so that, for every $v\in [n]$, almost everywhere on the event $\mathcal{E}_v$, we have $|\mathcal{J}_v\cup\mathcal{R}_v|\leqslant (2/\sqrt{K})(d-1)^m$. Then, by \eqref{eq:037prop5.2*}, for every $v\in [n]$, almost everywhere on the event~$\mathcal{E}_v$, we have $p\cdot|\mathcal{J}_v\cup\mathcal{R}_v|\leqslant 2\sqrt{K}$. Invoking again the fact that $\mathcal{A}_1$ is $(\boldsymbol{U}, \boldsymbol{U}_{-\ell_0})$-measurable and that $\hat{\boldsymbol{R}}$ is independent of $(\boldsymbol{U}, \boldsymbol{U}_{-\ell_0})$, by part \eqref{chernoff-e1} of Lemma \ref{chernoff} and taking $K_0$ large enough, for every $v\in [n]$, conditioning on any realization of $(\boldsymbol{U}, \boldsymbol{U}_{-\ell_0})$ on the event $\mathcal{E}_v \cap \big[|\mathcal{J}_v\cup\mathcal{R}_v|\geqslant4\sqrt{K}\big]$,
\begin{equation}\label{eq:041prop5.2}
\mathbb{P} \Big[ \big|\hat{\boldsymbol{R}}\, \cap \, (\mathcal{J}_v\cup\mathcal{R}_v)\big|\geqslant  4\sqrt{K}\Big]
\leqslant \exp\Big(-\frac{4K}{6\sqrt{K}}\Big) \leqslant\exp\big(-\sqrt[4]{K}\big);
\end{equation}
on the other hand, $\mathbb{P}\big[ |\hat{\boldsymbol{R}}\cap (\mathcal{J}_v\cup\mathcal{R}_v)|\geqslant 4\sqrt{K}\,\big|\, \mathcal{E}_v \cap  [|\mathcal{J}_v\cup\mathcal{R}_v|< 4\sqrt{K}] \big]=0$. Thus, for every $v\in [n]$, conditioning on any realization of $(\boldsymbol{U}, \boldsymbol{U}_{-\ell_0})$ on the event $\mathcal{E}_v$,
\begin{equation} \label{eq:042prop5.2}
\mathbb{P}\Big[ \big|\hat{\boldsymbol{R}}\cap (\mathcal{J}_v\cup\mathcal{R}_v)\big |\geqslant 4\sqrt{K} \Big] \leqslant\exp\big(-\sqrt[4]{K}\big).
\end{equation}

By \eqref{eq:038prop5.2}, \eqref{eq:040prop5.2}, \eqref{eq:042prop5.2} and a union bound, we conclude that, for every $v\in [n]$, conditioning on any realization of $(\boldsymbol{U}, \boldsymbol{U}_{-\ell_0})$ on the event $\mathcal{E}_v$,
\begin{equation} \label{eq:043prop5.2}
\mathbb{P}\big[ v\in\mathcal{V}_2(\hat{\boldsymbol{R}}) \big] \geqslant 1-3\exp\big(-\sqrt[4]{K}\big).
\end{equation}

For every $i\in[M+1]$, set $\mu_i:=\mathbb{E}\big[ |\mathcal{V}_2(\hat{\boldsymbol{R}})\cap W_i| \,\big|\, \boldsymbol{U}, \boldsymbol{U}_{-\ell_0}\big]$; notice that, almost everywhere on the event $[\boldsymbol{U}=U]\cap\mathcal{A}_1$,
\begin{equation} \label{eq:044prop5.2}
\mu_i\geqslant \Big(1-3\exp\big(-\sqrt[4]{K}\big)\Big)\cdot |W_i| \geqslant (1-K^{-1})|W_i|,
\end{equation}
where the last inequality holds by selecting $K_0$ sufficiently large. Moreover, by part (i) of the choice of $(W_i)_{i=1}^{M+1}$, for every distinct $v,u\in W_i$, we have $\partial B_{\boldsymbol{U}}(v,m) \cap \partial B_{\boldsymbol{U}}(u,m) = \emptyset$; by deleting edges we can only increase the path-distance\footnote{In fact, since $W_i\subseteq \mathcal{V}_1\subseteq \mathcal{T}(\boldsymbol{U},m)$, we have $\partial B_{\boldsymbol{U}_{-\ell_0}}(v,m)\subseteq \partial B_{\boldsymbol{U}}(v,m)$ for all $v\in W_i$.} and, consequently, $\partial B_{\boldsymbol{U}_{-\ell_0}}(v,m) \cap \partial B_{\boldsymbol{U}_{-\ell_0}}(u,m) = \emptyset$. Therefore, conditioning on any realization of $(\boldsymbol{U}, \boldsymbol{U}_{-\ell_0})$ on the event $[\boldsymbol{U}=U]\cap\mathcal{A}_1$, the random variables $\big\{ \mathbbm{1}_{\mathcal{V}_2(\hat{\boldsymbol{R}})}(v)\colon v\in W_i\big\}$ are independent. By part \eqref{chernoff-e2} of Lemma \ref{chernoff}, \eqref{eq:037prop5.2**} and \eqref{eq:044prop5.2}, we have, for every $i\in [M+1]$,
\begin{equation} \label{eq:045prop5.2}
\mathbb{P}\Big[ |\mathcal{V}_2(\hat{\boldsymbol{R}}) \cap W_i| \leqslant\Big(1-\frac{2}{K}\Big)|\mathcal{V}_1\cap W_i| \,\Big|\, [\boldsymbol{U}=U]\cap\mathcal{A}_1\Big] \leqslant\exp\big(-n^{0.9}\big).
\end{equation}
Finally notice that $M\leqslant n^{3/K}$, by \eqref{eq:001prop5.2**} and \eqref{eq:001prop5.2}. Therefore, the claim follows by \eqref{eq:036prop5.2}, \eqref{eq:045prop5.2} and taking $K_0$ sufficiently large.
\end{proof}

By \eqref{eq:035prop5.2} and \eqref{eq:032prop5.2}, we conclude that
\begin{equation} \label{eq:047prop5.2}
\mathbb{P}\big[ \mathcal{A}_1 \cap \mathcal{A}_2\, \big|\, \boldsymbol{U} = U\big]
\geqslant \Big(1 - \frac{2}{\sqrt[4]{K}}\Big)\big(1-e^{-\sqrt{n}}\big).
\end{equation}
This completes the proof of Step \hyperref[step2]{2}.

\subsection*{Step 3: definition of auxiliary events $\mathcal{A}_3$ and $\mathcal{A}_4$} \label{step3}

Define the $(\boldsymbol{U},\boldsymbol{U}_{-\ell_0},\boldsymbol{R},\boldsymbol{\alpha})$-measurable random sets of vertices
\begin{gather}
\label{eq:049prop5.2} \mathcal{V}_3 := \Big\{ v\in\mathcal{V}_2(\boldsymbol{R}) \colon
\min_{\boldsymbol{\alpha}(\boldsymbol{R})}\big(\boldsymbol{R}\cap\partial B_{\boldsymbol{U}}(v,m)\big)
\in \partial B_{\boldsymbol{U}_{-\ell_0}}(v,m)\setminus (\mathcal{J}_v\cup\mathcal{R}_v)\Big\},  \\
\label{eq:050prop5.2}
\mathcal{V}_4 := \Big\{ v\in\mathcal{V}_2(\boldsymbol{R}) \colon
\min_{\boldsymbol{\alpha}(\boldsymbol{R})}\big(\boldsymbol{R}\cap\partial B_{\boldsymbol{U}}(v,m)\big) \not\in \mathcal{R}_v\Big\},
\end{gather}
and set
\begin{equation} \label{eq:051prop5.2}
\mathcal{A}_3 := \bigg[ |\mathcal{V}_3| \geqslant
\frac{d-1}{d}\Big(1-\frac{2}{\sqrt[7]{K}}\Big)|\mathcal{V}_2(\boldsymbol{R})|\bigg] \ \ \ \text{ and } \ \ \
\mathcal{A}_4 := \bigg[ |\mathcal{V}_4| \geqslant \Big(1-\frac{2}{\sqrt[3]{K}}\Big)|\mathcal{V}_2(\boldsymbol{R})|\bigg].
\end{equation}

\begin{claim} \label{cl:03prop5.2}
We have
\begin{gather}
\label{eq:053prop5.2} \mathbb{P}\big[ \mathcal{A}_3 \,\big|\,
[\boldsymbol{U}=U]\cap \mathcal{A}_1\cap \mathcal{A}_2\big] \geqslant1-\exp(-\sqrt{n}), \\
\label{eq:054prop5.2} \mathbb{P}\big[ \mathcal{A}_4 \,\big|\,
[\boldsymbol{U}=U]\cap \mathcal{A}_1\cap \mathcal{A}_2\big] \geqslant1-\exp(-\sqrt{n}).
\end{gather}
\end{claim}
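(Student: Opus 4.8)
The plan is to prove both \eqref{eq:053prop5.2} and \eqref{eq:054prop5.2} by a single argument that runs parallel to the proof of Claim~\ref{cl:02prop5.2}, but is in fact simpler: once we condition on $(\boldsymbol{U},\boldsymbol{U}_{-\ell_0},\boldsymbol{R})$, the only remaining randomness is the order $\boldsymbol{\alpha}$, and observation~(O2) supplies the needed independence directly, so no Poissonization of $\boldsymbol{R}$ is required. Concretely, I would fix a realization $(U,U_{-\ell_0},R)$ of $(\boldsymbol{U},\boldsymbol{U}_{-\ell_0},\boldsymbol{R})$ lying in $[\boldsymbol{U}=U]\cap\mathcal{A}_1\cap\mathcal{A}_2$; then $\mathcal{V}_2(R)$ and all the sets $\partial B_U(v,m)$, $\partial B_{U_{-\ell_0}}(v,m)$, $\mathcal{J}_v$, $\mathcal{R}_v$ are determined, $|\mathcal{V}_2(R)|\geqslant 2\ell_0(1-\tfrac2K)(1-\tfrac3{\sqrt[4]{K}})$ by $\mathcal{A}_2$, and $\boldsymbol{\alpha}$ (uniform on $\mathrm{LO}([k_0])$) is independent of this realization.

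\emph{Step~1: per-vertex probabilities.} Fix $v\in\mathcal{V}_2(R)$. By observation~(O1) the seed $\min_{\boldsymbol{\alpha}(R)}\big(R\cap\partial B_U(v,m)\big)$ is uniform on $R\cap\partial B_U(v,m)$, which is nonempty since the first condition defining $\mathcal{V}_2(R)$ forces $|R\cap(\partial B_{U_{-\ell_0}}(v,m)\setminus(\mathcal{J}_v\cup\mathcal{R}_v))|\geqslant K(1-K^{-1/4})(1-3K^{-1/4})>0$ for $K_0$ large. Since $v\in\mathcal{T}(\boldsymbol{U},m)$ we have $\partial B_{U_{-\ell_0}}(v,m)\subseteq\partial B_U(v,m)$, and $\mathcal{J}_v\cup\mathcal{R}_v\subseteq\partial B_U(v,m)$, so $R\cap\partial B_U(v,m)$ is the disjoint union of the three blocks $R\cap(\partial B_{U_{-\ell_0}}(v,m)\setminus(\mathcal{J}_v\cup\mathcal{R}_v))$, $R\cap\partial B_{U_{-\ell_0}}(v,m)\cap(\mathcal{J}_v\cup\mathcal{R}_v)$, $R\cap(\partial B_U(v,m)\setminus\partial B_{U_{-\ell_0}}(v,m))$, whose sizes are respectively $\geqslant K(1-K^{-1/4})(1-3K^{-1/4})$, $\leqslant 4\sqrt K$, and $\leqslant(1+K^{-1/6})\tfrac{K}{d-1}$ by the three inequalities in \eqref{eq:033prop5.2}. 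Since $x\mapsto x/(x+c)$ is increasing,
\[
\mathbb{P}\big[v\in\mathcal{V}_3\ \big|\ \boldsymbol{U}=U,\ \boldsymbol{U}_{-\ell_0}=U_{-\ell_0},\ \boldsymbol{R}=R\big]\ \geqslant\ \frac{(1-K^{-1/4})(1-3K^{-1/4})}{(1-K^{-1/4})(1-3K^{-1/4})+(1+K^{-1/6})\tfrac1{d-1}+4K^{-1/2}},
\]
and the right-hand side tends to $\tfrac{d-1}{d}$ as $K\to\infty$; hence it is $\geqslant\tfrac{d-1}{d}(1-K^{-1/7})$ once $K_0=K_0(d)$ is large. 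For $\mathcal{V}_4$ one likewise gets $\mathbb{P}[v\notin\mathcal{V}_4\mid\cdots]=|R\cap\mathcal{R}_v|/|R\cap\partial B_U(v,m)|\leqslant 4\sqrt K/(K/2)=8K^{-1/2}$, so $\mathbb{P}[v\in\mathcal{V}_4\mid\cdots]\geqslant 1-K^{-1/3}$ for $K_0$ large.

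\emph{Step~2: partition, independence, Chernoff.} Apply Lemma~\ref{prop5.6} to the graph $\boldsymbol{U}$ and the set $\mathcal{V}_2(R)$ (legitimate since $d\leqslant m\leqslant\log_{d-1}n$ by \eqref{eq:001prop5.2} once $K_0\geqslant d$), obtaining a partition $\mathcal{V}_2(R)=W_1\sqcup\cdots\sqcup W_{M+1}$ with $\mathrm{dist}_{\boldsymbol{U}}(v,u)\geqslant 2m+1$ for distinct $v,u$ in one class and $|W_i|\geqslant\lfloor|\mathcal{V}_2(R)|/(M+1)\rfloor$; as in Claim~\ref{cl:02prop5.2}, $M+1\leqslant(d-1)^{2m+1}\leqslant n^{3/K}$, and using $\mathcal{A}_1$ together with $\ell_0=\lfloor dn/(Km)\rfloor$ one gets $|W_i|\geqslant n^{0.9}$ for $n$ large. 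For distinct $v,u\in W_i$ the sets $R\cap\partial B_{\boldsymbol{U}}(v,m)$ and $R\cap\partial B_{\boldsymbol{U}}(u,m)$ are disjoint, so observation~(O2) makes the seeds $\{\min_{\boldsymbol{\alpha}(R)}(R\cap\partial B_{\boldsymbol{U}}(v,m)):v\in W_i\}$ independent, hence $\{\mathbbm{1}[v\in\mathcal{V}_3]:v\in W_i\}$ are independent Bernoulli variables of mean $\geqslant\tfrac{d-1}{d}(1-K^{-1/7})$. The Chernoff bound \eqref{chernoff-e2} with $\delta=K^{-1/7}$ shows, conditionally on the realization, that the event $|\mathcal{V}_3\cap W_i|\geqslant\tfrac{d-1}{d}(1-K^{-1/7})^2|W_i|$ fails with probability $\leqslant\exp(-n^{0.8})$; since $(1-K^{-1/7})^2\geqslant 1-2K^{-1/7}$ and $\sum_i|W_i|=|\mathcal{V}_2(R)|$, a union bound over the $\leqslant n^{3/K}$ classes yields $\mathbb{P}[\mathcal{A}_3^{\complement}\mid[\boldsymbol{U}=U]\cap\mathcal{A}_1\cap\mathcal{A}_2]\leqslant n^{3/K}\exp(-n^{0.8})\leqslant\exp(-\sqrt n)$ for $n$ large, which is \eqref{eq:053prop5.2}. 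Running the same argument with $K^{-1/7}$ replaced by $K^{-1/3}$ gives \eqref{eq:054prop5.2}.

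The only part demanding real care — rather than a genuine obstacle — is Step~1: one must check that the three size bounds packaged into the definition of $\mathcal{V}_2(R)$ degrade the idealized per-vertex probabilities $\tfrac{d-1}{d}$ and $1$ by no more than the margins $K^{-1/7}$ and $K^{-1/3}$ that the events $\mathcal{A}_3,\mathcal{A}_4$ require, which is exactly what forces the final enlargement of $K_0=K_0(d)$; the partition-and-Chernoff step is then a direct transcription of the corresponding step in the proof of Claim~\ref{cl:02prop5.2}.
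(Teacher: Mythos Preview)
Your proposal is correct and follows essentially the same route as the paper's own proof: conditioning on $(\boldsymbol{U},\boldsymbol{U}_{-\ell_0},\boldsymbol{R})$, computing the per-vertex inclusion probabilities via observation~(O1) and the three size bounds packaged in the definition of $\mathcal{V}_2(R)$, partitioning $\mathcal{V}_2(R)$ via Lemma~\ref{prop5.6}, invoking observation~(O2) for independence within each block, and finishing with Chernoff plus a union bound over the $M+1\leqslant n^{3/K}$ blocks. The paper's arithmetic for the per-vertex bounds is organized slightly differently (it sets $a,b,c$ equal to the three relevant intersection sizes and bounds the ratios $b/(a+b+c)$ and $(a+b-c)/(a+b+c)$), and it tracks the $K$-dependent factors in the block sizes a bit more carefully to land exactly on $\exp(-n^{0.9})$ rather than your $\exp(-n^{0.8})$, but the method and the final estimate are identical.
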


\begin{proof}[Proof of Claim \ref{cl:03prop5.2}]
Recall that the random linear order $\boldsymbol{\alpha}$ is independent of $(\boldsymbol{U}, \boldsymbol{U}_{-\ell_0}, \boldsymbol{R})$. Thus, by observation (\hyperref[obs1]{O1}) and selecting $K_0$ sufficiently large, we have, for every $v\in [n]$, conditioning on any realization of $(\boldsymbol{U}, \boldsymbol{U}_{-\ell_0}, \boldsymbol{R})$ on the event $[\boldsymbol{U}=U] \cap\mathcal{A}_1\cap\mathcal{A}_2$, we have
\begin{gather}
\label{eq:055prop5.2} \mathbb{P}\big[v\in\mathcal{V}_3 \big]  =
\frac{\big|\boldsymbol{R}\cap \big(\partial B_{\boldsymbol{U}_{-\ell_0}}(v,m)\setminus
(\mathcal{J}_v\cup\mathcal{R}_v)\big)\big|}{|\boldsymbol{R}\cap\partial B_{\boldsymbol{U}}(v,m)|}\, \mathbbm{1}_{\mathcal{V}_2(\boldsymbol{R})}(v)
\geqslant \frac{d-1}{d}\Big(1-\frac{1}{\sqrt[7]{K}}\Big)\, \mathbbm{1}_{\mathcal{V}_2(\boldsymbol{R})}(v), \\
\label{eq:056prop5.2} \mathbb{P}\big[v\in\mathcal{V}_4 \big] =
\frac{\big|\big(\boldsymbol{R}\cap\partial B_{\boldsymbol{U}}(v,m)\big)\setminus\mathcal{R}_v\big|}{|\boldsymbol{R}\cap\partial B_{\boldsymbol{U}}(v,m)|}\, \mathbbm{1}_{\mathcal{V}_2(\boldsymbol{R})}(v)
\geqslant \Big(1-\frac{1}{\sqrt[3]{K}}\Big)\, \mathbbm{1}_{\mathcal{V}_2(\boldsymbol{R})}(v).
\end{gather}
Indeed, set
\begin{align*}
& a:= \big|\boldsymbol{R}\cap \big(\partial B_{\boldsymbol{U}}(v,m)\setminus
\partial B_{\boldsymbol{U}_{-\ell_0}}(v,m)\big)\big|,
&& a_0:=\Big(1+\frac{1}{\sqrt[6]{K}}\Big)\frac{K}{d-1}, \\
& b:=\big|\boldsymbol{R}\cap \big( \partial B_{\boldsymbol{U}_{-\ell_0}}(v,m)\setminus(\mathcal{J}_v\cup\mathcal{R}_v) \big)\big|,
&& b_0:=  K\Big(1-\frac{1}{\sqrt[4]{K}}\Big)\Big(1-\frac{3}{\sqrt[4]{K}}\Big), \\
& c:= |\boldsymbol{R}\cap (\mathcal{J}_v\cup\mathcal{R}_v)|,
&& c_0:=4\sqrt{K}.
\end{align*}
Then, by the choice of $\mathcal{V}_2(R)$ in \eqref{eq:033prop5.2}, almost everywhere on the event $\big[v\in \mathcal{V}_2(\boldsymbol{R})\big]$, we have $a\leqslant a_0$, $b\geqslant b_0$ and $c\leqslant c_0$. Therefore, if $K$ is sufficiently large,
\begin{align*}
\frac{\big|\boldsymbol{R}\cap \big(\partial B_{\boldsymbol{U}_{-\ell_0}}(v,m)\setminus
(\mathcal{J}_v\cup\mathcal{R}_v)\big)\big|}{|\boldsymbol{R}\cap\partial B_{\boldsymbol{U}}(v,m)|} & \, \mathbbm{1}_{\mathcal{V}_2(\boldsymbol{R})}(v)
\geqslant \frac{b}{a+b+c} \, \mathbbm{1}_{\mathcal{V}_2(\boldsymbol{R})}(v)
\geqslant \frac{b_0}{a+b_0+c} \, \mathbbm{1}_{\mathcal{V}_2(\boldsymbol{R})}(v) \\
& \geqslant \frac{b_0}{a_0+b_0+c_0} \, \mathbbm{1}_{\mathcal{V}_2(\boldsymbol{R})}(v)
\geqslant \frac{d-1}{d}\Big(1-\frac{1}{\sqrt[7]{K}}\Big)\, \mathbbm{1}_{\mathcal{V}_2(\boldsymbol{R})}(v)
\end{align*}
and, similarly,
\begin{align*}
\frac{\big|\big(\boldsymbol{R}\cap\partial B_{\boldsymbol{U}}(v,m)\big)\setminus\mathcal{R}_v\big|}{|\boldsymbol{R}\cap\partial B_{\boldsymbol{U}}(v,m)|} \, \mathbbm{1}_{\mathcal{V}_2(\boldsymbol{R})}(v)
& \geqslant\frac{a+b-c}{a+b+c} \, \mathbbm{1}_{\mathcal{V}_2(\boldsymbol{R})}(v)
= \Big(1-\frac{2c}{a+b+c}\Big) \, \mathbbm{1}_{\mathcal{V}_2(\boldsymbol{R})}(v) \\
& \geqslant \Big(1-\frac{2c_0}{b_0}\Big) \, \mathbbm{1}_{\mathcal{V}_2(\boldsymbol{R})}(v)
\geqslant \Big(1-\frac{1}{\sqrt[3]{K}}\Big)\, \mathbbm{1}_{\mathcal{V}_2(\boldsymbol{R})}(v).
\end{align*}

Almost everywhere on the event $[\boldsymbol{U}=U] \cap\mathcal{A}_1\cap\mathcal{A}_2$, by Lemma \ref{prop5.6} applied for ``$W = \mathcal{V}_2(\boldsymbol{R})$'', we obtain a random partition of $\mathcal{V}_2(\boldsymbol{R})$ into $W_1,\dots,W_{M+1}$ such that
\begin{enumerate}
\item [(i)] for every $i\in [M+1]$ and every distinct $v,u\in W_i$, we have $\mathrm{dist}_{\boldsymbol{U}}(v,u)\geqslant 2m+1$,
\item [(ii)] for every $i\in [M+1]$, we have $|W_i|\geqslant \big\lfloor\frac{|\mathcal{V}_2(\boldsymbol{R})|}{M+1}\big\rfloor$,
\end{enumerate}
where $M$ is as in \eqref{eq:001prop5.2**}. Observe that the random sets $W_1,\dots,W_{M+1}$ are $(\boldsymbol{U},\boldsymbol{U}_{-\ell_0},\boldsymbol{R})$-measurable. By the choice of $M$ in \eqref{eq:001prop5.2**} and \eqref{eq:001prop5.2}, we see that $M+1\leqslant(d-1)^{2m+1}\leqslant n^{3/K}$; thus, selecting $K_0$ sufficiently large, almost everywhere on the event $[\boldsymbol{U}=U]\cap\mathcal{A}_1\cap\mathcal{A}_2$, for every $i\in [M+1]$,
\begin{equation}\label{eq:059prop5.2}
|W_i|\geqslant \frac{dn}{Km (d-1)^{2m+1}} \geqslant
n^{0.9}\frac{2\sqrt[3]{K^2}}{1-\frac{1}{\sqrt[3]{K}}} \geqslant
n^{0.9}\frac{2d\sqrt[7]{K^2}}{(d-1)\Big(1-\frac{1}{\sqrt[7]{K}}\Big)}.
\end{equation}

Now notice that
\begin{align} \label{eq:060prop5.2}
\mathbb{P}\big[ \mathcal{A}_3^\complement \,\big|\, & [\boldsymbol{U}=U] \cap\mathcal{A}_1\cap\mathcal{A}_2\big]
\leqslant \mathbb{P}\Big[ |\mathcal{V}_3| \leqslant \frac{d-1}{d}\Big(1-\frac{2}{\sqrt[7]{K}}\Big)| \mathcal{V}_2(\boldsymbol{R})| \,\Big|\, [\boldsymbol{U}=U] \cap\mathcal{A}_1\cap\mathcal{A}_2\Big] \\
& \leqslant \sum_{i=1}^{M+1}
\mathbb{P}\Big[ |\mathcal{V}_3\cap W_i|
\leqslant \frac{d-1}{d}\Big(1-\frac{2}{\sqrt[7]{K}}\Big)
|W_i| \,\Big|\, [\boldsymbol{U}=U] \cap\mathcal{A}_1\cap\mathcal{A}_2\Big]. \nonumber
\end{align}
For every $i\in[M+1]$, set $\mu_{3,i}:=\mathbb{E}\big[|\mathcal{V}_3\cap W_i|\,\big|\, \boldsymbol{U},\boldsymbol{U}_{-\ell_0}, \boldsymbol{R}\big]$. By \eqref{eq:055prop5.2}, almost everywhere on the event $[\boldsymbol{U}=U] \cap\mathcal{A}_1\cap\mathcal{A}_2$, we have
\begin{equation}\label{eq:061prop5.2}
\mu_{3,i}\geqslant\frac{d-1}{d}\Big(1-\frac{1}{\sqrt[7]{K}}\Big)|W_i|.
\end{equation}
By part (i) of the choice of $(W_i)_{i=1}^{M+1}$, for every $i\in [M+1]$ and every distinct $v,u\in W_i$, we have $\partial B_{\boldsymbol{U}}(v,m) \cap \partial B_{\boldsymbol{U}}(u,m) = \emptyset$. Therefore, by observation (\hyperref[obs2]{O2}),
conditioning on any realization of $(\boldsymbol{U}, \boldsymbol{U}_{-\ell_0}, \boldsymbol{R})$ on the event $[\boldsymbol{U}=U] \cap\mathcal{A}_1\cap\mathcal{A}_2$, the random variables $\big\{ \mathbbm{1}_{\mathcal{V}_3}(v)\colon v\in W_i\big\}$ are independent, for every $i\in [M+1]$. By part \eqref{chernoff-e2} of Lemma \ref{chernoff} and \eqref{eq:059prop5.2}--\eqref{eq:061prop5.2}, we have
\begin{equation} \label{eq:062prop5.2}
\mathbb{P}\big[ \mathcal{A}_3^\complement \,\big|\, [\boldsymbol{U}=U] \cap\mathcal{A}_1\cap\mathcal{A}_2\big]
\leqslant(M+1)\exp\big(-n^{0.9}\big)\leqslant \exp\big(-\sqrt{n}\big),
\end{equation}
where we have used the fact that $M+1\leqslant n^{3/K}$---that follows from \eqref{eq:001prop5.2**} and \eqref{eq:001prop5.2}---and taking $K_0$ sufficiently large.

Similarly, observe that
\begin{align} \label{eq:063prop5.2}
\mathbb{P}\big[\mathcal{A}_4^\complement \,\big|\, [\boldsymbol{U}=U] & \cap\mathcal{A}_1\cap\mathcal{A}_2\big]
\leqslant \mathbb{P}\Big[|\mathcal{V}_4| \leqslant
\Big(1-\frac{2}{\sqrt[3]{K}}\Big)
|\mathcal{V}_2(\boldsymbol{R})| \,\Big|\, [\boldsymbol{U}=U] \cap\mathcal{A}_1\cap\mathcal{A}_2\Big] \\
& \leqslant \sum_{i=1}^{M+1}
\mathbb{P}\Big[ |\mathcal{V}_4\cap W_i|
\leqslant \Big(1-\frac{2}{\sqrt[3]{K}}\Big)
|W_i| \,\Big|\, [\boldsymbol{U}=U] \cap\mathcal{A}_1\cap\mathcal{A}_2\Big]. \nonumber
\end{align}
As above, for every $i\in [M+1]$, set $\mu_{4,i}:=\mathbb{E}\big[|\mathcal{V}_4\cap W_i|\,\big|\, \boldsymbol{U},\boldsymbol{U}_{-\ell_0},
\boldsymbol{R}\big]$ and notice that, by \eqref{eq:056prop5.2}, almost everywhere on the event $[\boldsymbol{U}=U] \cap\mathcal{A}_1\cap\mathcal{A}_2$,
\begin{equation}\label{eq:064prop5.2}
\mu_{4,i}\geqslant\Big(1-\frac{1}{\sqrt[3]{K}}\Big)|W_i|.
\end{equation}
By observation (\hyperref[obs2]{O2}) and the fact that $\partial B_{\boldsymbol{U}}(v,m) \cap \partial B_{\boldsymbol{U}}(u,m) = \emptyset$ for every $i\in[M+1]$ and every distinct $v,u\in W_i$, we see that, conditioning on any realization of $(\boldsymbol{U}, \boldsymbol{U}_{-\ell_0}, \boldsymbol{R})$ on the event $[\boldsymbol{U}=U] \cap\mathcal{A}_1\cap\mathcal{A}_2$, the random variables $\big\{\mathbbm{1}_{\mathcal{V}_4}(v)\colon v\in W_i\big\}$ are independent. Thus, by part \eqref{chernoff-e2} of Lemma \ref{chernoff}, \eqref{eq:063prop5.2}, \eqref{eq:064prop5.2} and \eqref{eq:059prop5.2},
\begin{equation} \label{eq:065prop5.2}
\mathbb{P}\big[ \mathcal{A}_4^\complement \,\big|\, [\boldsymbol{U}=U] \cap\mathcal{A}_1\cap\mathcal{A}_2\big]
\leqslant(M+1)\exp\big(-n^{0.9}\big)\leqslant\exp\big(-\sqrt{n}\big),
\end{equation}
where we have used, again, the fact that $M+1\leqslant n^{3/K}$ and taking $K_0$ sufficiently large.

The proof of Claim \ref{cl:03prop5.2} is completed by \eqref{eq:062prop5.2} and \eqref{eq:065prop5.2}.
\end{proof}

By \eqref{eq:047prop5.2}, \eqref{eq:053prop5.2} and \eqref{eq:054prop5.2}, we conclude that
\begin{equation} \label{eq:066prop5.2}
\mathbb{P}\big[ \mathcal{A}_1 \cap \mathcal{A}_2\cap \mathcal{A}_3\cap \mathcal{A}_4 \,\big|\, \boldsymbol{U} = U\big]
 \geqslant \Big(1 - \frac{2}{\sqrt[4]{K}}\Big)\big(1-3e^{-\sqrt{n}}\big).
\end{equation}
This completes the proof of Step \hyperref[step3]{3}.

\subsection*{Step 4: completion of the proof} \label{step4}

Notice that almost everywhere on the event $[\boldsymbol{U}=U]$,
\[ \mathcal{V}_2(\boldsymbol{R})\subseteq V_{d-1}, \]
\[ \mathcal{V}_3 \subseteq V'_{d-1}, \]
\[ \mathcal{V}_4 \subseteq V''_{d-1}, \]
where $\mathcal{V}_2(\boldsymbol{R})$ is as in \eqref{eq:033prop5.2}, $\mathcal{V}_3$ is as in \eqref{eq:049prop5.2}, and $\mathcal{V}_4$ is as in \eqref{eq:050prop5.2}; consequently,
\[ \mathcal{A}_1 \cap \mathcal{A}_2\cap \mathcal{A}_3\cap \mathcal{A}_4 \cap [\boldsymbol{U}=U] \subseteq
\mathcal{F}_1 \cap \mathcal{F}_2\cap \mathcal{F}_3\cap [\boldsymbol{U}=U], \]
where $\mathcal{A}_1$ is as in \eqref{eq:026prop5.2**}, $\mathcal{A}_2$ is as in \eqref{eq:034prop5.2}, and $\mathcal{A}_3$ and $\mathcal{A}_4$ are as in \eqref{eq:051prop5.2}. Therefore, \eqref{eq:005prop5.2} follows from \eqref{eq:066prop5.2}, and the entire proof of Proposition \ref{prop-seeds-and-conc} is completed.


\part{Applications to metric embeddings} \label{part3}

\section{Proof of Corollary \ref*{intro-cor1}} \label{sec12}

The goal of this section is to obtain optimal estimates on the bi-Lipschitz distortion of a uniformly random regular graph into an arbitrary finite metric space. We start with the proof of Corollary~\ref{intro-cor1}.

\begin{proof}[Proof of Corollary \ref{intro-cor1}]
Let $G\in G(n,d)$, set $D:=c_{\mathcal{M}}(G)$, and fix a map $f\colon G \to M$ that saturates the distortion, that is, it satisfies $s\, \dist_G(v,u) \leqslant \varrho\big(f(v),f(u)\big) \leqslant sD\, \dist_G(v,u)$ for all $v,u\in [n]$, where $s>0$ is a scaling factor. Then, as a deterministic consequence of $d$-regularity,
\[  s\log n \lesssim_d \frac{1}{n^2}\sum_{v,u\in [n]}\varrho\big(f(v),f(u)\big) \ \ \ \text{ and } \ \ \
 \frac{1}{|E_G|}\sum_{\{v,u\}\in E_G} \varrho\big(f(v),f(u)\big) \leqslant sD. \]
However, Theorem \ref{random-poincare} asserts that with probability $1-O_d\big(\frac{1}{n^\tau}\big)$ it holds for any function, and in particular for the selected $f$,
\[  \frac{1}{n^2}\sum_{v,u\in [n]} \varrho\big(f(v),f(u)\big) \leqslant
C_{d} \min\big\{\log n, \log\log N\big\}\, \frac{1}{|E_G|}\sum_{\{v,u\}\in E_G} \varrho\big(f(v),f(u)\big). \]
Combining the above displays yields the result.
\end{proof}

The following proposition complements Corollary \ref{intro-cor1} and shows that the lower bound of the bi-Lipschitz distortion obtained in \eqref{intro-e4} is actually optimal, up to a multiplicative factor, for a wide range of the parameters $n,N$.

\begin{proposition} \label{prop-optimality}
Let $k\geqslant n\geqslant d\geqslant 3$ be integers, and set
\begin{equation} \label{opt-e1}
D:= \frac{3\,\log n}{\min\{ \log n, \log\log k\}}.
\end{equation}
Then there exists a metric space $\mathcal{M}=(M,\varrho)$ with
\begin{equation} \label{opt-e2}
|M| = \exp\Big( \Theta\big( \log^2 n\cdot \log\log n \cdot \min\{ n, \log k\}\big)\Big)
\end{equation}
such that for any connected $G\in G(n,d)$ with $\diam(G)\leqslant 500 \log n$, we have $c_{\mathcal{M}}(G) \leqslant D$. Consequently, since $\min\big\{\log n, \log\log\big(|M|\big)\big\} =\Theta\big( \min\{\log n, \log\log k\}\big)$, by Corollaries~\ref{intro-cor1} and~\ref{Friedman}, there exists $\tau'=\tau'(d)>0$ so that
\begin{equation} \label{opt-e4}
\mathbb{P}_{G(n,d)}\bigg[ G\colon c_{\mathcal{M}}(G) = \Theta_d\Big( \frac{\log n}{\min\{ \log n, \log\log (|M|)\}}\Big) \bigg]
\geqslant 1- O_d\Big(\frac{1}{n^{\tau'}}\Big).
\end{equation}
\end{proposition}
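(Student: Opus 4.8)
The statement is a ``matching upper bound'' for Corollary~\ref{intro-cor1}: we must construct, for any $k \geqslant n$, a metric space $\mathcal{M}$ of the stated cardinality so that every connected $d$-regular graph on $n$ vertices of small diameter embeds into $\mathcal{M}$ with distortion at most $D$. The natural tool is Matou\v{s}ek's universal embedding argument, which is exactly the content of Appendix~\ref{appendix-A} (the proof of \eqref{intro-e5}); so the first step is simply to \emph{invoke} that appendix with a well-chosen distortion parameter. Recall that Matou\v{s}ek's theorem asserts: for $1 \leqslant D' \leqslant \frac{\log m}{\log\log m}$, every $m$-point metric space $D'$-embeds into some fixed metric space $\mathcal{M}$ with $|M| \lesssim \exp(m^{C/D'})$, where $C$ is universal. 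The plan is to apply this with $m$ chosen so that the graphs in question---which have $n$ vertices and, crucially, diameter $\leqslant 500\log n$, hence are metric spaces with ``effective size'' controlled by $n$ and the diameter---fit the hypothesis, and with $D'$ chosen to be $D$ as in \eqref{opt-e1}, up to constants.

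\textbf{Key steps.} First I would set $m := n$ (any connected graph on $n$ vertices, viewed with its shortest-path metric, is an $n$-point metric space), or more precisely track the role of the diameter: since $\diam(G) \leqslant 500\log n$, the graph metric takes values in $\{0,1,\dots,500\log n\}$, and Matou\v{s}ek's construction, applied at this scale, produces a target of size $\exp(\Theta(\log^2 n \cdot \log\log n \cdot \min\{n,\log k\}))$ once one also incorporates the parameter $k$ to pad the net finely enough --- this is where the $\min\{n,\log k\}$ factor enters: a finer net (more points) is needed precisely when $k$ is subexponential in $n$, while for $k \geqslant e^{n}$ the graph itself already has only $n$ points so no padding helps beyond $\min\{n,\log k\} = n$. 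Second, I would verify the distortion bookkeeping: Matou\v{s}ek's argument gives distortion $\lesssim \frac{\log n}{\log|M|^{1/n}}$-type bounds, which after substituting the chosen $|M|$ yields distortion $\Theta\big(\frac{\log n}{\min\{\log n,\log\log k\}}\big)$, matching $D$ in \eqref{opt-e1} up to the constant $3$ (adjust the constant in the exponent of $|M|$ if needed). Third, I would state the cardinality estimate \eqref{opt-e2} as the direct output of this computation. Finally, for the ``Consequently'' clause: one checks $\log\log|M| = \Theta(\log\log k + \log\log n) = \Theta(\min\{\log n,\log\log k\})$ in the relevant regime $k \geqslant n$ (using $\log|M| = \Theta(\log^2 n \log\log n \min\{n,\log k\})$, so $\log\log|M| = \Theta(\log\log n + \log\min\{n,\log k\})$; one must check this is $\Theta(\min\{\log n, \log\log k\})$ --- true when $\log\log k \lesssim \log n$, and when $\log\log k \gtrsim \log n$ we have $\min = \log n$ and $\log\min\{n,\log k\} \asymp \log n$ as well), then combine the lower bound from Corollary~\ref{intro-cor1} (applied with $N = |M|$) with the upper bound just proven, and intersect with the high-probability events from Corollary~\ref{Friedman} that $\boldsymbol{G}$ is connected and has $\diam(\boldsymbol{G}) \leqslant 500\log n$ (the diameter bound for random regular graphs is standard and follows from the locally-tree-like behavior, Proposition~\ref{prop-tree}, or from $\lambda_2(\boldsymbol{G}) \leqslant 2.1\sqrt{d-1}$).

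\textbf{Main obstacle.} The genuinely delicate point is the precise cardinality bookkeeping in \eqref{opt-e2}: making sure that the interplay between the three parameters $n$ (number of vertices), the diameter scale $\log n$, and $k$ (which sets the target granularity) produces exactly $\exp(\Theta(\log^2 n \cdot \log\log n \cdot \min\{n,\log k\}))$ and not some other polynomial-in-$\log n$ power. This requires going into the structure of Matou\v{s}ek's construction in Appendix~\ref{appendix-A} and tracking how the number of ``levels'' (contributing a $\log n$ factor), the number of coordinates per level (another $\log n$ and a $\log\log n$), and the alphabet size (contributing $\min\{n,\log k\}$) multiply. The distortion calibration --- choosing the internal parameters so that the distortion comes out as $\frac{\log n}{\min\{\log n,\log\log k\}}$ rather than, say, $\frac{\log n}{\log\log k}$ (which would be wrong when $k$ is doubly-exponentially large) --- is the other place where care is needed, and it is handled by the truncation-at-level-$k$ mechanism already visible in Definitions~\ref{truncation}--\ref{defn_opt_function} of Section~\ref{sec-optimality}. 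The rest (connectivity, diameter, invoking Corollary~\ref{intro-cor1}) is routine.
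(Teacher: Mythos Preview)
Your high-level approach---invoke Matou\v{s}ek's construction from Appendix~\ref{appendix-A} with the distortion parameter $D$ from \eqref{opt-e1}---is exactly what the paper does. But you are making the argument much harder than it is, and in the process misidentifying where the key factor $\min\{n,\log k\}$ comes from.

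The paper's proof is one sentence: apply Theorem~\ref{thm-Matousek} (the precise form in the appendix, which already tracks the diameter dependence) with $\Delta=\lfloor 500\log n\rfloor$ and $D$ as in \eqref{opt-e1}. That theorem gives a target space of size $\exp\big(\Theta(n^{3/D}\cdot\log\Delta\cdot\log^2 n)\big)$. Now simply compute
\[
n^{3/D}=n^{\min\{\log n,\log\log k\}/\log n}=\exp\big(\min\{\log n,\log\log k\}\big)=\min\{n,\log k\},
\]
and $\log\Delta=\Theta(\log\log n)$, which immediately yields \eqref{opt-e2}. There is no ``padding'', no ``finer net'', and no role for $k$ in the construction of $\mathcal{M}$ beyond its appearance in the definition of $D$; the factor $\min\{n,\log k\}$ is purely the algebraic identity $n^{3/D}=\min\{n,\log k\}$. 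Your attribution of this factor to ``alphabet size'' is backwards: in Matou\v{s}ek's construction the alphabet has size $\Delta+1\asymp\log n$ (contributing the $\log\log n$ via $\log\Delta$), while the number of coordinates is $\Theta(n^{3/D}\log^2 n)$.

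Two further minor corrections. First, you cite the cruder form \eqref{intro-e5} of Matou\v{s}ek's result; to get the exact exponent in \eqref{opt-e2} you need the sharper statement \eqref{mat-e1}, which is why the paper records it separately as Theorem~\ref{thm-Matousek}. Second, your reference to Definitions~\ref{truncation}--\ref{defn_opt_function} is misplaced: those belong to the proof of the \emph{lower} bound on $\gamma(G,\varrho)$ in Proposition~\ref{prop:optofrandom-poincare} and play no role here.
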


The proof of Proposition \ref{prop-optimality} is a based on the following result which is a slight modification of the argument in Matou\v{s}ek's work \cite[Theorem 4]{Ma92}.

\begin{theorem}[Matou\v{s}ek] \label{thm-Matousek}
Let $n, \Delta\geqslant 2$ be integers, and let $D \geqslant 1$ be a distortion parameter. Then there exists a metric space $\mathcal{M}= (M,\varrho)$ with
\begin{equation} \label{mat-e1}
|M| = \exp\Big( \Theta\big( n^{3/D} \cdot \log\Delta \cdot \log^2 n \big)\Big)
\end{equation}
such that every connected graph on $n$ vertices and diameter at most $\Delta$, embeds into $\mathcal{M}$ with distortion at most $D$.
\end{theorem}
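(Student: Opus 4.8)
\textbf{Proof proposal for Theorem~\ref{thm-Matousek} (Matou\v{s}ek).}
The plan is to adapt Matou\v{s}ek's construction from \cite[Theorem~4]{Ma92}. The key point is that a metric space universal for $n$-point graphs of diameter at most $\Delta$ can be built as a suitably padded and quantized family of $\ell_\infty$-type configurations, and the only novelty here compared to \cite{Ma92} is to keep explicit track of the dependence of the cardinality on both the distortion $D$ and the diameter bound $\Delta$. First I would recall that, for a target distortion $D$, one may cover the ``scales'' geometrically: a connected graph $H$ on $n$ vertices with $\diam(H)\leqslant\Delta$ has at most $O(\log_n\Delta)=O(1)$ dyadic scales of interest after normalization, so it suffices to handle a single scale and then take a product (or disjoint-union) over the $O(\log\Delta/\log 2)$ scales; this is where the $\log\Delta$ factor in the exponent of \eqref{mat-e1} enters.

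At a fixed scale, I would use the standard Fr\'echet-type embedding $v\mapsto (\dist_H(v,s))_{s\in S}$ into $\ell_\infty^{|S|}$, where $S$ ranges over an appropriately chosen family of ``landmark'' subsets of $V_H$; choosing $S$ of size roughly $n^{c/D}$ at random, a union bound over all $n$-point graphs shows that a single fixed family of such coordinate functions, after truncating each coordinate at level $O(\log n)$ and rounding to an integer grid, achieves distortion at most $D$ simultaneously for \emph{all} graphs. The number of grid points in a single coordinate is $O(\log n)$, the number of coordinates is $\exp(O(n^{3/D}\log^2 n))$ in Matou\v{s}ek's optimized version (the exponent $3/D$ rather than $c/D$ comes from his refined packing/metric entropy estimate), and taking the disjoint union over the $O(\log\Delta)$ scales multiplies the logarithm of the cardinality by $O(\log\Delta)$. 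Carefully bookkeeping these three contributions — $n^{3/D}$ landmark-configurations, a $\log^2 n$ entropy factor, and a $\log\Delta$ scale factor — yields exactly the claimed bound $|M|=\exp(\Theta(n^{3/D}\cdot\log\Delta\cdot\log^2 n))$. The lower bound on $|M|$ (the $\Theta$ rather than $O$) is witnessed by the same expander/girth constructions as in \cite{Ma96, Na21}, but since the statement only requires existence of such an $\mathcal{M}$, the upper bound construction is the substantive content.

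The main obstacle I anticipate is \emph{not} any single estimate but rather the simultaneity: one must produce one fixed metric space that works for every connected $n$-vertex graph of diameter $\leqslant\Delta$ at once, which forces the union-bound / probabilistic-existence argument over the (finite but enormous) set of such graphs, and the truncation and rounding steps must be done so that they neither destroy the lower Lipschitz bound (no two far-apart points collapse) nor inflate the upper bound (bounded coordinates stay bounded). Getting the truncation level, the grid mesh, and the landmark-set size to interact correctly so that the distortion is genuinely $\leqslant D$ — rather than $O(D)$ — is the delicate part, and this is precisely the place where one must follow Matou\v{s}ek's argument in \cite{Ma92} essentially verbatim, inserting the extra $\log\Delta$ factor for the diameter-dependent family of scales. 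Once this is in place, Proposition~\ref{prop-optimality} follows immediately by specializing $\Delta:=500\log n$ and $D$ as in \eqref{opt-e1}, noting that $n^{3/D}=n^{\min\{\log n,\log\log k\}/\log n}=\min\{n,\log k\}^{\Theta(1)}$ so that $n^{3/D}\log\Delta\log^2 n=\Theta(\log^2 n\cdot\log\log n\cdot\min\{n,\log k\})$, matching \eqref{opt-e2}; the final probabilistic statement \eqref{opt-e4} then combines this upper bound with the lower bound of Corollary~\ref{intro-cor1} and the high-probability connectivity and expansion from Corollary~\ref{Friedman}.
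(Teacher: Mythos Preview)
Your proposal misidentifies both the source of the $\log\Delta$ factor and the overall architecture of the construction, and in doing so makes the argument far more complicated than it is. In the paper's proof the universal space is simply the integer grid $\mathcal{M}=\{0,\dots,\Delta\}^K$ with the $\ell_\infty$ metric, where $K=\Theta(n^{3/D}\log^2 n)$; the $\log\Delta$ in the exponent of \eqref{mat-e1} arises because each of the $K$ coordinates takes $\Delta+1$ values, so $|M|=(\Delta+1)^K$. There are no ``scales'' and no disjoint union over $O(\log\Delta)$ copies; your claim that ``the number of grid points in a single coordinate is $O(\log n)$'' and that one truncates coordinates at level $O(\log n)$ is precisely backwards, and a disjoint union over $\log\Delta$ copies would only add $\log\log\Delta$ to the log-cardinality, not multiply by $\log\Delta$.

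The second gap is the ``union bound over all $n$-point graphs'': it is not needed, and inserting it would wreck the quantitative bounds. The grid $\{0,\dots,\Delta\}^K$ is fixed once and for all; for each individual graph $G$ one chooses, via the Johnson--Lindenstrauss--Schechtman lemma (Lemma~\ref{appendix-l1}, which you do not mention but is the only nontrivial input), independent random subsets $\boldsymbol{A}_k^i\subseteq [n]$ of sizes $2^k$ for $k=0,\dots,m-1$ and $i=1,\dots,r$, and embeds $v\mapsto(\dist_G(v,\boldsymbol{A}_k^i))_{k,i}$. The JLS lemma with $\alpha=1/D$ gives probability $\geqslant c_1 n^{-3/D}$ of success for each pair $(v,u)$ in some coordinate block $k$, so $r=\Theta(n^{3/D}\log n)$ repetitions and a union bound over $\binom{n}{2}$ pairs (not over graphs) suffice. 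Since $\diam(G)\leqslant\Delta$, every coordinate automatically lands in $\{0,\dots,\Delta\}$ with no truncation. Finally, the $\Theta$ in \eqref{mat-e1} records only the size of the constructed grid, not a matching lower bound for all universal spaces, so your appeal to \cite{Ma96,Na21} is beside the point.
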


For the reader's convenience, we give the proof of Theorem~\ref{thm-Matousek} in Appendix \ref{appendix-A}. We are ready to proceed to the proof of Proposition \ref{prop-optimality}.

\begin{proof}[Proof of Proposition \ref{prop-optimality}]
By Theorem \ref{thm-Matousek} applied for $\Delta:= \lfloor 500 \log n\rfloor$ and the distortion parameter $D$ selected in \eqref{opt-e1}, there exists a metric space $\mathcal{M}=(M,\varrho)$ of size
\[ \exp\Big( \Theta\big( n^{3/D} \cdot \log\Delta \cdot \log^2 n \big)\Big) =
\exp\Big( \Theta\big( \log^2 n\cdot \log\log n \cdot \min\{ n, \log k\}\big)\Big), \]
such that any connected $d$-regular graph $G$ on $[n]$ with $\diam(G)\leqslant 500\log n$, embeds into $\mathcal{M}$ with distortion at most $D$, as desired.
\end{proof}

\section{Proof of Corollary \ref*{intro-cor2}} \label{sec13}

Applying Theorem \ref{random-poincare} and Corollary \ref{Friedman} for $d=4$, we may select a constant $C\geqslant 1$ and an integer $n_0\geqslant 3$ so that for any\footnote{We have chosen to work with $4$-regular graphs because $4n$ is always even, and so, we have \eqref{opt-new-e1} at our disposal for any integer $n\geqslant n_0$.} integer $n\geqslant n_0$ and any metric space $\mathcal{M}=(M,\varrho)$ with $|M|\geqslant 3$,
\begin{equation} \label{opt-new-e1}
\mathbb{P}_{G(n,4)}\Big[ G\colon G \text{ is connected and } \gamma(G,\varrho)\leqslant C \min\big\{\log n, \log\log \big(|M|\big)\big\}\Big] \geqslant \frac12.
\end{equation}

Let $n\geqslant n_0$ be an integer, let $D\geqslant 1$, and fix a metric space $\mathcal{M}=(M,\varrho)$ so that every connected graph on $n$ vertices embeds into $\mathcal{M}$ with distortion at most $D$. Then, by~\eqref{opt-new-e1}, there exists a connected graph $G\in G(n,4)$ such that $\gamma(G,\varrho)\leqslant C  \min\big\{\log n, \log\log \big(|M|\big)\big\}$ and $c_{\mathcal{M}}(G)\leqslant D$. Fix an embedding $e\colon [n]\to M$ and $s>0$ such that $s\,\dist_G(v,u)\leqslant \varrho\big(e(v),e(u)\big)\leqslant sD\, \dist_G(v,u)$ for all $v,u\in [n]$. Then, by the $4$-regularity of $G$,
\begin{align*}
s\log n \lesssim \frac{1}{n^2}\sum_{v,u\in [n]} \varrho\big(e(v),e(u)\big)
& \leqslant \gamma(G,\varrho) \cdot \frac{1}{|E_G|}\sum_{\{v,u\}\in E_G} \varrho\big(e(v),e(u)\big) \\
& \leqslant sD \, \gamma(G,\varrho) \leqslant CsD\, \log\log \big(|M|\big),
\end{align*}
that yields that $|M|\geqslant \exp\big( n^{c/D}\big)$, for some universal constant $c>0$. The proof is completed by appropriately selecting the implied constant in \eqref{intro-e6} to cover the remaining cases $n\in \{2,\dots, n_0\}$.


\appendix

\numberwithin{equation}{section}

\section{Proof of Theorem \ref*{thm-Matousek}} \label{appendix-A}

As we have already pointed out, the proof of Theorem \ref{thm-Matousek} is a slight modification of the argument in \cite[Theorem 4]{Ma92} which, in turn, is based on the following lemma.

\begin{lemma}[Johnson--Lindenstrauss--Schechtman \textcolor{black}{\cite{JLS87}}] \label{appendix-l1}
Let $\mathcal{N}=(N,\rho)$ be an arbitrary finite metric space with $n:=|N|\geqslant 2$ points, and set $m=m(\mathcal{N}):= \lceil \log_2 n\rceil +1$; moreover, for every $k\in \{0,\dots,m-1\}$, let $\boldsymbol{A}_k$ be a random subset of $N$ uniformly distributed on $\binom{N}{2^k}$. Finally, let $x,y\in N$ be distinct. Then, for any $\alpha\in (0,1/3)$, there is $k=k(\alpha,x,y)\in \{0,\dots,m-1\}$~so~that
\begin{equation} \label{appendix-e2}
\mathbb{P}\big[ |\rho(x,\boldsymbol{A}_k)-\rho(y,\boldsymbol{A}_k)|\geqslant \alpha\, \rho(x,y)\big] \geqslant c_1\, n^{-3\alpha},
\end{equation}
where $c_1>0$ is a universal constant.
\end{lemma}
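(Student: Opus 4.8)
The plan is to follow the classical argument of Johnson--Lindenstrauss--Schechtman. First, since both sides of \eqref{appendix-e2} are homogeneous under rescaling the metric and $\rho(x,y)>0$, we may normalize so that $\rho(x,y)=1$. For $w\in N$ and $r\ge 0$ write $B_w(r):=\{z\in N:\rho(w,z)\le r\}$ and $B_w^{\circ}(r):=\{z\in N:\rho(w,z)<r\}$, and recall $\rho(w,A)=\min_{z\in A}\rho(w,z)$.

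The mechanism is a \emph{separation} observation: if $A\subseteq N$ satisfies $A\cap B_x(\beta)\ne\emptyset$ and $A\cap B_y^{\circ}(\beta+\alpha)=\emptyset$ for some $\beta\ge 0$, then $\rho(x,A)\le\beta$ while $\rho(y,A)\ge\beta+\alpha$, so $|\rho(x,A)-\rho(y,A)|\ge\alpha=\alpha\,\rho(x,y)$; the same holds after interchanging $x$ and $y$. Thus it suffices to exhibit a scale $2^k$ with $k\le m-1$ and a radius $\beta$ for which a uniformly random $2^k$-subset of $N$ realizes one of these two events with probability at least $c_1 n^{-3\alpha}$. Restricting to $\beta\le(1-\alpha)/2$, the triangle inequality makes $B_x(\beta)$ and $B_y^{\circ}(\beta+\alpha)$ disjoint (a common point would force $1=\rho(x,y)<2\beta+\alpha$), so with $a:=|B_x(\beta)|$, $b:=|B_y^{\circ}(\beta+\alpha)|$ the probability that a uniform $2^k$-subset misses $B_y^{\circ}(\beta+\alpha)$ and hits $B_x(\beta)$ equals $\big(\binom{n-b}{2^k}-\binom{n-b-a}{2^k}\big)\big/\binom{n}{2^k}$. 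I would show this is $\gtrsim\min(a/b,1)$ when $2^k$ is chosen of order $n/b$ (legitimate precisely when $b\le n/4$), and that when $b>n/4$ the trivial choice $2^k=1$ already gives probability $a/n>\tfrac14 n^{-3\alpha}$ as soon as $a\ge\tfrac14 n^{1-3\alpha}$. Either way the desired estimate reduces to the existence of an admissible radius at which $a\ge b\,n^{-3\alpha}$.

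The combinatorial core is therefore the claim that there is $\beta\in[0,(1-\alpha)/2]$ with $|B_x(\beta)|\ge|B_y(\beta+\alpha)|\,n^{-3\alpha}$ or $|B_y(\beta)|\ge|B_x(\beta+\alpha)|\,n^{-3\alpha}$. I would prove it by contradiction: if both inequalities failed at every admissible radius, then applying them at the radii $\beta=0,\alpha,2\alpha,\dots$ and using $|B_x(0)|=|B_y(0)|=1$ yields, by induction on $j$, that $|B_x(j\alpha)|>n^{3j\alpha}$ and $|B_y(j\alpha)|>n^{3j\alpha}$ for every $j$ up to $j^{\star}:=\lfloor\tfrac{1+\alpha}{2\alpha}\rfloor$. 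Since $j^{\star}\ge\tfrac{1-\alpha}{2\alpha}\ge\tfrac{1}{3\alpha}$ — this is exactly where $\alpha<\tfrac13$ is used — we get $3j^{\star}\alpha\ge 1$, hence $|B_x(j^{\star}\alpha)|>n$, which is absurd. This also pins down the exponent: a multiplicative gain of $n^{3\alpha}$ per step of length $\alpha$ is the most one can sustain over a radius window of length $(1-\alpha)/2$ before overflowing $n$.

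I expect the routine-but-delicate part to be the uniform lower bound on $\big(\binom{n-b}{s}-\binom{n-b-a}{s}\big)/\binom{n}{s}$. The plan there is to factor it as $\tfrac{\binom{n-b}{s}}{\binom{n}{s}}\big(1-\tfrac{\binom{n-b-a}{s}}{\binom{n-b}{s}}\big)$, bound the first factor below by a universal constant via $\prod_{i<s}\big(1-\tfrac{b}{n-i}\big)\ge\big(1-\tfrac{b}{n-s}\big)^{s}\ge e^{-1/2}$ whenever $bs\le n/4$, and bound the second factor below by $1-e^{-as/(n-b)}\ge\tfrac12\min\!\big(\tfrac{as}{n},1\big)$; taking $s=2^k$ in the interval $(n/8b,\,n/4b]$ — which has ratio $2$, hence contains a power of two, and satisfies $s\le n/4<n\le 2^{m-1}$, so $k\le m-1$ — makes both bounds hold. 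Combining this with the symmetric case $x\leftrightarrow y$ and the boundary case $b>n/4$ above produces \eqref{appendix-e2} with an admissible universal constant, e.g.\ $c_1=e^{-1/2}/16$.
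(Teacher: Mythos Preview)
The paper does not actually prove this lemma: it is stated in Appendix~A with attribution to \cite{JLS87} and then immediately used as a black box in the proof of Theorem~\ref{thm-Matousek}. So there is no ``paper's own proof'' to compare against.

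That said, your sketch is a faithful and correct rendition of the classical Johnson--Lindenstrauss--Schechtman argument. The two ingredients---the ball-growth dichotomy showing that some radius $\beta\in[0,(1-\alpha)/2]$ satisfies $|B_x(\beta)|\ge n^{-3\alpha}|B_y(\beta+\alpha)|$ (or the symmetric inequality), and the hypergeometric estimate for the probability that a random $2^k$-set hits one ball while missing the other---are exactly what is needed, and your treatment of both is sound. The use of $\alpha<1/3$ to guarantee $j^\star\ge 1/(3\alpha)$ is the right place where the hypothesis enters, and your choice of $2^k\in(n/8b,\,n/4b]$ together with the fallback $k=0$ when $b>n/4$ correctly covers all cases while keeping $k\le m-1$. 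The only cosmetic point is that the inequality $(1-b/(n-s))^s\ge e^{-1/2}$ deserves one more line of justification (e.g.\ via $1-x\ge e^{-2x}$ for $x\le 1/2$ combined with $bs\le n/4$ and $s\le n/4$), but this is routine.
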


We are ready to proceed to the proof of Theorem \ref{thm-Matousek}.

\begin{proof}[Proof of Theorem \ref{thm-Matousek}]
We may clearly assume that $n$ and $D$ are both sufficiently large. Let $c_1$ be as in Lemma~\ref{appendix-l1}, and set $m:=\lceil \log_2 n\rceil +1$, $K:= m\, \big\lceil \frac{2}{c_1}\, n^{3/D}\, \log n\big\rceil$ and $r:=K/m$. Define the metric space $\mathcal{M}=(M,\varrho)$, where $M=\{0,\dots,\Delta\}^K$ and $\varrho$ is the metric induced by~$\ell_\infty^K$. We claim that every connected graph on $n$ vertices and diameter at most $\Delta$, embeds into $\mathcal{M}$ with distortion at most~$D$. Indeed, fix a connected graph $G$ on $[n]$ with $\diam(G)\leqslant \Delta$. Let $(\boldsymbol{A}_k^i)_{k\in \{0,\dots,m-1\}, i\in [r]}$ be a random vector with independent components, where for every $k\in \{0,\dots,m-1\}$ and every $i\in [r]$, $\boldsymbol{A}^i_k$ is uniformly distributed on $\binom{[n]}{2^k}$. Define the random function $\boldsymbol{f}\colon [n]\to M$ by
\[ \boldsymbol{f}(v):=\big(\dist_G(v,\boldsymbol{A}^i_k)\big)_{k\in \{0,\dots,m-1\}, i\in [r]}. \]
Note that, almost everywhere,  $\boldsymbol{f}$~is well-defined and $1$-Lipschitz. Moreover, for every distinct $v,u\in [n]$, by Lemma \ref{appendix-l1} applied for ``$\alpha=1/D$", setting $k:=k(\alpha,v,u)$, we have
\begin{align*}
\mathbb{P}\big[  |\dist_G(v,\boldsymbol{A}^i_k) - \dist_G(u,\boldsymbol{A}^i_k)| <\alpha  & \, \dist_G(v,u)
\text{ for all } i\in [r] \big] \leqslant \\
& (1-c_1 n^{-3/D})^r \leqslant \exp\Big( -c_1 n^{-3/D} \frac{K}{m} \Big) \leqslant \frac{1}{n^2},
\end{align*}
that further implies, by a union bound over all $\{v,u\}\in \binom{[n]}{2}$, that $\mathbb{P}\big[ \|\boldsymbol{f}^{-1}\|_{\mathrm{Lip}}> D\big] <1$. Thus, with positive probability, a realization of $\boldsymbol{f}$ certifies that $G$ embeds into $\mathcal{M}$ with distortion at most~$D$, as desired.
\end{proof}
We close this appendix with the following straightforward consequence of Theorem \ref{thm-Matousek}.
\begin{corollary}[Matou\v{s}ek] \label{appendix-cor}
There exists a universal constant $C>0$ with the following property. Let $n\geqslant 2$ be an integer, and let $1\leqslant D\leqslant \frac{\log n}{\log\log n}$ be a distortion parameter. Then there exists a metric space $\mathcal{M}= (M,\varrho)$ with
\begin{equation} \label{appendix-e1}
|M|\lesssim \exp\big( n^{C/D} \big),
\end{equation}
such that every connected graph on $n$ vertices embeds into $\mathcal{M}$ with distortion at most $D$.
\end{corollary}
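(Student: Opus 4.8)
The plan is to derive Corollary~\ref{appendix-cor} from Theorem~\ref{thm-Matousek} simply by taking the diameter parameter as large as it ever needs to be. Concretely, I would apply Theorem~\ref{thm-Matousek} with $\Delta:=n$. Since every connected graph on $n$ vertices has diameter at most $n-1\leqslant n$, this single application produces one metric space $\mathcal{M}=(M,\varrho)$ into which \emph{every} connected graph on $n$ vertices embeds with distortion at most $D$, and whose size obeys
\[
|M| \,=\, \exp\Big(\Theta\big(n^{3/D}\cdot \log n\cdot \log^2 n\big)\Big) \,=\, \exp\Big(\Theta\big(n^{3/D}\,\log^3 n\big)\Big).
\]
Thus the whole content of the corollary is the elementary fact that, in the prescribed range $1\leqslant D\leqslant \frac{\log n}{\log\log n}$, the factor $\log^3 n$ can be swallowed by a slightly larger power of $n$.

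The second step is this absorption. Write the bound from Theorem~\ref{thm-Matousek} as $|M|\leqslant \exp\!\big(C_0\,n^{3/D}\log^3 n\big)$ for some universal constant $C_0\geqslant 1$. It then suffices to exhibit a universal constant $C$ with
\[
C_0\,n^{3/D}\,\log^3 n \,\leqslant\, n^{C/D}
\]
for all sufficiently large $n$. Taking logarithms, this is equivalent to $\log C_0+3\log\log n\leqslant \frac{C-3}{D}\log n$. The hypothesis $D\leqslant \frac{\log n}{\log\log n}$ gives $\frac{\log n}{D}\geqslant \log\log n$, hence $\frac{C-3}{D}\log n\geqslant (C-3)\log\log n$, and choosing $C:=7$ (any absolute $C>6$ works) the inequality holds once $4\log\log n\geqslant \log C_0+3\log\log n$, i.e. once $\log n\geqslant C_0$. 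Combining this with the displayed cardinality bound yields exactly $|M|\lesssim \exp\big(n^{C/D}\big)$, which is \eqref{appendix-e1}.

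I do not expect any genuine obstacle; the only minor subtlety is bookkeeping at the low end of the range of $n$. For $n$ below the absolute threshold at which the previous estimate activates (in particular for the handful of small $n$ for which $\frac{\log n}{\log\log n}$ is not even a meaningful upper bound on $D$), one instead invokes the classical Fr\'echet embedding: every connected $n$-vertex graph, having all pairwise distances in $\{0,1,\dots,n-1\}$, embeds isometrically into the fixed finite metric space $\big(\{0,1,\dots,n-1\}^n,\ \|\cdot\|_{\ell_\infty}\big)$, whose cardinality $n^n$ depends only on $n$. Since there are only finitely many such $n$, this target's size is dominated by the right-hand side of \eqref{appendix-e1} after enlarging the implicit constant hidden in $\lesssim$, completing the proof.
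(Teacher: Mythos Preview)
Your proposal is correct and is precisely the ``straightforward consequence'' the paper alludes to: the paper does not spell out a proof of Corollary~\ref{appendix-cor}, merely stating it as an immediate consequence of Theorem~\ref{thm-Matousek}, and your argument---applying Theorem~\ref{thm-Matousek} with $\Delta=n$ and absorbing the $\log^3 n$ factor into $n^{C/D}$ via the hypothesis $D\leqslant \frac{\log n}{\log\log n}$---is exactly the intended derivation.
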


\subsection*{Acknowledgments}

The research was supported in the framework of H.F.R.I call ``Basic research Financing (Horizontal support of all Sciences)" under the National Recovery and Resilience Plan ``Greece 2.0" funded by the European Union--NextGenerationEU (H.F.R.I. Project Number: 15866). The third named author (K.T.) is partially supported by the NSF grant DMS 2331037.

\end{document}